\ProvideTextCommandDefault{\cprime}{\tprime}
\newtheorem{theorem}{Theorem}[section]
\newtheorem{lemma}[theorem]{Lemma}
\newtheorem{prop}[theorem]{Proposition}
\newtheorem{cor}[theorem]{Corollary}
\theoremstyle{definition}
\newtheorem{definition}[theorem]{Definition}
\newtheorem{ques}[theorem]{Question}
\theoremstyle{remark}
\newtheorem{remark}[theorem]{Remark}
\numberwithin{equation}{section}
\newcommand{\CT}{\mathcal{T}}
\newcommand{\CS}{\mathcal{S}}
\def \N {\mathbb N}
\def \O {\mathcal O}
\def \B {\mathcal B}
\def \D {\mathcal D}
\def \Z {\mathbb Z}
\def \R {\mathcal R}
\def \EA {E_{\mathsf A}}
\def \F {\mathcal F}
\def \H {\mathcal H}
\def \EF {\mathsf E\mathcal F}
\def \EH {\mathsf E\mathcal H}
\def \Q {\mathcal Q}
\def \P {\mathcal P}
\def \M {\mathcal M}
\def \MGX {\mathcal M_G(X)}
\def \MTX {\mathcal M_T(X)}
\def \MGY {\mathcal M_G(Y)}
\def \MGXP {\mathcal M_G(X')}
\def \htop {h_{\mathsf{top}}}
\def \hsex {h_{\mathsf{sex}}}
\def \sq {sequence}
\def \xg {$(X,G)$}
\def \tl {topological}
\def \im {invariant measure}
\def \inv {invariant}
\def \ds {dynamical system}
\def \diam {\mathsf{diam}}
\def \htop{h_{\mathsf{top}}}
\def \Hom {\mathsf{Hom}}
\def \id {\mathsf{Id}}
\def \ex {\mathsf{ex}}
\def \usc {upper semicontinuous}
\def \se {superenvelope}
\def \ens {entropy structure}
\def \zd {zero-dimen\-sio\-nal}
\def \CT {\mathcal{T}}
\def \eps {\varepsilon}
\def \N {\mathbb N}
\def \Z {\mathbb Z}
\def \R {\mathcal R}
\def \T {\mathbb T}
\def \A {\mathcal A}
\def \F {\mathcal F}
\def \H {\mathcal H}
\def \EF {\mathsf E\mathcal F}
\def \EH {\mathsf E\mathcal H}
\def \Q {\mathcal Q}
\def \P {\mathcal P}
\def \M {\mathcal M}
\def \MGX {\mathcal M_G(X)}
\def \MGY {\mathcal M_G(Y)}
\def \MGXP {\mathcal M_G(X')}
\def \htop {h_{\mathsf{top}}}
\def \hsex {h_{\mathsf{sex}}}
\def \sq {sequence}
\def \xg {$(X,G)$}
\def \tl {topological}
\def \im {invariant measure}
\def \inv {invariant}
\def \ds {dynamical system}
\def \htop{h_{\mathsf{top}}}
\def \Hom {\mathsf{Hom}}
\def \id {\mathsf{Id}}
\def \ex {\mathsf{ex}}
\def \usc {upper semicontinuous}
\def \se {superenvelope}
\def \ens {entropy structure}
\def \qt {quasitiling}
\def \CT {\mathcal{T}}
\def \CS {\mathcal{S}}
\begin{document}

\frontmatter

%\lhead{Tomasz Downarowicz and Guohua Zhang}
%\rhead{Symbolic extensions and the comparison property }

\title{Symbolic extensions of amenable group actions and the comparison property}

\author{Tomasz Downarowicz and Guohua Zhang}

\address{\vskip 2pt \hskip -12pt Tomasz Downarowicz}

\address{\hskip -12pt Faculty of Pure and Applied Mathematics, Wroclaw University of Science and Technology, Wybrze\.ze Wyspia\'nskiego 27, 50-370 Wroc\l aw, Poland}

\email{downar@pwr.edu.pl}

\address{\vskip 2pt \hskip -12pt Guohua Zhang}

\address{\hskip -12pt School of Mathematical Sciences and Shanghai Center for Mathematical Sciences, Fudan University, Shanghai 200433, China}

\email{chiaths.zhang@gmail.com}

\subjclass[2010]{prim.: 37B05, 37C85, 37B10, sec.: 43A07, 20E07} 

\keywords{amenable group action, symbolic extension, symbolic extension entropy, entropy structure, superenvelope, comparison property, subexponential group, residually finite group, F\o lner system of quasitilings, tiling system, codeable tiling system}

\parindent=10pt
\begin{abstract}
In \tl\ dynamics, the \emph{Symbolic Extension Entropy Theorem} (SEET) \cite{BD} describes the possibility of a lossless digitalization of a \ds\ by extending it to a subshift on finitely many symbols. The theorem gives a precise estimate on the entropy of such a symbolic extension (and hence on the necessary number of symbols). Unlike in the measure-theoretic case, where Kolmogorov--Sinai entropy serves as an estimate in an analogous problem, in the \tl\ setup the task reaches beyond the classical theory of measure-theoretic and \tl\ entropy. Necessary are tools from an extended theory of entropy, the \emph{theory of entropy structures} developed in \cite{D0}. The main goal of this paper is to prove the analog of the SEET for actions of (discrete infinite) countable amenable groups:
\parindent=20pt
\bigskip

\begin{minipage}[c]{0,9\textwidth}
\emph{Let a countable amenable group $G$ act by homeomorphisms on a compact metric space $X$ and let $\MGX$ denote the simplex of all $G$-\inv\ Borel probability measures on $X$. A function $\EA$ on $\MGX$ equals the extension entropy function $h^\pi$ of a symbolic extension $\pi:(Y,G)\to (X,G)$, where $h^\pi(\mu)=\sup\{h_\nu(Y,G): \nu\in\pi^{-1}(\mu)\}$ ($\mu\in\MGX$), if and only if $\EA$ is a finite affine superenvelope of the \ens\ of $(X,G)$. }
\end{minipage}
\bigskip

\parindent=10pt
Of course, the statement is preceded by the presentation of the concepts of an \ens\ and its superenvelopes, adapted from the case of $\Z$-actions. In full generality we are able to prove a slightly weaker version of SEET, in which symbolic extensions are replaced by \emph{quasi-symbolic extensions}, i.e., extensions in form of a joining of a subshift with a zero-entropy tiling system. The notion of a tiling system is a subject of several earlier works (e.g. \cite{DHZ}, \cite{DH}) and in this paper we review and complement the theory developed there. The full version of the SEET (with genuine symbolic extensions) is proved for groups which are either residually finite or enjoy the so-called \emph{comparison property}. In order to describe the range of our theorem more clearly, we devote a large portion of the paper to studying the comparison property. Our most important result in this aspect is showing that all subexponential groups have the comparison property (and thus satisfy the SEET).  To summarize, the heart of the paper is the presentation of the following four major topics and the interplay between them:
\begin{itemize}
	\item Symbolic extensions,
	\item Entropy structures,
	\item Tiling systems (and their encodability),
	\item The comparison property.
\end{itemize}
\end{abstract}

\maketitle

\setcounter{tocdepth}{2}
\tableofcontents

\mainmatter

\section{Introduction}\label{sI}

\subsection{Motivation}

One of important tasks of the theory of \ds s is giving criteria for a lossless digitalization of a system. In classical ergodic theory of $\Z$-actions, Krieger's Generator Theorem \cite{Kr} resolves the problem completely using the Kolmogorov--Sinai entropy: if a measure-automorphism $T$ of a standard probability space $(X,\Sigma,\mu)$ has finite Kolmogorov--Sinai entropy $h=h_\mu(X,T)$ then the system has a finite generating partition, moreover, there exists such a partition into $l=\lfloor 2^h\rfloor +1$ atoms. That is to say, the system is isomorphic to a subshift over $l$ symbols equipped with some shift-\inv\ measure. This fact can be interpreted as the possibility of losslessly digitalizing the system, up to a measure-isomorphism, using $l$ symbols. The result was later generalized by \v S. \v Sujan \cite{Su} to free actions of (discrete infinite) countable amenable groups (with later improvements by A. Rosenthal \cite{R} and Danilenko--Park \cite{DP}). It is worth mentioning that recently B. Seward obtained an analog of Krieger's theorem for actions of general countable groups \cite{Sew}.

In topological dynamics, where one is concerned with a homeomorphism $T$ acting on a compact metric space $X$, one can ask an analogous question: how many symbols (or how much entropy) is needed to losslessly encode the system $(X,T)$ in a subshift? In general, it is impossible to represent the system by a \tl ly conjugate subshift, so instead one considers so-called \emph{symbolic extensions}, i.e., subshifts $(Y,S)$ which contain $(X,T)$ as a \tl\ factor. Since there are many such extensions, one is interested in optimizing the \tl\ entropy of the symbolic extension. In this way we are lead to the following parameter:
$$
\hsex(X,T)=\inf\{\htop(Y,S): (Y,S)\text{ is a symbolic extension of }(X,T)\}.
$$

By analogy to the measure-theoretic case, a naive guess would be that this parameter simply equals the topological entropy of the system $(X,T)$. But it is not the case. The parameter $\hsex(X,T)$ may assume values much higher than $\htop(X,T)$, including infinite, in which case symbolic extensions simply do not exist, even though $\htop(X,T)$ is finite. This phenomenon, first discovered by M. Boyle in the early 90's (and published much later in a survey \cite{BFF}), has lead to the development of the theory of symbolic extensions for $\Z$-actions (see \cite{BD,DN,DM, Bu,BuD,Se}, etc.). Technically, the parameter $\hsex(X,T)$ is much more sophisticated than $\htop(X,T)$ and tells us something that \tl\ entropy is incapable of telling: it describes the possibility of losslessly digitalizing the system, in particular it tells us how many symbols are needed for such a digitalization. Nowadays, we have a fairly good understanding of the subject matter and the associated phenomena. We understand why considerations of just \tl\ entropy are insufficient. Insufficient is also observing just the entropy function $\mu\mapsto h_\mu(X,T)$ defined on the simplex $\MTX$ of $T$-\inv\ Borel probability measures on $X$ associating to each measure its Kolmogorov--Sinai entropy. It is the defect in uniformity of the convergence of measure-theoretic entropy of \im s as the resolution improves, that has an essential impact on the entropy of possible symbolic extensions. This defect is captured by the \emph{theory of entropy structures} and their superenvelopes---objects that have no counterpart in ergodic theory. It is crucial that the way to calculate the \tl\ symbolic extension entropy $\hsex(X,T)$ is via computing the refined \emph{symbolic extension entropy function} $\mu\mapsto\hsex(\mu)$ on \im s. By definition, this function equals the pointwise infimum of \emph{extension entropy functions} $h^\pi$ defined on $\MTX$ for each symbolic extension $\pi:(Y,S)\to (X,T)$, as follows
$$
h^\pi(\mu)=\sup\{h_\nu(Y,S): \nu\in\pi^{-1}(\mu)\} \ \ \ (\mu\in\MTX).
$$
The key result of the theory of symbolic extensions, the \emph{Symbolic Extension Entropy Theorem} \cite{BD} asserts that a function $\EA$ on $\MTX$ equals $h^\pi$ in some symbolic extension $\pi:(Y,S)\to (X,T)$ if and only if it is a finite affine superenvelope of the entropy structure of the system $(X,T)$ (the definition of such a superenvelope is too complicated to be presented in the introduction and will be provided later). This allows to compute the function $\mu\mapsto\hsex(\mu)$ as the minimal superenvelope of the entropy structure, and finally, the \tl\ parameter $\hsex(X,T)$ is obtained as the supremum of $\hsex(\mu)$ over $\mu\in\MTX$.
\medskip

A natural direction of generalizing the theory of symbolic extensions is that of actions of countable amenable groups. For such actions we have completely analogous (to that in the $\Z$-case) structure of \im s (forming a Choquet simplex), with similarly defined notions of entropy (both measure-theoretic and \tl), enjoying similar basic properties. Beyond this class, say for arbitrary countable groups, the notion of a symbolic extension still makes sense, but there are serious problems with entropy. Sofic entropy, for instance, may equal minus infinity, or increase when passing to a factor (see e.g. \cite{Bow,KL}). So, for sofic groups there is no hope to create a theory of symbolic extensions with analogous connections to entropy notions as for $\Z$. This is why we believe that the realm of \tl\ actions of countable amenable groups is the most natural environment to carry over the theory of symbolic extensions and their entropy.

\subsection{Subject of the paper}

The goal the authors of this paper have focused on is very simple to formulate: prove an analog of the Symbolic Extensions Entropy Theorem for actions of countable amenable groups. A brief overview of the proof and the tools used for $\Z$-actions was rather reassuring: it should be possible to adapt most of them without too much trouble. This optimism however had a relatively short life. In reality, things turned out much more complex than they seemed, leading us to studying many accompanying topics such as \qt s, tilings and tiling systems and, above all, the mysterious comparison property. Eventually, even though we have acquired quite good insight into these subject matters and nontrivially contributed to their development, we were forced to make some (mild) compromises in the final formulation of the main theorem.

Some steps of the generalization are indeed quite straightforward. For example, most of the notions of the theory of entropy structures, such as uniform equivalence, entropy structure or \se, pass nearly unchanged. Also the proof of the ``easy'' implication of the
main theorem is a relatively painless adaptation from the $\Z$-case (practically only one detail needs to be reworked more carefully, but this does not present a serious challenge).

In the opposite ``hard'' implication the desired generalization becomes much less obvious. The proof of the direction relies on an effective construction of a symbolic extension $(Y,G)$ of a given system $(X,G)$, with an a priori given entropy function on \im s on $Y$ (this function delivered by the theory of entropy structures as a \se). In the $\Z$-case such an extension has the form of a \tl\ joining of two subshifts called \emph{rows}: the first row is the essential encoding of the system, the second row is just a zero entropy encoding of a ``dynamical parsing'' of each orbit into ``pieces'' of equal lengths. The second row is easily built and the description of how it is done occupies just a few lines. It is the first row that requires most of the effort in the construction which is divided into two main steps:
\begin{enumerate}
	\item From the a priori predicted entropy function (superenvelope) one derives an 		
	\emph{oracle}, a special integer-valued function on pieces of orbits which ``prophesies''
	the number of blocks in the symbolic extension that will correspond to each of these
	pieces.
	\item Using the oracle one creates the actual first row of the symbolic extension.
\end{enumerate}
Both steps are done with help of the parsing which must be applied beforehand to the elements of the system $(X,T)$. In order to make the decoding (i.e., the \tl\ factor map from $Y$ to $X$) possible, the parsing must be memorized in the symbolic extension, and this is exactly the role played by the second row.

Now, if $\Z$ is replaced by a general countable amenable group $G$, we encounter several serious obstacles, which we briefly discuss below.

First of all, the notion of a parsing must be replaced by a much more intricate notion of a \emph{tiling system}. In the classical case of $\Z$-actions, systems of parsings exist as factors in any aperiodic \zd\ system, which follows from a marker theorem attributed to W. Krieger (see \cite{Bo}). They occur under various names (as Krieger's markers, Kakutani--Rokhlin partitions or clopen tower partitions, etc.) and have numerous applications, for example in the study of full groups, orbit equivalence and Hopf-equivalence of minimal Cantor systems (see \cite{Sl} for an exposition on this subject, see also \cite{BH,GPS1,GPS2,GW}). For amenable group actions, for a long time, quasitilings of Ornstein and Weiss \cite{OW} have played a crucial role, and Lindenstrauss' Pointwise Ergodic Theorem \cite{L} is one of their most important applications. But we have quickly realized that, for building symbolic extensions, \qt s are rather useless and that we need more precise tilings (we explain why in subsection \ref{7.3}). In the long process of building up the foundations for this paper, we have, among many other things, proved in \cite{DHZ} that the Ornstein--Weiss \qt s can be improved to become tilings. Our tilings have already found numerous applications, see e.g. \cite{D, FH, FH1, S,Z,ZCY}. Further, in \cite{DH} we have proved that \qt s with arbitrarily good F\o lner properties exist \emph{as factors} in any free action of any countable amenable group. The results on tilings from \cite{DHZ} and \cite{DH} play a fundamental role in this paper.

Next encountered technical difficulties are associated with building the oracle (i.e., with the step (1) above) and result from lack of subadditivity of certain conditional entropy functions. This problem was resolved using, among other things, a sophisticated behavior (which we needed to establish in the first place) of entropy with respect to tiling systems. Once the oracle is successfully defined, step (2), i.e., building the analog of the first row, is performed in a manner more or less straightforward adapted from the $\Z$-case.

The most serious difficulty occurs, somewhat unexpectedly, in building the second row  responsible for memorizing the tiling system (in the $\Z$-case this is one of the easiest elements of the construction). It turns out that even though a tiling system created in \cite{DHZ} has zero \tl\ entropy, we are unable to encode it as a factor of a symbolic system. Hence we coined a notion of an \emph{encodable} tiling system and the existence of such tiling systems turns out to be one of the most serious challenges addressed in this paper. We confess, that we have stumbled upon this problem some time ago, and this has delayed the completion of the task undertaken in this work by several years. During these years we studied a new item necessary to put the pieces of the puzzle together: the \emph{comparison property} of countable amenable groups. And this subject became the second most important theme of this paper. The reader will find out that nearly half of the paper is devoted to or depends on this notion.

Comparison originates in the theory of \mbox{$C^*$-al}geb\-ras, but the most important for us ``dynamical'' version concerns group actions on compact metric spaces. In this setup it was defined by J.~Cuntz (see \cite{Cu}) and further investigated by M.~R\o rdam in \cite{MR1,MR2} and by W.~Winter in \cite{W}. As in the case of many other properties and notions in dynamical systems, the most fundamental form of comparison occurs in actions of the group $\Z$. In this context comparison is guaranteed for any action on a \zd\ compact metric space, which follows from the classical marker property of such actions (see \cite{Bo}). See also \cite{B} for more on comparison in $\Z$-actions.
For a wider generality, we refer the reader to a recent paper by D. Kerr \cite{K}, where the notion is defined for other actions including \tl\ and measure-preserving ones. We will focus on a particular case where a countable amenable group acts on a \zd\ compact metric space. In fact, this case also plays one of the leading roles in \cite{K}.

Unlike for $\Z$-actions, in the case of a general countable amenable group acting on a \zd\ compact metric space, it is unknown whether comparison necessarily occurs. There is neither a proof, nor a counterexample, although the problem has been attacked by several specialists for several years. Only a few partial results have been obtained, for instance, it is known (but never published, see \cite{Ph} and also \cite{SG}) that finitely generated groups with a symmetric F\o lner \sq\ satisfying Tempelman's condition (this includes all nilpotent, in particular Abelian, groups) have the comparison property, but beyond this case not much was known. In this paper we succeed in identifying a large class of groups whose any action on a \zd\ compact metric space admits comparison. Namely, it is the class of \emph{subexponential groups}, i.e., such that every finitely generated subgroup has subexponential growth. This covers all virtually nilpotent groups (which have polynomial growth) but also other, with intermediate growth, the most known example of which is the Grigorchuk group \cite{Gr}. By a recent result of E. Breuillard, B. Green and T. Tao \cite{BGT}, our result also covers the above mentioned ``Tempelman groups''; they turn out to be virtually nilpotent.

We establish a strong connection between comparison and the existence of tiling systems as factors of free \zd\ actions. In particular, if a group $G$ enjoys the comparison property then there exists an encodable tiling system of $G$ with zero \tl\ entropy. This opens the possibility of building symbolic extensions. Not counting the (relatively small) class of residually finite amenable groups, we can thus prove the Symbolic Extension Entropy Theorem (analogous as in the $\Z$-case) for groups which enjoy the comparison property, in particular for all subexponential groups. In case of a general countable amenable group we can prove a slightly deficient version, in which symbolic extensions are replaced by \emph{quasi-symbolic extensions}, defined as extensions in form of \tl\ joinings of subshifts with some (perhaps not encodable) zero entropy tiling system.

\subsection{Organization of the paper}

Section \ref{s2} contains rather standard material concerning actions of countable amenable groups, both \tl\ (on compact metric spaces) and measure-theoretic (on standard probability spaces), with special attention paid to subshifts and other \zd\ systems, as well as basic facts about entropy for such actions. The following two sections contain expositions on concepts less familiar to the potential reader, still not quite new. And so, in Section \ref{s3} we review entropy structures and symbolic extensions (including the proof of the easy direction of the main Symbolic Extension Entropy Theorem). What is new about these notions is their application (probably for the first time) to actions of countable amenable groups. But the translation from $\Z$-actions is more or less direct (though not completely trivial). Section \ref{s4} treats about F\o lner systems of \qt s and tiling systems, and is mainly a survey of authors' previous work \cite{DHZ} and \cite{DH}. The section is concluded by the presentation of tiled entropy---a new approach to dynamical entropy, natural in the context of tiling systems and necessary to cope with the difficulties encountered in the construction of symbolic extensions of countable amenable group actions that were not present in the case of $\Z$. In Section \ref{s5} we prove, in full generality, the hard direction of the Symbolic Extension Entropy Theorem, however in a slightly deficient version in which the extension is quasi-symbolic.
We are able to prove the full version of that theorem (with genuine symbolic extensions) for two important classes of groups, and Section \ref{s6} is devoted to introducing and studying one of these classes---groups with the comparison property. We describe alternative forms of this property, prove various auxiliary facts, but above all we prove that this property is enjoyed by the large class of subexponential groups. In Section \ref{s7} we show how comparison property allows to encode a zero entropy tiling system in a subshift on three symbols. This task, which in case of $\Z$-actions can be resolved in one line (using some standard constructions from \tl\ dynamics), in the general case becomes a complicated issue occupying several pages. With this tool in hand, we prove the full version of the Symbolic Extension Entropy Theorem for countable amenable groups which either enjoy the comparison property or are residually finite. At the end of the paper we have put an appendix, of perhaps independent interest, in which we reduce the alphabet used in Section \ref{s7}, from three to two symbols.

\section{Preliminaries on actions of countable amenable groups}\label{s2}
\subsection{Group actions, subshifts, symbolic extensions, block codes}

Thr\-oughout this paper $G$ denotes a (discrete) countable group with the unity $e$. By ``countable'' we will always mean ``infinite countable''. For finite groups, everything we address in this paper becomes trivial. Let $X$ be a compact metric space and let $\Hom(X)$ denote the group of all homeomorphisms $\phi:X\to X$. By an \emph{action} (more precisely, \emph{\tl\ action}) of $G$ on $X$ we will mean a homomorphism from $G$ into $\Hom(X)$, i.e., an assignment $g\mapsto \phi_g$ such that $\phi_{gg'} = \phi_g\circ\phi_{g'}$ for every $g,g'\in G$. It follows automatically that $\phi_{e}=\id$ (the identity homeomorphism) and that $\phi_{g^{-1}}=(\phi_g)^{-1}$ for every $g\in G$. Such an action will be denoted by \xg\ (although a group may act on the same space in many different ways, we will usually fix just one such action, hence this notation should not lead to a confusion). Another term used for $(X,G)$ is a \emph{\tl\ dynamical system} (or briefly a \emph{system}). From now on, to reduce the multitude of symbols used in this paper, we will write $g(x)$ in place of $\phi_g(x)$. The same applies to subsets $\mathsf A\subset X$: $g(\mathsf A)$ will replace $\phi_g(\mathsf A)$.
The action is called \emph{free} provided that $g(x)=x$ for at least one $x\in X$ implies $g=e$. A Borel measurable set $\mathsf A\subset X$ is called \emph{\inv} if $g(\mathsf A)=\mathsf A$ for every $g\in G$.

\medskip
An important example of an action of $G$ is the \emph{shift action on finitely many symbols}. Let $\Lambda$ be a finite set (usually, we assume that $\Lambda$ contains more than one element, otherwise the system is trivial) considered a discrete topological space (in this context the set $\Lambda$ will be called the \emph{alphabet}), and let
$$
\Lambda^G=\{x=(x_g)_{g\in G}:\ \forall_{g\in G}\ x_g\in\Lambda\}
$$
be equipped with any metric compatible with the product topology. Then $\Lambda^G$ is a compact metric space and $G$ acts on it naturally by shifts:
$$
\text{if \ }x=(x_f)_{f\in G}\text{ \ and \ }g\in G\text{ \ then \ }g(x)=(x_{fg})_{f\in G}.
$$
The system $(\Lambda^G,G)$ is called the \emph{full shift} (over $\Lambda$) while any nonempty closed invariant subset of $Y\subset\Lambda^G$ (regarded with the shift action) is called a \emph{subshift} or a \emph{symbolic system}.
If $(X,G)$ and $(Y,G)$ are actions of the same group on two (not necessarily different) spaces, and there exists a continuous surjection $\pi:Y\to X$ which commutes with the action (i.e., for any $g\in G$ and $y\in Y$, $\pi\circ g(y)=g\circ\pi(y)$), then $(X,G)$ is called a \emph{\tl\ factor of $(Y,G)$} and $(Y,G)$ is called a \emph{\tl\ extension of $(X,G)$}. In what follows, we will skip the adjective ``\tl'' and when the acting group is fixed and its action on given spaces is understood, we will also skip it in the denotation of the \ds s (i.e., we will use the letters $X$ and $Y$ in the meaning of $(X,G)$ and $(Y,G)$). If $X$ is a factor of $Y$, then the above map $\pi$ will be referred to as the \emph{factor map}. It has to be remarked that one system, say $X$, may be a factor of another, say $Y$, via many different factor maps. Since this may lead to a confusion, we will often use the phrase \emph{$X$ is a factor of $Y$ via the map $\pi$}. If the factor map is injective, in which case it is a homeomorphism between $Y$ and $X$, we will say that the systems are \emph{\tl ly conjugate}. From the point of view of \tl\ dynamics, conjugate systems are identical.

By a \emph{\tl\ joining} of finitely or countably many systems $X_k$ ($k\in K$ where $K=\{1,2,\dots,l\}$ with $l\in\N$, or $K=\N$) we will mean any closed subset $Z$ of the Cartesian product $\prod_{k\in K}X_k$, which is invariant under the product (coordinatewise) action, and whose projection on every coordinate is surjective. Such a joining will be sometimes denoted by $\bigvee_{k\in K}X_k$ (although this notation is ambiguous, as there may exist many joinings of the same collection of systems). At least one joining always exists---the product joining. The coordinate projections are factor maps from the joining to the respective coordinate systems. A special case of a countable joining is an inverse limit. We assume that $(X_k)_{k\in\N}$ is a \sq\ of systems such that, for each $k\in\N$, $X_k$ is a factor of $X_{k+1}$ via a map $\pi_k$ (referred to as the \emph{bonding map}). Then the \emph{inverse limit} of the \sq\ $(X_k)_{k\in\N}$ is defined as
$$
\overset\leftarrow{\lim_k}\, X_k=\{(x_k)_{k\in\N}: \forall_{k\in\N}\ x_k\in X_k \text{ and }x_k=\pi_k(x_{k+1})\}.
$$
It is elementary to check that the inverse limit is a countable joining of the systems $X_k$
($k\in\N$).
\medskip

The term ``symbolic extension'' which appears in the title of this paper is a very natural concept. By a \emph{symbolic extension} of a system $X$ we simply mean any symbolic system $Y$ (over some finite alphabet $\Lambda$) which is a (\tl) extension of $X$ (via some factor map $\pi$). Symbolic extensions are sometimes also called \emph{subshift covers}. The criteria for a system $X$ to admit at least one symbolic extension, and for computing how close the extension can be to $X$ in terms of information theory, are fairly well understood in case of $\Z$-actions. As explained in the Introduction, the goal of this paper is to see to what extent the same criteria apply to actions of general countable amenable groups.

Since we are discussing \tl\ factors, let us mention in this place the specific form of factor maps between two subshifts.

\begin{definition}\label{bcode} Let $\Lambda$ and $\Delta$ be some finite sets (alphabets). By a \emph{block code} we will mean any function $\Xi:\Lambda^F\to\Delta$, where $F$ is a nonempty finite subset of $G$ (called the \emph{coding horizon} of \,$\Xi$).
\end{definition}

The Curtis--Hedlund--Lyndon Theorem \cite{He} (which holds for actions of any countable group) states:
\begin{theorem}\label{CHL}
Let $Y\subset\Lambda^G$ be a subshift (over some finite alphabet $\Lambda$). Let
$\Delta$ be a finite set. Then $\xi:Y\to X\subset\Delta^G$ is a topological factor map (the image $X$ is then a subshift over $\Delta$) if and only if there exists a finite set $F\subset G$ and a block code $\Xi:\Lambda^F\to\Delta$,
such that, for all $y\in Y$ and $g\in G$ we have the equality
$$
(\xi(y))_g = \Xi(g(y)|_F).
$$
\end{theorem}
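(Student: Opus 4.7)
The statement is classical; both directions follow from the interplay of continuity, the product topology on $\Lambda^G$, and shift-equivariance. My plan is to dispatch the easy direction in a sentence and then focus on the ``only if'' implication, where a compactness argument does the real work.

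For the ``if'' direction, suppose $F\subset G$ is finite and $\Xi:\Lambda^F\to\Delta$ is given. The formula $(\xi(y))_g=\Xi(g(y)|_F)$ defines a map $\xi:Y\to\Delta^G$ whose $g$-th coordinate depends only on the coordinates $y_{fg}$ with $f\in F$. Since $\Delta$ is discrete and each coordinate projection on $\Lambda^G$ is continuous, $\xi$ is continuous. Shift-equivariance is a direct computation from $(g(y))_f=y_{fg}$: for $h\in G$, $(\xi(h(y)))_g=\Xi(g(h(y))|_F)=\Xi((hg)(y)|_F)=(\xi(y))_{hg}=(h(\xi(y)))_g$. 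So $\xi$ is a continuous $G$-equivariant map and its image is a subshift of $\Delta^G$.

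For the converse, assume $\xi:Y\to\Delta^G$ is a continuous $G$-equivariant surjection. The idea is to first produce the block code governing the single coordinate at $e$, then use equivariance to propagate it. Define $\xi_0:Y\to\Delta$ by $\xi_0(y)=(\xi(y))_e$. Composition of $\xi$ with the coordinate projection makes $\xi_0$ continuous, and since $\Delta$ is finite and discrete, each fiber $\xi_0^{-1}(\delta)$ is clopen in $Y$. The topology on $Y$ is generated by cylinder sets with finite bases, so every point $y\in\xi_0^{-1}(\delta)$ lies in a basic clopen cylinder $C_y\subset\xi_0^{-1}(\delta)$ determined by some finite $F_y\subset G$.

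The crucial step, and the only real obstacle, is to extract a \emph{single} finite coding horizon $F$ that works uniformly across $Y$; this is where compactness enters. Cover the compact space $Y$ by the cylinders $\{C_y:y\in Y\}$, extract a finite subcover, and let $F$ be the union of the finitely many index sets $F_y$ that appear. Then whether $y$ lies in a given $\xi_0^{-1}(\delta)$ depends only on $y|_F$, so there is a well-defined map $\Xi:\Lambda^F\to\Delta$ with $\xi_0(y)=\Xi(y|_F)$ for all $y\in Y$ (values of $\Xi$ on patterns that do not extend to points of $Y$ can be chosen arbitrarily). Finally, equivariance together with the identity $(g(z))_e=z_g$ gives, for every $g\in G$ and $y\in Y$,
\[
(\xi(y))_g=(g(\xi(y)))_e=(\xi(g(y)))_e=\xi_0(g(y))=\Xi(g(y)|_F),
\]
which is the required block-code formula. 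Apart from the compactness extraction of $F$, every step is bookkeeping with the shift convention.
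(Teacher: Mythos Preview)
Your proof is correct and follows the standard Curtis--Hedlund--Lyndon argument. The paper does not prove this theorem at all; it merely states it and cites Hedlund \cite{He}, so there is nothing to compare against. One small slip: in your equivariance check for the ``if'' direction you write $g(h(y))=(hg)(y)$ and $(h(\xi(y)))_g=(\xi(y))_{hg}$, but with the paper's convention $(g(x))_f=x_{fg}$ one has $g(h(y))=(gh)(y)$ and $(h(z))_g=z_{gh}$; the correct chain is $(\xi(h(y)))_g=\Xi((gh)(y)|_F)=(\xi(y))_{gh}=(h(\xi(y)))_g$. The conclusion is unaffected.
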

The term ``block code'' refers to both $\Xi$ and $\xi$, depending on the context, and $F$ is called a coding horizon of $\xi$ (and of $\Xi$). Clearly, if $F$ is a coding horizon of $\xi$ (and of $\Xi$), so is any finite set containing $F$. It will be convenient to assume that coding horizons always contain the unity.
\smallskip

By this opportunity we will introduce another convention often used in symbolic dynamics. Although it is a slight abuse of precision, it is commonly accepted and does not lead to a confusion. If $F\subset G$ is a finite set then any element $B=(B_f)_{f\in F}\in\Lambda^F$ will be called a \emph{block} (or, if needed, a \emph{block over $F$}). Now, if for some $g\in G$ the block $B'\in\Lambda^{Fg}$ satisfies $\forall_{f\in F}\ B'_{fg}=B_f$, then $B'$ will be called a \emph{shifted copy} of $B$. Shifted copies of the same block will be often denoted by the same letter. For example, for $y\in\Lambda^G$, in place of $g(y)|_F=B$ we will write $y|_{Fg}=B$. This means that $\forall_{f\in F}\ y_{fg}=B_f$.

With each block $B\in\Lambda^F$ we associate the \emph{cylinder set}
$$
[B]=\{x\in\Lambda^G: x|_F=B\}.
$$
The cylinder set is clopen in the full shift $\Lambda^G$ (following a common practice, we use the term ``clopen'' in the meaning of ``closed and open''). When regarding a subshift $X\subset\Lambda^G$ by a cylinder we will often mean the intersection of a cylinder (as defined above) with $X$. In this sense, cylinders are clopen in $X$.

\subsection{An $\varepsilon$-modification, $(K,\varepsilon)$-invariance, F\o lner \sq, amenability}\label{2.2}

This subsection introduces the key notions of amenability for countable groups. Amenability was introduced by J. von Neumann \cite{vN}. There are many equivalent ways of defining amenability and most of them apply to groups much more general than countable (see e.g. \cite{P}). We use the one which fits us best. It relies on the concept of a F\o lner \sq\ introduced by E. F\o lner \cite{Fo}. Note that all notions below depend on comparing cardinalities of certain sets, hence may be considered purely quantitative.

We will use $|F|$ to denote the cardinality of a set $F$.  Given a finite set $F\subset G$ and $\varepsilon>0$, an \emph{$\varepsilon$-modification} of $F$ is any set $F'$ such that $\frac{|F\triangle F'|}{|F|}<\varepsilon$, where $\triangle$ denotes the symmetric difference of sets. An $\varepsilon$-modification of $F$ which is also a subset of $F$ will be called a \emph{$(1\!-\!\varepsilon)$-subset} of $F$. If $K$ is another finite subset of $G$ then $F$ is called \emph{$(K,\varepsilon)$-\inv} if $KF$ is an $\varepsilon$-modification of $F$. For singletons, instead of ``$(\{g\},\varepsilon)$-\inv'' we will write ``$(g,\varepsilon)$-\inv''. Below we list a few easy but useful facts associated to the notions introduced above.
\begin{enumerate}
	\item If $F$ is $(K,\varepsilon)$-invariant then it is $(g,2\epsilon)$-\inv\ for every $g\in
	K$.
	\item By the \emph{$K$-core} of $F$ (denoted by $F_K$) we mean the set $\{f\in
	F:Kf\subset F\}=F\cap\bigcap_{g\in K}g^{- 1}F$. If $F$ is $(K,\varepsilon)$-\inv\ then its
	$K$-core is a $(1\!-\!|K|\varepsilon)$-subset of $F$ (see \cite[Lemma~2.6]{DHZ}).\label{dup}
	\item It follows that if $F$ is $(K,\varepsilon)$-\inv\ then any set $F'$ satisfying
	$F_K\subset F'\subset KF$ is a $(|K|\varepsilon\!+\!\varepsilon)$-modification of $F$.\label{cip}
	\item A $\delta$-modification of a $(K,\varepsilon)$-\inv\ set is $(K,\varepsilon')$-\inv, where
	$\varepsilon'=\frac{|K|\delta\!+\!\delta\!+\!\varepsilon}{1-\delta}$.
\end{enumerate}

\begin{definition}\label{tyi}
A \sq\ of finite subsets of $G$, $(F_n)_{n\in\N}$, is called a \emph{F\o lner \sq} if, for every finite set $K$ and every $\varepsilon>0$, the sets $F_n$ are eventually (i.e., except for finitely many indices $n$) $(K,\varepsilon)$-\inv. A group which possesses a F\o lner \sq\ is called \emph{amenable}.
\end{definition}
An immediate consequence of fact (4) above is that if $(F_n)_{n\in\N}$ is a F\o lner \sq\ in $G$ and, for each $n$, $F_n'$ is a $\delta_n$-modification of $F_n$, where $\delta_n\to0$, then $(F'_n)_{n\in\N}$ is a F\o lner \sq\ as well.

It is known that if a countable group $G$ is amenable then it possesses a F\o lner \sq\ $(F_n)_{n\in\N}$ with the following additional properties:
\begin{itemize}
	\item $\forall_{n\in\N}\ e\in F_n$,
	\item $\forall_{n\in\N}\ F_n\subset F_{n+1}$,
	\item $\forall_{n\in\N}\ F_n=F_n^{-1}$ (by convention, $F_n^{-1}=\{f^{-1}:f\in F_n\}$).
\end{itemize}

In reference to the above three properties of a F\o lner \sq, we will use the terms \emph{centered}, \emph{nested} and \emph{symmetric}, respectively. The first two properties are easily obtained, for the existence of symmetric F\o lner \sq s see \cite[Corollary 5.3]{N}.

\subsection{The Choquet simplex of \inv\ probability measures} Let $X$ be a compact metric space and let $\M(X)$ denote the family of all Borel probability measures on $X$. Since we shall consider no measures other than Borel probabilities, from now on ``measure'' will always mean an element of $\M(X)$. Endowed with the weak-star topology, this set is a metrizable \emph{Choquet simplex}, that is, it is a nonempty compact convex set which possesses a convex metric, and every its element $\mu$ has a unique representation as the integral average of the extreme points. Clearly, the extreme points of $\M(X)$ are the Dirac measures $\delta_x$ ($x\in X$) and the integral average representation of $\mu$ mentioned above is
$$
\mu = \int_X \delta_x\,d\mu(x).
$$
One of the standard convex metrics on $\M(X)$ compatible with the weak-star topology is given by the following formula
\begin{equation}\label{metric}
d_*(\mu,\nu)=\sum_{n=1}^\infty 2^{-n}\left|\int f_n\,d\mu - \int f_n\,d\nu\right|,
\end{equation}
where $(f_n)_{n\in\N}$ is some fixed \sq\ of continuous functions $f_n:X\to[0,1]$, linearly dense in the space $C(X)$ of all continuous real functions on $X$ (with the uniform metric).

If $G$ acts on $X$ then it also acts on $\M(X)$: for $g\in G$, the measure $g(\mu)$ is defined by the formula $g(\mu)(\mathsf A) = \mu(g^{-1}(\mathsf A))$ (where $\mathsf A$ is a Borel subset of $X$). We say that $\mu$ is an \emph{\im} if $\mu = g(\mu)$ for every $g\in G$. An \im\ is called \emph{ergodic} if $\mu(\mathsf A)\in\{0,1\}$ for every \inv\ Borel set $\mathsf A$. In many aspects concerning \im s, actions of amenable groups exhibit the same features as $\Z$-actions. In particular, the following fact holds (follows e.g. from \cite{Va}):

\begin{theorem}\label{thm1}
If a countable amenable group $G$ acts on a compact metric space $X$ then the family of all \im s (denoted by $\MGX$) is a metrizable Choquet simplex whose extreme points are exactly the ergodic measures.
\end{theorem}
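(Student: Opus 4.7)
The plan is to verify in order the four ingredients hidden inside the definition of a metrizable Choquet simplex for $\MGX$: nonemptiness, compact convex structure, the characterization of extreme points, and uniqueness of the barycentric representation by ergodic measures. Throughout I work inside the ambient Choquet simplex $\M(X)$ and use that the map $\mu\mapsto g(\mu)$ is affine and weak-$*$ continuous for each $g\in G$.

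First I would handle the structural part. Nonemptiness of $\MGX$ follows by a Markov--Kakutani style averaging: fix any $\nu\in\M(X)$ and any F\o lner \sq\ $(F_n)_{n\in\N}$ provided by Definition \ref{tyi}, and consider the averages $\mu_n = \frac1{|F_n|}\sum_{g\in F_n} g(\nu)$. By weak-$*$ compactness of $\M(X)$ pass to a convergent subnet $\mu_{n_k}\to\mu$. For any $g\in G$, the difference $g(\mu_n)-\mu_n$ is supported on (shifted copies of) $(gF_n)\triangle F_n$ with total variation at most $2\,|gF_n\triangle F_n|/|F_n|$, which tends to $0$ by $(g,\varepsilon)$-\inv ance, so $g(\mu)=\mu$. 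Convexity of $\MGX$ is immediate, and since $\MGX=\bigcap_{g\in G}\{\mu:g(\mu)=\mu\}$ is an intersection of weak-$*$ closed sets, it is closed in $\M(X)$, hence compact. Metrizability and existence of a convex metric are inherited directly from $\M(X)$ via the metric $d_*$ in \eqref{metric}, which is affine in each argument.

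Next I would identify the extreme points. If $\mu\in\MGX$ is ergodic and $\mu=\tfrac12(\mu_1+\mu_2)$ with $\mu_1,\mu_2\in\MGX$, then $\mu_1\ll\mu$ with Radon--Nikodym derivative $f=d\mu_1/d\mu$ satisfying $0\le f\le 2$; invariance of $\mu_1$ and $\mu$ forces $f\circ g=f$ $\mu$-a.e.\ for every $g\in G$, so $f$ is constant by ergodicity, giving $\mu_1=\mu$ and confirming that $\mu$ is extreme. Conversely, if $\mu\in\MGX$ is not ergodic there is a $G$-\inv\ Borel set $\mathsf A$ with $0<\mu(\mathsf A)<1$, and the conditional measures $\mu_\mathsf A$ and $\mu_{\mathsf A^c}$ are $G$-\inv\ and yield a nontrivial convex combination $\mu=\mu(\mathsf A)\mu_\mathsf A+\mu(\mathsf A^c)\mu_{\mathsf A^c}$, so $\mu$ is not extreme.

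Finally, and this is the main obstacle, I need the unique barycentric decomposition, i.e., that $\MGX$ is actually a simplex. The argument here is the ergodic decomposition theorem for actions of countable amenable groups, which is exactly what requires the amenability hypothesis beyond the mere existence of an \im. Concretely, fix $\mu\in\MGX$ and apply Lindenstrauss' pointwise ergodic theorem (cited in the Introduction) along a tempered F\o lner \sq: for every $f\in C(X)$ the averages $\frac1{|F_n|}\sum_{g\in F_n}f\circ g$ converge $\mu$-a.e.\ to a $G$-\inv\ function $\E_\mu(f|\mathcal I)$. Using a countable dense $\{f_n\}\subset C(X)$, this lets one build, for $\mu$-a.e.\ $x$, an ergodic measure $\mu_x\in\MGX$ such that $\mu=\int\mu_x\,d\mu(x)$, and standard measurable-selection/Choquet arguments (as in \cite{Va}) show that this disintegration is in fact the \emph{unique} probability measure on the extreme points representing $\mu$ as a barycenter. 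Uniqueness is what promotes $\MGX$ from a mere compact convex metrizable set to a Choquet simplex, finishing the proof.
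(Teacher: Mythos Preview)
The paper does not give its own proof of this theorem; it simply records it as a known fact and cites \cite{Va} (Varadarajan). So there is no ``paper's approach'' to compare with in any detailed sense---your sketch is a self-contained argument where the paper is content with a reference.

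Your outline is essentially correct, but there is one conceptual slip worth flagging. You write that the simplex property (uniqueness of the barycentric decomposition) ``is exactly what requires the amenability hypothesis beyond the mere existence of an invariant measure.'' This is not right: the ergodic decomposition, and with it the Choquet simplex structure of $\MGX$, holds for the action of \emph{any} countable group on a standard Borel space---this is precisely what Varadarajan \cite{Va} proves, with no amenability assumption. Amenability enters only to guarantee that $\MGX$ is nonempty (your F\o lner averaging argument). Consequently, invoking Lindenstrauss' pointwise ergodic theorem here is considerable overkill; the decomposition follows from the disintegration of $\mu$ over the $\sigma$-algebra of $G$-invariant sets, and uniqueness from the mutual singularity of distinct ergodic measures. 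Your route via Lindenstrauss does work for amenable $G$, but it obscures where the hypothesis is actually used.
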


The above theorem says that
\begin{enumerate}
\item $\MGX$ is a \emph{nonempty} and \emph{weakly-star closed} (hence compact) subset of $\M(X)$,
\item $\MGX$ is convex and the collection $\ex\MGX$ of its extreme points coincides with the collection of all ergodic measures,
\item every \im\ $\mu$ has a unique representation as the integral average of the ergodic measures, i.e., there exists a unique probability distribution $\xi_\mu$ on $\ex\MGX$ such that
$$
\mu = \int_{\ex\MGX} \nu\ \, d\xi_\mu(\nu).
$$
\end{enumerate}
The above formula is referred to as the \emph{ergodic decomposition} of $\mu$.

A way of proving the existence of \im s is by investigating \emph{empirical measures} of the form
$$
\boldsymbol\upmu_x^{F_n}=\frac1{|F_n|}\sum_{g\in F_n} \delta_{g(x)},
$$
where $F_n$ is a member of the F\o lner \sq\ and $x\in X$, and showing that with increasing $n$ such measures accumulate at \im s. We skip the details of this standard argument, however, we will need the following refinement:
\begin{prop}\label{oh}
Fix some $\gamma>0$. If $(F_n)_{n\in\N}$ is a F\o lner \sq\ in $G$ then there exists $n_0\in\N$ such that for every $n\ge n_0$ and any $x\in X$, the measure $\boldsymbol\upmu^{F_n}_x$ lies within the $\gamma$-neighborhood (in the metric $d_*$) of $\M_G(X)$.
\end{prop}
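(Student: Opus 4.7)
The plan is to argue by contradiction, combining the weak-$*$ compactness of $\M(X)$ with the defining property of a F\o lner sequence to extract a limit that is forced to be invariant. Suppose the conclusion fails. Then there exist indices $n_k\to\infty$ and points $x_k\in X$ with
$$
d_*\bigl(\boldsymbol\upmu^{F_{n_k}}_{x_k},\MGX\bigr)\ge\gamma
$$
for every $k$. By weak-$*$ compactness of $\M(X)$, I may pass to a subsequence along which $\boldsymbol\upmu^{F_{n_k}}_{x_k}\to\mu$ for some $\mu\in\M(X)$. Since $\MGX$ is weakly-$*$ closed by Theorem \ref{thm1} and the map $\nu\mapsto d_*(\nu,\MGX)$ is continuous, the inequality passes to the limit, giving $d_*(\mu,\MGX)\ge\gamma$; in particular $\mu\notin\MGX$.

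The remaining task is to show that $\mu$ is in fact $G$-invariant, which will contradict $\mu\notin\MGX$. Fix any $g\in G$ and any continuous $f\in C(X)$. Using the re-indexing $h\mapsto gh$ and the homomorphism property $g\circ h(x)=gh(x)$, one checks directly that
$$
\int f\circ g\,d\boldsymbol\upmu^{F_{n_k}}_{x_k}-\int f\,d\boldsymbol\upmu^{F_{n_k}}_{x_k}=\frac{1}{|F_{n_k}|}\Bigl(\sum_{h\in gF_{n_k}}f(h(x_k))-\sum_{h\in F_{n_k}}f(h(x_k))\Bigr),
$$
and the contributions from $gF_{n_k}\cap F_{n_k}$ cancel, so the absolute value is bounded by $\|f\|_\infty\cdot|gF_{n_k}\triangle F_{n_k}|/|F_{n_k}|$. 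The F\o lner property (specifically, the $(g,\varepsilon)$-invariance that eventually holds for every fixed $g$ and every $\varepsilon>0$) forces this to tend to $0$. Taking the limit yields $\int f\circ g\,d\mu=\int f\,d\mu$; since $g$ and $f$ were arbitrary, $\mu$ is $G$-invariant.

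There is no real obstacle here: this is a standard weak-$*$ compactness argument fused with the averaging property of a F\o lner sequence. The only subtlety worth flagging is that the uniformity in $x$---which is precisely what distinguishes this statement from the usual ``existence of an invariant measure'' argument---is delivered automatically by the contradiction mechanism, since the bad base points $x_k$ are allowed to vary freely with $k$ and the F\o lner estimate above is insensitive to this choice. Consequently, no genuinely new input beyond Theorem \ref{thm1} and fact (1) of subsection \ref{2.2} enters the proof.
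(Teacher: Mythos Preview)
Your proof is correct and follows essentially the same approach as the paper's: a contradiction via weak-$*$ compactness, extracting an accumulation point of the offending empirical measures and using the F\o lner property to show it must be invariant. The paper's version is terser (it invokes the standard fact, stated just before the proposition, that such empirical measures accumulate at invariant measures, and skips the verification you wrote out), but the logic is identical.
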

\begin{proof}
If, for some $\gamma>0$ and arbitrarily large indices $n$, there existed points $x_n\in X$ such that $d_*(\boldsymbol\upmu^{F_n}_{x_n},\MGX)>\gamma$ then the sequence of measures $(\boldsymbol\upmu^{F_n}_{x_n})_{n\in\N}$ would have some accumulation points outside $\MGX$, a contradiction.
\end{proof}

\medskip
Now suppose that a \tl\ \ds\ $X$ is a \tl\ factor of another, $Y$, via a map $\pi$. Then $\pi$ induces a map (which will be denoted by the same letter $\pi$) from $\MGY$ to $\MGX$, by the formula $\pi(\mu)(\mathsf A)=\mu(\pi^{-1}(\mathsf A))$ (where $\mathsf A\subset X$ is a Borel set). The following fact is well known:
\begin{prop}
The map $\pi:\MGY\to\MGX$ is a continuous affine surjection which sends extreme points to extreme points.
\end{prop}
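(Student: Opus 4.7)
The four claims — continuity, affinity, surjectivity, and preservation of extreme points — are essentially standard, but each rests on a slightly different ingredient, so I will handle them separately.

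\textbf{Continuity and affinity.} By the change-of-variables formula, $\int f\,d\pi(\mu) = \int f\circ\pi\,d\mu$ for every $f\in C(X)$. Since $f\circ\pi\in C(Y)$, any weak-star convergent net $\mu_\alpha\to\mu$ in $\MGY$ forces $\int f\,d\pi(\mu_\alpha)\to\int f\,d\pi(\mu)$, giving weak-star continuity. Affinity is a direct computation from the definition $\pi(\mu)(\mathsf A)=\mu(\pi^{-1}(\mathsf A))$ applied to convex combinations.

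\textbf{Surjectivity.} Fix $\mu\in\MGX$. First I would produce \emph{some} (possibly non-invariant) Borel probability $\nu_0\in\M(Y)$ with $\pi(\nu_0)=\mu$; this exists by a standard measurable selection argument (or simply by disintegration, since $Y$ is Polish and $\pi$ is continuous and surjective). Now I would symmetrize $\nu_0$ along the group. Pick a centered F\o lner sequence $(F_n)_{n\in\N}$ and set
\[
\nu_n = \frac{1}{|F_n|}\sum_{g\in F_n} g(\nu_0).
\]
Because $\mu$ is already $G$-invariant and $\pi$ commutes with $G$, we have $\pi(\nu_n)=\mu$ for every $n$. By compactness of $\M(Y)$ pass to a weak-star accumulation point $\nu$ of $(\nu_n)_{n\in\N}$. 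A routine F\o lner estimate shows $\nu$ is $G$-invariant, and the continuity just established gives $\pi(\nu)=\mu$. (Alternatively, the same conclusion follows from the Markov--Kakutani fixed-point theorem applied to the nonempty, closed, convex, $G$-invariant set $\pi^{-1}(\mu)\cap\M(Y)$.)

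\textbf{Preservation of extreme points.} By Theorem \ref{thm1}, the extreme points of $\MGY$ and $\MGX$ are exactly the ergodic measures, so it suffices to show that $\pi$ maps ergodic measures to ergodic measures. Let $\nu\in\MGY$ be ergodic. If $\mathsf A\subset X$ were a $G$-invariant Borel set with $\pi(\nu)(\mathsf A)\in(0,1)$, then $\pi^{-1}(\mathsf A)$ would be a $G$-invariant Borel subset of $Y$ (because $\pi$ commutes with $G$) of $\nu$-measure $\pi(\nu)(\mathsf A)\in(0,1)$, contradicting the ergodicity of $\nu$. Hence $\pi(\nu)$ is ergodic, i.e., extreme in $\MGX$.

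Of the three ingredients, the only mildly nontrivial one is the surjectivity step; the potential pitfall is producing the initial lift $\nu_0$ of $\mu$, but this is automatic once one invokes a Borel selection theorem (or disintegrates $\mu$ through $\pi$), after which the F\o lner averaging is completely routine.
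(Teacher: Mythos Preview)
Your proof is correct and complete; each of the four claims is handled by the standard argument. Note, however, that the paper does not actually prove this proposition: it is introduced with the phrase ``The following fact is well known'' and left without proof, so there is no approach in the paper to compare against.
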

A factor map $\pi:Y\to X$ such that $\pi:\MGY\to\MGX$ is injective (i.e., $\pi$ is an affine homeomorphism between the Choquet simplices $\MGY$ and $\MGX$) is called \emph{faithful}.

\subsection{The ergodic theorem}
Let $G$ be a countable amenable group.

\begin{definition} A F\o lner \sq\ $(F_n)_{n\in\N}$ in $G$ is called \emph{tempered} if, for each $n\in\N$, it satisfies the \emph{Shulman's condition}:
\begin{equation*}\label{tempered}
\Bigl|\bigcup_{i=1}^n F_i^{-1}F_{n+1}\Bigr|\le C|F_{n+1}|.
\end{equation*}
\end{definition}

It is very easy to see that any F\o lner \sq\ $(F_n)_{n\in\N}$ in $G$ contains a tempered sub\sq. It suffices to note that for each $n\in\N$ and then sufficiently large $k\in\N$, $F_{n+k}$ is $(\bigcup_{i=1}^n F_i^{-1},1)$-\inv, which implies that if, in the above condition, we replace $F_{n+1}$ by $F_{n+k}$, the above condition holds for $C=2$.
\smallskip

Let $(X,\Sigma,\mu)$ be a standard probability space (roughly, this means that $(X,\Sigma,\mu)$ can be modeled as a compact metric space with a Borel probability measure).
By a \emph{measure-theoretic action of $G$} we will understand the action on $(X,\Sigma,\mu)$ by measure-automorphisms, i.e., a case in which with each $g\in G$ we have associated a measurable and $\mu$-almost everywhere injective map $\phi_g:X\to X$ such that $\mu(\phi^{-1}_g(\mathsf A))=\mu(\mathsf A)$, for every $\mathsf A\in\Sigma$. Moreover we require that for all $g,h\in G$, $\phi_{gh}=\phi_g\circ\phi_h$. As in the case of a \tl\ action, we will write $g(x)$ and $g^{-1}(\mathsf A)$ in place of $\phi_g(x)$ and $\phi^{-1}_g(\mathsf A)$, respectively. Like in the \tl\ case, a measure-theoretic action of $G$ is called \emph{ergodic} if $\mu(\mathsf A)\in\{0,1\}$ for every \inv\ set $\mathsf A\in\Sigma$. The most important for us example of a measure-theoretic action of $G$ occurs when $G$ acts on a compact metric space $X$ by homeomorphisms, $\Sigma$ is the Borel sigma-algebra in $X$ and $\mu\in\M_G(X)$ (then the notion of ergodicity of the action coincides with the, introduced earlier, notion of ergodicity of the measure).

In the context of measure-theoretic actions of countable amenable groups, the pointwise ergodic theorem was proved by E. Lindenstrauss in \cite[Theorem 1.2]{L} (see also \cite{AJ75} for the necessity of Shulman's condition):

\begin{theorem}\label{ergodic} If $G$ acts by measure-automorphisms on a standard probability space $(X,\Sigma,\mu)$, the action is ergodic, $\mathsf A\in\Sigma$, and $(F_n)_{n\in\N}$ is a tempered F\o lner \sq\ in $G$ then, for $\mu$-almost every point $x\in X$, we have the equality
$$
\mu(\mathsf A)=\lim_{n\to\infty}\frac1{|F_n|}|\{g\in F_n: g(x)\in\mathsf A\}|.
$$
\end{theorem}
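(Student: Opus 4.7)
The plan is to follow Lindenstrauss's original strategy, which reduces the theorem to two ingredients: a mean ergodic theorem (an $L^2$ statement), and a weak-type $(1,1)$ maximal inequality for the averaging operators (an $L^1$ statement). These two combine through the standard Banach principle to yield almost sure convergence, and ergodicity then identifies the limit.

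First, I would pass from the set $\mathsf A$ to the indicator function $f=\mathbf 1_{\mathsf A}\in L^1(\mu)$ and the ergodic averages
\[
A_n f(x) = \frac{1}{|F_n|}\sum_{g\in F_n} f(g(x)) = \frac{|\{g\in F_n:g(x)\in \mathsf A\}|}{|F_n|},
\]
so the claim becomes $A_n f\to \int f\, d\mu$ almost surely. The mean ergodic part is soft: the Følner condition guarantees, via a Hilbert space argument, that $A_n f\to Pf$ in $L^2$, where $P$ is the orthogonal projection onto the subspace of $G$-invariant $L^2$ functions; ergodicity forces $Pf=\int f\, d\mu$ a.e. This gives convergence in measure, hence $\mu$-a.e.\ convergence along some subsequence, but not yet along the full sequence.

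The heart of the proof is the weak-type maximal inequality
\[
\mu\Bigl\{x:\sup_n A_n|f|(x)>\lambda\Bigr\}\le \frac{C\|f\|_1}{\lambda},
\]
valid for all $f\in L^1(\mu)$. By a standard transference argument (Calderón's principle) this reduces to a combinatorial Hardy--Littlewood-type inequality on the group $G$ itself, which in turn follows from a Vitali-type covering lemma tailored to the tempered condition. Concretely: given a finite family of translates $\{F_{n(i)}x_i\}$ one processes the indices in \emph{decreasing} order of $n(i)$, greedily selecting an $F_{n(i)}x_i$ whenever it is disjoint from all previously selected sets. The Shulman condition $\bigl|\bigcup_{j<i}F_{n(j)}^{-1}F_{n(i)}\bigr|\le C|F_{n(i)}|$ guarantees that every discarded set is covered by selected ones within a bounded multiplicative loss, so the selected subfamily is pairwise disjoint and has total measure at least $1/C$ of the union. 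This is the step where temperedness is used in an essential way and where most of the technical work sits; the rest of the proof treats it as a black box.

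With the maximal inequality in hand, the Banach principle tells us that the set $\{f\in L^1(\mu):A_nf\text{ converges a.e.}\}$ is closed in $L^1$. On the dense subspace consisting of $G$-invariant functions plus $L^\infty$-coboundaries $f=h-g(h)$ (for some $g\in G$, $h\in L^\infty$), the averages converge a.e.\ trivially---invariants are fixed, and coboundaries telescope down to $O(|KF_n\triangle F_n|/|F_n|)\to 0$ after grouping. Density of this subspace in $L^1$ follows from the mean ergodic theorem (its orthogonal complement in $L^2$ consists of invariants, which are already captured). Therefore $A_nf$ converges $\mu$-a.e.\ for every $f\in L^1$, and the limit must coincide with the $L^2$ limit, i.e.\ with $\int f\, d\mu$ under ergodicity. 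Specializing to $f=\mathbf 1_{\mathsf A}$ yields the theorem. The main obstacle is the Vitali-type covering lemma under the Shulman condition; everything else is a routine $L^1$--$L^2$ interplay plus the standard Banach principle.
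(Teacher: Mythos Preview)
Your sketch is essentially Lindenstrauss's original argument, and as a high-level outline it is correct. However, you should be aware that the paper does not prove this theorem at all: it is stated as a citation of \cite[Theorem 1.2]{L}, with no proof given in the paper itself. So there is nothing to compare your approach against---the paper simply imports the result as a black box from the literature, exactly as one would expect for a foundational ergodic theorem being used as a tool rather than being the object of study.

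That said, a few remarks on your sketch itself. The overall architecture (mean ergodic theorem plus weak-type maximal inequality via a tempered Vitali lemma, glued by the Banach principle) is right, and you have correctly identified the Shulman condition as the mechanism behind the covering lemma. One small imprecision: in the coboundary step you write that the averages of $h-g(h)$ telescope to $O(|KF_n\triangle F_n|/|F_n|)$; more precisely the bound is $2\|h\|_\infty\cdot|gF_n\triangle F_n|/|F_n|$, which goes to zero by the F\o lner property for the single element $g$. Also, Lindenstrauss's actual covering argument is somewhat more delicate than a straight greedy selection in decreasing order---it involves a random selection and an iteration---but your description captures the spirit and the role of temperedness correctly. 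For the purposes of this paper none of that matters, since the theorem is only being quoted.
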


\subsection{Entropy}
For actions of countable amenable groups we have well defined notions of topological entropy $\htop(X,G)$, and, for an invariant measure $\mu$, of the measure-theoretic entropy $h_\mu(X,G)$ (later denoted by $h(\mu,X)$). Let us briefly recall the basics.

Let $(X,\Sigma,\mu)$ be a standard probability space and let $\P$ be a finite measurable partition of $X$. The \emph{Shannon entropy} of $\P$ equals
$$
H(\mu,\P)=-\sum_{P\in\P}\mu(P)\log(\mu(P))\le\log|\P|.
$$
Now suppose that a countable group $G$ acts on $(X,\Sigma,\mu)$ by measure-automorphisms. Given a finite measurable partition $\P$ of $X$ and a finite set $F\subset G$, by $\P^F$ we will mean the join
$$
\P^F=\bigvee_{g\in F}g^{-1}(\P)=\Bigl\{\bigcap_{g\in F}g^{-1}(P_g): \forall_{g\in F}\ P_g\in\P\Bigr\}
$$
(which is again a finite measurable partition of $X$). The Shannon entropy of this partition (with respect to $\mu$) will be denoted by $H(\mu,\P^F)$. One of elementary properties of the Shannon entropy, is the following subadditivity property: for any pair of sets $F_1, F_2\subset G$,
$$
H(\mu,\P^{F_1\cup F_2})\le H(\mu,\P^{F_1})+H(\mu,\P^{F_2}).
$$
In fact, strong subadditivity holds (see e.g. \cite{DFR}):
$$
H(\mu,\P^{F_1\cup F_2})\le H(\mu,\P^{F_1})+H(\mu,\P^{F_2})-H(\mu,\P^{F_1\cap F_2}).
$$

If now $G$ is amenable (and countable), then one defines the \emph{dynamical entropy of $\P$ with respect to }$\mu$ by the formula
$$
h(\mu,\P) = \lim_n \frac1{|F_n|}H(\mu,\P^{F_n}),
$$
where $(F_n)_{n\in\N}$ is a F\o lner \sq\ in $G$. Using strong subadditivity, one can prove that the limit defining the dynamical entropy of a partition equals the infimum over all finite subsets $F\subset G$ (see e.g. \cite{HYZ,DFR}):
$$
h(\mu,\P) = \inf_F \frac1{|F|}H(\mu,\P^F).
$$
In particular, this shows that the dynamical entropy of a partition (and hence also the Kolmogorov--Sinai entropy defined below) does not depend on the choice of the F\o lner \sq.
\smallskip

The \emph{Kolmogorov--Sinai entropy} of the measure-theoretic system $(X,\Sigma,\mu,G)$ is defined as
$$
h(\mu,X)=\sup_\P h(\mu,\P),
$$
where $\P$ ranges over all finite measurable partitions of $X$. The Kolmogorov--Sinai entropy can be infinite, however, this case is of marginal interest for us. The analog of the Kolmogorov--Sinai Theorem holds: if a finite partition $\P$ is a \emph{generator} (i.e., the smallest sigma-algebra containing the partitions $\P^F$ for all finite sets $F\subset G$ equals $\Sigma$), then the Kolmogorov--Sinai entropy is attained on $\P$:
$$
h(\mu,X)=h(\mu,\P).
$$
In any case, there exists a \emph{refining \sq\ of finite partitions} $(\P_k)_{k\in\N}$, i.e., such that for every $k\in\N$, $\P_{k+1}$ \emph{refines} $\P_k$ (meaning that every atom of $\P_{k+1}$ is contained in some atom of $\P_k$; we will write $\P_{k+1}\succcurlyeq\P_k$) and jointly they generate $\Sigma$. Then
$$
h(\mu,X)=\lim_k\uparrow h(\mu,\P_k).
$$

\medskip
If $(X,G)$ is a \tl\ \ds, then the Kolmogorov--Sinai entropy can be regarded as a function on $\M_G(X)$. In this case, $h(\mu,X)$ will be denoted shortly by $h(\mu)$ and the function $\mu\mapsto h(\mu)$ on $\M_G(X)$ will be called the \emph{entropy function}.

\medskip
Now consider two finite measurable partitions of $X$, $\P$ and $\Q$. In this context one defines the \emph{conditional Shannon entropy of $\P$ given $\Q$} (with respect to $\mu$) as
$$
H(\mu,\P|\Q) = \sum_{B\in\Q}\mu(B)H(\mu_B,\P) = H(\mu,\P\vee\Q)-H(\mu,\Q),
$$
where $\mu_B$ is the normalized conditional measure $\mu$ on $B$. Subadditivity still holds for conditional entropy (see formula 1.6.11 in \cite{D1}):
$$
H(\mu,\P^{F_1\cup F_2}|\Q^{F_1\cup F_2})\le H(\mu,\P^{F_1}|\Q^{F_1})+H(\mu,\P^{F_2}|\Q^{F_2}),
$$
but strong subadditivity in general fails.

The \emph{conditional dynamical entropy of $\P$ given $\Q$}, with respect to $\mu$ is defined analogously, as
$$
h(\mu,\P|\Q) = \lim_n \frac1{|F_n|}H(\mu,\P^{F_n}|\Q^{F_n}) = h(\mu,\P\vee\Q)-h(\mu,\P).
$$
Here also the limit can be replaced by the infimum over all finite sets $F$, which follows from the following three facts:
\begin{enumerate}
	\item for each finite set $F$, $H(\mu,\P^F|\Q^F)\ge H(\mu,\P^F|\Q^G)$, where $Q^G$ is the
	\inv\ sigma-algebra generated by $\Q$,
	\item $h(\mu,\P|\Q) = \lim_n \frac1{|F_n|}H(\mu,\P^{F_n}|\Q^G)$ (Abramov-Rokhlin formula \cite[Theorem 4.4]{WZ} or \cite[Lemma 1.1]{GTW}),
	\item the conditional entropy $H(\mu,\P^F|\Q^G)$ is strongly subadditive, which implies
	that $h(\mu,\P|\Q) = \inf_F \frac1{|F|}H(\mu,\P^F|\Q^G)\le \inf_F
	\frac1{|F|}H(\mu,\P^F|\Q^F)$ (both infima range over finite sets $F$).
\end{enumerate}

An important consequence of the above facts is the following observation:
\begin{lemma}\label{doda}
If $X$ is a \tl\ \ds\ and $\P$ and $\Q$ are finite partitions such that the boundary of any atom of either $\P$ or $\Q$ has measure zero for any $\mu\in\M$, where $\M\subset\MGX$, then the function $\mu\mapsto h(\mu,\P|\Q)$ is \usc\ on $\M$.
\end{lemma}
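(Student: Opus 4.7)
The plan is to exhibit $\mu\mapsto h(\mu,\P|\Q)$ as a pointwise infimum of a family of continuous functions on $\M$; upper semicontinuity then follows automatically.

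First, I combine the defining F\o lner limit
\[
h(\mu,\P|\Q)=\lim_n \frac{1}{|F_n|}H(\mu,\P^{F_n}|\Q^{F_n})
\]
with facts (1)--(3) preceding the lemma to obtain
\[
h(\mu,\P|\Q)\ =\ \inf_F \frac{1}{|F|}H(\mu,\P^F|\Q^F),
\]
where $F$ ranges over finite subsets of $G$. The inequality ``$\le$'' is precisely the final assertion of fact (3); ``$\ge$'' is immediate, since $\lim_n \frac{1}{|F_n|}H(\mu,\P^{F_n}|\Q^{F_n})\ge\inf_n \frac{1}{|F_n|}H(\mu,\P^{F_n}|\Q^{F_n})\ge \inf_F \frac{1}{|F|}H(\mu,\P^F|\Q^F)$.

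Second, for each fixed finite $F\subset G$, I will show that $\mu\mapsto \frac{1}{|F|}H(\mu,\P^F|\Q^F)$ is continuous on $\M$. Using $H(\mu,\P^F|\Q^F)=H(\mu,\P^F\vee\Q^F)-H(\mu,\Q^F)$, the task reduces to continuity of $\mu\mapsto H(\mu,\R)$ on $\M$ for each of the finite partitions $\R=\P^F\vee\Q^F$ and $\R=\Q^F$. This is the only place where the boundary hypothesis is used: every atom $E$ of such an $\R$ is a finite intersection of sets of the form $g^{-1}(A)$ with $A$ an atom of $\P$ or $\Q$, and hence its topological boundary lies inside $\bigcup_{g\in F}g^{-1}(\partial A_g)$. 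By the $G$-invariance of any $\mu\in\M$ together with the assumption $\mu(\partial A)=0$, this union is $\mu$-null, so $\mu(\partial E)=0$. The Portmanteau theorem then yields $\mu_n(E)\to \mu(E)$ for every atom $E$ of $\R$ whenever $\mu_n\to\mu$ weakly with $\mu_n,\mu\in\M$, and continuity of $(p_1,\dots,p_k)\mapsto -\sum_i p_i\log p_i$ on the probability simplex gives $H(\mu_n,\R)\to H(\mu,\R)$.

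Assembling, $\mu\mapsto h(\mu,\P|\Q)$ is a pointwise infimum of continuous functions on $\M$, hence \usc. I do not expect any step to pose a genuine obstacle; the only mildly delicate point is justifying the infimum representation in step one, and even that reduces to combining the three labelled facts with one trivial comparison of a limit with an infimum.
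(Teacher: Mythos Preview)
Your proof is correct and follows essentially the same approach as the paper: express $h(\mu,\P|\Q)$ as $\inf_F \frac{1}{|F|}H(\mu,\P^F|\Q^F)$, verify continuity of each term on $\M$ via the small boundary property, and conclude upper semicontinuity from the infimum representation. You have simply supplied more detail than the paper's two-sentence proof, in particular spelling out the Portmanteau argument for continuity and the easy ``$\ge$'' direction of the infimum formula.
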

\begin{proof}
The ``small boundary property'' of $\P$ and $\Q$ easily implies that, for each finite $F\subset G$, the function $\mu\mapsto \frac1{|F|}H(\mu,\P^F|\Q^F)$ is continuous on $\M$. The infimum of any family of continuous functions on any metric space is \usc.
\end{proof}

In case $G=\Z$ and $F_n=\{1,2,\dots,n\}$ it is known that the \sq s $\frac1nH(\mu,\P^n)$ and
$\frac1nH(\mu,\P^n|Q^n)$ are in fact nonincreasing (see e.g., \cite[Fact 2.3.1]{D1}). This cannot be claimed in the case of a general countable amenable group. We will need to cope with this difficulty later.
\smallskip

Throughout this paper, we will be using the following convention: if $\pi:Y\to X$ is any map (between any spaces) and $\P$ is a (finite) partition of $X$, then the \emph{lifted partition}, $\{\pi^{-1}(P):P\in\P\}$ (which is a (finite) partition of $Y$) will be denoted by the same letter $\P$. We will take care to avoid any confusion caused by this convention.
Note that if $\pi$ is continuous then lifting partitions preserves measurability and the property of having clopen atoms.

Now consider a \tl\ factor map between two \tl\ \ds s, $\pi:Y\to X$. If $\nu\in\MGY$ and  then we can also define the \emph{conditional entropy of $\nu$ given $X$}, as follows
$$
h(\nu,Y|X)= \sup_\Q\inf_\P h(\nu,\Q|\P),
$$
where $\Q$ ranges over all finite measurable partitions of $Y$, while $\P$ ranges over all finite measurable partitions of $X$ (lifted to $Y$). If $h(\mu,X)<\infty$, where $\mu=\pi(\nu)\in\MGX$, then $h(\nu,Y|X)$ is simply the difference $h(\nu,Y)-h(\mu,X)$.

If, for every $\nu\in\MGY$, $h(\nu,Y|X)=0$ then $Y$ is called a \emph{principal extension of $X$}. A particularly good extension is described in the definition below:
\begin{definition}
Let $\pi:Y\to X$ be a \tl\ factor map between \tl\ \ds s. We say that $Y$ is an \emph{isomorphic extension} of $X$ (via the map $\pi$) if the associated map $\pi:\MGY\to\MGX$ is injective (i.e., the extension is faithful) and, for each $\nu\in\MGY$ and $\mu=\pi(\nu)\in\MGX$ the measure-preserving actions of $G$ on $(Y,\Sigma_Y, \nu)$ and on $(X,\Sigma_X,\mu)$ ($\Sigma_Y$ and $\Sigma_X$ denote the Borel sigma-algebras in $Y$ and $X$, respectively) are isomorphic in the measure-theoretic sense via the same map $\pi$.
\end{definition}
An isomorphic extension is both faithful and principal. For a  \tl\ extension $\pi:Y\to X$ to be isomorphic it suffices that there are sets $Y'\subset Y$ and $X'\subset X$ such that
$\nu(Y')=\mu(X')=1$ for every $\nu\in\MGY$ and $\mu\in\MGX$, and $\pi|_{Y'}$ is a bijection between $Y'$ and $X'$.
\smallskip

Since \tl\ entropy will play in this paper only a marginal role, we reduce its presentation to a necessary minimum. For actions of countable amenable groups the variational principle is valid (see \cite{STZ,MOP}), hence we can use it instead of a lengthy original definition (in fact one of many possible definitions). So, for our goals the following understanding of \tl\ entropy is completely sufficient:
\begin{definition}
Let a countable amenable group $G$ act on a compact metric space $X$. The \emph{\tl\ entropy} of the system $(X,G)$ equals
$$
\htop(X,G)=\sup_{\mu\in\M_G(X)}h(\mu,X).
$$
\end{definition}

\subsection{Zero-dimensional systems}
Let $X$ be, in addition to being compact and metric, also \zd\ (equivalently, totally disconnected), i.e., such that there exists a basis of the topology consisting of clopen sets. In such a space there exists a \sq\ of finite clopen partitions $(\P_k)_{k\in\N}$ (i.e., partitions whose all atoms are clopen) which is \emph{jointly refining in the \tl\ sense}, that is, denoting $\P_{[1,k]}=\bigvee_{i=1}^k\P_i$, and for a partition $\P$, letting $\diam(\P)$ denote the maximal diameter of an atom of $\P$, we have $\diam(\P_{[1,k]})\to 0$. Note that then the partitions $\P_{[1,k]}$ form a refining \sq\ also in the previously defined measurable sense. For each $k$, let $\Lambda_k$ be a set of labels bijectively associated to the atoms of $\P_k$, so that $\P_k = \{P_a:a\in\Lambda_k\}$. If now a countable group $G$ acts on $X$ (by homeomorphisms), then we introduce the following notation:
\begin{align*}
&\forall_{k\in\N}\,\forall_{g\in G}\ (x_{k,g}:=a\in\Lambda_k \iff g(x)\in P_a\in\P_k),\\
&\forall_{k\in\N}\ \pi_k(x):=(x_{k,g})_{\,g\in G}\in\Lambda_k^G,\\
&\pi(x):=(\pi_k(x))_{k\in\N}=(x_{k,g})_{k\in\N,g\in G}\in\prod_{k\in\N}\Lambda_k^G.
\end{align*}
We will call the double \sq\ $(x_{k,g})_{k\in\N,g\in G}$ the \emph{array-name} of $x$.
We let $X_k$ denote the image of $X$ by the map $\pi_k$. As easily verified, $X_k$ is a subshift over the alphabet $\Lambda_k$. Because all partitions $\P_k$ are clopen, the maps $\pi_k$ and $\pi$ are continuous. Also, they commute with the action, hence each $X_k$ as well as $\pi(X)$ are factors of $X$. The subshift $X_k$ will be called \emph{the $k$th layer} of $X$. We will denote by $X_{[1,k]}$ the projection of $X$ onto the first $k$ layers, which is a subshift over the product alphabet $\Lambda_{[1,k]}=\prod_{i=1}^k\Lambda_i$.
The natural projections provide bonding maps between the successive subshifts $X_{[1,k]}$ and allow to identify the image $\pi(X)$ with the inverse limit
$$
\overset\leftarrow{\lim_k}\,X_{[1,k]}.
$$
Because the diameters of the partitions $\P_{[1,k]}$ converge to zero, different points in $X$ have different array-names, which means that $\pi$ is injective. In this manner, we conclude that $X$ is \tl ly conjugate to the above inverse limit of subshifts. We will call it \emph{the array representation of $X$} and because we treat conjugate systems as one, we will simply write $X=\overset\leftarrow{\lim_k}X_{[1,k]}$. From now on, we will imagine any \zd\ system in its array representation (we always fix one of many possible such representations). Observe that if $X$ is given the array representation, the partitions $\P_k$ can be restored as the \emph{symbol partitions}:
$$
\P_k=\{[a]:a\in\Lambda_k\},
$$
where $[a]$ is the one-symbol cylinder at $e$, $\{x\in X:x_{k,e}=a\}$.

\section{Entropy structure and the easy direction of the main theorem}\label{s3}

Entropy structure for an action of a countable amenable group is defined in exactly the same manner as it is done for $G=\Z$. Let us recall some basic terms from the theory of \ens s for $\Z$-actions. \medskip

\subsection{Structures}

Let $\M$ be a compact metric set. By a \emph{structure} on $\M$ we will understand any nondecreasing \sq\ of commonly bounded nonnegative functions on $\M$, $\F =(f_k)_{k\ge 0}$ with $f_0\equiv 0$. Clearly, the pointwise \emph{limit function} $f=\lim_k f_k$ exists and is nonnegative and bounded.
\medskip

Two structures $\F =(f_k)_{k\ge 0}$ and $\F' = (f'_k)_{k\ge 0}$ are said to be \emph{uniformly equivalent} if
$$
\forall_{\varepsilon>0,\,k_0\ge 0}\ \exists_{k\ge 0}\ \ (f'_k>f_{k_0}-\varepsilon \text{ and } f_k>f'_{k_0}-\varepsilon).
$$
Notice the obvious fact that uniformly equivalent structures have a common limit function.
\medskip

Let $f$ be a nonnegative bounded function on $\M$. By the \emph{\usc\ envelope} of $f$ we shall mean the function $\tilde f$ defined on $\M$ by any of the following formulas
$$
\tilde f(\mu) = \limsup_{\mu'\to\mu}f(\mu') = \inf_{U\ni\mu}\sup\{f(\mu'):\mu'\in U\} = \inf\{g \text{ continuous and } g\ge f\},
$$
where $\mu,\mu'\in\M$ and $U$ ranges over all open neighborhoods of $\mu$. Note that $\tilde f\ge f$. We also define the \emph{defect of $f$} as the difference $\overset{...}f = \tilde f-f$. The function $f$ is \emph{\usc} if $f=\tilde f$ or, equivalently, $\overset{...}f \equiv 0$.
\medskip

We will say that a structure $\F$ has \emph{\usc\ differences}, if the difference functions $f_{k+1}-f_k$ are \usc\ for every $k\ge 0$.

If $\M$ is a compact convex subset of some locally convex linear space and all functions $f_k$ are affine, then we will say that $\F=(f_k)_{k\ge 0}$ is an \emph{affine structure}.

\subsection{Superenvelopes}

\begin{definition}
Given a structure $\F=(f_k)_{k\ge 0}$ on a compact domain $\M$, a nonnegative function $E$ on $\M$ is called a \emph{superenvelope} of $\F$ if $E\ge f_k$ for each $k\ge 0$ and the defects $\overset{...........}{E-f_k}$ tend pointwise to zero. Notice that then $E-f$ (where $f$ is the limit function of $\F$) is upper semicontinuous, hence bounded, thus $E$ is also bounded. A priori a structure may have no such \se s. By default, the constant infinity function is added to the collection of  \se s of any structure.
\end{definition}

We have the following facts (see e.g. \cite[Lemma 8.1.10, Theorem 8.1.25 (2), Lemma 8.1.12 and Theorem 8.2.5]{D1}):

\begin{prop}\label{fact2}
\begin{enumerate}
  \item The infimum of all \se s of a structure $\F$ is a \se\ of $\F$ (in the
  extreme case this is the constant infinity function). This \emph{minimal \se} of
  $\F$ will be denoted by $\EF$.
	\item Uniformly equivalent structures have the same collection of \se s (hence the same minimal \se).
	\item If $\F=(f_k)_{k\ge 0}$ has \usc\ differences then a (finite) function $E$ is its \se\ if and only if $E-f_k$ is nonnegative and \usc\ for every $k\ge 0$ (in particular $E = E-f_0$ is then \usc).
	\item If $\F$ is an affine structure with \usc\ differences, defined on a Choquet simplex, then $\EF$ coincides with the pointwise infimum of all affine \se s of $\F$ (in particular $\EF$ is concave).
\end{enumerate}
\end{prop}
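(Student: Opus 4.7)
\emph{Proof plan.}

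For (1), the crucial observation is that the minimum of two \se s is again a \se: the \usc\ envelope of a minimum is dominated by the minimum of the \usc\ envelopes, hence at each $\mu$ the defect of $\min(E_1,E_2)-f_k$ is bounded above by the defect of whichever of $E_1-f_k$, $E_2-f_k$ is smaller at $\mu$, and both tend to zero as $k\to\infty$. Iterating yields closure under finite minima. For an arbitrary family of \se s with pointwise infimum $E^*$, metrizability of $\M$ lets us choose a countable sub-family attaining the infimum on a countable dense subset; passing to the decreasing sequence of running minima and its pointwise limit preserves the \se\ property via a diagonal argument controlling defects simultaneously in $k$ and in the sub-family index.

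For (2), given uniformly equivalent structures $\F=(f_k)$, $\F'=(f'_k)$ and a \se\ $E$ of $\F$: for every $k'$ and $\varepsilon>0$ pick $k$ with $f_k>f'_{k'}-\varepsilon$, so $E\ge f_k>f'_{k'}-\varepsilon$ and, letting $\varepsilon\to 0$, $E\ge f'_{k'}$. For the defect condition, choose $k$ depending on $k'$ so that $f_k$ and $f'_{k'}$ approximate each other to within $\varepsilon_{k'}\to 0$ (using uniform equivalence in both directions); the \usc\ envelopes of $E-f_k$ and $E-f'_{k'}$ then agree up to $\varepsilon_{k'}$, and the defect of $E-f'_{k'}$ tends to zero as $k'\to\infty$. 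The argument is symmetric in $\F$ and $\F'$.

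For (3), the backward direction is immediate since a \usc\ nonnegative function has zero defect. For the forward direction, fix $k_0$ and telescope
\[
E-f_{k_0}=(E-f_{k_0+n})+\sum_{j=k_0}^{k_0+n-1}(f_{j+1}-f_j).
\]
The second summand is \usc\ as a finite sum of the \usc\ differences of $\F$. Taking \usc\ envelopes on both sides and subtracting $E-f_{k_0}$ yields
\[
\widetilde{E-f_{k_0}}-(E-f_{k_0})\le\widetilde{E-f_{k_0+n}}-(E-f_{k_0+n}),
\]
and the right side tends pointwise to zero as $n\to\infty$ by the \se\ hypothesis, so $E-f_{k_0}$ is \usc. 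Specializing to $k_0=0$ gives that $E$ itself is \usc\ and hence bounded.

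For (4), the inequality $\EF\le\inf\{A:A\text{ affine \se}\}$ is immediate from (1). For the converse, first show $\EF$ is concave: given any \se\ $E$, define $\bar E(\mu):=\int_{\ex\MGX} E(\nu)\,d\xi_\mu(\nu)$ using the ergodic decomposition from Theorem~\ref{thm1}; uniqueness of $\xi_\mu$ on the Choquet simplex makes $\bar E$ affine, affineness of each $f_k$ gives $\bar E\ge f_k$, and a Fubini-type argument exploiting the \usc-differences hypothesis shows $\bar E$ is itself a \se. Hence $\EF$ equals the infimum of such affine \se s $\bar E$, which is in particular concave. Second, invoke the Mokobodzki representation theorem on the metrizable Choquet simplex $\M$: every concave \usc\ function there equals the pointwise infimum of affine continuous functions above it, and a short verification (using the \usc-differences hypothesis once more) shows each affine continuous majorant of $\EF$ is itself a \se. \emph{The main obstacle} is the verification that ergodic-decomposition averaging preserves the \se\ property---this is the unique step where the Choquet-simplex structure of $\M$ is used essentially, and where the interplay between affineness, \usc\ envelopes and the ergodic decomposition must be handled with care.
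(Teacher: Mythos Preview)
The paper does not prove this proposition; it merely cites \cite[Lemma 8.1.10, Theorem 8.1.25(2), Lemma 8.1.12, Theorem 8.2.5]{D1}. So there is no in-paper argument to compare against, and your sketch must stand on its own.

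Your arguments for (2) and (3) are sound. For (1), the finite-minimum step is correct, but the passage to arbitrary infima via a ``countable sub-family on a dense set plus diagonal argument'' is unnecessarily delicate and not clearly correct as stated. There is a direct route: if $E^*=\inf_\alpha E_\alpha$, then for any $\mu$ and $\varepsilon>0$ choose $\alpha$ with $E_\alpha(\mu)<E^*(\mu)+\varepsilon$; since $\widetilde{E^*-f_k}\le\widetilde{E_\alpha-f_k}$, the defect of $E^*-f_k$ at $\mu$ is bounded by the defect of $E_\alpha-f_k$ at $\mu$ plus $\varepsilon$, and letting $k\to\infty$ then $\varepsilon\to 0$ finishes it.

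Part (4) has two genuine gaps. First, from ``$\bar E$ is an affine superenvelope for every superenvelope $E$'' you conclude ``$\EF$ equals the infimum of such $\bar E$''. This does not follow: you only get $\EF\le\inf_E\bar E\le\overline{\EF}$, and the last quantity equals $\EF$ precisely when $\EF$ is already affine---which is neither assumed nor generally true. The concavity of $\EF$ is the crux and cannot be obtained by this barycenter trick alone; in \cite{D1} it is derived from the transfinite construction of $\EF$. Second, your claim that every affine continuous majorant $A\ge\EF$ is itself a superenvelope is not justified and is generally false under the stated hypotheses: with $A$ continuous one computes that the defect of $A-f_k$ equals $f_k$ minus its lower semicontinuous envelope, and nothing in the hypotheses forces this to vanish as $k\to\infty$. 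So the Mokobodzki step, even granting concavity, does not produce superenvelopes as written.
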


\smallskip
We will need the following terminology: Let $\pi:Y\to X$ be a continuous surjection between compact metric spaces. Given a bounded nonnegative function $f$ on $X$, we define its \emph{lift of $f$ against $\pi$} as the composition $f\circ\pi$ (which is a function defined on $Y$). As in the case of partitions, we will denote the lift of $f$ by the same letter $f$. The lifted function is \emph{constant on fibers}, that is, it is constant on the sets $\pi^{-1}(x)$ ($x\in X$). Going in the opposite direction is less obvious. Let now $f$ be a bounded function on $Y$. We define the \emph{push-down} of $f$ (along $\pi$) as the function $f^\pi$ on $X$ given by $f^{\pi}(x) = \sup\{f(y):y\in\pi^{-1}(x)\}$ ($x\in X$). Lifting reverses the operation of pushing down exclusively for functions constant on fibers.

We have the following facts, some of which are immediate, some are proved in \cite[Facts A.1.26 and A.2.22]{D1}:
\begin{prop}\label{pushdown}
\begin{enumerate}
\item The operation of lifting preserves continuity, upper and lower semicontinuity of a function. If $Y$ and $X$ are convex sets, and $\pi$ is affine, then lifting preserves concavity, convexity and affinity of a function. The same holds for pushing down functions which are constant on fibers.
\item In general, the pushing down preserves upper semicontinuity of a function. If $Y$ and $X$ are
convex sets, and $\pi$ is affine, then pushing down preserves concavity of a function. If, moreover, $Y$ and $X$ are Choquet simplices and $\pi$ sends extreme points of $Y$ to extreme
points of $X$, then pushing down also preserves affinity of a function.
\end{enumerate}
\end{prop}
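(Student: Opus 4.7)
The plan is to handle Part (1) briefly as essentially formal, then devote the bulk of the work to Part (2), whose only nontrivial piece is the preservation of affinity under pushing down.

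For Part (1), since lifting is simply the composition $f \circ \pi$ and $\pi$ is continuous, continuity, upper and lower semicontinuity transfer at once. If $\pi$ is affine and $f$ is affine (resp.\ concave, convex), then so is $f\circ\pi$ by direct substitution into the defining inequality, using that $\pi(ty_1+(1-t)y_2)=t\pi(y_1)+(1-t)\pi(y_2)$. For the pushdown of a function $f$ that is constant on fibers, the value $f^\pi(x)$ equals $f(y)$ for any $y\in\pi^{-1}(x)$, so the push-down is well defined and every property amounts to the same property of $f$ evaluated at an arbitrary fiber representative. Upper semicontinuity passes because $\{x:f^\pi(x)\ge c\}=\pi(\{f\ge c\})$ and $\pi$ is a closed map (continuous image from a compact space); lower semicontinuity passes dually using $\{x:f^\pi(x)\le c\}=X\setminus\pi(\{f>c\})$, which is again closed by the closedness of $\pi$ applied to $\{f\ge c+1/n\}$ and a countable union argument. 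Affinity, concavity, convexity transfer because the fiber representatives can be chosen compatibly with the affine combinations by the affinity of $\pi$.

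For Part (2), the semicontinuity statement is a standard compactness argument: the sup in $f^\pi(x)=\sup\{f(y):y\in\pi^{-1}(x)\}$ is attained on the compact fiber $\pi^{-1}(x)$ when $f$ is upper semicontinuous; for any $x_n\to x$ pick attaining points $y_n\in\pi^{-1}(x_n)$, pass to a subsequence $y_{n_k}\to y^*$ (using compactness of $Y$), note $\pi(y^*)=x$ by continuity of $\pi$, and conclude $\limsup_n f^\pi(x_n)\le f(y^*)\le f^\pi(x)$. For concavity, if $x=tx_1+(1-t)x_2$, any pair $y_i\in\pi^{-1}(x_i)$ produces $ty_1+(1-t)y_2\in\pi^{-1}(x)$ by affinity of $\pi$, so $f^\pi(x)\ge f(ty_1+(1-t)y_2)\ge tf(y_1)+(1-t)f(y_2)$; taking suprema over $y_1,y_2$ yields $f^\pi(x)\ge tf^\pi(x_1)+(1-t)f^\pi(x_2)$.

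The main obstacle, and the only step that genuinely uses the Choquet-simplex hypothesis together with the fact that $\pi$ sends extreme points to extreme points, is the reverse inequality needed for affinity. Concretely, for $f$ affine and $x=tx_1+(1-t)x_2$, I need to show that every $y\in\pi^{-1}(x)$ decomposes as $y=ty_1+(1-t)y_2$ with $\pi(y_i)=x_i$; then $f(y)=tf(y_1)+(1-t)f(y_2)\le tf^\pi(x_1)+(1-t)f^\pi(x_2)$, and taking sup over $y$ finishes the proof. To obtain such a decomposition, I will use the unique barycentric representations guaranteed by the simplex structure: let $\xi_y$ be the unique probability on $\ex Y$ representing $y$, and observe that $\pi_*\xi_y$ is a probability on $\ex X$ (because $\pi$ preserves extreme points) representing $x$, hence by uniqueness $\pi_*\xi_y=\xi_x=t\xi_{x_1}+(1-t)\xi_{x_2}$. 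Disintegrating $\xi_y$ over its pushforward $\xi_x$ and regrouping by the absolutely continuous decomposition $\xi_x=t\xi_{x_1}+(1-t)\xi_{x_2}$ yields $\xi_y=t\eta_1+(1-t)\eta_2$ with $\pi_*\eta_i=\xi_{x_i}$. Setting $y_i$ to be the barycenter of $\eta_i$, the affinity of $\pi$ and of the barycenter map gives $\pi(y_i)=x_i$ and $y=ty_1+(1-t)y_2$, as required. The affinity of $f^\pi$ then follows by combining this convexity direction with the concavity already established.
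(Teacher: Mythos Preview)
The paper does not supply its own proof of this proposition; it merely cites \cite[Facts A.1.26 and A.2.22]{D1} and treats the rest as immediate. Your argument is correct and follows the standard route (and presumably matches what appears in that reference). The only genuinely nontrivial step is the preservation of affinity under pushing down, and your barycentric argument handles it properly: pushing forward the unique representing measure $\xi_y$ on $\ex Y$ gives a measure on $\ex X$ (here the extreme-point-preserving hypothesis enters) with barycenter $x$, hence equal to $\xi_x$ by uniqueness (here the simplex hypothesis enters); the Radon--Nikodym decomposition of $\xi_x=t\xi_{x_1}+(1-t)\xi_{x_2}$ then lifts to $\xi_y$ by composing the density with $\pi$.

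One minor wrinkle: your justification of lower semicontinuity for the push-down of a fiber-constant function is a bit tangled---the ``countable union'' does not by itself yield openness. The clean version is simply that, by constancy on fibers, $\{x:f^\pi(x)\le c\}=\pi(\{y:f(y)\le c\})$, and the latter is closed because $\pi$ is a closed map and $\{f\le c\}$ is closed when $f$ is lower semicontinuous.
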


\subsection{Definition of the entropy structure}
This section is almost identical as in the $\Z$-case \cite{D0}. The only difference is that the cited theorem about the existence of a principal zero-dimen\-sio\-nal extension in the general case (which we will use below) is incomparably more intricate than that for $\Z$-actions. The rest follows the standard scheme and there are no essential differences.
The \ens\ for an action of an amenable group $G$ on a compact metric space $X$ will be introduced in two steps. At first we will do it in case $X$ is \zd, next we will address the general case. In both cases we assume finite \tl\ entropy of $X$.
\smallskip

\subsubsection{The \zd\ case}

\begin{definition}\label{ensz}
Let $X=\overset\leftarrow{\lim_k} X_{[1,k]}$ be a zero-dimen\-sio\-nal system in its array representation. Let $(\P_k)_{k\in\N}$ denote the associated \sq\ of symbol partitions. For every $\mu\in\MGX$ define $h_k(\mu) = h(\mu,\P_{[1,k]})$ (also $h_k(\mu)=h(\mu_k,X_{[1,k]})$, where $\mu_k$ is the image of $\mu$ on the first $k$ layers $X_{[1,k]}$ by the natural projection map $\pi_{[1,k]}:X\to X_{[1,k]}$). Then the structure $\H = (h_k)_{k\ge 0}$ on the Choquet simplex $\MGX$ is called an \emph{\ens} of $X$.
\end{definition}
We have the following crucial fact:

\begin{theorem}\label{crucial}The \ens\ is an affine structure with \usc\ differences, converging nondecreasingly to the entropy function.
\end{theorem}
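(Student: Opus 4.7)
The plan is to verify in turn each of the four claimed properties of $\H=(h_k)_{k\ge 0}$: the structural axioms (nonnegative, commonly bounded, nondecreasing, $h_0\equiv 0$), convergence to the entropy function, affinity of each $h_k$, and upper semicontinuity of the differences $h_{k+1}-h_k$.

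The structural part is essentially tautological. By the convention that $\P_{[1,0]}$ is the trivial partition, $h_0\equiv 0$. Because $\P_{[1,k+1]}=\P_{[1,k]}\vee\P_{k+1}\succcurlyeq \P_{[1,k]}$, monotonicity of dynamical entropy with respect to partition refinement gives $h_{k+1}\ge h_k$. The sequence is commonly bounded by $\htop(X,G)<\infty$. For the convergence to the entropy function, I would note that, since $\diam(\P_{[1,k]})\to 0$, the partitions $\P_{[1,k]}$ form a refining sequence of clopen partitions generating the Borel $\sigma$-algebra, so the Kolmogorov--Sinai type formula stated at the end of Subsection 2.5 yields $h_k(\mu)\uparrow h(\mu,X)$ pointwise on $\MGX$.

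For affinity of each $h_k$, I would invoke the ergodic decomposition of dynamical entropy for amenable actions: for every finite measurable partition $\P$ and every $\mu\in\MGX$ with ergodic decomposition $\mu=\int\nu\,d\xi_\mu(\nu)$, one has
\[
h(\mu,\P)=\int h(\nu,\P)\,d\xi_\mu(\nu),
\]
from which affinity of $\mu\mapsto h(\mu,\P_{[1,k]})=h_k(\mu)$ is immediate. One half of this identity (concavity) is free: for each finite $F\subset G$ the Shannon entropy $H(\mu,\P^F)$ is concave in $\mu$ by concavity of $-x\log x$, so $\frac{1}{|F|}H(\mu,\P^F)$ is concave and therefore so is its infimum $h(\mu,\P)$. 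The reverse (convexity, hence affinity together with concavity) requires a genuine argument, where one reduces to ergodic components via the pointwise ergodic theorem (Theorem \ref{ergodic}) along a tempered Følner subsequence and uses a Shannon--McMillan--Breiman type control on the atoms of $\P^{F_n}$.

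Upper semicontinuity of the differences is the cleanest part and is where Lemma \ref{doda} pays off. Using the conditional entropy identity from Subsection 2.5,
\[
h_{k+1}(\mu)-h_k(\mu)=h(\mu,\P_{[1,k+1]})-h(\mu,\P_{[1,k]})=h(\mu,\P_{k+1}\mid \P_{[1,k]}).
\]
Both $\P_{k+1}$ and $\P_{[1,k]}$ consist of clopen atoms (joins of clopens remain clopen), hence each atom has empty boundary, so its boundary has measure zero for every $\mu\in\MGX$. Lemma \ref{doda}, applied with $\M=\MGX$, therefore yields upper semicontinuity of the function $\mu\mapsto h(\mu,\P_{k+1}\mid \P_{[1,k]})=h_{k+1}(\mu)-h_k(\mu)$ on the whole Choquet simplex $\MGX$.

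The only genuinely non-trivial step is the affinity of $h_k$; everything else amounts to direct bookkeeping plus Lemma \ref{doda}. In the $\Z$-case the affinity comes essentially for free from the Abramov--Rokhlin identity $h(\mu,\P)=H(\mu,\P\mid\P^{-}\!)$, and in the amenable setting the analogous ergodic-decomposition identity is the main obstacle: it is classical but requires the pointwise ergodic theorem of Lindenstrauss plus a Shannon--McMillan type statement for amenable actions, which I would cite from the standard references rather than reprove.
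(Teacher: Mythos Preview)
Your argument is correct and matches the paper's proof almost verbatim for the structure axioms, the convergence to $h$, and the upper semicontinuity of the differences via Lemma~\ref{doda}. The one place where you and the paper diverge is the affinity of each $h_k$. You treat this as the hard step and propose to establish the ergodic-decomposition identity $h(\mu,\P)=\int h(\nu,\P)\,d\xi_\mu(\nu)$ directly, invoking Lindenstrauss' ergodic theorem and a Shannon--McMillan--Breiman type control. The paper bypasses this entirely by using the identification already stated in Definition~\ref{ensz}: $h_k(\mu)=h(\mu_{[1,k]},X_{[1,k]})$, i.e., $h_k$ is the \emph{full} entropy function on the factor system $X_{[1,k]}$, lifted along the affine projection $\pi_{[1,k]}:\MGX\to\M_G(X_{[1,k]})$. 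Affinity of the entropy function $\mu\mapsto h(\mu,X)$ on $\MGX$ is then quoted as a standard fact (e.g.\ \cite[Theorem~2.5.1]{D1}, whose proof carries over to amenable groups), and lifting against an affine map preserves affinity. Your route is not wrong---it proves a finer statement (affinity of $h(\cdot,\P)$ for a fixed partition)---but it is more work than the theorem requires, and the reduction to the factor's entropy function is both shorter and more in line with how affinity is used elsewhere in the paper.
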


\begin{proof}
Affinity of the entropy function is a commonly known fact (see e.g. \cite[Theorem 2.5.1]{D1}; the same proof applies to actions of all countable amenable groups). Each function $h_k$ is in fact the entropy function on $X_{[1,k]}$ lifted against the factor map $\pi_{[1,k]}$. Since the factor map applied to \im s is affine, the lifted function is affine, too. The nondecreasing convergence to the entropy function is obvious. For upper semicontinuity of the differences, notice that, for any $k\ge0$ and $\mu\in\MGX$, we have $h_{k+1}(\mu)-h_k(\mu)= h(\mu,\P_{k+1}|\P_{[1,k]})$ (for $k=0$ there is no conditioning). Both involved partitions are clopen, i.e., their atoms have empty boundary. By Lemma~\ref{doda}, the discussed difference function is upper semicontinuous on $\MGX$.
\end{proof}

There exist many \ens s depending on the choice of the array representation, however, all these structures are uniformly equivalent (see below), and hence, by Proposition \ref{fact2} (2), have the same collection of \se s and the same minimal \se.
\smallskip

\subsubsection{The general case}

There are many ways of introducing the entropy structure in actions of $\Z$ on general compact metric spaces (see \cite{D0}). Most of them (but not all) can be adapted to actions of general countable amenable groups. We choose one which seems to pass in a most direct manner.

By a deep result of D. Huczek (see \cite[Theorem 2]{H}), any action of a countable amenable group $G$ on a compact metric space $X$ has a principal \zd\ extension $X'$. Let $\pi':X'\to X$ denote the corresponding factor map. %Although this extension need not be faithful, the entropy function $h'$ on $\MGXP$ is constant on fibers and equals the entropy function on $\MGX$ lifted against $\pi'$. However, if $\H'=(h'_k)_{k\ge 0}$ is an entropy structure of $X'$, we have no guarantee that the functions $h'_k$ are also constant on fibers. This fact makes the verification of correctness of the definition given below a bit harder.
We define the entropy structure of a \tl\ \ds\ $X$ following the idea from \cite[Definition 5.0.1]{D1}. Recall that we assume finite \tl\ entropy of the action on $X$.

\begin{definition}\label{ensf}
The \ens\ of a \tl\ \ds\ $X$ of finite entropy is defined as any structure $\H=(h_k)_{k\ge 0}$ on $\MGX$, such that for any principal \zd\ extension $\pi':X'\to X$, and any entropy structure $\H'=(h'_k)_{k\ge 0}$ on $\MGXP$, the structure $\H=(h_k)_{k\ge 0}$ lifted against $\pi'$ is uniformly equivalent to $\H'$.
\end{definition}

Of course, it is a priori not obvious, that such a structure exists. Once the existence of at least one such structure $\H$ is guaranteed, it becomes obvious that any other structure defined on $\MGX$ is an entropy structure if and only it is uniformly equivalent to $\H$. In particular, it will be now obvious that if $X$ is \zd\ then the \ens\ from Definition \ref{ensz} is consistent with Definition \ref{ensf} and does not depend on the array representation.

The proof of existence is tedious and requires (for example) a concept of entropy via finite families of continuous functions (instead of partitions). We prefer not to copy entire sections from, for example, \cite{D0} or \cite{D1}. The construction does not depend, in any aspect, on the acting group, and its details play no role in this paper. So, we choose to  formulate the existence theorem without a detailed proof and confine ourselves to suitable references.

\begin{theorem}\label{niezalezy}
If $X$ has finite \tl\ entropy, then it has an entropy structure which is affine and has \usc\ differences.
\end{theorem}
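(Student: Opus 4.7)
The plan is to adapt the construction from \cite{D0} and \cite{D1}, Chapter 5, developed for $\Z$-actions. The only group-dependent inputs are Huczek's theorem \cite{H} providing a principal \zd\ extension $\pi':X'\to X$, sub- and strong subadditivity of Shannon entropy along finite subsets of $G$, and the infimum-over-$F$ characterization $h(\mu,\P)=\inf_{F}\frac{1}{|F|}H(\mu,\P^F)$ together with its conditional analogue --- all available for any countable amenable $G$. Hence the $\Z$-argument transcribes essentially verbatim; the only task is to identify the amenable substitutes for the two or three places where the $\Z$-proof exploits extra structure.

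First, fix a principal \zd\ extension $\pi':X'\to X$ and an affine \ens\ $\H'=(h'_k)_{k\ge 0}$ on $\MGXP$ with \usc\ differences (Definition \ref{ensz} and Theorem \ref{crucial}). To build $\H=(h_k)_{k\ge 0}$ on $\MGX$, I would follow the ``finite families of continuous functions'' paradigm mentioned before the statement: fix a linearly dense sequence $(f_n)_{n\in\N}\subset C(X)$, and for each $k$ define $h_k(\mu)$ as the dynamical Shannon entropy of $\mu$ with respect to the finite family $\{f_1,\dots,f_k\}$ resolved at precision $1/k$, formalized through oscillations of $\int f_i\circ g\,d\mu$ over finite $F\subset G$. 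Affinity of each $h_k$ (Shannon entropy is affine), monotonicity in $k$, and convergence to $h(\mu,X)$ are immediate. Upper semicontinuity of each difference $h_{k+1}-h_k$ then follows by writing the difference as an infimum over finite $F\subset G$ of continuous functions on $\MGX$, exactly as in Lemma \ref{doda}. Uniform equivalence of $\H\circ\pi'$ with $\H'$ on $\MGXP$ is obtained by comparing both structures against an intermediate one built from clopen partitions of $X'$ refining the $\pi'$-lifts of the open covers implicit in the definition of $h_k$; principality of $\pi'$ forces a common limit function, eliminating any residual defect.

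The main obstacle is precisely the \usc\ of differences. While the naive pushdown $h_k(\mu):=\sup\{h'_k(\nu):\nu\in(\pi')^{-1}(\mu)\}$ preserves affinity and sends extreme points to extreme points (Proposition \ref{pushdown}), it need \emph{not} preserve \usc\ of differences, since $h_{k+1}-h_k$ is not in general the pushdown of $h'_{k+1}-h'_k$. This is exactly why the construction must proceed through finite families of continuous functions on $X$ rather than a direct descent along $\pi'$. A secondary (but genuinely new) annoyance is the failure, noted before Lemma \ref{doda}, of the monotonicity of $\frac{1}{|F_n|}H(\mu,\P^{F_n}|\Q^{F_n})$ in $n$ that makes the $\Z$-argument so clean; this is circumvented by systematically using the infimum-over-$F$ formula in place of limits along a F\o lner \sq, which preserves the needed \usc\ structure by Lemma \ref{doda}. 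Since, as the authors stress, none of this depends on the acting group beyond the amenable Shannon-entropy machinery, I would present the proof as a precise reference to \cite{D0} and \cite[Chapter~5]{D1}, flagging the two substitutions (Huczek's theorem in place of Boyle's zero-dimensional extension, and the infimum-over-$F$ formula in place of the $\frac1n$-monotonicity).
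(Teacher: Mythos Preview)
Your approach is essentially the same as the paper's: both defer to the constructions in \cite{D0} (specifically Definition~6.2.1, Lemma~7.1.2, and part~(1) of Theorem~7.0.1 there), note that the arguments transcribe verbatim to amenable groups, and isolate Lemma~\ref{doda} as the one place where a group-independent substitute for upper semicontinuity of conditional entropy is needed. Your added discussion of why the naive pushdown of $h'_k$ along $\pi'$ fails to preserve \usc\ differences is correct and helpful context that the paper omits.

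One point to clean up: your description of the $h_k$ construction as ``formalized through oscillations of $\int f_i\circ g\,d\mu$ over finite $F\subset G$'' does not match the actual construction in \cite{D0}. There, $h_k(\mu)$ is the dynamical Shannon entropy of the (finite, but not clopen) partition of $X$ induced by level sets of the finite family $\{f_1,\dots,f_k\}$ at a fixed resolution --- that is, one takes $h(\mu,\mathcal F_k)=\inf_F\frac{1}{|F|}H(\mu,\mathcal F_k^F)$ where $\mathcal F_k^F$ is the partition generated by the functions $f_i\circ g$ for $g\in F$. Integrals of $f_i\circ g$ against $\mu$ do not enter. Since you ultimately cite \cite{D0} anyway this is not a gap in the argument, but the sketch as written would mislead a reader trying to reconstruct the details.
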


\begin{proof}Combine \cite[Definition 6.2.1]{D0} (adapted to a F\o lner \sq\ $(F_n)_{n\in\N}$) and \cite[Lemma 7.1.2]{D0} with part (1) of the proof of \cite[Theorem 7.0.1]{D0}. The proofs for countable amenable groups are identical. In one place, for upper semicontinuity of a conditional entropy function, Lemma \ref{doda} (from this paper) is needed.
\end{proof}

In order to obtain a notion which does not depend on any choices, we will replace the ``individual'' \ens s by entire uniform equivalence class. Nevertheless, instead of saying that $\H$ \emph{belongs to} an \ens\ we will keep saying that it \emph{is} an \ens. The \ens\ defined as a uniform equivalence class is an invariant of \tl\ conjugacy in the following sense:

\begin{theorem}\label{topinv}
Suppose $X$ and $Y$ are \tl ly conjugate, say, $\pi:Y\to X$ is the conjugating map. Then a structure $\H = (h_k)_{k\ge 0}$ defined on $\MGX$ is an \ens\ of $X$ if and only if $\H\circ\pi = (h_k\circ\pi)_{k\ge 0}$ (defined on $\MGY$) is an \ens\ of $Y$.
\end{theorem}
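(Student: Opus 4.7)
The plan is to argue directly from Definition \ref{ensf}: to prove that $\H\circ\pi$ is an \ens\ of $Y$ it suffices to verify the defining property with respect to a single principal \zd\ extension of $Y$, and I will construct such an extension by composing the conjugacy $\pi^{-1}:X\to Y$ with any principal \zd\ extension of $X$. The two implications of the ``if and only if'' are symmetric (interchange $\pi$ and $\pi^{-1}$), so I only write up one direction.

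Fix a principal \zd\ extension $\pi':X'\to X$ (which exists by Huczek's theorem, cited in the text). Then $\rho:=\pi^{-1}\circ\pi':X'\to Y$ is a continuous surjection commuting with the $G$-action (both $\pi^{-1}$ and $\pi'$ do), hence a \tl\ factor map, with $X'$ still \zd. Moreover $\rho$ is principal: for any $\sigma\in\mathcal M_G(X')$, $\rho(\sigma)=\pi^{-1}(\pi'(\sigma))$, and because $\pi$ is a \tl\ conjugacy it is in particular a measure-theoretic isomorphism between $(Y,\rho(\sigma))$ and $(X,\pi'(\sigma))$, so $h(\sigma,X')=h(\pi'(\sigma),X)=h(\rho(\sigma),Y)$ (the first equality using that $\pi'$ is principal). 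Thus $\rho$ is a principal \zd\ extension of $Y$.

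Now assume $\H=(h_k)_{k\ge 0}$ on $\MGX$ is an \ens\ of $X$, and pick any \ens\ $\H'=(h'_k)_{k\ge 0}$ on $\MGXP$ (which is intrinsic to the \zd\ system $X'$ via Definition \ref{ensz}). By the hypothesis, the lift of $\H$ against $\pi'$, i.e.\ the structure $(h_k\circ\pi')_{k\ge 0}$ on $\MGXP$, is uniformly equivalent to $\H'$. But on $\MGXP$ we have the pointwise identity
$$
h_k\circ\pi' \ =\ h_k\circ\pi\circ\pi^{-1}\circ\pi' \ =\ (h_k\circ\pi)\circ\rho,
$$
so $(h_k\circ\pi')_{k\ge 0}$ is exactly the lift of $\H\circ\pi=(h_k\circ\pi)_{k\ge 0}$ against $\rho$. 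Hence this lift of $\H\circ\pi$ is uniformly equivalent to $\H'$. Since $\rho$ is a principal \zd\ extension of $Y$ and $\H'$ is an \ens\ of $X'$, this is precisely the defining property of an \ens\ of $Y$, so $\H\circ\pi$ is an \ens\ of $Y$.

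There is no real obstacle here; the step that deserves care is the verification that the composition $\rho=\pi^{-1}\circ\pi'$ inherits principality from $\pi'$, which uses only that \tl\ conjugacies are fiberwise measure-theoretic isomorphisms between corresponding \im s. One should also note that the definition of ``\ens'' (as a uniform equivalence class) is independent of the particular principal \zd\ extension used to verify it, a fact already embedded in the theory preceding Theorem \ref{niezalezy}; this is what allows us to check the property for $Y$ through the single extension $\rho$.
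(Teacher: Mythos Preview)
Your argument is correct and is essentially a detailed unpacking of the paper's one-line proof (``Conjugate systems have the same principal \zd\ extensions''): your construction $\rho=\pi^{-1}\circ\pi'$ is exactly what that sentence encodes. The only organizational difference is that the paper's remark implicitly gives a bijection $\pi'\leftrightarrow\pi^{-1}\circ\pi'$ between \emph{all} principal \zd\ extensions of $X$ and of $Y$, so the ``for every extension'' quantifier in Definition~\ref{ensf} transfers directly; you instead verify the condition for a \emph{single} extension $\rho$ and then appeal to the uniform-equivalence-class characterization (which needs Theorem~\ref{niezalezy}). Both routes are valid, though the bijection route is slightly more self-contained since it avoids invoking existence.
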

\begin{proof} Conjugate systems have the same principal \zd\ extensions.
\end{proof}

\subsection{Symbolic extensions---the easy direction}

Let us go back to the situation where $Y$ is a \tl\ extension of $X$ via a map $\pi:Y\to X$.

\begin{definition}\label{exenf}
In this context, on $\MGX$ we define the \emph{extension entropy function} as the push-down along $\pi$ of the entropy function on $\MGY$:
$$
h^\pi(\mu) = \sup\{h(\nu,Y):\nu\in\MGY,\ \pi(\nu)=\mu\}.
$$
\end{definition}
\begin{definition}\label{sexenf}
Given a \tl\ \ds\ $X$, on $\MGX$ we define the \emph{symbolic extension entropy function}, as the infimum of all extension entropy functions arising from symbolic extensions $Y$ of $X$:
$$
\hsex(\mu) = \inf\{h^\pi(\mu):\ \pi:Y\to X \text{ is a symbolic extension}\}.
$$
\end{definition}
Every symbolic system $Y$ has finite \tl\ entropy (at most $\log|\Lambda|$, where $Y\subset\Lambda^G$), while, by convention, infimum of an empty set equals $+\infty$. Thus, lack of symbolic extensions of $X$ is equivalent to the condition $\hsex\equiv\infty$ on $\MGX$ (otherwise $\hsex$ is always bounded).
\medskip

We can also define the \emph{\tl\ symbolic extension entropy} of $X$, as
$$
\hsex(X,G)=\inf\{\htop(Y,G): Y\text{ is a symbolic extension of }X\}.
$$
It can be proved, using the same methods as in the $\Z$-case (see \cite{BD}), that
$$
\hsex(X,G)=\sup_{\mu\in\MGX}\hsex(\mu).
$$
This equality, called the \emph{symbolic extension entropy variational principle}, is the reason why independent study of the \tl\ symbolic extension entropy is of lesser interest.
This paper is devoted to proving the analog of the following theorem for $\Z$-actions, known as the \emph{Symbolic Extension Entropy Theorem} \cite{BD}.

\begin{theorem}\label{dlaz}
Let $X=(X,T)$ be a \tl\ \ds\ (the $\Z$-action generated by a single homeomorphism $T:X\to X$ of a compact metric space $X$). If the \tl\ entropy of $X$ is infinite then obviously $X$ admits no symbolic extensions. Otherwise we have the following equivalence:
Let $\EA$ be a (finite) function defined on $\MTX$. There exists a symbolic extension $\pi:Y\to X$ such that $\EA=h^\pi$ if and only if $\EA$ is an affine \se\ of the \ens\ of $X$. In particular, $\hsex\equiv\EH$, where $\H$ is (belongs to) the \ens\ of $X$ (this includes the infinite case: $\EH\equiv\infty$ if and only if $X$ has no symbolic extensions).
\end{theorem}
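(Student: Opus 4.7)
The theorem is a biconditional whose two directions require quite different techniques. The easy direction (necessity) assumes $\EA=h^\pi$ for a symbolic extension $\pi:(Y,T)\to(X,T)$ and asks for $\EA$ to be a finite affine \se\ of the \ens\ $\H=(h_k)_{k\ge 0}$ of $(X,T)$. Finiteness comes from $h^\pi\le\htop(Y,T)\le\log|\Lambda|$. Affinity follows from Proposition~\ref{pushdown}(2): $\pi:\mathcal{M}_T(Y)\to\MTX$ is an affine surjection sending extreme points to extreme points, and the entropy function on a subshift is affine. For the \se\ property I would invoke Proposition~\ref{fact2}(3): since $\H$ has \usc\ differences, it is enough to check that $\EA - h_k\ge 0$ (immediate from $h^\pi(\mu)\ge h(\mu,X)\ge h_k(\mu)$) and that $\EA - h_k$ is \usc. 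The latter follows by expressing $h^\pi - h_k$ essentially as a fiber-supremum of a conditional entropy $h(\nu,Y\mid \P_{[1,k]})$, a quantity that admits an infimum representation over finite coding horizons (cf.\ Lemma~\ref{doda}).

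For the hard direction (sufficiency) I must realize a given finite affine \se\ $\EA$ as the extension entropy function of some symbolic extension. First I would reduce to the case that $X$ is \zd: replacing $X$ by a principal \zd\ extension $X'$ preserves the problem, any symbolic cover of $X'$ is automatically a symbolic cover of $X$, and the given $\EA$ lifts affinely to $\MGXP$ as a \se. Passing to the array representation $X'=\overset\leftarrow{\lim_k}X'_{[1,k]}$, one has $h_k(\mu)=h(\mu,\P_{[1,k]})$. The symbolic extension is then built as a topological joining $Y\subset Y^{(1)}\times Y^{(2)}$ of two ``rows''. The second row $Y^{(2)}$ is a zero-entropy subshift carrying a dynamical parsing of orbits into finite pieces -- in the $\Z$-case, Krieger markers produce disjoint clopen towers, and this row is essentially trivial to construct. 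The first row $Y^{(1)}$, of controlled finite alphabet, carries the actual encoding of the $X'$-dynamics, and the factor map $\pi:Y\to X'$ decodes each block of $Y^{(1)}$ using the parsing read off from $Y^{(2)}$.

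The heart of the argument is the construction of an \emph{oracle} specifying, for each parsing piece at each scale, the number of allowed $Y^{(1)}$-labels on that piece. I would choose rapidly increasing scales $k_1<k_2<\cdots$ and matched parsing sizes, and use the defect bound $\widetilde{\EA-h_{k_j}}\to 0$ to locally approximate $\EA(\mu)-h_{k_j}(\mu)$ by a small constant on each $\P_{[1,k_j]}$-cylinder. Labels on large pieces are obtained by telescoping: passing from scale $k_j$ to $k_{j+1}$ multiplies the label count only by a factor accounting for the additional $\EA-h_{k_j}$ bits per unit time. This is precisely what the \se\ property (rather than mere continuity or concavity above $h$) enables.

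The main obstacle is arranging the oracle so that, simultaneously for every invariant $\mu$, the entropy of the fiber $\pi^{-1}(\mu)$ in $Y$ is exactly $\EA(\mu)$, while the set of allowable labels on pieces of each size forms a genuine shift-invariant subshift. Matching the inductive choice of scales $k_j$, the defect bounds, the marker densities, and the alphabet inflation is a delicate combinatorial bookkeeping; verification that $h^\pi=\EA$ then combines a Shannon--McMillan--Breiman calculation along the parsing (for the lower bound) with the fact that the oracle was calibrated to never exceed $\EA$ (for the upper bound). For the extension to amenable $G$ pursued in this paper, this is precisely the step where the $\Z$-intuition collapses: the marker/parsing picture must be replaced by a tiling system, whose existence in any free \zd\ action is subtle, and whose \emph{encodability} as a factor of a subshift -- needed for the second row -- is governed by the comparison property.
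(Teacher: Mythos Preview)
The paper does not itself prove Theorem~\ref{dlaz}; it is quoted as the known $\Z$-result of Boyle--Downarowicz \cite{BD}, and the paper's business is to establish its amenable-group analogs (Theorems~\ref{easy}, \ref{quasi}, \ref{coc}, \ref{fc}). So there is no ``paper's own proof'' of this statement to compare against directly. That said, your outline is recognizably the \cite{BD} strategy and matches the architecture the paper follows for general $G$, so a comparison with Theorems~\ref{easy} and~\ref{quasi} is meaningful.

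On the easy direction, your sketch is correct for \zd\ $X$ and is exactly what the first half of the proof of Theorem~\ref{easy} does: write $h(\nu)-h_k(\pi(\nu))=h(\nu,\P_\Lambda|\P_{[1,k]})$, use Lemma~\ref{doda} for upper semicontinuity, then push down via Proposition~\ref{pushdown}. You do not, however, address the passage to general $X$. In \cite{BD} this is handled using that asymptotically $h$-expansive $\Z$-systems have principal symbolic extensions; the present paper explicitly avoids that route (it is unclear for general amenable $G$) and instead uses the fiber product $Y'''=Y\times_X X'$ with a principal \zd\ extension $X'$ (Lemma~\ref{simple}) to transport the \usc\ conclusion back to $\MGX$. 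Your sentence ``cf.\ Lemma~\ref{doda}'' glosses over a genuine reduction step.

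On the hard direction, your two-row picture (encoding row plus a zero-entropy parsing row) is right, and your identification of the oracle as the central object is correct. But your description of how the oracle is produced --- ``locally approximate $\EA(\mu)-h_{k_j}(\mu)$ by a small constant on each $\P_{[1,k_j]}$-cylinder'' --- is not how either \cite{BD,D1} or the present paper proceeds, and a constant-on-cylinders approximation would not by itself yield the oracle inequality. The actual mechanism (Stage~1 of the proof of Theorem~\ref{quasi}, following \cite[Lemma~9.2.6]{D1}) is to interpolate a decreasing sequence of \emph{continuous affine} functions $g_k\downarrow \EA-h$ with $g_k>\EA-h_k$ and $g_k-g_{k+1}>h_{k+1}-h_k$, and then set $\O(R)=\lceil 2^{|R|g_k(\boldsymbol\upmu^R)}\rceil$ on $k$-rectangles. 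The oracle condition is then verified via a counting lemma (\cite[Lemma~9.2.11]{D1}, item~(5) in Stage~1 here). Likewise, the verification that $h^{\bar\pi}=\EA$ is not an SMB argument but a direct computation (Lemma~\ref{seven}) showing that the maximizing fiber measure is the one equidistributing the $\O(R)$ admissible blocks over each rectangle. Your final paragraph, diagnosing where the $\Z$-proof breaks for general $G$ (tilings replacing parsings, encodability of the tiling row, comparison), is accurate and matches the paper's narrative.
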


For actions of general countable amenable groups, one implication is relatively easy to prove, and the proof does not differ much from that for $\Z$-actions. As for the other implication, we encounter an (at the moment) inaccessible problem, and we must make a sacrifice: either add an assumption on $G$ or widen the notion of a symbolic extension. In both cases we will need more terminology, thus the formulation will be provided later. For now, we can prove the ``easy direction'':

\begin{theorem}\label{easy}
Let a countable amenable group $G$ act on a compact metric space $X$. Let $\pi:Y\to X$ be a
symbolic extension of $X$. Then the extension entropy function $h^\pi$ is an affine \se\ of the \ens\ of $X$.
\end{theorem}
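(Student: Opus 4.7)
The plan is to verify two things: that $h^\pi$ is affine on $\MGX$, and that for every $k\ge 0$ the difference $h^\pi - h_k$ is nonnegative and upper semicontinuous, which, since the entropy structure $\H=(h_k)_{k\ge 0}$ has upper semicontinuous differences by Theorem~\ref{niezalezy}, gives the superenvelope property via Proposition~\ref{fact2}(3). Affinity is immediate: $h$ is affine on $\MGY$, and the induced $\pi:\MGY\to\MGX$ is a continuous affine map of Choquet simplices sending ergodic to ergodic measures, so by Proposition~\ref{pushdown}(2) the push-down $h^\pi$ is affine. Nonnegativity of $h^\pi-h_k$ is clear: picking any $\nu\in\pi^{-1}(\mu)$ gives $h^\pi(\mu)\ge h(\nu,Y)\ge h(\mu,X)\ge h_k(\mu)$, using that the entropy structure is nondecreasing with limit equal to the entropy function.

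For upper semicontinuity I would pass to a principal zero-dimensional extension $\pi':X'\to X$ (Huczek's theorem, cited ahead of Definition~\ref{ensf}). By Definition~\ref{ensf}, the lift $\H\circ\pi'$ is uniformly equivalent to an entropy structure $\H'$ of $X'$, and Proposition~\ref{fact2}(2) then says they share superenvelopes. Since $\pi':\MGXP\to\MGX$ is a continuous surjection of compact metric spaces, upper semicontinuity of $h^\pi-h_k$ on $\MGX$ is equivalent to upper semicontinuity of its lift on $\MGXP$ (Proposition~\ref{pushdown}(1) gives one direction; compactness, used to lift convergent sequences, gives the other). Thus it suffices to prove that the lift $h^\pi\circ\pi'$ is a superenvelope of $\H'$ on $\MGXP$.

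To do this I would form the pullback $Z=\{(y,x')\in Y\times X':\pi(y)=\pi'(x')\}$ with the diagonal action and projections $\rho:Z\to Y$, $\sigma:Z\to X'$; as a closed subset of a product of zero-dimensional spaces, $Z$ is itself zero-dimensional. Using $\Sigma_X\subseteq\Sigma_Y$ and principality of $\pi'$, for every $\tau\in\mathcal M_G(Z)$,
\begin{equation*}
h(\tau,Z)-h(\rho(\tau),Y)=h(\tau,\Sigma_{X'}\mid\Sigma_Y)\le h(\tau,\Sigma_{X'}\mid\Sigma_X)=h(\sigma(\tau),X'\mid X)=0,
\end{equation*}
so $\rho$ is itself principal. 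A standard joining argument (for $\nu\in\MGY$ and $\mu'\in\MGXP$ sharing a common projection to $\MGX$, a joining on $Z$ exists) then yields $h^\sigma=h^\pi\circ\pi'$. Fix $k$ and let $\P'_{[1,k]}$ be the clopen $k$-th layer partition of $X'$ in its array representation (lifted to $Z$), and let $\Q$ be a clopen generating partition of the symbolic system $Y$ (also lifted to $Z$). Principality of $\rho$ gives $h(\tau,Z)=h(\tau,\Q)$, whence
\begin{equation*}
h(\tau,Z)-h'_k(\sigma(\tau))=h(\tau,\Q\vee\P'_{[1,k]})-h(\tau,\P'_{[1,k]})=h(\tau,\Q\mid\P'_{[1,k]}),
\end{equation*}
which is upper semicontinuous in $\tau$ by Lemma~\ref{doda}. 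Pushing down along $\sigma$ (Proposition~\ref{pushdown}(2)) gives that $h^\sigma-h'_k$ is upper semicontinuous on $\MGXP$, and combined with the reduction step this completes the proof.

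The main obstacle is precisely this last upper-semicontinuity step. The naive expression $\sup_{\tau\in\sigma^{-1}(\mu')}[h(\tau,Z)-h(\tau,\P'_{[1,k]})]$ is a difference of two upper semicontinuous functions, which need not itself be upper semicontinuous; this is a genuine issue because $Z$, though zero-dimensional, is not expansive and its entropy is not realised by any single clopen partition. The crucial move is to exploit principality of $\rho$ to replace $h(\tau,Z)$ by the finite-partition entropy $h(\tau,\Q)$, after which the problematic difference collapses to a conditional entropy between two clopen partitions, to which Lemma~\ref{doda} applies directly.
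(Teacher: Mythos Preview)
Your proof is correct and follows essentially the same route as the paper's: form the fiber product of $Y$ and a principal zero-dimensional extension $X'$ over $X$, use that the projection to $Y$ is principal (your conditional-entropy chain is exactly the content of the paper's Lemma~\ref{simple}), rewrite $h(\tau,Z)-h'_k(\sigma(\tau))$ as a conditional entropy between clopen partitions, apply Lemma~\ref{doda}, and push down. The only cosmetic differences are that the paper first treats the zero-dimensional case separately before the general one, and that it pushes down to $\MGX$ at the end rather than reducing to the lift at the start; your joining argument showing $h^\sigma=h^\pi\circ\pi'$ is the same as the paper's identity $(h^{\mathsf{proj}_2})^{\pi'}=h^\pi$, just phrased differently.
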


\begin{proof}
One fact used in the proof of the easy direction for $\Z$-actions (that asymptotically $h$-expansive systems have principal symbolic extensions) is uncertain for general countable amenable groups. Thus the proof will change in one place. For the sake of completeness, we present it whole.

Recall that the entropy function on $\MGY$ is \usc, affine and the factor map $\pi$ applied to the sets of \im s, $\pi:\MGY\to\MGX$, is an affine surjection between Choquet simplexes, sending extreme points to extreme points (i.e., ergodic measures to ergodic measures). Now Propositon \ref{pushdown} implies that the extension entropy function $h^\pi$ is \usc\ and affine. It remains to show that it is a \se\ of the \ens\ of $X$.

We begin under the additional assumption that the space $X$ is \zd. Then we can choose the entropy structure $\H=(h_k)_{k\ge 0}$ obtained as $h_k(\mu) = h(\mu,\P_{[1,k]})$, where $(\P_k)_{k\in\N}$ is some jointly refining \sq\ of finite clopen partitions of $X$. Clearly, for each $k\ge 0$, we have $h^{\pi}\ge h\ge h_k$. By Proposition~\ref{fact2}~(3), the only thing we need to show is that $h^\pi - h_k$ is \usc. For each $k\in\N$ the partition $\P_{[1,k]}$ lifts against $\pi$ to a clopen partition of the symbolic space $Y$. By convention, this lifted partition will be denoted also by $\P_{[1,k]}$. Let $\Lambda$ denote the alphabet of the symbolic system $Y$. The symbol partition $\P_\Lambda$ of $Y$ generates the entire Borel sigma-algebra, hence the entropy of $h(\nu,Y)$ of every \im\ $\nu\in\MGY$ equals $h(\nu,\P_\Lambda)$. Obviously, it also equals $h(\nu,\P_\Lambda\vee\P_{[1,k]})$. On the other hand, $h(\nu,\P_{[1,k]})=h(\mu,\P_{[1,k]})$, where $\mu=\pi(\nu)$. So, for every $\nu\in\MGY$, the difference $h(\nu) - h_k(\pi(\nu))$ equals $h(\nu,\P_\Lambda\vee\P_{[1,k]}) - h(\nu,\P_{[1,k]}) = h(\nu,\P_\Lambda|\P_{[1,k]})$, and since both partitions are clopen, by Lemma \ref{doda}, the considered difference function is \usc\ on $\MGY$. Finally, by Propositon \ref{pushdown}, the push-down of the function $\nu\mapsto h(\nu) - h_k(\pi(\nu))$ is upper semicontinuous. On the other hand, this push-down evaluated at a measure $\mu\in\MGX$ equals $h^\pi(\mu)-h_k(\mu)$ (note that the function $\nu\mapsto h_k(\pi(\nu))$ is constant on fibers, thus it is not affected by pushing down). We have shown that $h^\pi-h_k$ is \usc\ on $\MGX$. So $h^\pi$ is indeed an affine \se\ of $\H$.
\smallskip

Now we will address the general case. For that we will need a simple entropy lemma (which is the same as for $\Z$-actions):

\begin{lemma}\label{simple}
Consider four \tl\ \ds s $X,X',X''$ and $X'''$, where $X'$ and $X''$ are \tl\ extensions of $X$ via factor maps $\pi'$ and $\pi''$, respectively, while $X'''$ is their \emph{fiber product}:
$$
X''' = \{(x',x''):\pi'(x')=\pi''(x'')\}\subset X'\times X''.
$$
Assume that both $X'$ and $X''$ have finite \tl\ entropy. Then, for any $\mu'''\in\M_G(X''')$ we have
$$
h(\mu''',X'''|X'')\le h(\mu',X'|X) \text{ \ and \ }h(\mu''',X'''|X')\le h(\mu'',X''|X),
$$
where $\mu'$ and $\mu''$ are the projections of $\mu'''$ onto $X'$ and $X''$, respectively.
\end{lemma}

\begin{proof}Clearly, all four considered systems have finite \tl\ entropy and $X'''$ is a \tl\ extension of both $X'$ and $X''$ via the coordinate projections $\mathsf{proj}_1$ and $\mathsf{proj}_2$, respectively. Moreover, it is an extension of $X$ via $\pi'\circ\mathsf{proj}_1=\pi''\circ\mathsf{proj}_2$. With this notation, let $\P$, $\P'$ and $\P''$ be finite measurable partitions of $X$, $X'$ and $X''$, respectively. By lifting, we can treat them as partitions of $X'''$. With this notation, we obviously have
$$
h(\mu''',\P'|\P''\vee\P)\le h(\mu''',\P'|\P),
$$
which can be written as
$$
h(\mu''',\P'\vee\P''\vee\P)-h(\mu''',\P''\vee\P)\le h(\mu''',\P'\vee\P)-h(\mu''',\P).
$$
Notice that since $X'''$ is a joining of $X'$ and $X''$, partitions of the form $\P'\vee\P''$ generate the sigma-algebra in $X'''$. This implies, that if $(\P_k)_{k\in\N}$, $(\P'_k)_{k\in\N}$ and $(\P''_k)_{k\in\N}$ are refining \sq s of partitions in $X$, $X'$ and $X''$, respectively, then applying the above to $\P_k,\ \P'_k,\ \P''_k$ and
letting $k\to\infty$, and because all the terms below are finite, we will obtain
$$
h(\mu''',X''')-h(\mu'',X'')\le h(\mu',X')-h(\mu,X),
$$
or
$$
h(\mu''',X'''|X'')\le h(\mu',X'|X).
$$
The other case is symmetric.
\end{proof}

We return to the main proof. Let $\H=(h_k)_{k\ge0}$ denote an \ens\ of $X$ with \usc\ differences (see Theorem \ref{niezalezy}). By Proposition \ref{fact2} (3), we need to show that $h^\pi-h_k$ is \usc, for each $k\ge 0$. We pick a principal \zd\ extension $\pi':X'\to X$, and a jointly refining \sq\ of clopen partitions $(\P'_k)_{k\in\N}$ of $X'$. The \sq\ $\H'=(h'_k)_{k\ge 0}$, where $h'_k(\mu')=h(\mu',\P'_{[1,k]})$ ($\mu'\in\MGXP$) is an \ens\ of $X'$ and, by definition of $\H$, $\H'$ is uniformly equivalent to $\H$ lifted against $\pi$ from $\MGX$ to $\MGXP$. Unfortunately, the symbolic extension $Y$ of $X$ need not be an extension of $X'$, so we cannot argue directly on $\MGXP$.

Fix some $k\ge1$ and let $Y'$ denote the fiber product of $Y$ and $X'$ (over the common factor $X$). By Lemma \ref{simple}, $Y'$ is a principal extension of $Y$.\footnote{From this place the proof differs from that in \cite{BD} or \cite{D1}.} On $Y'$ consider the partitions $\P_\Lambda$ (the symbol partition lifted from $Y$) and $\P'_{[1,k]}$ (lifted from $X'$). They are both clopen, so
$$
h(\nu',\P_\Lambda|\P'_{[1,k]})=h(\nu',\P_{\Lambda}\vee\P'_{[1,k]})-h(\nu',\P'_{[1,k]})
$$
is an \usc\ function of $\nu'\in\M_G(Y')$. By Proposition \ref{pushdown} (2), the push-down
$$
(h(\cdot,\P_\Lambda|\P'_{[1,k]}))^{\mathsf{proj}_2}
$$
from $\M_G(Y')$ to $\MGXP$ is \usc\ on $\MGXP$.

Because $\P_\Lambda$ generates the Borel sigma-algebra in $Y$, and $\mathsf{proj}_1$ is a principal extension, we have
$$
h(\nu',Y')\ge h(\nu',\P_{\Lambda}\vee\P'_{[1,k]})\ge h(\nu',\P_{\Lambda})=h(\nu,\P_\Lambda)=h(\nu,Y)=h(\nu',Y'),
$$
where $\nu = \mathsf{proj}_1(\nu')$. Clearly, $h(\nu',\P'_{[1,k]})=h(\mu',\P'_{[1,k]})=h'_k(\mu')$, where
$\mu' = \mathsf{proj}_2(\nu')$. We have shown that, on $\M_G(Y')$,
$$
h(\cdot,\P_\Lambda|\P'_{[1,k]})= h-h'_k\circ\mathsf{proj_2}.
$$
Because $h'_k\circ\mathsf{proj_2}$ is constant on fibers of $\mathsf{proj}_2$,
the (\usc) pushed down function $(h(\cdot,\P_\Lambda|\P'_{[1,k]}))^{\mathsf{proj}_2}$ on $\MGXP$ equals $h^{\mathsf{proj_2}}-h'_k$. This proves that $h^{\mathsf{proj_2}}$ is a \se\ of the entropy structure $\H'$ on $X'$. By definition, the entropy structure $\H$ lifted against $\pi'$ from $\MGX$ is uniformly equivalent to $\H'$, so, by Propositon~\ref{fact2}, $h^{\mathsf{proj_2}}$ is also a \se\ of the lifted structure $\H$. Since $\H$ has \usc\ differences, this means that $h^{\mathsf{proj_2}}-h_k$ is \usc\ on $\MGXP$, for each $k\ge 0$. We fix $k\ge 0$ again. Invoking Proposition~\ref{pushdown}~(2), one more time, we obtain that the push-down along $\pi'$, $(h^{\mathsf{proj_2}}-h_k)^{\pi'}$ is \usc\ on $\MGX$. But since here $h_k$ denotes the function lifted against $\pi'$, it is constant on fibers of $\pi'$, and the above \usc\ function equals $(h^{\mathsf{proj_2}})^{\pi'}-h_k=h^{\pi'\!\circ\,\mathsf{proj_2}}-h_k$. Because, on $Y'$,  $\pi'\circ\mathsf{proj_2}=\pi\circ\mathsf{proj_1}$, we obtain that, on $\MGX$, the function
$$
h^{\pi\circ\,\mathsf{proj_1}}-h_k=(h^{\mathsf{proj_1}})^{\pi}-h_k
$$
is \usc. Finally, because $Y'$ is a principal extension of $Y$,
$h^{\mathsf{proj_1}}=h$ on $\MGY$. We have proved that $h^\pi-h_k$ is \usc\ on $\MGX$, for each $k\in\N$. Thus, $h^\pi$ is a \se\ of $\H$.
\end{proof}

\section{Quasitilings and tiling systems}\label{s4}

From this place onward, everything in this paper has but one goal: proving (in possibly largest generality) the opposite direction of the Symbolic Extensions Entropy Theorem. Subsections \ref{nra} and \ref{ra} are based on the papers \cite{DHZ} and \cite{DH}.

\subsection{Terminology and facts not requiring amenability}\label{nra}

\smallskip

\subsubsection{Banach density}\label{bd}
\smallskip
Let $G$ be a countable group.

\begin{definition}\label{1.7}
For a subset $B\subset G$ and a finite set $F\subset G$ we denote
\[
\underline D_F(B)=\inf_{g\in G} \frac{|B\cap Fg|}{|F|}\text{ \ and \ } \overline D_F(B)=\sup_{g\in G} \frac{|B\cap Fg|}{|F|},
\]
\[
\underline D(B)=\sup_{F\subset G} \underline D_F(B)\text{ \ and \ }
\overline D(B)=\inf_{F\subset G} \overline D_F(B),
\]
where $F$ ranges over all finite subsets of $G$. The last two terms are called the \emph{lower} and \emph{upper Banach density} of $B$, respectively.
\end{definition}

\begin{remark}
The notions of upper and lower Banach density have been studied from several points of view.
For example, in \cite{BBF} the reader will find a different definition. It can be shown that that definition is in fact equivalent to ours.
\end{remark}

\begin{definition}\label{bad}
For two sets $A$ and $B$ of $G$ we define the following quantities
\[
\underline D_F(B,A)=\inf_{g\in G} \frac1{|F|}(|B\cap Fg|-|A\cap Fg|), \ \ \
\underline D(B,A)=\sup_{F\subset G} \underline D_F(B,A),
\]
where, as before, $F$ ranges over all finite subsets of $G$. The latter number will be called the \emph{Banach density advantage} of $B$ over $A$ (which can be negative, but we will never consider such a case).
\end{definition}

The following lemma will be repeatedly used in many of our considerations.
\begin{lemma}\label{bdc}
Let $F, F_1$ be finite subsets of $G$ and let $A,B$ be some arbitrary subsets of $G$.
If $F_1$ is $(F,\eps)$-\inv\ then $\underline D_{F_1}(B,A)\ge \underline D_F(B,A)-4\eps$.
\end{lemma}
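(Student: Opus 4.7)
My plan is to use a double-counting (Fubini) estimate. Fix $g_1\in G$ and set $\alpha:=\underline D_F(B,A)$. The hypothesis says $|B\cap Fg|-|A\cap Fg|\ge \alpha|F|$ for every $g\in G$, so applying this with $g=f_1 g_1$ for each $f_1\in F_1$ and summing yields
$$
\sum_{f_1\in F_1}\bigl(|B\cap Ff_1g_1|-|A\cap Ff_1g_1|\bigr)\ge \alpha|F||F_1|. \qquad(\ast)
$$
The main task is then to show that this sum differs from $|F|\bigl(|B\cap F_1g_1|-|A\cap F_1g_1|\bigr)$ by at most $4|F|\varepsilon|F_1|$; dividing by $|F||F_1|$ and taking the infimum over $g_1$ will give the desired bound $\underline D_{F_1}(B,A)\ge\alpha-4\varepsilon$.

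For the comparison, I swap the order of summation: for any $C\subset G$,
$$
\sum_{f_1\in F_1}|C\cap Ff_1g_1|=\sum_{h\in G}\mathbf{1}_C(h)\,\psi(h),\qquad \psi(h):=\bigl|F_1\cap F^{-1}hg_1^{-1}\bigr|.
$$
Here $\psi(h)\in[0,|F|]$ records how many of the translates $Ff_1g_1$ (with $f_1\in F_1$) cover $h$, and $\psi(h)>0$ exactly when $h\in FF_1g_1$. Introduce the discrepancy
$$
\eta(h):=\psi(h)-|F|\,\mathbf{1}_{F_1g_1}(h).
$$
Two features drive the estimate. First, by total counting, $\sum_h\psi(h)=|F||F_1|=|F|\cdot|F_1g_1|$, so $\sum_h\eta(h)=0$. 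Second, $\eta(h)>0$ forces simultaneously $h\in FF_1g_1$ and $h\notin F_1g_1$, which locates the positive support of $\eta$ inside $(FF_1\setminus F_1)g_1$.

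By $(F,\varepsilon)$-invariance, $|FF_1\setminus F_1|\le|FF_1\triangle F_1|<\varepsilon|F_1|$; combined with the pointwise bound $\eta\le|F|$ this yields $\sum_h\eta^+(h)<|F|\varepsilon|F_1|$, and the identity $\sum_h\eta(h)=0$ transfers the same bound to $\eta^-$, giving $\sum_h|\eta(h)|<2|F|\varepsilon|F_1|$. Applying the Fubini identity to $C=B$ and $C=A$ and using $\bigl|\sum_h\mathbf{1}_C(h)\eta(h)\bigr|\le\sum_h|\eta(h)|$ twice bounds the deviation between the left side of $(\ast)$ and $|F|\bigl(|B\cap F_1g_1|-|A\cap F_1g_1|\bigr)$ by $4|F|\varepsilon|F_1|$. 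Combining with $(\ast)$ and dividing by $|F||F_1|$ completes the proof. The only delicate point is the estimate on $\sum|\eta|$; once the positive part of $\eta$ is localized on $(FF_1\setminus F_1)g_1$ by the invariance assumption, the global constraint $\sum\eta=0$ automatically controls the negative part, and the rest is bookkeeping.
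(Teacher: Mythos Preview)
Your proof is correct. Both you and the paper begin identically, summing the defining inequality for $\underline D_F(B,A)$ over $h\in F_1$ to obtain the double-count $(\ast)$. From there the arguments diverge.

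The paper rewrites the double sum as $\sum_{f\in F}\bigl(|B\cap fF_1g|-|A\cap fF_1g|\bigr)$ and applies pigeonhole: some single $f\in F$ must satisfy $|B\cap fF_1g|-|A\cap fF_1g|\ge\alpha|F_1|$. It then compares $fF_1g$ with $F_1g$ directly, using $|fF_1\triangle F_1|=2|fF_1\setminus F_1|\le 2|FF_1\setminus F_1|<2\varepsilon|F_1|$; the factor $2$ for each of $A$ and $B$ gives the $4\varepsilon$.

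You instead compare the whole double sum to $|F|\bigl(|B\cap F_1g_1|-|A\cap F_1g_1|\bigr)$ via the multiplicity function $\psi$ and its signed deviation $\eta=\psi-|F|\mathbf{1}_{F_1g_1}$. The localization of $\eta^+$ to $(FF_1\setminus F_1)g_1$ together with $\sum\eta=0$ yields $\sum|\eta|<2|F|\varepsilon|F_1|$, and applying this once for $B$ and once for $A$ again produces the $4\varepsilon$. Your argument is the standard ``averaging over translates'' estimate familiar from many amenability proofs; the paper's pigeonhole trick is a bit slicker but less systematic. Either way the constant $4$ arises from the same source: a factor $2$ from handling $A$ and $B$ separately, and a factor $2$ from the two-sidedness of the boundary error (for the paper, $|fF_1\triangle F_1|=2|fF_1\setminus F_1|$; for you, $\sum|\eta|=2\sum\eta^+$).
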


\begin{proof}
Given $g\in G$, we have
$$
|B\cap Fhg|-|A\cap Fhg|\ge \underline D_F(B,A)|F|,
$$
for every $h\in F_1$. This implies that
\begin{multline*}
|\{(f,h): f\in F, h\in F_1, fhg\in B\}| - |\{(f,h): f\in F, h\in F_1, fhg\in A\}| \ge\\
\underline D_F(B,A)|F||F_1|.
\end{multline*}
This in turn implies that there exists at least one $f\in F$ for which
$$
|B\cap fF_1g|-|A\cap fF_1g|\ge \underline D_F(B,A)|F_1|.
$$
Since $f\in F$ and $F_1$ is $(F,\eps)$-invariant (and hence so is $F_1g$), we have
$$
\bigl||B\cap fF_1g|-|B\cap F_1g|\bigr|\le |fF_1\triangle F_1|=2|fF_1\setminus F_1|\le2|FF_1\setminus F_1|\le2\eps|F_1|,
$$
and the same for $A$, which yields
\begin{equation}
|B\cap F_1g|-|A\cap F_1g|\ge (\underline D_F(B,A)-4\eps)|F_1|.
\end{equation}
To end the proof, it remains to apply the infimum over all $g\in G$ on the left, and divide both sides by $|F_1|$.
\end{proof}

A set $A\subset G$ is called \emph{syndetic} (more precisely \emph{left syndetic}) if there exists a finite set $U\subset G$ such that $UA=G$ (equivalently, for each $g\in G$, $A\cap U^{-1}g\neq\emptyset$). The set $U$ will be referred to as the \emph{syndeticity set} for $A$, we will also say that $A$ is $U$-syndetic. In noncommutative groups left and right syndeticity are independent notions and throughout this paper right syndeticity will not be used. The following is an easy exercise:

\begin{prop}\label{p45}
A set $A\subset G$ is syndetic if and only if it has positive lower Banach density. The lower Banach density is at least $\frac1{|U|}$ where $U$ is a syndeticity set for~$A$.
\end{prop}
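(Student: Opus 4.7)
The plan is to prove the two directions of the equivalence separately, using the definitions almost directly, and obtaining the quantitative estimate $\underline D(A)\ge \frac1{|U|}$ along the way in the ``only if'' direction.

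First I would handle the implication ``syndetic $\Rightarrow$ positive lower Banach density''. Assume $UA=G$ for some finite $U\subset G$. I will show that the particular choice $F=U^{-1}$ witnesses $\underline D_F(A)\ge \frac1{|U|}$, which by taking the supremum over finite sets yields the desired bound on $\underline D(A)$. The key observation is that for an arbitrary $g\in G$, the hypothesis $UA=G$ produces some $u\in U$ and $a\in A$ with $ua=g$, equivalently $a=u^{-1}g\in U^{-1}g$; thus $a\in A\cap U^{-1}g$, so this intersection contains at least one element. Consequently
\[
\underline D_{U^{-1}}(A)=\inf_{g\in G}\frac{|A\cap U^{-1}g|}{|U^{-1}|}\ge \frac1{|U|},
\]
and hence $\underline D(A)\ge \frac1{|U|}>0$.

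For the converse, suppose $\underline D(A)>0$. By the definition of $\underline D(A)$ as a supremum, there exists a finite set $F\subset G$ with $\underline D_F(A)>0$, meaning $|A\cap Fg|\ge 1$ for every $g\in G$. I claim the finite set $U=F^{-1}$ is a syndeticity set for $A$. Indeed, fixing any $h\in G$, the nonemptiness of $A\cap Fh$ produces $f\in F$ and $a\in A$ with $fh=a$, i.e., $h=f^{-1}a\in F^{-1}A=UA$. Since $h$ was arbitrary, $UA=G$, so $A$ is syndetic.

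This argument is entirely elementary once the definitions are unfolded, so there is no real obstacle; the only thing to be careful about is the orientation of translates in the noncommutative setting, since the definition uses right translates $Fg$ while syndeticity is formulated with $UA$ (left translates by elements of $U$). Tracking this correctly is what forces the choice $U=F^{-1}$ (and conversely $F=U^{-1}$) in the two directions, and it also explains why the quantitative bound in the statement is symmetric: $|U|=|F|$.
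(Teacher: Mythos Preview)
Your proof is correct and complete. The paper does not supply a proof of this proposition at all---it is stated as ``an easy exercise''---so there is nothing to compare against; your argument is exactly the kind of direct unfolding of the definitions the authors had in mind, with the correct handling of left versus right translates in the noncommutative setting.
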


A set $A\subset G$ is called \emph{$F$-separated} (more precisely \emph{left $F$-separated}), where $F$ is another finite subset of $G$, if the sets $Fg$ for $g\in A$ are pairwise disjoint. The upper Banach density of an $F$-separated set is at most $\frac1{|F|}$.
Every $F$-separated set $A$ is contained in a maximal $F$-separated set $A'$ (i.e., such that $A'\cup\{g\}$ is not $F$-separated for any $g\in G\setminus A'$). Another nearly obvious fact is this:
\begin{prop}
Any maximal $F$-separated set is $(F^{-1}F)$-syndetic.
\end{prop}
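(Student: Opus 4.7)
The statement has a very short and clean proof, so the plan is essentially to carry out the direct argument suggested by the definitions. Let $A$ be a maximal $F$-separated subset of $G$. We want to show that $(F^{-1}F)A = G$, i.e., every $h \in G$ can be written as $u g$ for some $u \in F^{-1}F$ and $g \in A$.

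First I would handle the trivial case: if $h \in A$, then since $F$ is nonempty we have $e \in F^{-1}F$ (pick any $f \in F$ and write $e = f^{-1}f$), so $h = e \cdot h \in (F^{-1}F)A$. The interesting case is $h \notin A$. Here I would invoke maximality: by definition of maximality, $A \cup \{h\}$ fails to be $F$-separated. Since $A$ itself is $F$-separated, the failure must involve the new element $h$, so there exists $g \in A$ with $g \ne h$ such that $Fg \cap Fh \ne \emptyset$. Pick $f_1, f_2 \in F$ with $f_1 g = f_2 h$; then
\[
h = f_2^{-1} f_1 g \in F^{-1}F g \subset (F^{-1}F)A,
\]
which is exactly what was needed.

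Since this covers every $h \in G$, the set $U = F^{-1}F$ satisfies $UA = G$, so $A$ is $(F^{-1}F)$-syndetic. There is no real obstacle: the only thing that required a moment's thought is making sure $e \in F^{-1}F$, which is immediate as long as $F \ne \emptyset$ (and $F = \emptyset$ is a degenerate case not worth considering, since then $F$-separatedness would be vacuous). The whole argument is a one-paragraph proof, and it is purely group-theoretic, using neither amenability nor any of the Banach-density machinery developed immediately before.
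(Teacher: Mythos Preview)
Your proof is correct and is exactly the standard argument. The paper does not actually supply a proof of this proposition---it introduces it as ``another nearly obvious fact''---so your one-paragraph argument is precisely what the authors had in mind and left to the reader.
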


\subsubsection{Quasitilings}\label{qtl}

\begin{definition}
A \emph{\qt} of $G$ is a countable family $\CT$ of finite sets $T\subset G$, called the \emph{tiles}, together with an injective map from $\CT$ to $G$ assigning to each tile $T$ a point $c_T\in T$ called the \emph{center} of $T$. The image of this injection, i.e., the set $C(\CT)=\{c_T:T\in\CT\}$ will be referred to as the \emph{set of centers} of $\CT$.
For each tile $T$, the set $S_T=Tc_T^{-1}$ will be called the \emph{shape} of $T$ (note that every shape contains the unity $e$). The \emph{collection of shapes} $\{S_T:T\in\CT\}$ will be denoted by $\CS(\CT)$. Given $S\in\CS(\CT)$, the set $C_S=C_S(\CT)=\{c_T\in C(\CT):S_T=S\}$ will be called the \emph{set of centers for the shape $S$}. Note that the sets of centers for different shapes are disjoint and their union over all shapes equals $C(\CT)$. A \qt\ $\CT$ is \emph{proper} if the collection of shapes $\CS(\CT)$ is finite. From now on, by a \qt\ we shall always mean a proper \qt.
\end{definition}

\begin{definition}\label{qt} Let $\eps\in[0,1)$ and $\alpha\in(0,1]$, and let $K\subset G$ be a finite set. A \qt\ $\CT$ is called
\begin{enumerate}
	\item \emph{$(K,\eps)$-invariant} if all shapes of $\CT$ are \emph{$(K,\eps)$-invariant};
	\item \emph{$\eps$-disjoint} if there exists a mapping $T\mapsto T^\circ$ (\,$T\in\CT$) such that
	\begin{itemize}
	\item $T^\circ$, is a $(1\!-\!\eps)$-subset of $T$ and
	\item the family $\{T^\circ:T\in\CT\}$ is disjoint;
	\end{itemize}
    \item \emph{disjoint} if the tiles of $\CT$ are pairwise disjoint;
	\item \emph{$\alpha$-covering} if $\underline D(\bigcup\CT)\ge\alpha$;
	\item a \emph{tiling} if it is a partition of $G$.
\end{enumerate}
By an \emph{$\varepsilon$-\qt} we shall mean a \qt\ which is both $\varepsilon$-disjoint and $(1-\varepsilon)$-covering.
\end{definition}

We have the following elementary fact:
\begin{prop}\label{syn}
For any $0<\varepsilon<1$, the set of centers $C(\CT)$ of a $(1\!-\!\varepsilon)$-covering \qt\ $\CT$ is syndetic.
\end{prop}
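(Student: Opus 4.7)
The plan is to reduce the assertion to showing that $\underline D(C(\CT)) > 0$ and then invoke Proposition \ref{p45}. Since $\CT$ is proper (by the standing convention), its collection of shapes $\CS(\CT) = \{S_1, \dots, S_n\}$ is finite; I would set $M = \max_i |S_i|$ and $K = \bigcup_{i=1}^n S_i^{-1}$, both finite subsets of $G$.

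The key geometric observation is that any $g$ lying in a tile $T = S_T c_T$ satisfies $c_T = s^{-1}g \in S_T^{-1} g \subseteq Kg$ for the corresponding $s \in S_T$. Consequently, for any finite $F \subset G$ and any $h \in G$, every tile meeting $Fh$ has its center in $KFh$, and contributes at most $M$ points to the intersection, giving
$$
\Bigl|\bigcup\CT \cap Fh\Bigr| \le M \cdot |C(\CT) \cap KFh|.
$$
Since $\underline D(\bigcup \CT) \ge 1 - \varepsilon > 0$, I can pick a finite $F$ and $\alpha > 0$ such that $\underline D_F(\bigcup \CT) \ge \alpha$, so that $|\bigcup \CT \cap Fh| \ge \alpha |F|$ for all $h \in G$. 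Combining with the displayed inequality, $|C(\CT) \cap KFh| \ge \alpha|F|/M$ for every $h$, and dividing by $|KF| \le |K|\,|F|$ yields
$$
\underline D_{KF}(C(\CT)) \ge \frac{\alpha}{M\,|K|} > 0.
$$
Hence $\underline D(C(\CT)) > 0$, and Proposition \ref{p45} concludes the argument.

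I anticipate no substantive obstacle: the proof is a transparent counting estimate in which properness of $\CT$ supplies the uniform finite constants $M$ and $|K|$ that allow the lower Banach density of the union to be ``read back'' as a lower Banach density of centers. The only delicate point is to keep straight the distinction between an element $g$ of the union and the center of the tile containing $g$, which differ merely by an element of some shape — a bounded distance in the sense controlled by $K$.
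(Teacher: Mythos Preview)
Your argument is correct. It differs from the paper's proof in that the paper verifies syndeticity directly: from $(1-\varepsilon)$-covering one only needs the qualitative consequence that every translate $Fg$ meets $\bigcup\CT$, hence contains a point of some tile, whose center then lies in $F^{-1}Vg$ with $V=\bigcup\CS(\CT)$; this yields $G=F^{-1}V\,C(\CT)$ immediately. Your route is quantitative: you bound $|\bigcup\CT\cap Fh|$ by $M$ times the number of nearby centers and thereby obtain an explicit positive lower bound on $\underline D(C(\CT))$, after which Proposition~\ref{p45} converts density into syndeticity. The paper's argument is shorter and avoids the counting estimate and the appeal to Proposition~\ref{p45}; yours has the minor advantage of producing an explicit density bound $\underline D(C(\CT))\ge \alpha/(M|K|)$, should that ever be needed.
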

\begin{proof}
There exists a finite set $F$ such that for every $g\in G$, $\frac{|Fg\cap \bigcup\CT|}{|F|}\ge 1-\varepsilon>0$. In particular $Fg\cap \bigcup\CT\neq\emptyset$. Let $T_g$ denote a tile which intersects $Fg$. Since $\CT$ is proper, the set $V=\bigcup\CS(\CT)$ finite. The center $c_g$ of $T_g$ satisfies $T_gc_g^{-1}\subset V$ hence there exists $f\in F$ with $fgc_g^{-1}\in V$ and thus $g\in F^{-1}VC(\CT)$. We have shown that $C(\CT)$ is $F^{-1}V$-syndetic.
\end{proof}

We are about to define dynamical \qt s. For better differentiation of the notions, the \qt s defined so far will be referred to as \emph{static}. A static \qt\ $\CT$ can be identified with an element of the symbolic space ${\rm V}^G$ where ${\rm V}=
\{``S\,":S\in\CS(\CT)\}\cup\{0\}$. Namely, for each $S\in\CS(\CT)$ we place the symbol $``S\,"$ at all the centers $c\in C_S$, and we place the symbol $0$ at all remaining positions. Formally, we can write $\CT=\{\CT_g:g\in G\}$, where
$$
\CT_g=\begin{cases}``S\,"\,;& g\in C_S, S\in\CS(\CT),\\
0;& g\notin C(\CT).
\end{cases}
$$
\begin{definition} By a \emph{dynamical \qt} with the finite collection of shapes $\CS$ we will understand any subshift $\T\subset {\rm V}^G$, where ${\rm V}=\{``S\,":S\in\CS\}\cup\{0\}$ (the elements of $\T$ are interpreted as static \qt s $\CT$ with $\CS(\CT)\subset\CS$). The set $\CS$ will be also denoted as $\CS(\T)$. We will say that the dynamical \qt\ $\T$ is $(K,\varepsilon)$-\inv, $\varepsilon$-disjoint, disjoint, $\alpha$-covering, a \emph{dynamical $\varepsilon$-\qt} or a \emph{dynamical tiling} if all its elements are $(K,\varepsilon)$-\inv, $\varepsilon$-disjoint, disjoint, $\alpha$-covering, $\varepsilon$-\qt\ or tilings, respectively.
\end{definition}

Every static \qt\ $\CT$ generates a dynamical \qt\ $\T$ as its orbit-closure (under the shift action): $\T=\bar O(\CT)=\overline{\{g(\CT):g\in G\}}$.

\begin{lemma}\label{obvious}If a static \qt\ $\CT$ is $(K,\varepsilon)$-\inv\ ($\varepsilon$-disjoint, disjoint, $\alpha$-covering, or a tiling, where $\varepsilon,\alpha\in(0,1)$) with the collection of shapes $\CS(\CT)$ then $\T=\bar O(\CT)$ is a dynamical \qt\ with the collection of shapes $\CS(\T)=\CS(\CT)$, which is $(K,\varepsilon)$-\inv\ ($\varepsilon$-disjoint, disjoint, $\alpha$-covering, or a tiling, respectively).
\end{lemma}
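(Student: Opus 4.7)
The plan is to show that all five listed properties of a static \qt\ are preserved (i) under the shift action on $\mathrm{V}^G$, and (ii) under passage to limits in the product topology. The symbolic encoding of a static \qt\ identifies $\CT$ with a point $\omega\in\mathrm{V}^G$, where $\mathrm{V}=\{``S\,":S\in\CS(\CT)\}\cup\{0\}$, and the orbit closure $\T=\bar O(\CT)$ then consists of points $\omega'$ each of which needs to be reinterpreted as a proper static \qt\ with shapes contained in $\CS(\CT)$.

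First I would verify shift-invariance. A shift by $g$ sends centers $c_T$ to $c_Tg^{-1}$ and tiles $T$ to $Tg^{-1}$, so $\bigcup g(\CT)=(\bigcup\CT)g^{-1}$, and the collection of shapes is unchanged. Consequently $(K,\eps)$-invariance (a property of shapes only), disjointness, $\eps$-disjointness (using the translated map $T\mapsto T^\circ g^{-1}$), being a tiling, and $\alpha$-covering (lower Banach density is invariant under right translation) all transfer from $\CT$ to every $g(\CT)$.

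Next I would pass to limits. Let $\CT^{(n)}=g_n(\CT)$ converge pointwise to $\CT'$. The key observation is locality: if $V=\bigcup\CS(\CT)$, then the event ``$g\in\bigcup\CT$'' depends only on the symbols of $\omega$ in the finite window $V^{-1}g$, since $g\in\bigcup\CT$ iff there exists $s\in V$ such that $s^{-1}g$ is a center with shape containing $s$. Hence for each fixed $g$, $g\in\bigcup\CT'\iff g\in\bigcup\CT^{(n)}$ for all sufficiently large $n$. Similarly, whether two tiles intersect at a prescribed point is witnessed in a finite window. From these two locality facts: (1) $\CS(\CT')\subseteq\CS(\CT)$ automatically, so $(K,\eps)$-invariance is inherited; (2) if $\CT'$ contained two intersecting tiles, a finite window would force the same collision in some $\CT^{(n)}$, contradicting disjointness; (3) for $\alpha$-covering, fix a finite $F$ and $h\in G$; then $|\bigcup\CT'\cap Fh|=|\bigcup\CT^{(n)}\cap Fh|$ eventually, so $\underline D_F(\bigcup\CT')\ge\underline D_F(\bigcup\CT)$, and taking $\sup$ over $F$ yields $\underline D(\bigcup\CT')\ge\alpha$; (4) the tiling case combines disjointness with $\bigcup\CT'=G$ again by locality.

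The only delicate point, and what I expect to be the main obstacle, is $\eps$-disjointness, which a priori involves an existential choice of the auxiliary map $T\mapsto T^\circ$ that need not pass to limits directly. The fix is to reformulate $\eps$-disjointness as the purely local condition
\[
|T\cap\textstyle\bigcup_{T'\in\CT,\,T'\ne T}T'|\le\eps|T|\qquad\text{for every }T\in\CT,
\]
which is equivalent to the original via the canonical choice $T^\circ:=T\setminus\bigcup_{T'\ne T}T'$ (giving $|T^\circ|\ge(1-\eps)|T|$ and automatic disjointness of the $T^\circ$'s across different $T$). Each instance of this inequality is witnessed in a finite window of the symbolic encoding (determined by $c_T$ and $V^{-1}Vc_T$), and therefore, by the same locality argument used above, it transfers from $\CT^{(n)}$ to $\CT'$. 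With this reformulation in place, the lemma follows by assembling the items in the order outlined.
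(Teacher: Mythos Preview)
Your overall strategy---shift-invariance followed by closedness under pointwise limits---is exactly the paper's approach, and your treatment of $(K,\eps)$-invariance, disjointness, tilings, and $\alpha$-covering is correct (your direct argument for $\alpha$-covering is in fact slightly cleaner than the paper's contrapositive).

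However, there is a genuine gap in the $\eps$-disjointness case. Your ``local reformulation''
\[
\Bigl|T\cap\bigcup_{T'\in\CT,\,T'\ne T}T'\Bigr|\le\eps|T|\quad\text{for every }T\in\CT
\]
is \emph{not} equivalent to $\eps$-disjointness; it is strictly stronger. A two-tile counterexample in $\Z$ with a single shape $S=\{0,1,\dots,99\}$: take tiles $T_1=S$ and $T_2=S+50$, so $|T_1\cap T_2|=50$. Choosing $T_1^\circ=\{0,\dots,74\}$ and $T_2^\circ=\{75,\dots,149\}$ shows the pair is $0.3$-disjoint, yet your local inequality would require $50\le30$. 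Thus from the hypothesis that $\CT$ is $\eps$-disjoint you cannot deduce your local condition for $\CT$, and the passage to the limit breaks down.

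The fix is a compactness argument on the \emph{witnesses}. Enlarge the alphabet to record, at each center with shape $S$, the chosen subset $T^\circ c_T^{-1}\subset S$; this lives in the compact product $\prod_{g\in G}\bigl(\mathrm V\times 2^{\,\bigcup\CS}\bigr)$. The conditions ``$T^\circ$ is a $(1-\eps)$-subset of $T$'' and ``the family $\{T^\circ\}$ is pairwise disjoint'' are each determined on finite windows, hence closed. The set of witnessed $\eps$-disjoint quasitilings is therefore compact, and its continuous projection to $\mathrm V^G$ (forgetting the witness) is closed. Since each $g(\CT)$ lies in this projection, so does every $\CT'\in\bar O(\CT)$. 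Equivalently, one can diagonalize: for each tile $Sc$ of $\CT'$ the witnesses $(Sc)^{\circ,(n)}$ take only finitely many values, so along a subsequence they stabilize simultaneously for all tiles, and disjointness of the limiting family is again a finite-window condition inherited from the approximants.
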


\begin{proof}
 First assume that $\CT$ is $(K,\varepsilon)$-\inv. This property depends on the collection of shapes, so it is obviously maintained by other elements of $\bar O(\CT)$. The properties of $\varepsilon$-disjointness, disjointness, $\alpha$-covering or being a tiling are shift-\inv, so they pass to $g(\CT)$ for all $g\in G$. Next, the properties of $\varepsilon$-disjointness, disjointness and being a tiling are easily seen to be closed properties (i.e., inherited by limit points), so they pass to all elements of the orbit closure. It remains to show that if $\CT$ is $\alpha$-covering and for some \sq\ $(g_n)_{n\in\N}$ of elements of $G$ we have the convergence $g_n(\CT)\to\CT'$, then $\CT'$ is also $\alpha$-covering. So, suppose that $\CT'$ is not $\alpha$-covering, i.e., there is a positive number $\beta<\alpha$ such that $\CT'$ is at most $\beta$-covering. This means that for every finite set $F\subset G$ there exists $g_{\!_F}\in G$ such that
$$
\Bigl|\bigcup\CT'\cap Fg_{\!_F}\Bigr|\le \beta|F|.
$$
The convergence $g_n(\CT)\to\CT'$ implies that $\bigcup\CT'\cap Fg_{\!_F}=\bigcup g_n(\CT)\cap Fg_{\!_F}$ for all sufficiently large $n$. We pick one such $n$ and denote it by $n_{\!_F}$. Note that $\bigcup g_{n_{\!_F}}(\CT)=\bigcup\CT g^{-1}_{n_{\!_F}}$. Thus
$$
\beta|F|\ge\Bigl|\bigcup\CT g^{-1}_{n_{\!_F}}\cap Fg_{\!_F}\Bigr|=\Bigl|\bigcup\CT\cap Fg_{\!_F}g_{n_{\!_F}}\Bigr|.
$$
Because this holds for every finite $F\subset G$, the lower Banach density of $\bigcup\CT$ is at most $\beta$, i.e., $\CT$ is at most $\beta$-covering, a contradiction.
\end{proof}

Sometimes, we will be using a convention by which the term ``\qt'' will have a slightly extended meaning. Namely, we will admit that in the set of shapes $\CS(\CT)$ there are repeated terms treated as separate objects. In the symbolic representation, such shapes will be marked by different symbols. The process in which one shapes is marked by multiple
(still finitely many) symbols will be referred to as \emph{duplicating (the shapes)}. A dynamical \qt\ after duplicating becomes a \tl\ extension of the original. Duplicating does not affect any of the properties listed in Definition \ref{qt}.

% A dynamical \qt\ is \emph{minimal} if it is minimal as a subshift. Standard facts about minimal subshifts apply, in particular, a minimal dynamical \qt\ $\T$ equals the orbit-closure of every element $\CT\in\T$. For brevity, we will call a member of a minimal dynamical \qt\ a \emph{minimal \qt}. In a minimal dynamical \qt\ $\T$ every symbol (shape) $S$ which occurs at least once in at least one static \qt\ $\CT\in\T$, occurs in every $\CT\in\T$ syndetically, with a common syndeticity set.

\subsection{Terminology and facts requiring amenability}\label{ra}

\subsubsection{Banach density and syndeticity revisited}\label{bdr}

If $G$ is a countable amenable group with a F\o lner \sq\ $(F_n)_{n\in\N}$ then the upper and lower Banach densities can be evaluated as the limits along a F\o lner \sq:

\begin{prop}\label{bd1}
For any $A\subset G$, we have
$$
\underline{D}(A) = \lim_{n\to\infty}\underline D_{F_n}(A) \text{ \ \ and \ \ } \overline{D}(A) = \lim_{n\to\infty}\overline D_{F_n}(A),
$$
and for any $A,B\subset G$ we also have
$$
\underline{D}(B,A) = \lim_{n\to\infty}\underline D_{F_n}(B,A).
$$
\end{prop}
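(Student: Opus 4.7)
The plan is to deduce all three limit formulas from Lemma \ref{bdc}, which provides the only nontrivial ingredient: a quantitative lower bound showing that $\underline D_F(B,A)$ is approximately monotone when one passes from $F$ to a sufficiently invariant set $F_1$.

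First I would handle the mixed quantity $\underline D(B,A)$, since it is the most general. The inequality $\limsup_n \underline D_{F_n}(B,A)\le \underline D(B,A)$ is immediate from the definition $\underline D(B,A)=\sup_F \underline D_F(B,A)$: every $\underline D_{F_n}(B,A)$ is a term in the supremum. For the reverse inequality, fix an arbitrary finite $F\subset G$ and $\varepsilon>0$. By the F\o lner property of $(F_n)_{n\in\N}$, the sets $F_n$ are eventually $(F,\varepsilon)$-invariant, so Lemma \ref{bdc} gives
\[
\underline D_{F_n}(B,A)\ \ge\ \underline D_F(B,A)-4\varepsilon
\]
for all sufficiently large $n$. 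Hence $\liminf_n \underline D_{F_n}(B,A)\ge \underline D_F(B,A)-4\varepsilon$. Letting $\varepsilon\to 0$ and then taking the supremum over all finite $F\subset G$ yields $\liminf_n \underline D_{F_n}(B,A)\ge \underline D(B,A)$. Combined with the reverse inequality above, the limit exists and equals $\underline D(B,A)$.

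The statement for $\underline D(A)$ is then a direct specialization: taking $B=A$ and $A=\emptyset$ in Definition \ref{bad} recovers Definition \ref{1.7}, so $\underline D(A)=\underline D(A,\emptyset)$ and $\underline D_{F_n}(A)=\underline D_{F_n}(A,\emptyset)$, and the previous paragraph applies verbatim.

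For the upper Banach density I would use the elementary duality
\[
\overline D_F(A)\ =\ \sup_{g\in G}\frac{|A\cap Fg|}{|F|}\ =\ 1-\inf_{g\in G}\frac{|(G\setminus A)\cap Fg|}{|F|}\ =\ 1-\underline D_F(G\setminus A),
\]
valid for every finite $F\subset G$. Taking the infimum over $F$ on the outer formula and the supremum over $F$ inside gives $\overline D(A)=1-\underline D(G\setminus A)$. Applying the already proved lower-density limit formula to the set $G\setminus A$ yields
\[
\lim_{n\to\infty}\overline D_{F_n}(A)\ =\ 1-\lim_{n\to\infty}\underline D_{F_n}(G\setminus A)\ =\ 1-\underline D(G\setminus A)\ =\ \overline D(A),
\]
completing the proof. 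There is no real obstacle here; Lemma \ref{bdc} was designed precisely to make this style of argument work, and the only substantive step is the ``eventually $(F,\varepsilon)$-invariant'' application of the F\o lner condition.
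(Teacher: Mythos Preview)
Your proof is correct and follows essentially the same route as the paper: prove the $\underline D(B,A)$ formula via Lemma~\ref{bdc} and the F\o lner property, then specialize with $A=\emptyset$ for the lower density, and pass to complements for the upper density. The only difference is cosmetic---you spell out the duality $\overline D_F(A)=1-\underline D_F(G\setminus A)$ explicitly, whereas the paper just says ``passing to the complement.''
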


\begin{proof}
We will prove the third equality. Then, plugging in $A=\emptyset$ we will get the first equality and passing to the complement $B^c$ we will get the second equality. The inequality
$\limsup_{n\to\infty}\underline D_{F_n}(B,A)\le\sup\{\underline D_F(B,A): F\subset G, F \text{ is finite}\}$
is obvious. It remains to show that
$$
\liminf_{n\to\infty}\underline D_{F_n}(B,A)\ge\sup\{\underline D_F(B,A): F\subset G, F \text{ is finite}\}.
$$
At the same time this will prove the existence of all three limits.

Let $F\subset G$ be a finite set. Given $\eps>0$, for any $n$ large enough $F_n$ is $(F,\eps)$-invariant, hence Lemma \ref{bdc} implies that $\liminf_{n\to \infty} \underline D_{F_n}(B,A)\ge \underline D_F(B,A)-4\eps$. Since $\eps>0$ is arbitrary, it can be ignored.\end{proof}

\begin{cor}\label{coro}We have
$$
\underline D(B)-\overline D(A)\le \underline D(B,A).
$$
\end{cor}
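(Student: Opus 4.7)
The plan is to deduce the corollary directly from Proposition \ref{bd1} by showing that the desired inequality holds at every finite stage of a F\o lner \sq\ and then passing to the limit.

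First, I would fix a F\o lner \sq\ $(F_n)_{n\in\N}$ in $G$. For each $n$ and each $g\in G$, the elementary numerical inequality
\[
|B\cap F_n g|-|A\cap F_n g|\ge |B\cap F_n g|-\sup_{h\in G}|A\cap F_n h|
\]
gives, after dividing by $|F_n|$ and taking the infimum over $g\in G$ on the left,
\[
\underline D_{F_n}(B,A)\ge \underline D_{F_n}(B)-\overline D_{F_n}(A).
\]
This is the core observation; it uses nothing beyond the definitions of the three quantities.

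Next, by Proposition \ref{bd1}, all three of $\underline D_{F_n}(B,A)$, $\underline D_{F_n}(B)$ and $\overline D_{F_n}(A)$ converge as $n\to\infty$ to $\underline D(B,A)$, $\underline D(B)$ and $\overline D(A)$, respectively. Passing to the limit in the pointwise inequality yields
\[
\underline D(B,A)\ge \underline D(B)-\overline D(A),
\]
which is the claim. Since the existence of the limits is already guaranteed by Proposition \ref{bd1}, no separate monotonicity or subadditivity argument is required.

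There is really no main obstacle here: the whole content of the corollary is that Banach density advantage is at least the difference of the two one-sided Banach densities, and once the three quantities are represented as the same F\o lner-limit the statement reduces to the trivial bound $\inf_g(x_g-y_g)\ge \inf_g x_g-\sup_g y_g$ applied to $x_g=|B\cap F_n g|/|F_n|$ and $y_g=|A\cap F_n g|/|F_n|$. The only mild caveat is to ensure one uses the F\o lner-limit formulation (Proposition \ref{bd1}) rather than trying to match up a single finite $F$ in the three different $\sup$/$\inf$ definitions, since then the three optimal $F$'s need not agree.
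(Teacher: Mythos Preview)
Your proof is correct and follows essentially the same approach as the paper's: fix a F\o lner sequence, use the elementary bound $\inf_g(x_g-y_g)\ge\inf_g x_g-\sup_g y_g$ at each stage, and pass to the limit via Proposition~\ref{bd1}. The paper's argument is written as a single chain of inequalities rather than isolating the finite-stage inequality first, but the content is identical.
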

\begin{proof} Fix a F\o lner \sq\ $(F_n)_{n\in\N}$. By the above lemma, we can write
\begin{multline*}
\underline D(B,A)= \lim_{n\to\infty} \ \inf_{g\in G} \frac{|B\cap F_ng|-|A\cap F_ng|}{|F_n|}\ge\\
\lim_{n\to\infty} \ \inf_{g\in G} \frac{|B\cap F_ng|}{|F_n|}- \lim_{n\to\infty} \ \sup_{g\in G}\frac{|A\cap F_ng|}{|F_n|}= \underline D(B)-\overline D(A).
\end{multline*}
\end{proof}

We also have the following:

\begin{prop}\label{suba}
Upper Banach density is subadditive: for any $A,B\subset G$,
$$
\overline{D}(A\cup B)\le \overline{D}(A) + \overline{D}(B).
$$
\end{prop}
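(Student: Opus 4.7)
The plan is to prove subadditivity by passing to the expressions along a F\o lner \sq, where the supremum formulation behaves well with the elementary set-theoretic bound $|X \cup Y| \le |X| + |Y|$. Concretely, fix any F\o lner \sq\ $(F_n)_{n\in\N}$ in $G$ and invoke Proposition \ref{bd1}, which guarantees that $\overline D(A) = \lim_{n\to\infty} \overline D_{F_n}(A)$ (and analogously for $B$ and for $A\cup B$), so it suffices to establish the finite-level inequality
\[
\overline D_{F_n}(A\cup B) \le \overline D_{F_n}(A) + \overline D_{F_n}(B)
\]
for every $n$, and then take limits.

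For the finite-level inequality, I would argue directly from the definition: for every $g\in G$,
\[
|(A\cup B)\cap F_n g| \le |A\cap F_n g| + |B\cap F_n g|,
\]
and so, dividing by $|F_n|$,
\[
\frac{|(A\cup B)\cap F_n g|}{|F_n|} \le \frac{|A\cap F_n g|}{|F_n|} + \frac{|B\cap F_n g|}{|F_n|} \le \overline D_{F_n}(A) + \overline D_{F_n}(B).
\]
Taking the supremum of the left-hand side over $g\in G$ yields the desired bound on $\overline D_{F_n}(A\cup B)$, after which letting $n\to\infty$ and again applying Proposition \ref{bd1} gives the conclusion.

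There is no real obstacle: the only mild subtlety is that the finite-set formula $\overline D_F(A) = \sup_g |A \cap Fg|/|F|$ is an infimum-achieving quantity only in the limit (the original definition takes $\inf_F$), which is precisely why one must work along a F\o lner \sq\ instead of at a single $F$ (at a single $F$ one could not pass from $|A\cap Fg| + |B\cap Fg|$ to $\overline D(A) + \overline D(B)$ in one step). Proposition \ref{bd1} handles this cleanly, making the argument essentially a two-line computation once the right formulation is chosen.
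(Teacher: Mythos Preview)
Your proof is correct and follows essentially the same approach as the paper: both use the elementary bound $|(A\cup B)\cap F_n g|\le |A\cap F_n g|+|B\cap F_n g|$, take the supremum over $g$, and then pass to the limit along a F\o lner \sq\ via Proposition~\ref{bd1}. Your closing remark about why one must work along a F\o lner \sq\ rather than at a single finite $F$ even matches the paper's own footnote on this point.
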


\begin{proof}
For every $n\in\N$ and $g\in G$, we have
$$
\frac{|(A\cup B)\cap F_ng|}{|F_n|}\le\frac{|A\cap F_ng|}{|F_n|}+\frac{|B\cap F_ng|}{|F_n|}.
$$
Hence,
$$
\sup_{g\in G}\frac{|(A\cup B)\cap F_ng|}{|F_n|}\le \sup_{g\in G}\frac{|A\cap F_ng|}{|F_n|}+\sup_{g\in G}\frac{|B\cap F_ng|}{|F_n|}.
$$
Passing to the limit over $n$ ends the proof.\footnote{We remark that in non-amenable groups, instead of taking the limit we would have to apply the infimum over all finite sets $F$, which spoils the proof.}
\end{proof}

%The next fact connects upper Banach density with \im s.
%\begin{definition}
%Let a countable amenable group $G$ act on a compact metric space $G$. The \emph{orbit capacity} of a set $\mathsf A\subset X$ is defined as
%$$
%\(\mathsf A)=\sup_{x\in X}\overline D(\{g\in G:g(x)\in\mathsf A\}).
%$$
%\end{definition}

%\begin{lemma}\label{ocap}
%If $\mathsf A\subset X$ is a measurable set then
%$$
%\sup\{\mu(\mathsf A):\mu\in\MGX\}\le\ocap(\mathsf A).
%$$
%\end{lemma}
%\begin{proof}
%The proof is standard and involves the ergodic theorem. For a countable amenable group $G$ we shall use the version proved by E. Lindestrauss \cite{L}: Every F\o lner \sq\ in $G$, $(F_n)_{n\in\N}$, has a sub\sq\ $(F_{n_k})_{k\in\N}$ such that for every ergodic action of $G$ on a standard probability space $(X,\Sigma,\mu)$ and every set $A\in\Sigma$ we have, for almost every $x\in X$,
%$$
%\mu(\mathsf A)=\lim_k \frac{|\{g\in F_{n_k}:g(x)\in\mathsf A\}|}{|F_{n_k}|}.
%$$
%With this theorem in hand, pick any number $a<\sup\{\mu(\mathsf A):\mu\in\MGX\}$. There exists an ergodic measure $\mu$ with $\mu(\mathsf A)>a$, implying $\frac1{|F_n|}|\{g\in F_n:f(x)\in\mathsf A\}|=\frac1{|F_n|}|\{g\in G:g(x)\in\mathsf A\}\cap F_n|>a$ for some $x\in X$ and arbitrarily large $n$. This implies that $\ocap(\mathsf A)\ge a$, which, by the choice of $a$, ends the proof.
%\end{proof}

At some point, we will be needing the following elementary fact (this is basically \cite[Lemma 3.4]{DHZ}, where it is formulated using the language of \qt s).

\begin{lemma}\label{1.5}
Let $(A_k)_{k\ge 1}$ and $(g_k)_{k\ge 1}$ be a sequence of subsets of $G$ and a \sq\ of elements of $G$, respectively, such that:
\begin{enumerate}
	\item the union $\bigcup_{k=1}^\infty A_k$ is finite,
	\item $A=\bigcup_{k=1}^\infty A_kg_k$ is a disjoint union.
\end{enumerate}
For each $k$ let $B_k\subset A_k$ and let $B=\bigcup_{k=1}^\infty B_kg_k$. Then
$$
\underline D(B)\ge\underline D(A)\cdot\inf_k\frac{|B_k|}{|A_k|}.
$$
\end{lemma}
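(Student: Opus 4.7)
My plan is to reduce the statement to a one-step comparison $\underline D_{F_n}(B)\ge\alpha\bigl(\underline D_{F_n}(A)-o(1)\bigr)$ along a F\o lner \sq, where $\alpha:=\inf_k|B_k|/|A_k|$, and then conclude via Proposition~\ref{bd1}. Set $V:=\bigcup_k A_k$, which is finite by hypothesis~(1), and $W:=VV^{-1}$, a finite symmetric set containing $e$. Given $\varepsilon>0$, choose $n$ large enough that $F_n$ is $(W,\varepsilon)$-\inv; then by fact~(\ref{dup}) in Subsection~\ref{2.2}, the $W$-core $F_n^W=\{f\in F_n:Wf\subset F_n\}$ satisfies $|F_n\setminus F_n^W|\le|W|\varepsilon|F_n|$.

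Fix $h\in G$ and split the indices into \emph{full} (meaning $A_kg_k\subset F_n h$) and \emph{partial} (meaning $A_kg_k$ meets both $F_n h$ and its complement). For each full $k$ the entire tile $A_kg_k$ sits inside $F_n h$, and at least a proportion $\alpha$ of its elements lie in $B\cap F_n h$, yielding $|B\cap F_n h|\ge\alpha\sum_{k\text{ full}}|A_k|$. The key geometric observation would be that for every partial index $k$ and every $a\in A_k$ with $ag_k\in F_n h$, one can pick $a'\in A_k$ with $a'g_k\notin F_n h$; then $w:=a'a^{-1}\in W$ witnesses that $ag_k h^{-1}\in F_n\setminus F_n^W$. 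Since the sets $A_kg_k$ are pairwise disjoint by hypothesis~(2), the total contribution of all partial indices to $A\cap F_n h$ is bounded by $|F_n\setminus F_n^W|\le|W|\varepsilon|F_n|$, and therefore
$$
|B\cap F_n h|\ge\alpha\bigl(|A\cap F_n h|-|W|\varepsilon|F_n|\bigr).
$$

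Dividing by $|F_n|$ and taking the infimum over $h\in G$ yields $\underline D_{F_n}(B)\ge\alpha\bigl(\underline D_{F_n}(A)-|W|\varepsilon\bigr)$. Letting $n\to\infty$, so that $\varepsilon$ may be taken arbitrarily small, and invoking Proposition~\ref{bd1} on both sides, I obtain $\underline D(B)\ge\alpha\underline D(A)$, as required. I expect the main obstacle to be exactly the partial-tile bookkeeping above: the argument hinges on observing that any tile only partially contained in $F_n h$ must have \emph{all} of its points in $F_n h$ translate, via $h^{-1}$, into the thin boundary shell $F_n\setminus F_n^W$, and then on using disjointness of $\{A_kg_k\}$ to upgrade this per-tile localization into a uniform global bound for the mass lost when passing from $A$ to the union of full sub-tiles. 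The remaining manipulations are routine properties of Banach density and F\o lner invariance.
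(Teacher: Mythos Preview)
Your proof is correct and follows essentially the same approach as the paper. The paper uses the notation $K=\bigcup_k A_k$ (your $V$) and argues that every element of $F_ng\cap A$ not lying in a full tile must fall outside the $KK^{-1}$-core of $F_ng$; you spell out this boundary-shell argument more explicitly via the witness $w=a'a^{-1}\in W=VV^{-1}$, but the geometric content and the final estimate $\underline D_{F_n}(B)\ge\alpha\,\underline D_{F_n}(A)-o(1)$ are identical.
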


\begin{proof} Let $\alpha = \inf_k\frac{|B_k|}{|A_k|}$.
Given $n\in\N$ and $g\in G$, denote
$$
A(n,g)=\bigcup\{A_kg_k:A_kg_k\subset F_ng\} \text{ \ \ and \ \ } B(n,g)=\bigcup\{B_kg_k:A_kg_k\subset F_ng\}.
$$
Clearly
$$
\frac{|B(n,g)|}{|A(n,g)|}\ge \alpha.
$$
Denote $K=\bigcup_{k=1}^\infty A_k$ (by assumption, this is a finite set).  As easily verified, each element of the difference $F_ng\cap A\setminus A(n,g)$ lies outside the $KK^{-1}$-core of $F_ng$. Clearly, this core equals $(F_n)_{KK^{-1}}g$, where $(F_n)_{KK^{-1}}$ is the $KK^{-1}$-core of $F_n$.
So,
$$
|F_ng\cap B|\ge|B(n,g)|\ge\alpha|A(n,g)|\ge\alpha(|F_ng\cap A|-|F_n\setminus(F_n)_{KK^{-1}}|).
$$
Taking infimum over all $g\in G$ and dividing both sides by $|F_n|$, we obtain that
$$
\underline D_{F_n}(B)\ge \alpha\underline D_{F_n}(A)-\epsilon_n
$$
where, by the property (2) in subsection \ref{2.2}, $\epsilon_n\to 0$ with $n$. Applying the limit as $n$ tends to infinity we complete the proof.
\end{proof}

\subsubsection{Special $\varepsilon$-\qt s}\label{spqt}
We begin with citing some theorems about the existence of special \qt s in any countable amenable group $G$ in which we fix a symmetric, centered F\o lner \sq\ $(F_n)_{n\in\N}$.

\begin{theorem}\label{OW}(\cite[Lemma 4.1]{DHZ}, see also \cite{OW})
For any $\varepsilon>0$ there exists an integer $r(\varepsilon)$ such that for any $n$ there exists
a static $\varepsilon$-\qt\ $\CT$ of $G$ with the collection of shapes $\CS(\CT)\subset\{F_{n_1}, F_{n_2}, \dots, F_{n_{r(\varepsilon)}}\}$, where $n< n_1<n_2<\cdots<n_{r(\varepsilon)}$.
\end{theorem}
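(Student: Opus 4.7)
The plan is to use the classical Ornstein--Weiss greedy construction, choosing shapes from the F\o lner \sq\ in decreasing order of size and at each level selecting a maximal $\varepsilon$-disjoint family of translates. First, I would fix $r=r(\varepsilon)$ large enough that $(1-\varepsilon)^r<\varepsilon$; this single choice is what makes $r$ depend only on $\varepsilon$ and not on $n$ or on the group. Given $n$, I would then inductively pick indices $n<n_1<n_2<\cdots<n_r$ so that $F_{n_{k+1}}$ is $(F_{n_k}F_{n_k}^{-1},\delta_k)$-\inv\ with $\delta_k$ chosen very small relative to $\varepsilon$. The symmetry and centeredness of the F\o lner \sq\ make this inductive choice possible and ensure that translates of the smaller shapes by centers placed later will sit almost entirely inside the larger shapes wherever they meet.

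Next, I would build the \qt\ from the largest shape down. At the top level, I would take a maximal set of centers $C_r\subset G$ such that $\{F_{n_r}c:c\in C_r\}$ is $\varepsilon$-disjoint (realised by distinguished $(1\!-\!\varepsilon)$-subsets $F_{n_r}^\circ c$). By maximality, for every $g\in G$ the translate $F_{n_r}g$ cannot be added, which means that an $\varepsilon$-proportion of $F_{n_r}g$ is already covered by previously placed tiles or their $\varepsilon$-overhead; this gives a lower Banach density of at least $\varepsilon$ for the covered set $A_r=\bigcup_{c\in C_r}F_{n_r}^\circ c$. Having constructed $A_r\supset A_{r-1}\supset\cdots\supset A_{r-k+1}$, at step $k+1$ I would look at the complement $U_k=G\setminus A_{r-k+1}$ and pick a maximal set $C_{r-k}$ such that (i) the translates $F_{n_{r-k}}c$ are mutually $\varepsilon$-disjoint and (ii) each translate has at least a $(1-\varepsilon)$-fraction lying inside $U_k$; such centers are then added with their effective part equal to $F_{n_{r-k}}c\cap U_k$.

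The key estimate is that at each step the uncovered density decreases by a factor of at most $(1-\varepsilon)$. The maximality of $C_{r-k}$ forces, at any $g$ still uncovered with $F_{n_{r-k}}g$ essentially inside $U_k$, that $F_{n_{r-k}}g$ overlaps the freshly covered set in a non-trivial proportion; together with the sufficient invariance of $F_{n_{r-k}}$ with respect to all larger shapes, this gives the recursive bound $\overline D(U_{k+1})\le(1-\varepsilon)\overline D(U_k)$. After $r$ steps, $\overline D(U_r)\le(1-\varepsilon)^r<\varepsilon$, so the resulting family, viewed as a \qt\ with shapes in $\{F_{n_1},\dots,F_{n_r}\}$ (each tile being a full translate with a distinguished $(1\!-\!\varepsilon)$-subset), is both $\varepsilon$-disjoint and $(1-\varepsilon)$-covering, i.e.\ an $\varepsilon$-\qt.

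The main obstacle is the bookkeeping in step $k+1$: one needs to guarantee simultaneously that the new translates are $\varepsilon$-disjoint among themselves, mostly lie in the current uncovered region $U_k$, and that the greedy maximality actually yields a definite density gain. This is where the precise quantitative choice of the invariance parameters $\delta_k$ matters, because a translate of a smaller shape that is mostly inside $U_k$ but whose small overhead straddles the boundary of previously placed tiles must still contribute a $(1\!-\!\varepsilon)$-subset to the disjoint family; the symmetric F\o lner property and fact~(2) of subsection~\ref{2.2} (the $K$-core estimate) are exactly the tools needed to control this boundary effect uniformly and to close the induction.
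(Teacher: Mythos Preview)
The paper does not give its own proof of this theorem; it is stated as a citation of \cite[Lemma 4.1]{DHZ} (with the Ornstein--Weiss paper \cite{OW} as background), so there is nothing in the paper to compare against directly. Your outline is precisely the classical Ornstein--Weiss greedy construction that those references carry out: choose $r$ so that $(1-\varepsilon)^r<\varepsilon$, pick the $r$ shapes with sufficient mutual invariance, and place tiles from the largest shape downward, using maximality at each level to force a fixed proportional gain in covered density.

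One point to tighten: your recursive bound is stated for the \emph{upper} Banach density of the uncovered set, but what the argument actually delivers (and what the definition of $(1-\varepsilon)$-covering requires) is a bound on the complement of the \emph{lower} Banach density of the covered set. The maximality argument at level $r-k$ shows that within any sufficiently invariant window, the fraction of $U_k$ that remains uncovered after placing the new tiles is at most $(1-\varepsilon)$ of what it was; passing this to Banach densities uses the $K$-core estimate exactly as you indicate. Also, the ``$\varepsilon$-disjoint among themselves and mostly inside $U_k$'' condition is usually handled by first intersecting candidate translates with $U_k$ and only accepting those whose intersection is a $(1-\varepsilon)$-subset, then selecting a maximal $\varepsilon$-disjoint subfamily among the accepted ones---this ordering avoids the simultaneous bookkeeping you flag as the main obstacle. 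With these cosmetic adjustments your sketch matches the argument in \cite{DHZ}.
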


It is seen that, for large $n$, the \tl\ entropy of the dynamical \qt\ generated by the above \qt\ $\CT$ is small (symbols other than zero appear with small upper Banach density). Since we need this entropy to be zero, we shall use a different combination of results:

\begin{theorem}\label{DHZ}(\cite[Theorem 6.1]{DHZ})
If $G$ is a countable amenable group then there exists of a free action of $G$ on
a \zd\ space, which has \tl\ entropy zero.
\end{theorem}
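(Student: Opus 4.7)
The plan is to invoke the construction from \cite{DHZ} but sketch the key ideas. The starting point is Theorem \ref{OW}: for any $\varepsilon>0$ and any finite $K\subset G$ we can produce a static $\varepsilon$-\qt\ $\CT$ of $G$ whose shapes are $(K,\varepsilon)$-\inv. I would first upgrade such an $\varepsilon$-\qt\ to an exact (disjoint, covering) tiling. The standard trick is to process the shapes one at a time in order of decreasing ``size,'' carving out of each tile the portion that has already been taken by tiles processed earlier; the shapes thereby splinter into finitely many congruence classes of subsets (called ``proto-tiles''), and the Ornstein--Weiss estimate combined with the $(K,\varepsilon)$-invariance guarantees that with slightly larger $\varepsilon$ the resulting proto-tiles are still $(K,\varepsilon')$-\inv. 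This produces a static tiling $\CT_1$ of $G$ with a prescribed Følner quality.

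Next I would iterate the construction to obtain a nested congruent sequence of static tilings $\CT_1,\CT_2,\CT_3,\dots$ in which every tile of $\CT_{k+1}$ is an exact union of tiles of $\CT_k$, and the shapes of $\CT_{k+1}$ are $(K_{k+1},\varepsilon_{k+1})$-\inv\ with $K_k$ exhausting $G$ and $\varepsilon_k\to 0$. The way to enforce the congruence property is to apply step one inside a new ambient set: treat the centers $C(\CT_k)$ as the ``atoms'' of a new amenable set and tile \emph{them} by pulling back a fresh $\varepsilon$-\qt\ through the bijection ``tile $\mapsto$ center''. Passing to symbolic representations, each $\CT_k$ generates by orbit closure a dynamical tiling $\T_k\subset{\rm V}_k^G$ (Lemma \ref{obvious}), and the congruence gives canonical bonding factor maps $\T_{k+1}\to\T_k$ (each element of $\T_{k+1}$ determines unique positions of its $\CT_k$-sub-tiles).

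Now set $X=\overset\leftarrow{\lim_k}\T_k$ with the shift action of $G$; this is automatically \zd. For the \tl\ entropy, observe that each level $\T_k$ is a subshift whose complexity over a Følner window $F_n$ is essentially the number of ways of drawing tiles of $\CT_k$ that intersect $F_n$. Because the tile shapes are $(K_k,\varepsilon_k)$-\inv, the density of centers is at most some $\rho_k\to 0$, and the number of such placements is at most $(|\CS(\T_k)|+1)^{\rho_k|F_n|+o(|F_n|)}$. Hence $\htop(\T_k,G)\le\rho_k\log(|\CS(\T_k)|+1)$, which can be driven to $0$ by the choice of parameters at each level. Since entropy passes to inverse limits as a supremum over factors, $\htop(X,G)=0$.

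Finally, for freeness, I would fix $g\in G\setminus\{e\}$ and show no point of $X$ is $g$-invariant. Any tiling in $X$ projects to a tiling in $\T_k$ for every $k$. If $g$ fixed the point, it would shift-fix every $\CT_k$-projection; but once $F_k$ is $(\{g\},\tfrac12)$-\inv\ and $g\not\in F_k$ (which happens eventually), shift-invariance by $g$ would force two distinct centers inside a single shape of $\CT_k$, contradicting the fact that the centers are the distinguished marker symbols inside disjoint tiles. The main obstacle I anticipate is the first one: genuinely converting quasitilings to exact, congruent tilings with controlled Følner shapes. This is the technical heart of \cite{DHZ}, and every subsequent step (zero entropy, freeness, inverse limit) is then a manageable consequence of the structural properties of such a congruent tower.
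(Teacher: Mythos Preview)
The paper does not prove this statement; it is quoted from \cite[Theorem 6.1]{DHZ} and used as a black box. Your sketch does follow the architecture of \cite{DHZ}: upgrade Ornstein--Weiss quasitilings to exact tilings, iterate to a congruent tower, take the inverse limit, then verify zero entropy and freeness. That much is on target.

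Your freeness argument, however, has a genuine gap. You claim that once the shapes are sufficiently $(g,\varepsilon)$-invariant, $g$-periodicity of $\CT_k$ would force two centers into a single tile. But $g$-invariance of $\CT_k$ (in the shift sense) only gives $C_S(\CT_k)g=C_S(\CT_k)$, so $Sc$ and $Scg$ are both tiles; they overlap iff $cgc^{-1}\in S^{-1}S$. In a non-abelian group the conjugates $cgc^{-1}$ need not lie in $S^{-1}S$ however F{\o}lner $S$ is (consider $g$ with infinite conjugacy class), so no contradiction arises at a single level. Indeed periodic tilings exist in abundance: in $\Z$ the tiling by intervals of length $n$ is $n$-periodic. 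Freeness of the inverse limit in \cite{DHZ} is obtained from the interaction of all levels together with specific choices made in the inductive construction, not from the one-line argument you propose. A second, smaller issue: your entropy bound $\rho_k\log(|\CS(\T_k)|+1)$ is not obviously small, because the passage from quasitiling to tiling gives no control on $|\CS(\T_k)|$ (the paper itself flags this just after Theorem~\ref{ext}); in \cite{DHZ} zero entropy is obtained by counting tile configurations in F{\o}lner sets much larger than all shapes of $\T_k$, not by a symbol-density estimate at the scale of the tiles.
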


\begin{theorem}\label{DH}(\cite[Lemma 3.4]{DH})
Given a free action of $G$ on a compact metric \zd\ space $X$, $\varepsilon>0$ and $n\in\N$, there exists a dynamical $\varepsilon$-\qt\ $\T$ of $G$ with the collection of shapes $\CS(\T)\subset\{F_{n_1}, F_{n_2}, \dots, F_{n_{r(\varepsilon)}}\}$, where $n<n_1<n_2<\cdots<n_{r(\varepsilon)}$ (the dependence $\varepsilon\mapsto r(\varepsilon)$ is the same as in Theorem \ref{OW}), and which is a \tl\ factor of $X$.
\end{theorem}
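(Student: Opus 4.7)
The plan is to mimic the inductive construction of the static $\varepsilon$-\qt\ from Theorem~\ref{OW}, but to perform each step \emph{inside} the \zd\ system $X$, using clopen sets as ``markers''. Equivariance and continuity of the resulting assignment $x \mapsto \CT(x)$ will automatically make the subshift $\T = \overline{\{\CT(x) : x \in X\}} \subset \text{V}^G$ a \tl\ factor of $X$.

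First, following the proof of Theorem~\ref{OW}, we choose indices $n < n_1 < n_2 < \cdots < n_r$ (with $r = r(\varepsilon)$) so that each $F_{n_{k+1}}$ is $(F_{n_k}^{-1}F_{n_k},\eta_k)$-\inv\ for a suitably small $\eta_k$, ensuring that a quasitiling whose level-$k$ shapes are $F_{n_k}$ will be both $\varepsilon$-disjoint and $(1\!-\!\varepsilon)$-covering once all $r$ levels are placed. These are exactly the numerical conditions that drive the Ornstein--Weiss inductive argument.

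Second---and this is the main new ingredient---we establish a \textbf{clopen marker lemma}: \emph{For any finite $F \subset G$ and any clopen $Y \subset X$, there exists a clopen set $V \subset Y$ such that, for every $x\in X$, the return set $A_V(x) := \{g \in G : g(x)\in V\}$ is an $F$-separated subset of $\{g\in G: g(x)\in Y\}$ that is maximal as such}. The existence of $V$ uses three ingredients: freeness of the action (for each $x$ and each $f \in F\setminus\{e\}$, $f(x) \neq x$), compactness of $X$, and the basis of clopen sets (zero-dimensionality). A first clopen approximation $V_0 \subset Y$ with pairwise disjoint $F$-translates is obtained by refining a finite clopen partition of $X$ so that each atom has ``$F$-free diameter''. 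Maximality is then achieved by a finite enlargement procedure: one iteratively adjoins clopen subsets of $Y \setminus F^{-1}F V_0$ on which no $F$-collision occurs. Because $X$ is compact \zd, the process terminates after finitely many steps and produces the desired $V$ whose return sets are simultaneously maximal for all $x$.

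Third, we apply the marker lemma inductively. Setting $Y_r = X$, obtain a clopen $V_r \subset Y_r$ with return sets maximal $F_{n_r}^{-1}F_{n_r}$-separated; these return sets become the centers $C_r(x) = A_{V_r}(x)$ of the level-$r$ tiles $\{F_{n_r}g : g \in C_r(x)\}$. Let $Y_{r-1}$ be the clopen set of $x \in X$ such that $e \notin F_{n_r}^{-1}F_{n_r}\cdot C_r(x)$ (equivalently, $x$ is not already covered by any level-$r$ tile); this is clopen because $V_r$ is. Apply the marker lemma inside $Y_{r-1}$ with horizon $F_{n_{r-1}}^{-1}F_{n_{r-1}}$ to obtain $V_{r-1}$, and so on, down to $k=1$. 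The whole construction depends only on finitely many clopen sets and finitely many group elements, hence is equivariant (by definition of the return sets) and locally constant in $x$ on any prescribed finite window of $G$. Therefore the map $x \mapsto \CT(x)$ is a continuous equivariant factor map from $X$ onto a subshift $\T \subset \text{V}^G$.

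Finally, the $\varepsilon$-disjointness and $(1\!-\!\varepsilon)$-covering of each $\CT(x)$ follow orbit-by-orbit from the invariance relations imposed in the first step, via the same counting lemmas that drive Theorem~\ref{OW}. The $(K,\varepsilon)$-invariance of all shapes is immediate from $\CS(\T) \subset \{F_{n_1},\dots,F_{n_r}\}$ and the F\o lner property.

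The main obstacle is the clopen marker lemma with \emph{pointwise} maximality of the return sets, not just maximality in a measure-theoretic or topological-genericity sense. The delicate point is that one must simultaneously enlarge $V$ for all $x \in X$ without destroying $F$-separation on any single orbit; this is where freeness, compactness, and the abundance of clopen sets all interact.
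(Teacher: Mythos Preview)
The paper does not prove this theorem; it is cited from \cite[Lemma~3.4]{DH}. However, the construction from \cite{DH} is recalled in detail later in the paper (the passage ``Revision of the construction in \cite{DH}'' in the proof of Theorem~\ref{nine}), so one can compare against that.

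Your overall architecture matches \cite{DH}: place tile centers inductively from the largest shape $F_{n_r}$ down to $F_{n_1}$, using clopen subsets of $X$ as markers, so that $x\mapsto\CT_x$ becomes a topological factor map. Your ``clopen marker lemma'' is essentially what \cite{DH} obtains by covering $X$ with finitely many clopen sets $U_{j,1},\dots,U_{j,m_j}$, each having pairwise disjoint $F_{n_j}$-translates, and then processing them greedily; your iterative enlargement to achieve pointwise maximality is exactly this greedy sweep.

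The gap is in your acceptance criterion for the lower levels. You place level-$(r{-}1)$ centers only at points of the uncovered region $Y_{r-1}$ and take a maximal $F_{n_{r-1}}^{-1}F_{n_{r-1}}$-separated set there. But a center $g$ lying in the uncovered region can still have its tile $F_{n_{r-1}}g$ almost entirely inside the already covered region (take $g$ just off the boundary of a level-$r$ tile); hence $\varepsilon$-disjointness does not follow from your rule, and the Ornstein--Weiss counting for $(1{-}\varepsilon)$-covering does not apply to your uncovered set $Y_{r-1}$, which need not be suitably invariant. The construction in \cite{DH} uses a different, and crucial, criterion: at step $(j,i)$ a candidate tile $F_{n_j}g$ with $g(x)\in U_{j,i}$ is accepted precisely when $F_{n_j}g\setminus V_{j,i}$ is a $(1{-}\varepsilon)$-subset of $F_{n_j}g$, where $V_{j,i}$ is the union of all previously accepted tiles. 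This condition is clopen in $x$, directly enforces $\varepsilon$-disjointness, and is the hypothesis under which the Ornstein--Weiss covering estimate actually runs. (Incidentally, your $Y_{r-1}$ as written, with $F_{n_r}^{-1}F_{n_r}$ rather than $F_{n_r}$, is empty: a maximal $F_{n_r}^{-1}F_{n_r}$-separated set is automatically $F_{n_r}^{-1}F_{n_r}$-syndetic.)
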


\begin{theorem}\label{DH1}(\cite[Corollary 3.5]{DH}) The above \qt\ $\T$ can be transformed to a \qt\ $\hat\T$ with the following properties
\begin{enumerate}
	\item $\hat\T$ remains a \tl\ factor of $X$,
	\item $\hat\T$ is disjoint and covers the same part of $G$ as $\T$ (hence $\hat\T$ is
	$(1\!-\!\varepsilon)$-covering),
	\item each shape of $\hat\T$ is a $(1\!-\!\varepsilon)$-subset of one of the shapes of $\T$.
\end{enumerate}
\end{theorem}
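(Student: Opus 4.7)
The plan is to shift-equivariantly redistribute the overlaps among the tiles of $\T$ so as to produce a disjoint family with the same union. Since $\T$ is $\varepsilon$-disjoint, each tile $T$ carries a distinguished $(1-\varepsilon)$-subset $T^\circ\subseteq T$ with the $T^\circ$ pairwise disjoint. I first arrange this selection to be shift-equivariant: for $\T$ as constructed in Theorem \ref{DH} this is automatic, but in general one can replace the abstract existence of $T^\circ$ by an explicit local recipe (greedily subtract from $T$ the intersections $T\cap T'$ in the order given by a fixed enumeration of $G$, stopping once a further removal would violate the $(1-\varepsilon)$-subset threshold), and use the $\varepsilon$-disjointness hypothesis to verify that this recipe produces the required $T^\circ$. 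The points in $\bigcup\T\setminus\bigcup_T T^\circ$ form the ``spillover'' which must be reassigned to unique tiles.

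Fix once and for all an enumeration of $G$. For each tile $T$, define
\[
\hat T = T^\circ \cup \Bigl\{g\in T\setminus \textstyle\bigcup_{T'}(T')^\circ : c_T\text{ is smallest in }\{c_{T'}:T'\in\T,\ g\in T'\}\Bigr\}.
\]
Only finitely many tiles can contain any given $g$, since their centers lie in the finite set $V^{-1}g$ where $V=\bigcup\CS(\T)$; so the minimum is well defined. The resulting family $\hat\T=\{\hat T:T\in\T\}$ is pairwise disjoint (each $g\in\bigcup\T$ is assigned to exactly one tile, either via $T^\circ$ or via the center-minimum rule), covers exactly $\bigcup\T$, and satisfies $T^\circ\subseteq\hat T\subseteq T$, whence $|\hat T|\ge(1-\varepsilon)|T|$. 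After a harmless adjustment ensuring $c_T\in T^\circ$ (adding a single point of $T$ at negligible cost to the $(1-\varepsilon)$-subset constant), we keep $c_T$ as the center of $\hat T$, and the shape $S_{\hat T}=\hat T c_T^{-1}$ is a $(1-\varepsilon)$-subset of $S_T$, yielding claims (2) and (3).

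For claim (1), observe that the label of $\hat\T$ at any $g\in G$ (a shape symbol when $g$ is a center of some $\hat T$, otherwise $0$) is determined by $\T$ restricted to the finite window $V^{-1}Vg$: this window contains every center whose tile could contain $g$ or any shape-translate $sg$ with $s\in V^{-1}$, and the center-minimum test consults only finitely many elements of the fixed enumeration. The transformation $\T\to\hat\T$ is therefore a block code in the sense of Definition \ref{bcode}, hence by Theorem \ref{CHL} it is a continuous shift-equivariant factor map, and composing with the factor map $X\to\T$ supplied by Theorem \ref{DH} realises $\hat\T$ as a topological factor of $X$. Only finitely many shapes occur in $\hat\T$, because the local pattern dictating each shape (which neighboring tiles intersect $T$ and the relative order of their centers) takes only finitely many values within the bounded horizon.

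The main technical obstacle is the very first step: verifying that the $\varepsilon$-disjointness witness $T\mapsto T^\circ$ can be chosen to depend only on a bounded-horizon view of $\T$, so that the whole construction remains a block code. Once this local equivariant witness is in hand, the redistribution scheme is routine, and all of the claimed properties follow from simple inclusions between $T^\circ$, $\hat T$, and $T$.
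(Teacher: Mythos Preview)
Your tie-breaking rule is the problem. You fix an enumeration of $G$ and, for each spillover point $g$, hand it to the tile whose center $c_T$ is smallest in that enumeration. This is not shift-equivariant: when $\CT$ is replaced by $h(\CT)$ the competing centers become $c_Th^{-1}$, and a fixed enumeration of $G$ is not preserved by right translation, so the winner can change. Put differently, a block code in the sense of Definition~\ref{bcode} sees only the pattern $g(\CT)|_F$ over a \emph{fixed} finite window $F$ and has no access to the absolute coordinate $g$; it therefore cannot compare absolute centers against a global order on $G$. Your remark that the test ``consults only finitely many elements of the fixed enumeration'' establishes local determinacy, not equivariance. The same objection hits your greedy recipe for $T^\circ$ in the first step. (One could instead compare the \emph{relative} positions $c_Tg^{-1}\in V^{-1}$ in a fixed order on the finite set $V^{-1}$; that is a genuine block code, but then nothing protects the $(1-\varepsilon)$-subset bound unless equivariant disjoint cores $T^\circ$ are already in hand, which is precisely the step you flagged as the obstacle.)

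The construction in \cite{DH}, as recounted in the revision inside the proof of Theorem~\ref{nine}, sidesteps this by exploiting how $\T$ is actually built as a factor of $X$: tiles are accepted in steps indexed by pairs $(j,i)$, where the index is read off from a fixed clopen cover $\{U_{j,i}\}$ of $X$, and one simply sets $\hat T = T\setminus V_{j,i}$ with $V_{j,i}$ the union of all tiles accepted at earlier steps. This single subtraction already yields a disjoint family with the same union (no separate spillover assignment is needed), and the rule is a topological factor map from $X$ because the index $(j,i)$ is determined by which clopen cell of $X$ the center lands in. The point you are missing is that $\hat\T$ is manufactured as a factor of $X$, not of $\T$ alone; the extra clopen structure on $X$ supplies the equivariant ordering that a bare enumeration of $G$ cannot.
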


The disjoint \qt\ $\hat\T$ is created from the non-disjoint \qt\ $\T$ by replacing the tiles of each $\CT\in\T$ by their subsets. But then the centers may fall outside the new tiles and we need to perform the following \emph{adjustement of centers}. For each shape $\hat S\in\CS(\hat\T)$ we choose a point $a_{\hat S}\in\hat S$ (a ``new center''). We define a new set of shapes $\hat\CS'=\{\hat Sa_{\hat S}^{-1}:\hat S\in\CS(\hat\T)\}$ (notice that each new shape contains the unity, as required). Next, for each $\hat\CT\in\hat\T$ we rewrite each tile $\hat T=\hat Sc\in\hat\CT$ ($\hat S\in\CS(\hat\T)$ and $c\in C_{\hat S}$), as $\hat T=\hat T'=\hat S'c'$ where $\hat S'=\hat Sa_{\hat S}^{-1}$ and $c'=a_{\hat S}c$. We let $\hat\CT'$ be the \qt\ with the same (disjoint) tiles as $\hat\CT$ but interpreted as $\hat T'$ rather than $\hat T$. Clearly, $\hat\CT'$ is a \qt\ with the set of shapes $\hat\CS'$ and the center sets $C_{\hat S'}$ ($\hat S'\in\hat\CS'$) equal to $a_{\hat S}C_{\hat S}$, where $C_{\hat S}$ is the center set for the old shape $\hat S\in\CS(\hat\T)$. Since the \qt\ $\hat\CT$ is disjoint, the new center sets for different shapes in $\hat\CS'$ are disjoint, as required. It is clear that the mapping $\hat\CT\mapsto\hat\CT'$ is a \tl\ conjugacy between $\hat\T$ and it image $\hat\T'$. This map preserves the tiles and just moves the centers to new locations within the tiles.
\smallskip

As far as tilings are concerned, we have at our disposal the following general result:

\begin{theorem}\label{ext} ({\rm follows from \cite[Theorem 5.2]{DHZ}}) For any countable amenable group $G$, any $\varepsilon>0$ and any finite set $K\subset G$, there exists a $(K,\varepsilon)$-\inv\ dynamical tiling $\T$ of $G$ of entropy zero.
\end{theorem}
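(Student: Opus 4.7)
The plan is to iterate the quasitiling machinery from Theorems \ref{DHZ}, \ref{DH} and \ref{DH1} at geometrically refined scales so that, at each level, a larger and larger fraction of $G$ gets covered by disjoint, highly invariant tiles, and in the limit the residual uncovered set has zero upper Banach density. Fix $(K,\eps)$ and choose a summable sequence $(\eps_n)$ of positive numbers with $\eps_n<\eps$ and $\sum_n\eps_n<\infty$ (so that $\prod_n(1-\eps_n)>0$, guaranteeing the process does not collapse, while simultaneously $\eps_n\to 0$). Choose an increasing sequence $K=K_1\subset K_2\subset\cdots$ of finite subsets of $G$ whose union is $G$.

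First, invoke Theorem \ref{DHZ} to produce a free zero-entropy action of $G$ on a zero-dimensional space $X_0$. Apply Theorem \ref{DH} inside $X_0$ at level $n=1$: choose $N_1$ so large that every $F_{N_1},F_{N_1+1},\dots$ is $(K_1,\eps_1)$-invariant, and obtain an $\eps_1$-quasitiling $\T^{(1)}$ which is a factor of $X_0$ and whose shapes are drawn from $\{F_{N_1},\dots,F_{N_1+r(\eps_1)}\}$, hence all $(K_1,\eps_1)$-invariant. Apply Theorem~\ref{DH1} to replace $\T^{(1)}$ by a disjoint $(1-\eps_1)$-covering dynamical quasitiling $\hat\T^{(1)}$, still a factor of $X_0$, whose shapes are $(1-\eps_1)$-subsets of the previous ones (hence, using fact (4) of Subsection \ref{2.2}, still $(K_1,\eps_1')$-invariant for a slightly worse $\eps_1'$ which we may fold into the choice of $\eps_1$). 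Inductively, suppose disjoint quasitilings $\hat\T^{(1)},\dots,\hat\T^{(n)}$ have been constructed, all factors of $X_0$, whose tiles are pairwise disjoint (across levels too), and whose union covers a set of lower Banach density at least $1-\prod_{i=1}^n\eps_i$. At level $n+1$, choose $N_{n+1}$ large enough that the candidate Følner shapes are $(K_{n+1},\eps_{n+1})$-invariant and \emph{also} much larger than every shape used so far, and apply Theorems \ref{DH} and \ref{DH1} \emph{restricted to the complement} of the already-tiled region: this produces a new disjoint quasitiling $\hat\T^{(n+1)}$ whose tiles lie in the residual set and cover a $(1-\eps_{n+1})$-fraction of it, measured in lower Banach density. (Some care is needed to carry out the relative construction: one passes to a factor of $X_0$ in which the uncovered region is a distinguished clopen subset, tiles it by the free action restricted there, and pulls back.)

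The union $\T=\bigcup_n\hat\T^{(n)}$ is, at each point of $G$ that ever gets covered, a disjoint family of tiles with shapes from a countable list, each shape $(K,\eps)$-invariant (since every $K_n\supset K$ and every $\eps_n<\eps$, possibly after the minor inflation from Theorem \ref{DH1}). By Lemma \ref{1.5} combined with Proposition~\ref{suba}, the uncovered remainder after level $n$ has upper Banach density at most $\prod_{i=1}^n\eps_i\to 0$, so $\bigcup\T=G$ in the sense of Banach density, and by a standard diagonal argument (or by passing to the orbit closure as in Lemma \ref{obvious}) one promotes this to an honest partition: any point of $G$ not yet covered can be absorbed into a sufficiently large later shape after a shift adjustment, since the shapes at level $n$ can be made so invariant that their interior easily swallows isolated leftover points. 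The dynamical tiling $\T$ is obtained as the orbit closure of this static tiling (or, better, directly as the joint factor of $X_0$ produced by the countable join of the factor maps $X_0\to\hat\T^{(n)}$, after verifying the tiling property is closed). Because each $\hat\T^{(n)}$ is a factor of the zero-entropy system $X_0$, the countable join $\T$ is also a factor of $X_0$ and thus has zero topological entropy; $(K,\eps)$-invariance of all shapes follows from the construction.

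The main obstacle is showing that the residual set genuinely disappears and that the countable join of the levels yields a partition rather than merely an increasingly good quasitiling: controlling lower Banach density of the \emph{uncovered} set (which is a complement, so a priori behaves badly under countable unions) requires exploiting that each level is produced inside a factor of the \emph{same} zero-entropy system $X_0$, so that the complements form a decreasing sequence of clopen sets to which one can apply the relative quasitiling construction coherently. A secondary technical point is preserving $(K,\eps)$-invariance of shapes through the disjointification of Theorem \ref{DH1} and through the center-adjustment, but this is absorbed by choosing the $\eps_n$ summable and the $K_n$ nested so that the cumulative degradation of invariance is controlled by the original tolerance $\eps$.
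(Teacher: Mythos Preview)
There is a genuine gap. Your construction, as stated, cannot produce a tiling in the sense required by the paper: a (proper) quasitiling must have a \emph{finite} collection of shapes (see the sentence following the definition of $\CS(\CT)$), and a dynamical tiling is a subshift over the finite alphabet $\{``S\,":S\in\CS\}\cup\{0\}$. Your object $\T=\bigcup_n\hat\T^{(n)}$ is an infinite union of disjoint quasitilings drawn from an unbounded sequence of F{\o}lner shapes, so its shape set is countably infinite. Even if every individual orbit were partitioned, the resulting object is not a dynamical tiling. The hand-wave ``any point of $G$ not yet covered can be absorbed into a sufficiently large later shape after a shift adjustment'' does not repair this: absorbing leftover points into later tiles only enlarges those tiles and adds yet more shapes.

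A second problem is the step ``apply Theorems~\ref{DH} and~\ref{DH1} restricted to the complement of the already-tiled region''. Those theorems produce quasitilings of $G$ as factors of a free action on a compact space; they do not produce tilings of a prescribed (generally non-syndetic, non-invariant) subset of $G$. Passing to a factor in which the uncovered region is marked does not help, because the shapes of the new quasitiling are F{\o}lner sets in $G$, and there is no mechanism forcing their shifted copies to land inside the residual set.

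The actual argument in \cite{DHZ} (which the paper merely cites) proceeds quite differently: one takes a \emph{single} disjoint $(1-\eps)$-covering quasitiling with $(K,\eps)$-invariant shapes, and then attaches each uncovered point of $G$ to a nearby tile using Hall's Marriage Lemma (see the discussion in Subsection~\ref{7.3} of the paper). This modifies each tile by a small set, yielding a genuine tiling with finitely many shapes (each an $\eps$-modification of one of the original F{\o}lner shapes, hence still $(K,\eps')$-invariant). The attaching rule is not a block code, which is why the tiling is not known to be a factor of anything nice, but one can still show the resulting dynamical tiling has topological entropy zero. The key idea you are missing is precisely this one-step absorption of the uncovered set into the existing tiles, rather than an infinite iteration.
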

As can be seen from the construction, the above tiling is ``made from'' the disjoint \qt\ of Theorem \ref{OW} and its collection of shapes can be divided into $r(\varepsilon)$ classes such that each shape in the $i$th class is an $\varepsilon$-modification of $F_{n_i}$ ($i=1,2,\dots,r(\varepsilon)$). However, the algorithm of creating the tiling from the \qt\ is not given by a block code (i.e., it is not a \tl\ factor map), moreover, there is no estimate on the number of shapes of the tiling, which a priori can be uncontrollably large. Zero entropy is due to a relatively small number of configurations of tiles in F\o lner sets much larger than the tiles.
\smallskip

\subsubsection{F\o lner systems of \qt s and tiling systems}
One dynamical \qt\ (or tiling) is insufficient for the construction of a symbolic extension. What we need is a countable joining of a \sq\ of dynamical \qt s (tilings), with improving disjointness, covering and invariance properties. In case of dynamical \qt s, this is all we are asking for. We make the following definition.

\begin{definition}
Let $(\epsilon_k)_{k\in\N}$ be a decreasing to $0$ \sq\ of positive numbers. Let $\mathbf T=\bigvee_{k\in\N}\T_k$ be a \tl\ joining of a \sq\ of dynamical \qt s of $G$, such that for every $k\in\N$, $\T_k$ is a (dynamical) $\epsilon_k$-\qt, and for every $\varepsilon>0$ and finite set $K\subset G$, for $k$ sufficiently large all shapes of $\T_k$ are $(K,\varepsilon)$-\inv. Such $\mathbf T$ will be called a \emph{F\o lner system of \qt s}. The elements of $\mathbf T$ will be denoted by $\boldsymbol\CT=(\CT_k)_{k\in\N}$ ($\forall_{k\in\N}\,\CT_k\in\T_k$). The collection of shapes of $\T_k$, $\CS(\T_k)$, will be abbreviated as $\CS_k$.
\end{definition}

The term ``F\o lner system of \qt s'' comes from the fact that it is a \tl\ \ds\ and from the
observation that the last requirement in the above definition can be formulated differently: the joint collection of shapes $\bigcup_{k\in\N}\CS_k$, indexed (bijectively, but in an arbitrary order) by natural numbers, is a F\o lner \sq\ in $G$.

The existence of F\o lner systems of \qt s in any countable amenable group follows directly from Theorem \ref{OW}, also directly from Theorem \ref{DH} one can find such a system as a \tl\ factor of any free action of $G$ on a compact metric \zd\ space. As a corollary, there exist F\o lner systems of \qt s with \tl\ entropy zero. Using Theorem \ref{DH1} we can even require the \qt s to be disjoint.

We can use Theorem \ref{ext} to deduce a similar corollary for tilings, but we cannot claim that a F\o lner system of tilings (or even one dynamical tiling) appears as a \tl\ factor in every free \zd\ action of $G$. We will claim this later under an additional assumption on $G$.

In case of a F\o lner system of tilings (rather than \qt s) we can demand the members of the joining to ``interact'' with each other in a more specific manner. Two key such interactions are congruency and determinism, as defined below:

\begin{definition}
\begin{enumerate}
	\item A F\o lner system of tilings $\mathbf T=\bigvee_{k\in\N}\T_k$ is
	\emph{congruent} if for each $\boldsymbol{\CT}=(\CT_k)_{k\in\N}\in\mathbf T$, for every
	$k\in\N$, every tile of $\CT_{k+1}$ is a union of some tiles of $\CT_k$.
	\item A congruent F\o lner system of tilings $\mathbf T=\bigvee_{k\in\N}\T_k$ is
	\emph{deterministic}, if, for each $k\in\N$ and every shape $S'\in\CS_{k+1}$,
	there exist sets $C_S(S')\subset S'$ ($S\in\CS_k$) such that
	$$
	S'=\bigcup_{S\in\CS_k}\ \bigcup_{c\in C_S(S')}Sc \text{\ \ (disjoint union),}
	$$
	and for each $\boldsymbol{\CT}=(\CT_k)_{k\in\N}\in\mathbf T$, whenever $S'c'$ is
	a tile of $\CT_{k+1}$ then the sets $Scc'$ with $S\in\CS_k$ and $c\in C_S(S')$ are tiles
	of $\CT_k$. We also define $C_k(S')=\bigcup_{S\in\CS_k}C_S(S')$.
\end{enumerate}
\end{definition}

In the deterministic case, each static tiling $\CT_{k+1}$ \emph{determines} the tiling
$\mathcal T_k$ joined with it, because each of the tiles of $\CT_{k+1}$ is partitioned into the tiles of $\CT_k$ in a unique way determined by its shape. Clearly, the assignment $\CT_{k+1}\mapsto\CT_k$ is given by a block code. Thus, the joining $\mathbf T$ is in fact an inverse limit
$$
\mathbf T=\overset\leftarrow{\lim_k}\T_k.
$$

\begin{remark}\label{remark}
Any congruent F\o lner system of tilings $\mathbf T=(\T_k)_{k\in\N}$ can be easily made deterministic in an inductive process of duplicating the shapes (see end of subsection \ref{qtl}), as follows: For each $S'\in\CS_{k+1}$ there are only finitely many, say $m(S')$, possible partitions of $S'$ into tiles from $\CS_k$. We duplicate the tile $S'$ into $m(S')$ copies (identical as subsets of $G$, but in the symbolic representation of the tiling we will now associate to them different symbols, say $``S'_1",``S'_2",\dots,``S'_{m(S')}"$) and use them for tiles with the original shape $S'$ according to how the tile is subdivided by the tiles of $\CT_k$. Clearly, this process increases the cardinality of $\CS_k$ for each $k\ge 2$,
but it is a \tl\ conjugacy, so the dynamical properties (for example the \tl\ entropy) of $\mathbf T$ remain unchanged.
\end{remark}

\begin{definition}
A congruent, deterministic, F\o lner system of tilings will be briefly called a \emph{tiling system}.
\end{definition}

The already cited \cite[Theorem 5.2]{DHZ} in full strength, translated to the terminology introduced above, states:

\begin{theorem}\label{fs}
If $G$ is a countable amenable group then there exists a tiling system of $G$ with \tl\ entropy zero.
\end{theorem}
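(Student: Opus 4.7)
The plan is to recognize that Theorem~\ref{fs} is essentially a reformulation of \cite[Theorem 5.2]{DHZ} in the vocabulary of F\o lner systems of tilings, plus the simple conjugacy described in Remark~\ref{remark}. So I would organize the proof in three stages: (i) extract from \cite{DHZ} a \emph{congruent} sequence $(\T_k)_{k\in\N}$ of zero-entropy dynamical tilings with increasing F\o lner invariance; (ii) perform the shape-duplication of Remark~\ref{remark} to pass from congruent to deterministic; (iii) identify the resulting joining with an inverse limit and read off that its topological entropy is zero.

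For stage (i), I would invoke the Ornstein--Weiss-type multi-scale construction of \cite{DHZ}. Fix a symmetric centered F\o lner sequence $(F_n)_{n\in\N}$ and a decreasing sequence $\varepsilon_k\downarrow 0$. The construction builds $\T_1$ as a $(K_1,\varepsilon_1)$-invariant dynamical tiling with zero entropy (this is already the content of Theorem~\ref{ext}). Inductively, given $\T_k$ with finite shape collection $\CS_k$, one builds $\T_{k+1}$ whose shapes are $(K_{k+1},\varepsilon_{k+1})$-invariant for a sufficiently large $K_{k+1}\supset\bigcup\CS_k$ and $\varepsilon_{k+1}$ small enough so that, via the machinery of \cite{DHZ}, each shape of $\T_{k+1}$ can be partitioned internally into translates of shapes from $\CS_k$; this is the congruency property. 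Zero entropy of each $\T_k$ comes from the fact, established in \cite{DHZ}, that the number of shape-configurations of $\T_k$ inside a large F\o lner set is of subexponential cardinality in the size of that set. The joining $\mathbf T=\bigvee_{k\in\N}\T_k$ is then a well-defined F\o lner system of tilings which is congruent but a priori not deterministic.

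For stage (ii), I would apply Remark~\ref{remark}: for every $S'\in\CS_{k+1}$ there are only finitely many ways to partition $S'$ into translates of shapes from $\CS_k$, so I duplicate the symbol ``$S'$'' into as many copies as there are such partitions, using a different copy according to the actual subdivision seen in $\CT_k$. This is a topological conjugacy, so it preserves invariance properties, shape sets (up to relabelling), congruency, and, crucially, zero topological entropy of each level. After the duplication each $S'\in\CS_{k+1}$ comes equipped with a canonical set $C_S(S')\subset S'$ ($S\in\CS_k$) witnessing its partition, and $\T_{k+1}$ determines $\T_k$ by a block code. Hence $\mathbf T$ is now a tiling system in the sense of the definition.

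For stage (iii), because $\mathbf T$ is deterministic the joining coincides with the inverse limit $\overset\leftarrow{\lim}_k\T_k$ with bonding maps given by the block codes $\T_{k+1}\to\T_k$. For actions of any countable amenable group, the topological entropy of an inverse limit equals the supremum of the entropies of the factors (standard, via the variational principle and the fact that any invariant measure on the inverse limit projects to invariant measures on the factors whose entropies converge up to the entropy upstairs). Since each $h_{\mathsf{top}}(\T_k)=0$, we conclude $h_{\mathsf{top}}(\mathbf T)=0$.

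The main obstacle is concentrated in stage (i): simultaneously securing congruency across scales, uniform invariance of shapes, and zero topological entropy. Stages (ii) and (iii) are formal. Fortunately, the delicate multi-scale argument has already been carried out in \cite{DHZ}, so the proof reduces to citing that theorem and performing the cosmetic duplication of symbols; this is why the authors present Theorem~\ref{fs} as a direct translation rather than an independent result.
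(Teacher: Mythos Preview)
Your proposal is correct and matches the paper's approach: both treat Theorem~\ref{fs} as a direct translation of \cite[Theorem~5.2]{DHZ} into the paper's vocabulary, with the only content being the citation itself. The one expository difference is that the paper observes determinism is already \emph{implicit} in the DHZ construction (and is in fact used there in the entropy count), so your stage~(ii) duplication via Remark~\ref{remark} is not actually needed---though it is harmless, since duplication is a conjugacy of the full joining $\mathbf T$ and hence preserves its topological entropy.
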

(Determinism is implicit in the proof in a way essential in obtaining \tl\ entropy zero).
As mentioned before, in general we cannot claim that an arbitrary free \zd\ action of $G$ has a tiling system as a factor.

\subsection{Tiled entropy}\label{te}
Throughout this section we assume that $\mathbf T=\overset\leftarrow{\lim_k}\T_k$ is a tiling system of $G$ with \tl\ entropy zero. Recall that the set of shapes of $\T_k$ is denoted by $\CS_k$, and given $\boldsymbol\CT=(\CT_k)_{k\in\N}\in\mathbf T$, the set of centers of $\CT_k$ of the tiles with shape $S\in\CS_k$ is denoted by $C_S(\CT_k)$. The set of all centers of $\CT_k$ is $C(\CT_k)=\bigcup_{S\in\CS_k}C_S(\CT_k)$. We now introduce more notation. For $S\in\CS_k$, and $s\in S$ by $[S,s]$ we denote the set of elements $\boldsymbol\CT\in\mathbf T$ for which $s^{-1}$ belongs to $C_S(\CT_k)$. If $\boldsymbol\CT\in[S,s]$ then $Ss^{-1}$ is the tile of $\CT_k$ which contains the unity, i.e., \emph{the central tile of $\CT_k$}. The set $[S,e]$ will be abbreviated as $[S]$.
Observe that  $\boldsymbol\CT\in[S]$ if and only if $\CT_{k,e}=``S\,"$, so the notation is consistent with that of one-symbol cylinders over the alphabet ${\rm V}_k=\{``S\,": S\in\CS_k\}\cup\{0\}$.
The family $\D_{\CS_k}=\{[S,s]:S\in\CS_k,\ s\in S\}$ is a partition of $\mathbf T$. Also note that $\boldsymbol\CT\in[S,s]$ if and only if $s^{-1}(\boldsymbol\CT)\in[S]$, i.e., $[S,s]=s([S])$. So, if $\nu$ is a shift-\im\ on $\mathbf T$, then $\nu([S,s])=\nu([S])$ for all $s\in S$.

Recall, that by congruency and determinism of the tiling system, whenever $k'>k$, every shape $S'$ of $\T_{k'}$ decomposes in a unique way as a concatenation of shifted shapes of $\T_k$ and the set of centers of these tiles is denoted by $C_k(S')$. The subset of $C_k(S')$ consisting of centers of tiles with a particular shape $S\in\CS(\T_k)$ is denoted by $C_S(S')$ (now the subscript $k$ is not needed; $k$ is determined by $S$).

\smallskip
Let $G$ act on a \zd\ compact metric space $X$ given in its array representation $X=\overset\leftarrow{\lim_k} X_{[1,k]}$, and let us assume that $X$ has the tiling system $\mathbf T$ as a \tl\ factor (in the following chapter, we will replace $X$ by its joining with $\mathbf T$, so this assumption will be fulfilled). Then we can combine the layers of $X$ (i.e., the subshifts $X_k$) with the layers of $\mathbf T$ (i.e., the dynamical tilings $\T_k$) by replacing each $X_k$ by its \tl\ joining $\bar X_k$ with $\T_k$ realized naturally in the common extension $X$. The combined alphabet of $\bar X_k$ is $\bar\Lambda_k=\Lambda_k\times \rm V_k$, while that of $\bar X_{[1,k]}$ is $\bar\Lambda_{[1,k]}=\prod_{l=1}^k\bar\Lambda_l$.  In this manner, the system on $X$ is replaced by its \tl ly conjugate model which is the inverse limit $\bar X = \overset\leftarrow{\lim_k} \bar X_{[1,k]}$. We will call $\bar X$ the \emph{tiled array representation of $X$}.

We will use the following notational convention. For $\bar x\in\bar X$ and $S\in\CS_k$, the expression $\bar x_g=``S\,"$ means that in the tiling $\CT_k$ apparent in the $k$th layer of $\bar x$, at the position $g$ there occurs a center of a tile with shape $S$. Formally, this means that if $\bar x_k$ is the $k$th layer of $\bar x$ then
$$
\bar x_{k,g} \in\Lambda_k\times\{``S\,"\}\subset\bar\Lambda_k.
$$
With this convention, the notions $[S]$ and $[S,s]$ ($S\in\CS_k,\ s\in S$) may be applied to $\bar X$ in the following way:
$$
[S]=\{\bar x\in \bar X: \bar x_e=``S\,"\}, \ \ [S,s]=s([S]) =\{\bar x\in \bar X: \bar x_{s^{-1}}=``S\,"\}.
$$
\subsubsection{Tiled entropy and its monotonicity}
We continue to assume that a \zd\ \ds\ $X$ has a tiling system $\mathbf T$ as a \tl\ factor and we denote by $\bar X$ the tiled array representation of $X$.
\begin{definition}
Let $\P$ and $\Q$ be two finite measurable partitions of $\bar X$, $|\P|>1$. Let $\mu$ be  a probability measure on $\bar X$. By the \emph{$k$th tiled entropy of $\P$} and \emph{$k$th conditional tiled entropy of $\P$ given $\Q$} with respect to $\mu$ we will mean the following terms:
$$
H_{\T_k}(\mu,\P)=\sum_{S\in\CS_k}\mu([S])H(\mu_{[S]},\P^S), \ \ H_{\T_k}(\mu,\P|\Q)=\sum_{S\in\CS_k}\mu([S])H(\mu_{[S]},\P^S|\Q^S),
$$
where $\mu_{[S]}$ is the normalized conditional measure $\mu$ on $[S]$.
\end{definition}
Alternatively, one can define just the unconditional version and then put
$$
H_{\T_k}(\mu,\P|\Q)=H_{\T_k}(\mu,\P\vee\Q)-H_{\T_k}(\mu,\Q).
$$
In general, the tiled entropy cannot be easily reduced to a standard notion of conditional entropy (except in some cases, see formula \eqref{przek} below) and indeed requires a separate definition; it resembles a conditional entropy given the partition $\D_{\CS_k}$, but it takes into account only selected elements of this partition (the cylinders $[S]$, $S\in\CS_k$) and on each cylinder a different power of $\P$ is considered.
%The above notions can be viewed as ``partial'' conditional entropies of $\P$ given $\D_{\S_k}$ and of $\P$ given the join $\Q\vee\D_{\S_k}$, respectively. NOT TRUE.
%We will say that a \sq\ of real functions $(f_k)_{k\in\N}$ defined on some common domain converge to some $f$ \emph{almost decreasingly} if there exists a decreasing to zero \sq\ of positive numbers $(\delta_k)_{k\in\N}$, such that the \sq\ of functions $(f_k+\delta_k)_{k\in\N}$ is decreasing.

\begin{theorem}\label{tte}
On the simplex of \im s $\M_G(\bar X)$, the \sq s of tiled entropies $(H_{\T_k}(\mu,\P))_{k\in\N}$ and $(H_{\T_k}(\mu,\P|\Q))_{k\in\N}$, converge decreasingly to $h(\mu,\P)$ and $h(\mu,\P|\Q)$, respectively.
\end{theorem}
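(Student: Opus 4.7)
The plan is to treat both sequences together: the conditional case will follow from the unconditional one by replacing Shannon entropies with their conditional versions everywhere. I would reduce the theorem to three facts: (i) $H_{\T_k}(\mu,\P)$ is nonincreasing in $k$; (ii) $H_{\T_k}(\mu,\P)\ge h(\mu,\P)$ for every $k$; and (iii) $\liminf_k H_{\T_k}(\mu,\P)\le h(\mu,\P)$.

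For \emph{monotonicity} (i), the driver is determinism. For $k'>k$ and $S'\in\CS_{k'}$, I will use the decomposition $S'=\bigsqcup_{S\in\CS_k,\,c\in C_S(S')}Sc$, so $\P^{S'}=\bigvee_{S,c}\P^{Sc}$ with $\P^{Sc}=c^{-1}\P^S$; the push-forward identity rewrites the entropies as $H(c_*\mu_{[S']},\P^S)$. Dually, determinism gives the disjoint decomposition $[S]=\bigsqcup_{S',\,c\in C_S(S')}c([S'])$ with $\mu(c([S']))=\mu([S'])$, so $\mu_{[S]}=\sum \frac{\mu([S'])}{\mu([S])}c_*\mu_{[S']}$. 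Subadditivity of Shannon entropy in the partition argument together with concavity in the measure argument then assemble to $H_{\T_k}(\mu,\P)\ge H_{\T_{k'}}(\mu,\P)$.

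For the \emph{lower bound} (ii), I would use that $\htop(\mathbf T)=0$, so $h(\mu,\D_{\CS_k})=0$, which gives $h(\mu,\P)=\lim_n\frac{1}{|F_n|}H(\mu,\P^{F_n}\mid\D_{\CS_k}^{F_n})$ along any F\o lner sequence. Taking $F_n$ so invariant with respect to $K=\bigcup\CS_k$ that $F_n^\ast(\bar x):=\bigcup\{T\in\CT_k(\bar x):T\subset F_n\}$ satisfies $|F_n\setminus F_n^\ast|=o(|F_n|)$ uniformly in $\bar x$, I will split $\P^{F_n}$ into a tile-aligned part and a boundary part. On each atom of $\D_{\CS_k}^{F_n}$, $\P^{F_n^\ast}$ is a join of $\P^T$ over the tiles $T$; subadditivity plus concavity will collapse the contribution of each potential $(S,c)$ with $Sc\subset F_n$ to $\mu([S])H(\mu_{[S]},\P^S)$, via the identity $H(c^{-1}_*\mu_{[S]},\P^{Sc})=H(\mu_{[S]},\P^S)$. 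Since $|\{c:Sc\subset F_n\}|\le|F_n|$, the total will be $\le|F_n|H_{\T_k}(\mu,\P)+o(|F_n|)$, and dividing by $|F_n|$ yields (ii).

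The \emph{upper bound} (iii) is the hard part. I would fix $\varepsilon>0$ and a finite $F\subset G$ with $\frac{1}{|F|}H(\mu,\P^F)\le h(\mu,\P)+\varepsilon$. For $k$ large, every $S\in\CS_k$ will be so $F$-invariant that it admits a disjoint subfamily $\{Fg_i\}_{i=1}^r\subset S$ with $r|F|\ge(1-\varepsilon)|S|$ whose set of centers $\{g_i\}$ is asymptotically F\o lner in $G$. Subadditivity, push-forward, and concavity will give $H(\mu_{[S]},\P^S)\le r\,H(\bar\mu_S,\P^F)+\varepsilon|S|\log|\P|$, where $\bar\mu_S=\frac{1}{r}\sum_i(g_i)_*\mu_{[S]}$. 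The key step will be that by the mean ergodic theorem, applied via ergodic decomposition of $\mu$, $\bar\mu_S(A)\to\mu(A)$ for every atom $A$ of the finite partition $\P^F$, uniformly over $S\in\CS_k$ as $k\to\infty$; hence $H(\bar\mu_S,\P^F)\le|F|(h(\mu,\P)+\varepsilon)+o_k(1)$. Combining gives $\frac{1}{|S|}H(\mu_{[S]},\P^S)\le h(\mu,\P)+\varepsilon(1+\log|\P|)+o_k(1)$ uniformly in $S$; since $\sum_S|S|\mu([S])=1$, the same bound transports to $H_{\T_k}(\mu,\P)$. Letting $k\to\infty$ and then $\varepsilon\to 0$ completes the proof. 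The central obstacle is precisely this uniform-in-$S$ convergence $\bar\mu_S\to\mu$ on finite partitions: it will require arranging the quasitiling so that $\{g_i\}$ is F\o lner, together with a careful ergodic decomposition to extend the argument beyond the ergodic case.
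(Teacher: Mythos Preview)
Your treatment of monotonicity (i) and of the lower bound (ii) is essentially the same as the paper's, and is correct. The gap is in (iii).

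The step ``$\bar\mu_S(A)\to\mu(A)$ by the mean ergodic theorem'' does not hold. Observe that $(g_i)_*\mu_{[S]}=\mu_{[S,g_i]}$, so $\bar\mu_S=\frac1r\sum_i\mu_{[S,g_i]}$. Even if the $g_i$ exhausted all of $S$, one would get $\frac1{|S|}\sum_{s\in S}\mu_{[S,s]}(A)=\mu(A\mid\text{central tile has shape }S)$, because the sets $\{[S,s]:s\in S\}$ are disjoint with union the set of points whose central $k$-tile has shape $S$. This conditional measure has no reason to equal $\mu(A)$ when $\CS_k$ contains several shapes, regardless of ergodicity of $\mu$ or of how F\o lner the set $\{g_i\}$ is; the mean ergodic theorem says nothing about averages of the \emph{conditional} measures $\mu_{[S]}$ under group translates. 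So the plan of collapsing to $\bar\mu_S$ shape by shape and then invoking an ergodic limit cannot close.

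The paper's argument for (iii) bypasses this entirely via Shearer's inequality. For each $S\in\CS_k$ one uses the \emph{overlapping} family $\{Fs:s\in S\}$, which is an $|F|$-cover of the $F^{-1}$-core $\tilde S$; Shearer then gives
\[
H(\mu_{[S]},\P^{\tilde S})\le\frac1{|F|}\sum_{s\in S}H(\mu_{[S]},\P^{Fs})=\frac1{|F|}\sum_{s\in S}H(\mu_{[S,s]},\P^F).
\]
Now one sums over $S$ \emph{before} doing anything else: since $\{[S,s]:S\in\CS_k,\,s\in S\}=\D_{\CS_k}$ is a genuine partition of $\bar X$, the double sum is exactly $H(\mu,\P^F\mid\D_{\CS_k})\le H(\mu,\P^F)$, and one is done with no ergodic input. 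Your approach can be repaired along the same lines: drop the per-shape concavity to $\bar\mu_S$, keep the individual terms $\mu([S,g_i])H(\mu_{[S,g_i]},\P^F)$, sum over $S$, and recognize the result as (a sub-sum of) a conditional entropy; but then the disjoint $Fg_i$ give total weight $\approx\frac1{|F|}$ only up to an $\varepsilon$-error requiring an Ornstein--Weiss type quasitiling, whereas Shearer with the full overlapping family hits $\frac1{|F|}$ exactly and uses all $s\in S$, making the identification with $H(\mu,\P^F\mid\D_{\CS_k})$ immediate.
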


\begin{proof}
We begin by showing that $H_{\T_{k+1}}(\mu,\P)\le H_{\T_k}(\mu,\P)$. The proof for the conditional entropy is identical (we only use subadditivity).
Recall that each shape $S'\in\CS_{k+1}$ decomposes as a disjoint union of shifted shapes of $\T_k$:
\begin{equation}\label{hj}
S'=\bigcup_{S\in\CS_k}\ \bigcup_{c\in C_S(S')}Sc.
\end{equation}
Consider a point $\bar x\in[S]$,  i.e., such that in the tiling $\CT_k$ associated to $\bar x$, a tile of shape $S$ occurs centered at $e$. Let $S'c^{-1}$ denote the central tile in the tiling $\CT_{k+1}$ associated to $\bar x$ (i.e., the tile containing $e$). Then $\bar x\in [S',c] = c([S'])$. By congruency, $S'c^{-1}$ contains the tile $S$ as its component in the decomposition into the shifted shapes from $\CS_k$. Equivalently, $S'$ contains $Sc$ in its decomposition, which means that $c\in C_S(S')$. We conclude that
\begin{equation}\label{dupa}
[S]=\bigcup_{S'\in\CS_{k+1}}\ \bigcup_{c\in C_S(S')}[S',c],
\end{equation}
which is a (disjoint) union of some atoms of the partition $\D_{\CS_{k+1}}$.
Obviously, we have
$$
H(\mu_{[S]},\P^S)\ge H(\mu_{[S]},\P^S|\D_{\CS_{k+1}}),
$$
and, according to \eqref{dupa}, whenever $S'\in \CS_{k+1}$ and $c\in C_S(S')$, one has $(\mu_{[S]})_{[S',c]}=\mu_{[S',c]}$. Thus the conditional entropy on the right equals
$$
\sum_{S'\in\CS_{k+1}}\ \sum_{c\in C_S(S')}\frac{\mu([S',c])}{\mu([S])}H(\mu_{[S',c]},\P^S).
$$

On the other hand, the term $H(\mu_{[S']},\P^{S'})$ represents the entropy of $\P^{S'}$ restricted to $[S']$ (and with regard to the normalized measure on $[S']$). By the decomposition of $S'$ (see \eqref{hj}) and subadditivity of entropy (and using invariance of $\mu$ for the first and second equalities), we have
\begin{multline*}
H(\mu_{[S']},\P^{S'})\le \sum_{S\in\CS_k}\ \sum_{c\in C_S(S')} H(\mu_{[S']},\P^{Sc})=
\sum_{S\in\CS_k}\ \sum_{c\in C_S(S')} H(\mu_{c([S'])},\P^S)=\\
\frac1{\mu([S'])} \sum_{S\in\CS_k}\ \sum_{c\in C_S(S')}\mu([S',c])H(\mu_{[S',c]},\P^S).
\end{multline*}
After multiplying both sides by $\mu([S'])$ and summing over $S'\in\CS_{k+1}$, we obtain
\begin{multline*}
H_{\T_{k+1}}(\mu,\P)\le
\sum_{S'\in\CS_{k+1}}\ \sum_{S\in\CS_k}\ \sum_{c\in C_S(S')}\mu([S',c])H(\mu_{[S',c]},\P^S) =\\
\sum_{S\in\CS_k}\ \mu([S])\sum_{S'\in\CS_{k+1}}\ \sum_{c\in C_S(S')}\frac{\mu([S',c])}{\mu([S])}H(\mu_{[S',c]},\P^S) =\\
\sum_{S\in\CS_k}\ \mu([S])H(\mu_{[S]},\P^S|\D_{\CS_{k+1}})
\le \sum_{S\in\CS_k}\ \mu([S])H(\mu_{[S]},\P^S)=H_{\T_k}(\mu,\P).
\end{multline*}

\smallskip
We pass to proving the convergence to the appropriate limits. It suffices to prove the unconditional version; the conditional version will follow straightforward, by subtraction
(recall that $H_{\T_k}(\mu,\P|\Q)=H_{\T_k}(\mu,\P\vee\Q)-H_{\T_k}(\mu,\Q)$).
We will show that given $k\in\N$ and $\delta>0$, we have
\begin{equation}\label{wte}
\frac1{|F_n|}H(\mu,\P^{F_n})\le H_{\T_k}(\mu,\P)+\delta
\end{equation}
for all sufficiently large $n$, and conversely, that given $n\in\N$ and $\delta>0$ we have
\begin{equation}\label{wtamte}
H_{\T_k}(\mu,\P)\le \frac1{|F_n|}H(\mu,\P^{F_n})+\delta
\end{equation}
for all sufficiently large $k$. This will imply the desired convergence.

Fix $k$ and $\delta$. From now on we will skip the index $k$ in objects associated to the tiling $\T_k$ (for instance, the set of tiles of $\T$ will be denoted by $\CS$). Let $\gamma=\frac\delta{2\log|\P|}$. Let $n$ be so large that $F_n$ is $(\bigcup\CS,\frac\gamma{2|\bigcup\CS|})$-\inv. We will abbreviate $F_n$ as $F$. Given a tiling $\CT\in\T$ let $\F_\CT$ denote collection of all tiles of $\CT$ with centers in $F$, i.e.,
$$
\F_\CT=\{Sc:S\in\CS,\ c\in C_S(\CT)\cap F\}.
$$
We also let $F_\CT = \bigcup\F_\CT$. The parameters of the invariance of $F$ were selected so that $F_\CT$ is a $\gamma$-modification of $F$ ($F_\CT$ is contained in $\bigcup\CS F$ and contains the $(\bigcup\CS)^{-1}$-core of $F$; now use property (3) above Definition \ref{tyi}). With $\CT$ ranging over $\T$, there are finitely many possibilities for $\F_\CT$. Let $\D$ denote the (obviously finite and measurable) partition of $\bar X$ according to which of these possibilities occurs. For each $D\in\D$ the (common for all $\CT\in D$) corresponding family $\F_\CT$ will be denoted by $\F_D$. The union $F_D=\bigcup\F_D$ is a $\gamma$-modification of $F$. Also, for any $S\in\CS$, by $C_S^D(F)$ we will denote the (common for all $\CT\in D$) set of elements of $F$ which are centers of tiles of $\CT$ of the shape $S$. We can write
\begin{multline*}
H(\mu,\P^F)\le H(\mu,\P^F|\D)+H(\mu,\D)=\sum_{D\in\D}\mu(D)H(\mu_D,\P^F)+H(\mu,\D) \le\\
\sum_{D\in\D}\mu(D)H(\mu_D,\P^{F_D})+\eta+H(\mu,\D) = L+\eta+H(\mu,\D),
\end{multline*}
where $L=\sum_{D\in\D}\mu(D)H(\mu_D,\P^{F_D})$ and $\eta = \gamma|F|\log(|\P|)=|F|\frac\delta2$. We need to estimate the term $L$ (we will take care $H(\mu,\D)$ later). By subadditivity of entropy,
$$
H(\mu_D,\P^{F_D})\le\sum_{S\in\CS}\ \sum_{c\in C_S^D(F)}H(\mu_D,\P^{Sc}),
$$
and by invariance of $\mu$, $H(\mu_D,\P^{Sc})$ can be replaced by $H(\mu_{c(D)},\P^S)$. So,
\begin{multline*}
L\le\sum_{D\in\D}\mu(D)\ \sum_{S\in\CS}\ \sum_{c\in C_S^D(F)}H(\mu_{c(D)},\P^S)=\\
\sum_{S\in\CS}\ \sum_{D\in\D}\ \sum_{c\in C_S^D(F)}\mu(c(D))H(\mu_{c(D)},\P^S).
\end{multline*}
Every set $c(D)$ with $D\in\D$ and $c\in C_S^D(F)$ is contained in $[S]$. Moreover, every point $\bar x\in[S]$ belongs to the sets $c(D)$ for exactly $|F|$ pairs $(D,c)$ with $D\in\D,\ c\in C_S^D(F)$, each time for a different value of $c$ (although we do not claim that for different pairs $(D,c)$ the sets $c(D)$ are always different). Indeed, for every $c\in F$ the point $c^{-1}(\bar x)$ has a tile of shape $S$ centered at $c$, hence it belongs to some $D\in\D$ such that $c\in C_S^D(F)$, and then $\bar x$ belongs to $c(D)$. If $\bar x$ belonged to $c(D)$ for more than $|F|$ pairs $(D,c)$ with $D\in\D, c\in C_S^D(F)$ then some value of $c\in F$ would have to repeat, implying that $c^{-1}(\bar x)$ would belong to two different sets $D$, which is impossible. From these facts we conclude that for each $c\in F$, the family $\{c(D): D\in\D \text{ such that } c\in C_S^D(F)\}$ is a partition of $[S]$. We will denote it by $\mathcal E^c_{[S]}$.

Since for each $E\in\mathcal E^c_{[S]}$ we have $(\mu_{[S]})_E=\mu_E$, the last triple sum can be rearranged, as follows:
\begin{multline*}
\sum_{S\in\CS}\ \sum_{c\in F}\ \sum_{E\in \mathcal E^c_{[S]}}\mu(E)H(\mu_{E},\P^S)=
\sum_{c\in F}\ \sum_{S\in\CS}\ \mu([S])H(\mu_{[S]},\P^S|\mathcal E^c_{[S]})\le  \\
\sum_{c\in F}\ \sum_{S\in\CS}\ \mu([S])H(\mu_{[S]},\P^S) = |F|H_{\T}(\mu,\P).
\end{multline*}
We have obtained
$$
\frac1{|F|}H(\mu,P^F)\le H_{\T}(\mu,\P) + \frac\delta2 + \frac1{|F|}H(\mu,\D).
$$

The partition $\D$ depends solely on the tiling $\CT$ restricted to the set $F$. We can write this as $\D\preccurlyeq{\rm V}^F$ (recall that ${\rm V}$ is the alphabet used by the tiling $\T$; here it is identified with the zero-coordinate partition of $\T$). Thus $\frac1{|F|}H(\mu,\D)\le \frac1{|F|}H(\mu,{\rm V}^F)$. Because our tiling $\T$ has \tl\ entropy zero, by the choice of large enough $F=F_n$, this term can be made smaller than $\frac\delta2$. This ends the proof of the inequality \eqref{wte}.

\medskip
For the other inequality, \eqref{wtamte}, we will use Shearer's inequality, which is weaker than strong subadditivity and thus holds for unconditional entropy (see e.g. \cite{DFR}).

Having fixed $\delta$ and $n$ we will abbreviate $F_n$ as $F$. Let $k$ be so large that every shape $S$ of $\T_k$ is $(F,\gamma)$-\inv\, where $\gamma=\frac\delta{|F|\log(|\P|)}$. From now on we will skip the index $k$ in objects associated to the tiling $\T_k$. In the definition of $H_{\T}(\mu,\P)=\sum_{S\in\CS}\mu([S])H(\mu_{[S]},\P^S)$ we will estimate the term $H(\mu_{[S]},\P^S)$. First, we replace it by $H(\mu_{[S]},\P^{\tilde S})$, where $\tilde S$ is the $F^{-1}$-core of $S$ (the set of points $g\in S$ such that $F^{-1}g\subset S$). It follows from the property (2) above Definition \ref{tyi} that $\tilde S$ is a $(1-|F|\gamma)$-subset of $S$. Thus
$$
H(\mu_{[S]},\P^S)\le H(\mu_{[S]},\P^{\tilde S})+|S||F|\gamma\log|\P|=H(\mu_{[S]},\P^{\tilde S})+|S|\delta.
$$
It remains to estimate $H(\mu_{[S]},\P^{\tilde S})$. Consider the family $\{Fs:s\in S\}$.
Every element of $\tilde S$ is contained in precisely $|F|$ sets from this family (it belongs to all $Fs$ with $s\in F^{-1}g)$. That is to say, the above family is \emph{an $|F|$-cover} of $\tilde S$, and the Shearer's inequality applies, yielding
$$
H(\mu_{[S]},\P^{\tilde S})\le\frac1{|F|}\sum_{s\in S}H(\mu_{[S]},\P^{Fs})=\frac1{|F|}\sum_{s\in S}H(\mu_{[S,s]},\P^F),
$$
by invariance of $\mu$. So,
$$
H_{\T}(\mu,\P)\le\frac1{|F|}\sum_{S\in\CS}\sum_{s\in S}\mu([S,s])H(\mu_{[S,s]},\P^F)+\delta
$$
(to obtain $\delta$ at the end we have used $|S|\sum_{S\in\CS}\mu([S])=1$). Because the family $\{[S,s]:S\in\CS,s\in S\}$ is the partition $\D_{\CS}$ of $\bar X$, we have obtained
$$
H_{\T}(\mu,\P)\le\frac1{|F|}H(\mu,\P^F|\D_{\CS})+\delta\le\frac1{|F|}H(\mu,\P^F)+\delta,
$$
and the proof is finished.
\end{proof}
\subsubsection{The language of rectangles}\label{rl}
In this section we introduce the key objects in the construction of symbolic extensions, the rectangles. Although for actions of general countable amenable group these objects no longer resemble rectangles (more appropriate would be calling them ``stacks''), still, by analogy to $\Z$-actions, we will use the term ``$k$-rectangles''.

Let $\bar X$ be the tiled array representation of some zero-dimensional action of $G$, which has the tiling system $\mathbf T$ as a \tl\ factor. We continue to use the notation from the preceding subsection.

\begin{definition} Given $k\in\N$, by a \emph{$k$-rectangle} (\emph{extended $k$-rectangle}) we will mean any block $R\in\bar\Lambda_{[1,k]}^S$ (resp. $\hat R\in(\bar\Lambda_{[1,k]}\times\Lambda_{k+1})^S$), where $S\in\CS_k$, which occurs in some $x\in[S]\subset\bar X$. In particular, $R$ (resp. $\hat R$) has the symbol $``S\,"$ at the position $e$ of the $k$th layer. In either case, $S$ will be referred to as the \emph{shape} of the $k$-rectangle $R$ (resp. extended $k$-rectangle $\hat R$) and $R$ (resp. $\hat R$) will be called a $k$-rectangle (resp. extended $k$-rectangle) over $S$. By $|R|$ (resp. $|\hat R|$) we will always mean the size $|S|$ of the shape. The collection of all $k$-rectangles (resp. extended $k$-rectangles) will be denoted by $\R_k$ (resp. $\hat\R_k$). We will also denote
\begin{gather*}
\R_S = \{R\in\R_k:\text{ the shape of $R$ is }S\} \ \ (S\in\CS_k),\\
\hat\R_S = \{\hat R\in\hat\R_k:\text{ the shape of $\hat R$ is }S\} \ \ (S\in\CS_k),\\
\R=\bigcup_{k\in\N}\R_k\text{ \ \ and \ \ }\hat\R=\bigcup_{k\in\N}\hat\R_k.\phantom{ \ \ (S\in\CS_k)}
\end{gather*}
\end{definition}

By congruency and determinism of the \sq\ of tilings, any $(k\!+\!1)$-rectangle $R'$ is a concatenation of several (precisely $|C_k(S')|$, where $S'\in\CS_{k+1}$ is the shape of $R'$) shifted extended $k$-rectangles, and the projection of $R'$ on the first $k$ layers (denoted by $R'_{[1,k]}$) is a concatenation of $|C_k(S')|$ shifted $k$-rectangles. Although, the component $k$-rectangles (extended $k$-rectangles) are, in the general case, not linearly ordered, we will write these concatenations (also ignoring the shifting of the components) as
$$
R'=\hat R^{(1)}\hat R^{(2)}\dots\hat R^{(q)}, \ \ \ R'_{[1,k]}=R^{(1)}R^{(2)}\dots R^{(q)}
$$
($\hat R^{(i)}\in\hat\R_k,\ R^{(i)}\in\R_k$, $i=1,2,\dots,q$, $q=|C_k(S')|$, $S'\in\CS_{k+1}$ is the shape of $R'$). This will not lead to a confusion, as long as we are only interested in quantitative parameters of the concatenation. (Formally, in writing $R'=\hat R^{(1)}\hat R^{(2)}\dots\hat R^{(q)}$ we make one more imprecision: the concatenation on the right is missing the symbol $``S'"$ at the position $e$ and zeros at other positions of the $(k\!+\!1)$st layer. This should cause no confusion.)
\medskip

With each $k$-rectangle $R\in\R_k$ (extended $k$-rectangle $\hat R\in\hat \R_k$) we will associate its \emph{cylinder set}
$$
[R]=\{x\in\bar X: \bar x\in [S], \bar x_{[1,k]}|_S=R\},\ \ [\hat R]=\{x\in\bar X: \bar x\in [S], \bar x_{[1,k+1]}|_S=\hat R\},
$$
where $S\in\CS_k$ is the shape of $R$ (and of $\hat R$), and $\bar x_{[1,k]}$ (resp. $\bar x_{[1,k+1]}$) is the projection of $\bar x$ on the first $k$ (resp. $k\!+\!1$) layers. For any $R\in\R_k$ we have
\begin{equation}\label{cyl}
[R] = \bigcup\bigl\{[\hat R]:\hat R\in\hat\R_k,\ \hat R_{[1,k]}=R\bigr\},
\end{equation}
where $\hat R_{[1,k]}$ is the $k$-rectangle obtained by projecting $\hat R$ on the first $k$ layers. For a fixed $S\in\CS_k$, with a slight abuse of notation (by identifying the $k$-rectangles or extended $k$-rectangles with their cylinders), we can view $\hat\R_S$ and $\R_S$ as partitions of $[S]$, and then $\hat\R_S\succcurlyeq\R_S$.
\medskip

The language of rectangles will play a crucial role in the forthcoming considerations. In particular, a special case of conditional tiled entropy can be conveniently expressed using rectangles. Let $\mu$ be a probability measure on $\bar X$ and fix some $k\in\N$. Consider the $k$th conditional tiled entropy
$$
H_{\T_k}(\mu,\Lambda_{k+1}|\Lambda_{[1,k]})=\sum_{S\in\CS_k}\mu([S])H(\mu_{[S]},\Lambda_{k+1}^S|\Lambda_{[1,k]}^S)
$$
(where $\Lambda_{k+1}$ and $\Lambda_{[1,k]}$ are considered as symbol partitions of $\bar X$). For each $S\in\CS_k$, all points $\bar x\in[S]$ have the symbol $``S\,"$ in row $k$ at the position $e$. By determinism of the tiling system, this determines all other symbols from the alphabets ${\rm V}_l$ with $l\le k$ at all positions within $S$. In other words, $[S]$ is contained in one atom of the partition ${\rm V}^S_{[1,k]}$. This implies that on $[S]$, the partitions $\Lambda_{[1,k]}^S$ and $\bar\Lambda_{[1,k]}^S$ are identical. Furthermore, the latter partition coincides with $\R_S$ (which is the same as $\R_k$ restricted to $[S]$). Likewise, the partition $\Lambda^S_{[1,k+1]}$ coincides on $[S]$ with $\hat\R_S$, which is the same as $\hat\R_k$ restricted to $[S]$. We conclude that
$$
H_{\T_k}(\mu,\Lambda_{k+1}|\Lambda_{[1,k]})= \sum_{S\in\CS_k}\mu([S])H(\mu_{[S]},\hat\R_k|\R_k).
$$
This looks very much like a conditional entropy, however, $\sum_{S\in\CS_k}\mu([S])$ does not equal $1$. It equals $\mu([C_k])$, where $[C_k]$ is the set of points $\bar x$ which have a tile of $\CT_k$ centered at $e$ (or $\bar x_e=``S\,"$ for some $S\in\CS_k$). If $\mu_{[C_k]}$ denotes the normalized measure $\mu$ restricted to $[C_k]$ then $\mu_{[C_k]}([S])=\frac{\mu([S])}{\mu([C_k])}$. Moreover, since $\mu_{[S]}$ is already normalized, there is no difference between $\mu_{[S]}$ and $(\mu_{[C_k]})_{[S]}$.
Applying this normalization, we obtain
\begin{multline*}
H_{\T_k}(\mu,\Lambda_{k+1}|\Lambda_{[1,k]})=\\
\mu([C_k])\sum_{S\in\CS_k}\mu_{[C_k]}([S])H\bigl((\mu_{[C_k]})_{[S]},\hat\R_k|\R_k\bigr)=\mu([C_k])H(\mu_{[C_k]},\hat\R_k|\R_k\vee\CS_k),
\end{multline*}
where $\CS_k, \R_k$ and $\hat\R_k$ are viewed as partitions of $[C_k]$ (indeed, the set $[C_k]$ consists of all points which have the central tile of $\T_k$ centered at $e$, and the above three partitions classify such points according to the shape of the central tile, the $k$-rectangle and the extended $k$-rectangle over that tile, respectively). But notice that $\R_k\succcurlyeq\CS_k$, because each $k$-rectangle $R$ carries the information about its shape (indeed, the symbol in the $k$th layer of $R$ at the position $e$ is $``S\,"$, which encodes the shape $S$ of $R$). So, the conditioning with respect to $\CS_k$ can be skipped and we have just proved the following, very useful formula:
\begin{equation}\label{przek}
H_{\T_k}(\mu,\Lambda_{k+1}|\Lambda_{[1,k]})=\mu([C_k])H(\mu_{[C_k]},\hat\R_k|\R_k).
\end{equation}
\smallskip

Next, with each $k$-rectangle we will associate a (usually not invariant) empirical measure:

\begin{definition}
Let $k\in\N$. For each $k$-rectangle $R\in\R_k$ we select one point $\bar x_{\!R}$ belonging to the cylinder $[R]$, and we define the \emph{empirical measure associated with~$R$}, as follows:
$$
\boldsymbol\upmu^R= \frac1{|R|}\sum_{g\in S}\delta_{g(\bar x_{\!R})},
$$
where $S\in\CS_k$ is the shape of $R$.
\end{definition}
Although the definition depends on the choice of the point $\bar x_{\!R}$, this choice will turn out to be of no importance. This is why we skip $\bar x_{\!R}$ in the denotation of~$\boldsymbol\upmu^R$.

Recall, that one of the key properties of a F\o lner system of tilings is that the shapes form a F\o lner \sq, which implies that the measures $\boldsymbol\upmu^R$ have the general form $\boldsymbol\upmu^{F_n}_x$ as defined prior to Proposition \ref{oh}, and by that proposition, for sufficiently large $k$, lie in a small neighborhood of $\M_G(\bar X)$. The lemma below shows that if the shapes of the $k$-rectangles are large enough then the measures $\boldsymbol\upmu^R$ (more pecisely, their projections $\boldsymbol\upmu^R_{[1,k]}$ on $\bar X_{[1,k]}$) depend insignificantly on the choice of the points $\bar x_{\!R}$.

\begin{lemma}\label{comb} Choose some $\delta>0$. Let $R\in\R_k$, where $k\in\N$. If the shape $S$ of $R$ is a sufficiently far member of a F\o lner \sq\ then for any two points $\bar x, \bar x'\in[R]$, the empirical measures
$$
\boldsymbol\upmu^R= \frac1{|R|}\sum_{s\in S}\delta_{s(\bar x)} \text{ \ \ and \ \ }
{\boldsymbol\upmu'}^R= \frac1{|R|}\sum_{s\in S}\delta_{s(\bar x')}
$$
satisfy $d_*({\boldsymbol\upmu}^R_{[1,k]},{\boldsymbol\upmu'}^R_{[1,k]})<\delta$.
\end{lemma}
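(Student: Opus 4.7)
The plan is a straightforward weak-$*$ estimate that reduces everything to the $K$-core property of F\o lner sets. Fix a compatible metric $d_*$ on $\M(\bar X_{[1,k]})$ of the form \eqref{metric}, built from a sequence $(f_n)_{n\in\N}$ of continuous functions $f_n:\bar X_{[1,k]}\to[0,1]$ linearly dense in $C(\bar X_{[1,k]})$.

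First I would choose $N\in\N$ so large that the tail $\sum_{n>N}2^{-n}$ is less than $\delta/2$; since each $f_n$ takes values in $[0,1]$, this tail contributes at most $\delta/2$ to $d_*$ regardless of the measures compared. Next, each $f_n$ with $n\le N$ is continuous on the zero-dimensional space $\bar X_{[1,k]}$, hence can be uniformly approximated by a function $\tilde f_n$ depending only on coordinates in some common finite set $K\subset G$ (containing $e$), with $\|f_n-\tilde f_n\|_\infty<\delta/8$. This replacement costs at most $2\|f_n-\tilde f_n\|_\infty<\delta/4$ in each of the first $N$ integrals, so in total at most $\delta/4$ in $d_*$.

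The main point is then the following: if $\bar x,\bar x'\in[R]$ then they coincide on the first $k$ layers at every position in $S$. Consequently, for any $s\in G$,
\[
(s(\bar x))_{[1,k]}\big|_K=(s(\bar x'))_{[1,k]}\big|_K \quad\text{whenever } Ks\subset S,
\]
i.e., whenever $s$ lies in the $K$-core $S_K=\{s\in S:Ks\subset S\}$ (see property (2) in Subsection~\ref{2.2}). Since $\tilde f_n$ depends only on coordinates in $K$, this gives $\tilde f_n(\pi_{[1,k]}(s(\bar x)))=\tilde f_n(\pi_{[1,k]}(s(\bar x')))$ for every $s\in S_K$. Therefore
\[
\left|\int \tilde f_n\,d\boldsymbol\upmu^R_{[1,k]}-\int \tilde f_n\,d{\boldsymbol\upmu'}^R_{[1,k]}\right|
\le \frac{1}{|S|}\sum_{s\in S\setminus S_K}2\|\tilde f_n\|_\infty\le 2\,\frac{|S\setminus S_K|}{|S|}.
\]
Now I invoke F\o lner invariance: if $S$ is $(K,\eps)$-invariant with $\eps$ small enough that $|K|\eps<\delta/8$, then by property (2) in Subsection~\ref{2.2}, $|S\setminus S_K|\le|K|\eps|S|<(\delta/8)|S|$, so the right-hand side above is less than $\delta/4$. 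Since all sufficiently far members of any F\o lner sequence are $(K,\eps)$-invariant, this holds as soon as the shape $S$ is far enough in such a sequence.

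Combining: for each $n\le N$ the integrals against the two empirical measures differ by at most $\delta/4+\delta/4=\delta/2$, and the tail contributes at most $\delta/2$. Summing the weights $2^{-n}$ yields $d_*(\boldsymbol\upmu^R_{[1,k]},{\boldsymbol\upmu'}^R_{[1,k]})<\delta$. There is no serious obstacle here; the only subtle point is choosing the finite coding horizon $K$ uniformly for the finitely many relevant $f_n$ before invoking the F\o lner property, which is a routine maneuver in zero-dimensional symbolic dynamics.
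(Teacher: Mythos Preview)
Your proof is correct and follows essentially the same approach as the paper: both hinge on the observation that $\bar x,\bar x'\in[R]$ agree on $S$ in the first $k$ layers, so for $s$ in the $K$-core $S_K$ the shifted points agree on $K$, and then one uses the F\o lner property to make $|S\setminus S_K|/|S|$ small. The only cosmetic difference is that the paper packages the estimate via convexity of $d_*$ (Dirac measures at points agreeing on $K$ are $\tfrac\delta2$-close, and the empirical measures are convex combinations of these), whereas you unpack $d_*$ explicitly, truncate the defining series, and approximate the finitely many $f_n$ by cylinder functions; the content is the same.
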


\begin{proof} First of all, notice that $\boldsymbol\upmu^R_{[1,k]}=\frac1{|R|}\sum_{g\in S}\delta_{g(\bar x_{[1,k]})}$. Let $K\subset G$, be a finite set such that if $\bar x_{[1,k]}, \bar x'_{[1,k]}$ agree on $K$ then the corresponding Dirac measures $\delta_{\bar x_{[1,k]}}, \delta_{\bar x'_{[1,k]}}$ are closer to each other than $\frac\delta2$ in $\M(\bar X_{[1,k]})$. If the shape $S$ is a sufficiently far member of a F\o lner \sq\ then it is $(K,\frac\delta{2|K|})$-\inv. We can write
$$
\boldsymbol\upmu^R_{[1,k]}= \frac1{|R|}\sum_{s\in S_K}\delta_{s(\bar x_{[1,k]})}+\frac1{|R|}\sum_{s\in S\setminus S_K}\delta_{s(\bar x_{[1,k]})},
$$
where $S_K$ is the $K$-core of $S$. The sum representing ${\boldsymbol\upmu'}^R_{[1,k]}$ splits analogously. The points $\bar x$ and $\bar x'$ belong to the same cylinder $[R]$, which means that $\bar x_{[1,k]}$ and $\bar x'_{[1,k]}$ agree on $S$, thus, for $s\in S_K$, the points $s(\bar x_{[1,k]})$ and $s(\bar x'_{[1,k]})$ agree at least on $K$ implying that the measures $\delta_{s(\bar x_{[1,k]})}$ and $\delta_{s(\bar x'_{[1,k]})}$ are at most $\frac\delta2$ apart. Because $S_K$ is a $(1\!-\!\frac\delta2)$-subset of $S$ (see property (2) in subsection \ref{2.2}), using convexity of the metric $d_*$ (we also use that $d_*
\le 1$) we get
$$
d_*\bigl(\boldsymbol\upmu^R_{[1,k]}, {\boldsymbol\upmu'}^R_{[1,k]}\bigr)\le \bigl(1-\tfrac\delta2\bigr)\cdot\tfrac\delta2+\tfrac\delta2\cdot 1<\delta,
$$
which ends the proof.
\end{proof}

From the above lemma we draw two conclusions which will be used later. They look very similar, however, the first one deals with \im s, while the other with measures that are not necessarily \inv. This is why the statements are presented separately, with slightly different proofs.

\begin{cor}\label{cor1} Fix some $\delta>0$. If all shapes $S\in\CS_k$ ($k\in\N$) are sufficiently far members of a F\o lner \sq, then, for any \im\ $\mu\in\M_G(\bar X)$, we have
$$
d_*\Bigl(\mu_{[1,k]}\,,\,\sum_{R\in\R_k}\mu([R])|R|\boldsymbol\upmu^R_{[1,k]}\Bigr)<\delta.
$$
\end{cor}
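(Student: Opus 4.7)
My plan is to exhibit both $\mu_{[1,k]}$ and the target measure as integral mixtures of empirical measures with \emph{the same} system of weights, so that Lemma~\ref{comb} (giving pointwise closeness) combined with convexity of $d_*$ yields the bound directly. The engine is invariance of $\mu$ together with the identity $[S,s]=s([S])$, which gives $\mu([S,s])=\mu([S])$, and the fact that the tiles of $\CT_k$ partition $G$, hence $\bar X=\bigsqcup_{S\in\CS_k,\,s\in S}[S,s]$. Together with the further refinement $[S]=\bigsqcup_{R\in\R_S}[R]$, this yields the bookkeeping identity
$$
1=\sum_{S\in\CS_k}|S|\mu([S])=\sum_{R\in\R_k}|R|\mu([R]).
$$

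For any continuous $f$ on $\bar X_{[1,k]}$, lifted to $\bar X$ as a fiber-constant function, invariance gives $\int_{[S,s]}f\,d\mu=\int_{[S]}f\circ s\,d\mu$, so
$$
\int f\,d\mu=\sum_{S\in\CS_k}\int_{[S]}\sum_{s\in S}f(s(\bar y))\,d\mu(\bar y)=\sum_{R\in\R_k}|R|\int_{[R]}\int f\,d\boldsymbol\upmu^{\bar y}_{[1,k]}\,d\mu(\bar y),
$$
where, for $\bar y\in[R]\subset[S(R)]$ with $S(R)$ the shape of $R$, I write $\boldsymbol\upmu^{\bar y}=\frac1{|S(R)|}\sum_{s\in S(R)}\delta_{s(\bar y)}$. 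As an equality of measures on $\bar X_{[1,k]}$ this reads
$$
\mu_{[1,k]}=\sum_{R\in\R_k}|R|\int_{[R]}\boldsymbol\upmu^{\bar y}_{[1,k]}\,d\mu(\bar y),
$$
a convex combination of probability measures with total mass $1$.

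Since $\mu([R])=\int_{[R]}d\mu$, the target measure is visibly the same convex combination but with $\boldsymbol\upmu^{\bar y}_{[1,k]}$ replaced by the constant-in-$\bar y$ quantity $\boldsymbol\upmu^R_{[1,k]}$:
$$
\sum_{R\in\R_k}\mu([R])|R|\boldsymbol\upmu^R_{[1,k]}=\sum_{R\in\R_k}|R|\int_{[R]}\boldsymbol\upmu^R_{[1,k]}\,d\mu(\bar y).
$$
Lemma~\ref{comb}, applied with the given $\delta$, guarantees that once all shapes in $\CS_k$ are sufficiently far in the F\o lner \sq, every pointwise distance $d_*(\boldsymbol\upmu^{\bar y}_{[1,k]},\boldsymbol\upmu^R_{[1,k]})$ with $\bar y\in[R]$ is bounded by $\delta$; note that the arbitrariness of the choice of $\bar x_R$ entering $\boldsymbol\upmu^R_{[1,k]}$ is immaterial, since the lemma precisely controls any two points of $[R]$. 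Convexity of $d_*$ then delivers
$$
d_*\Bigl(\mu_{[1,k]},\sum_R\mu([R])|R|\boldsymbol\upmu^R_{[1,k]}\Bigr)\le\sum_{R\in\R_k}|R|\int_{[R]}d_*(\boldsymbol\upmu^{\bar y}_{[1,k]},\boldsymbol\upmu^R_{[1,k]})\,d\mu(\bar y)<\delta.
$$

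I do not foresee a genuine obstacle: the argument is essentially a careful accounting of the shift-invariance via $[S,s]=s([S])$ together with the tiling partition, after which Lemma~\ref{comb} is exactly the pointwise input required and convexity of $d_*$ packages everything into the desired bound.
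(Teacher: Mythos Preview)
Your proof is correct and follows essentially the same approach as the paper's own proof. Both arguments express $\mu_{[1,k]}$ and the target measure as identical convex (integral) combinations over $R\in\R_k$ and $\bar y\in[R]$, differing only in that one uses the variable empirical measure $\boldsymbol\upmu^{\bar y}_{[1,k]}$ and the other the constant $\boldsymbol\upmu^R_{[1,k]}$; then Lemma~\ref{comb} and convexity of $d_*$ conclude exactly as you do.
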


%Let $R'\in\R_{k+1}$ and let $R'_{[1,k]}=R^{(1)}R^{(2)}\dots R^{(q)}$ be the decomposition of the restriction $R'_{[1,k]}$ of $R'$ to the first $k$ layers into $k$-rectangles ($q=|C_k(S')|$, where $S'$ is the shape of $R'$). If all shapes $S\in\CS_k$ are sufficiently far members of a F\o lner \sq, then:
%$$
%d_*\Bigl(\ \mu^{R'}_{[1,k]}\,,\ \frac1{|R'|}\sum_{i=1}^q |R^{(i)}|\,\mu^{R^{(i)}}_{[1,k]}\Bigr)\le\delta.
%$$

\begin{proof} We have:
\begin{multline}\label{al}
\sum_{R\in\R_k}\mu([R])|R|\boldsymbol\upmu^R_{[1,k]}=\sum_{S\in\CS_k}\sum_{R\in\R_S}\mu([R])|R|\frac1{|R|}\sum_{s\in S}(\delta_{s(\bar x_{\!R})})_{[1,k]}=\\
\sum_{S\in\CS_k}\sum_{R\in\R_S}|R|\left(\int_{[R]}\frac1{|R|}\sum_{s\in S}\delta_{s(\bar x_{\!R})}\,d\mu(\bar x)\right)_{[1,k]},
\end{multline}
where $\bar x$ ranges over~$[R]$, while the integrated (measure-valued) function is constant.

We now pass to analyzing $\mu_{[1,k]}$. Given $S\in\CS_k, R\in\R_S$ and $s\in S$, points $\bar x\in s([R])$ are characterized by two properties:
\begin{enumerate}
	\item[(a)] the central tile of the tiling $\CT_k$ associated with $\bar x$ is centered at
	$s^{-1}$, and
 \item[(b)] the $k$-rectangle appearing in $\bar x$ over the central tile is $R$.
\end{enumerate}
It is thus obvious that the sets $s([R])$ with $S$ ranging over $\CS_k$, $R$ ranging over $\R_S$ and $s$ ranging over $S$, form a finite measurable partition of $\bar X$.
Therefore
\begin{multline}\label{ak}
\mu_{[1,k]}=\left(\int_{\bar X}\delta_{\bar x}\,d\mu(\bar x)\right)_{[1,k]}=
\left(\sum_{S\in\CS_k}\sum_{R\in\R_S}\sum_{s\in S}\int_{s([R])}\delta_{\bar x}\,d\mu(\bar x)\right)_{[1,k]}=\\
\sum_{S\in\CS_k}\sum_{R\in\R_S}|R|\left(\int_{[R]}\frac1{|R|}\sum_{s\in S}\delta_{s(\bar x)}\,d\mu(\bar x)\right)_{[1,k]}
\end{multline}
(in the last equality it is essential that $\mu$ is \inv).
Comparing the right hand sides of formulas \eqref{al} and \eqref{ak}, we find out that they differ only in having the variable point $\bar x$ ranging over $[R]$ replaced by the constant point $\bar x_{R}$ (also belonging to $[R]$). Since the shape $S$ of $R$ is a far member of the F\o lner \sq, by Lemma \ref{comb}, for each $\bar x\in[R]$, the measures $(\frac1{|R|}\sum_{s\in S}\delta_{s(\bar x)})_{[1,k]}$ and $(\frac1{|R|}\sum_{s\in S}\delta_{s(\bar x_{\!R})})_{[1,k]}$ are less than $\delta$ apart. The measures $\mu_{[1,k]}$ and $\sum_{R\in\R_k}\mu([R])|R|{\boldsymbol\upmu}^R_{[1,k]}$ are represented as identical integral representations of $(\frac1{|R|}\sum_{s\in S}\delta_{s(\bar x)})_{[1,k]}$ and $(\frac1{|R|}\sum_{s\in S}\delta_{s(\bar x_{\!R})})_{[1,k]}$, with respect to a distribution whose total mass equals
$$
\sum_{S\in \mathcal{S}_k} \sum_{R\in \mathcal{R}_S} |R| \mu ([R])= \sum_{S\in \mathcal{S}_k} |S| \mu ([S])= 1.
$$
So we are dealing with generalized convex combinations, and by convexity and continuity of the metric $d_*$ the proof is finished.
\end{proof}

\begin{cor}\label{cor2}
Fix some $\delta>0$. If all shapes $S\in\CS_k$ ($k\in\N$) are sufficiently far members of a F\o lner \sq, then, the following holds: Let $R'\in\R_{k+1}$ be a $(k\!+\!1)$-rectangle and  let $R'_{[1,k]}=R^{(1)}R^{(2)}\dots R^{(q)}$ be the decomposition of the restriction $R'_{[1,k]}$ of $R'$ to the first $k$ layers into $k$-rectangles ($q=|C_k(S')|$, where $S'$ is the shape of $R'$). Then
$$
d_*\Bigl(\ \boldsymbol\upmu^{R'}_{[1,k]}\,,\ \frac1{|R'|}\sum_{i=1}^q |R^{(i)}|\,\boldsymbol\upmu^{R^{(i)}}_{[1,k]}\Bigr)\le\delta.
$$
\end{cor}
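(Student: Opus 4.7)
The plan is to mimic the proof of Corollary~\ref{cor1}, but instead of using invariance of a measure $\mu$ to integrate, we use congruency and determinism of the tiling system to slice the shape $S'$ into the sub-shapes, and then apply Lemma~\ref{comb} once on each piece.

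First I would set up notation consistent with the statement: write the decomposition of the shape as $S'=\bigsqcup_{i=1}^q S^{(i)}c^{(i)}$, where $S^{(i)}\in\CS_k$ is the shape of $R^{(i)}$ and $c^{(i)}\in C_{S^{(i)}}(S')$. Pick once and for all the point $\bar x_{\!R'}\in[R']$ used to define $\boldsymbol\upmu^{R'}$. The crucial observation is that, since $\bar x_{\!R'}\in[R']$, the central tile of $\CT_{k+1}(\bar x_{\!R'})$ is $S'$; by congruency and determinism, the tiling $\CT_k(\bar x_{\!R'})$ partitions $S'$ into the tiles $S^{(i)}c^{(i)}$, and the $k$-rectangle appearing over $S^{(i)}c^{(i)}$ in the first $k$ layers is exactly $R^{(i)}$. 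Applying $c^{(i)}$ to $\bar x_{\!R'}$ gives a point $c^{(i)}(\bar x_{\!R'})$ whose central tile in $\CT_k$ is $S^{(i)}$ and whose $k$-rectangle over that tile is $R^{(i)}$, hence $c^{(i)}(\bar x_{\!R'})\in[R^{(i)}]$.

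Next I would use the partition $S'=\bigsqcup_i S^{(i)}c^{(i)}$ to split the defining sum
\[
\boldsymbol\upmu^{R'}=\frac1{|R'|}\sum_{g\in S'}\delta_{g(\bar x_{\!R'})}
=\frac1{|R'|}\sum_{i=1}^q|R^{(i)}|\cdot\boldsymbol\upnu^{(i)},\qquad
\boldsymbol\upnu^{(i)}:=\frac1{|R^{(i)}|}\sum_{s\in S^{(i)}}\delta_{s(c^{(i)}(\bar x_{\!R'}))},
\]
where I have rewritten $sc^{(i)}(\bar x_{\!R'})=s\bigl(c^{(i)}(\bar x_{\!R'})\bigr)$. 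Now $\boldsymbol\upnu^{(i)}$ and $\boldsymbol\upmu^{R^{(i)}}$ are two empirical measures over the same shape $S^{(i)}$, associated with two points in the same cylinder $[R^{(i)}]$. Assuming that every shape in $\CS_k$ is a sufficiently far member of a F\o lner sequence, Lemma~\ref{comb} applies uniformly to give $d_*\bigl(\boldsymbol\upnu^{(i)}_{[1,k]},\boldsymbol\upmu^{R^{(i)}}_{[1,k]}\bigr)<\delta$ for every $i=1,\dots,q$.

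Finally, since $\sum_{i=1}^q\frac{|R^{(i)}|}{|R'|}=\frac{|S'|}{|R'|}=1$, the difference
\[
\boldsymbol\upmu^{R'}_{[1,k]}-\frac1{|R'|}\sum_{i=1}^q|R^{(i)}|\,\boldsymbol\upmu^{R^{(i)}}_{[1,k]}
=\sum_{i=1}^q\frac{|R^{(i)}|}{|R'|}\Bigl(\boldsymbol\upnu^{(i)}_{[1,k]}-\boldsymbol\upmu^{R^{(i)}}_{[1,k]}\Bigr)
\]
is a convex combination of signed measures each of $d_*$-norm at most $\delta$, so by convexity of $d_*$ the total distance is bounded by $\delta$. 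No obstacle is expected: the only nontrivial ingredient is the identification of $c^{(i)}(\bar x_{\!R'})$ as a point of $[R^{(i)}]$, which follows directly from congruency and determinism of $\mathbf T$, and once that is in place Lemma~\ref{comb} does all the work. The proof is genuinely easier than that of Corollary~\ref{cor1} because no integration (and hence no invariance of $\mu$) is required.
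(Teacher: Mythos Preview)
Your proof is correct and follows essentially the same approach as the paper: decompose $S'$ into the tiles $S^{(i)}c^{(i)}$, observe that $c^{(i)}(\bar x_{R'})\in[R^{(i)}]$, apply Lemma~\ref{comb} on each piece, and conclude by convexity of $d_*$.
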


\begin{proof}
In precise terms, the above decomposition of $R'_{[1,k]}$ means that the set $C_k(S')$ can be enumerated as $\{c^{(1)},c^{(2)},\dots,c^{(q)}\}$ and then $S'=\bigcup_{i=1}^q S^{(i)}c^{(i)}$ is the partition of $S'\in\CS_{k+1}$ by the tiles of $\T_k$, and, for each $i=1,2,\dots,q$, $R'_{[1,k]}|_{S^{(i)}c^{(i)}}=R^{(i)}$. With this notation, we have
$$
\boldsymbol\upmu^{R'}= \frac1{|R'|}\sum_{g\in S'}\delta_{g(\bar x_{R'})}=\frac1{|R'|}\sum_{i=1}^q |R^{(i)}|\,\frac1{|R^{(i)}|}\sum_{g\in S^{(i)}}\delta_{gc^{(i)}(\bar x_{R'})}.
$$
On the other hand,
$$
\frac1{|R'|}\sum_{i=1}^q |R^{(i)}|\,\boldsymbol\upmu^{R^{(i)}}=\frac1{|R'|}\sum_{i=1}^q |R^{(i)}|\,\frac1{|R^{(i)}|}\sum_{g\in S^{(i)}}\delta_{g(\bar x_{R^{(i)}})}.
$$
Comparing the right hand sides above we find out that they differ only in having the points $c^{(i)}(\bar x_{R'})$ replaced by $\bar x_{R^{(i)}}$ (selected from the respective cylinders $[R^{(i)}]$, $i=1,2,\dots,q$). But observe that the points $c^{(i)}(\bar x_{R'})$ also belong to the respective cylinders $[R^{(i)}]$. By Lemma \ref{comb}, once all shapes $S\in \CS_k$ are sufficiently far members of a F\o lner sequence, then for each $i=1,2,\dots, q$, the measures
$\frac1{|R^{(i)}|}\sum_{g\in S^{(i)}}\delta_{gc^{(i)}(\bar x_{R'})}$ and $\frac1{|R^{(i)}|}\sum_{g\in S^{(i)}}\delta_{g(\bar x_{R^{(i)}})}$ are less than $\delta$ apart and the assertion follows from convexity of the metric $d_*$ and the fact that
$\sum_{i=1}^q|R^{(i)}|= |R'|$.
\end{proof}

\section{Quasi-symbolic extensions---the hard direction of the main theorem}\label{s5}

In full generality we can prove the hard direction of the Symbolic Extension Entropy Theorem in a somewhat deficient version, where the symbolic extensions are replaced by what we call quasi-symbolic extensions, as defined below:

\begin{definition}
By a \emph{quasi-symbolic system} $\bar Y$ we mean a \tl\ joining $Y\vee\mathbf T$ of a subshift $Y$ with a zero entropy tiling system $\mathbf T$. By a \emph{quasi-symbolic extension} of a system we mean a \tl\ extension which is a quasi-symbolic system.
\end{definition}

We are in a position to prove the hard direction of the main theorem.

\begin{theorem}\label{quasi}
Let a countable amenable group $G$ act on a compact metric space $X$ and let $\H$ denote
the \ens\ of $X$. Then $\EA$ is a (finite) affine \se\ of $\H$ if and only if there exists a quasi-symbolic extension $\pi:\bar Y\to X$ such that $\EA=h^\pi$.
\end{theorem}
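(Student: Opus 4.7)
The easy direction is essentially Theorem \ref{easy} with a minor adaptation. A quasi-symbolic extension $\bar Y = Y\vee\mathbf T$ is a topological joining of a symbolic system with a zero-entropy tiling system, so for any $\bar\nu\in\M_G(\bar Y)$ projecting to $\nu\in\M_G(Y)$ and $\boldsymbol\tau\in\M_G(\mathbf T)$, the Abramov--Rokhlin-type identity gives $h(\bar\nu,\bar Y)=h(\nu,Y)+h(\bar\nu,\bar Y|Y)\le h(\nu,Y)$ (since $\htop(\mathbf T,G)=0$ forces $h(\bar\nu,\bar Y|Y)=0$), while the reverse inequality $h(\bar\nu,\bar Y)\ge h(\nu,Y)$ is trivial. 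Thus $\bar Y$ behaves entropy-theoretically like its first coordinate $Y$, which is symbolic. Plugging this into the proof of Theorem \ref{easy}---replacing $\P_\Lambda$ by the symbol partition of $Y$ lifted to $\bar Y$---shows that $h^\pi$ is a finite affine upper semicontinuous function on $\MGX$ with $h^\pi-h_k$ \usc\ for every $k$, hence a finite affine superenvelope of $\H$.

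For the hard direction, the plan is to perform the construction over a zero-dimensional model of $X$ enriched with a tiling system. By Huczek's theorem there is a principal zero-dimensional extension $X'\to X$; I take a zero-entropy tiling system $\mathbf T$ (Theorem \ref{fs}) and form the joining $\bar X=X'\vee\mathbf T$, which is still a principal extension of $X$. Lifting $\EA$ against the factor map yields a finite affine superenvelope of the entropy structure of $\bar X$ (by Propositions \ref{fact2} and \ref{pushdown}). It suffices to build a subshift $Y$ and a factor map $\bar Y:=Y\vee\mathbf T\to \bar X$ whose extension-entropy function on $\M_G(\bar X)$ equals the lifted $\EA$; pushing down along $\bar X\to X$ returns a quasi-symbolic extension of $X$ with extension entropy $\EA$, because $\mathbf T$ contributes no entropy and $X'\to X$ is principal.

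The construction proceeds rectangle-by-rectangle in the tiled array representation of $\bar X=\overset\leftarrow{\lim_k}\bar X_{[1,k]}$. For each $k$ and each $k$-rectangle $R\in\R_k$ I will assign an integer ``oracle'' $\mathbf u_k(R)\in\N$, intended to satisfy
\begin{equation*}
\tfrac1{|R|}\log\mathbf u_k(R)\;\approx\;\EA\bigl(\boldsymbol\upmu^R\bigr),
\end{equation*}
with two compatibility requirements. First, on each $(k{+}1)$-rectangle $R'=\hat R^{(1)}\cdots\hat R^{(q)}$, the oracles must multiply consistently, in the sense that
\begin{equation*}
\mathbf u_{k+1}(R')\;\ge\;\prod_{i=1}^q \mathbf u_k(R^{(i)})\cdot M(R'),
\end{equation*}
where $M(R')$ is a local factor counting how much additional information the $(k{+}1)$st layer contributes over the $k$th; the partition of $|R'|\log M(R')$ among the component rectangles is guided by the tiled conditional entropy $H_{\T_k}(\cdot,\Lambda_{k+1}|\Lambda_{[1,k]})$ expressed via rectangles in formula \eqref{przek}. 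Second, the oracles must be compatible with the superenvelope hierarchy: the excess $\frac1{|R|}\log\mathbf u_k(R)-h_k(\boldsymbol\upmu^R)$ is controlled by the \usc\ defect $\EA-h_k$. The convergence properties from Corollaries \ref{cor1}--\ref{cor2} and the decreasing convergence of tiled entropy (Theorem \ref{tte}) permit the local averages to be matched with the global superenvelope condition. This step---building a coherent oracle in the absence of strong subadditivity for ordinary conditional entropy on amenable groups---is where tiled entropy is indispensable and where I expect most of the technical work to lie.

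Given the oracles, the subshift $Y$ is assembled as follows: I choose a finite alphabet $\Lambda$ large enough that for every $k$ and $R\in\R_k$ there are at least $\mathbf u_k(R)$ admissible blocks of shape $S$ over $\Lambda$, and I declare a block $B\in\Lambda^S$ to be ``admissible over $R$'' by choosing a set of $\mathbf u_k(R)$ such blocks, nested compatibly with the oracle multiplication rule for $R'$ above. The subshift $Y$ is the set of $y\in\Lambda^G$ such that, for some $\bar x\in\bar X$, for every $k$ and every tile $Sc$ of the $k$th tiling of $\bar x$ the block $y|_{Sc}$ is a shifted copy of an admissible block over the $k$-rectangle appearing there; $\bar Y:=Y\vee\mathbf T$ and $\pi$ is the canonical projection onto $\bar X$ (then to $X$). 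The upper bound $h^\pi\le\EA$ follows by Shannon entropy counting: on any F\o lner set the number of admissible $Y$-blocks above a fixed $\bar X$-pattern is bounded by the product of the $\mathbf u_k$'s on the covering tiles, giving $\frac1{|F_n|}\log(\#)\lesssim \EA$. The lower bound $h^\pi\ge\EA$ follows by exhibiting, for each $\mu\in\MGX$ and each $\varepsilon>0$, an invariant measure on $\bar Y$ projecting to $\mu$ with entropy at least $\EA(\mu)-\varepsilon$, constructed by uniformizing the choice of admissible blocks above each rectangle; ergodic-decomposition-plus-affinity reduces this to ergodic $\mu$, where the pointwise ergodic theorem (Theorem \ref{ergodic}) and Corollary \ref{cor1} make the oracle counts $\log\mathbf u_k(R)$ materialize as genuine entropy.
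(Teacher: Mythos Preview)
Your outline follows the paper's architecture (easy direction via Theorem \ref{easy}, hard direction via a principal \zd\ extension joined with a zero-entropy tiling system, then an oracle-driven symbolic coding), but the oracle step contains a genuine error. You write the compatibility as
\[
\mathbf u_{k+1}(R')\;\ge\;\prod_{i=1}^q \mathbf u_k(R^{(i)})\cdot M(R'),
\]
whereas the inequality must go the other way, and moreover must be summed over all $(k{+}1)$-rectangles sharing the same bottom $k$ layers:
\[
\sum_{R':\,R'_{[1,k]}=R^{(1)}\cdots R^{(q)}}\O(R')\ \le\ \prod_{i=1}^q\O(R^{(i)}).
\]
The point is that at stage $k$ the construction has already committed to exactly $\prod_i\O(R^{(i)})$ admissible $\Lambda$-blocks over the concatenation $D=R^{(1)}\cdots R^{(q)}$; at stage $k{+}1$ these must be partitioned into disjoint families $\F_{S'}(R')$, one for each $R'$ with $R'_{[1,k]}=D$, of cardinality $\O(R')$. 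Your inequality would make the space of admissible blocks grow with $k$, which is impossible once the alphabet $\Lambda$ is fixed.

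Relatedly, setting $\tfrac1{|R|}\log\mathbf u_k(R)\approx\EA(\boldsymbol\upmu^R)$ directly cannot produce the correct oracle condition: since $\EA$ is only \usc\ and $\EA-\EA=0$ does not dominate $h_{k+1}-h_k$, you have no slack to absorb the conditional contribution of the $(k{+}1)$st layer. The paper inserts an auxiliary decreasing sequence of \emph{continuous} affine functions $g_k$ on $\M_G(\bar X)$ with $g_k>\EA-h_k$, $\lim_k g_k=\EA-h$, and (crucially) $g_k-g_{k+1}-3\delta_k>H_{\T_k}(\cdot,\Lambda_{k+1}|\Lambda_{[1,k]})$, and then sets $\O(R)=\lceil 2^{|R|g_k(\boldsymbol\upmu^R)}\rceil$. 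The gap $g_k-g_{k+1}$ is what pays for the sum $\sum_{R'}2^{-|R'|H_{\T_k}(\boldsymbol\upmu^{R'},\Lambda_{k+1}|\Lambda_{[1,k]})}$ (controlled via a combinatorial lemma on blocks), yielding the oracle inequality in the correct direction. Without this sandwich $(g_k)$ and the reversed inequality, Stage~2 of the construction cannot be carried out, and your entropy bounds in the last paragraph have nothing to stand on.
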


\begin{proof}
The ``easy'' direction requires just a comment. We need to show that the extension entropy function $h^{\pi}$ in any quasi-symbolic extension $\pi:\bar Y\to X$, where $\bar Y=Y\vee\mathbf T$, is a \se\ of the \ens\ $\H$ of $X$. We can repeat the proof of Theorem \ref{easy} almost unchanged. The only change is that in the first part (for zero-dimensional $X$) we need to replace $Y$ by $\bar  Y$. But because $\bar Y$ is a principal extension of $Y$, we can still use the partition $\P_\Lambda$ (lifted to $\bar Y$) and the equality $h(\nu,\bar Y)=h(\nu,\P_\Lambda)$ will hold for all $\nu\in\M_G(\bar Y)$. The rest of the proof passes with no further modifications.

It is the other, ``hard'' direction, that requires a lot of work.
We begin by replacing $X$ with its principal zero-dimensional extension $X'$ provided by \cite[Theorem 2]{H}. We choose an array representation $X'=\overset\leftarrow{\lim_k} X'_{[1,k]}$, where, for each $k\in\N$, $X'_k$ is a subshift over some alphabet $\Lambda_k$ and the alphabet of $X'_{[1,k]}$ is $\Lambda_{[1,k]}=\prod_{l=1}^k\Lambda_l$. Next, on $G$ we fix a tiling system $\mathbf T=\overset\leftarrow{\lim_k}\T_k$ of \tl\ entropy zero (whose existence is guaranteed by Theorem \ref{fs}). Later we may need to come back to this starting point and replace $\mathbf T$ by its sub\sq\ (a process which we will call ``speeding up the tiling system''), but at the moment we consider $\mathbf T$ as fixed. We extend $X'$ by joining it (in any case, one can take the direct product) with $\mathbf T$. We denote this joining by $\bar X$. This is still a principal extension of $X$. Now, since $\bar X$ has $\mathbf T$ as a \tl\ factor, we can use its tiled array representation, $\bar X = \overset\leftarrow{\lim_k} \bar X_{[1,k]}$, where, for each $k\in\N$, $\bar X_k$ is a subshift over the alphabet $\bar\Lambda_k=\Lambda_k\times{\rm V}_k$ (recall that ${\rm V}_k=\{``S\,":S\in\CS_k\}\cup\{0\}$), and the alphabet of $\bar X_{[1,k]}$ is $\bar\Lambda_{[1,k]}=\prod_{j=1}^k\bar\Lambda_j$. The \ens\ $\bar\H=(\bar h_k)_{k\in\N}$ of $\bar X$ is given by $\bar h_k(\mu)=h(\mu,\bar\Lambda_{[1,k]})$. Note that since $\mathbf T$ has \tl\ entropy zero, for each $k\in\N$ we have $\bar h_k = h(\cdot,\Lambda_{[1,k]})$. Notice also, that $\bar h_k(\mu)=h(\mu_{[1,k]})$, where $\mu_{[1,k]}$ is the projection of $\mu$ on $\bar X_{[1,k]}$, i.e., $\bar h_k$ is in fact a function on $\M_G(\bar X_{[1,k]})$ (lifted to $\M_G(\bar X)$).

Clearly, $\bar X$ is a principal \zd\ extension of $X$, which, by definition of the \ens\ on $X$, implies that the \ens\ $\H$ lifted to $\M_G(\bar X)$ is uniformly equivalent to $\bar\H$. Thus, the lift of $\EA$ (which clearly is a \se\ of the lift of $\H$), is also a \se\ of $\bar\H$. If we construct a quasi-symbolic extension of $\bar X$ whose extension entropy function equals (the lift of) $\EA$ then the same extension will be a quasi-symbolic extension of $X$ whose extension entropy function equals $\EA$. From now on, we will skip the ``bar'' in the denotation of $\bar\H$ and $\bar h_k$.

The construction mimics the one presented in \cite{BD} for $\Z$-actions (a corrected and slightly simplified version is given in \cite{D1}), but many details have to be reworked. There are three main stages: in the first one, basing on $\EA$ and the entropy structure $\H=(h_k)_{k\in\N}$ we create an \emph{oracle}---an integer-valued function on rectangles. In stage 2, based on the oracle, we build a quasi-symbolic extension $\bar Y$ of $\bar X$. The last part of the proof, stage 3, is the verification that the corresponding extension entropy function indeed matches $\EA$.

\smallskip\noindent
{\bf Stage 1}. For each $k\in\N$ we have $\EA\ge h_k,\  \EA-h_k$ is affine and upper semicontinuous. Thus the function $\EA-h=\lim_k\downarrow(\EA-h_k)$ is also nonnegative, affine and upper semicontinuous. We can use varbatim \cite[Lemma 9.2.6]{D1} and find a decreasing \sq\ of nonnegative affine continuous functions $(g_k)_{k\in\N}$ on $\M_G(\bar X)$, such that, for each $k\in\N$, $g_k$ is constant on fibers of the projection $\M_G(\bar X)\to\M_G(\bar X_{[1,k]})$ (i.e., $g_k(\mu)$ depends only on the projection $\mu_{[1,k]}$) and
\begin{enumerate}
	\item $\lim_k\downarrow g_k = \EA-h$,
	\item $\forall_k\ g_k > \EA-h_k$,
	\item $\forall_k\ g_k-g_{k+1} > h_{k+1}-h_k$.
\end{enumerate}
By continuity of $g_k-g_{k+1}$ and upper semicontinuity of $h_{k+1}-h_k$, we can find a decreasing to zero \sq\ of positive numbers $(\delta_k)_{k\in\N}$ such that, for each $k\in\N$,
$$
g_k-g_{k+1} -3\delta_k > h_{k+1}-h_k.
$$

Our next step is ``speeding up'' the tiling system $\mathbf T$, i.e., replacing it by its sub\sq, in order to guarantee some additional properties (note that since $\mathbf T$ has \tl\ entropy zero, this will not affect any of the preceding arrangements). Five desired properties can be achieved in this manner:
\begin{enumerate}
  \item By speeding up we can arrange that for each $S\in\CS_{k+1}$,
  $|S|>\frac1\delta_{\!\!_k}$. This will imply that for every $S\in\CS_{k+1}$ and $t>0$,
  $$
  \lceil2^{|S|t}\rceil\le 2^{|S|(t+\delta_{\!k})}.
  $$
	\item According to Theorem \ref{tte}, for fixed $k$, the conditional tiled entropy functions on $\M_G(\bar X)$, $H_{\T_{k'}}(\mu,\Lambda_{k+1}|\Lambda_{[1,k]})$ converge, as $k'\to\infty$, decreasingly to $h_{k+1}-h_k$. Since these are continuous functions decreasing to an upper semicontinuous function, and $g_k-g_{k+1}-3\delta_k$ is continuous, for large enough $k'$ we have
$$
g_k-g_{k+1} -3\delta_k > H_{\T_{k'}}(\mu,\Lambda_{k+1}|\Lambda_{[1,k]}).
$$
	Because the partitions $\Lambda_{k+1}$ and $\Lambda_{[1,k]}$ do not depend on the
  tiling system, by speeding up we can arrange that, on $\M_G(\bar X)$,
$$
g_k-g_{k+1} -3\delta_k > H_{\T_k}(\mu,\Lambda_{k+1}|\Lambda_{[1,k]}).
$$
	\item The functions $g_k$, being implicitly defined on $\M_G(\bar X_{[1,k]})$, can be prolonged to continuous and affine functions on $\M(\bar X_{[1,k]})$ (and then lifted to functions on $\M(\bar X)$ constant on fibers of the projection $\pi_{[1,k]}$).  With the help of Lemma \ref{comb} and Corollaries \ref{cor1}, \ref{cor2}, by speeding up the tiling system, we can arrange that for each $k\in\N$ the following conditions hold:
\begin{enumerate}
\item$\bigl|g_k(\boldsymbol\upmu^{R'})-\frac1{|R'|}\sum_{i=1}^q |R^{(i)}|\,g_k(\boldsymbol\upmu^{R^{(i)}})\bigr|<\delta_k$, \item$\bigl|g_k(\mu)-\sum_{R\in\R_k}\,\mu([R])|R|g_k({\boldsymbol\upmu}^R)\bigr|<\delta_k$,
\end{enumerate}
whenever $\mu\in\M_G(\bar X)$ and $R'\in\R_{k+1}$ is such that $R'_{[1,k]}=R^{(1)}R^{(2)}\dots R^{(q)}$, $R^{(i)}\in\R_k$  ($i=1,2,\dots,q$, $q=|C_k(S')|$, $S'$ is the shape of $R'$).
  \item  The inequality $g_k-g_{k+1}-3\delta_k > H_{\T_k}(\mu,\Lambda_{k+1}|\Lambda_{[1,k]})$ then holds on a neighborhood of $\M_G(\bar X)$. According to Proposition \ref{oh}, by speeding up the tiling system, we can arrange that all empirical measures associated to $(k\!+\!1)$-rectangles lie in this neighborhood. Then, for every $(k\!+\!1)$-rectangle $R'$, we shall have
$$
g_k(\boldsymbol\upmu^{R'})-g_{k+1}(\boldsymbol\upmu^{R'}) -3\delta_k > H_{\T_k}(\boldsymbol\upmu^{R'},\Lambda_{k+1}|\Lambda_{[1,k]}).
$$
  \item We also need the following to hold for every $k\in\N$: For each concatenation of
  $k$-blocks, $R^{(1)}R^{(2)}\dots R^{(q)}$, which occurs as the first $k$ layers of some
  $(k\!+\!1)$-rectangle, $S'$ denoting the shape of that $(k\!+\!1)$-rectangle and
  $q=|C_k(S')|$, we should have
  $$
  \sum_{R'\in \R_{k+1},\,R'_{[1,k]}=R^{(1)}R^{(2)}\dots R^{(q)}}
  2^{-|R'|H_{\T_k}(\boldsymbol\upmu^{R'},\,\Lambda_{k+1}|\Lambda_{[1,k]})} < 2^{|S'|\delta_k}.
  $$
  This inequality holds whenever $q$ is sufficiently large; the proof will be
  provided in a moment. So, this property can be achieved by speeding up the tiling system.
\end{enumerate}

Once (after appropriate speeding up of the tiling system) all the above conditions are satisfied, we can define an oracle, $\O:\R\to\N$, as follows: for $R\in\R_k$ we let
$$
\O(R)=\lceil 2^{|R|g_k({\boldsymbol\upmu}^R)}\rceil.
$$
Let us explain, that the oracle ``predicts'' how many different blocks (of the same shape as $R$) will appear  in the elements of the future symbolic extension of $\bar X$ ``above'' the occurrences of $R$. According to the definition (see \cite[Definition 9.2.4]{D1}), in order to be an oracle, a function $\O:\R\to\N$ must satisfy, for every $k\in\N$, and every concatenation $R^{(1)}R^{(2)}\dots R^{(q)}$, where $R^{(i)}\in\R_k$ ($i=1,2,\dots,q$), which occurs as the first $k$ layers of some $(k\!+\!1)$-rectangle, the following \emph{oracle condition}:
$$
\sum_{R'\in\R_{k+1},\,R'_{[1,k]}=R^{(1)}R^{(2)}\dots R^{(q)}}\!\!\!\!\!\!\!\!\!\!\!\!\!\!\!\!\!\!\O(R')\ \ \le \ \
\O(R^{(1)})\O(R^{(2)})\cdots\O(R^{(q)}).
$$

\begin{lemma}
The function $\O(R)$ defined above satisfies the oracle condition.
\end{lemma}
\begin{proof}
Fix a concatenation of $k$-blocks, $R^{(1)}R^{(2)}\dots R^{(q)}$, which occurs as the first $k$ layers of some $(k\!+\!1)$-rectangle. If $S'$ denotes the common shape of all such $(k\!+\!1)$-rectangles then $q=|C_k(S')|$. We have:
\begin{multline*}
\sum_{R'}\O(R') =
\sum_{R'}\lceil 2^{|R'|g_{k+1}(\boldsymbol\upmu^{R'})}\rceil\overset{(1)}\le
\sum_{R'}2^{|R'|(g_{k+1}(\boldsymbol\upmu^{R'})+\delta_k)}\overset{(4)}\le\\
\sum_{R'}2^{|R'|(g_k(\boldsymbol\upmu^{R'})- H_{\T_k}(\boldsymbol\upmu^{R'}\!\!,\,\Lambda_{k+1}|\Lambda_{[1,k]})-2\delta_k)}\overset{(3a)}\le\\
2^{|S'|\frac1{|S'|}\sum_{i=1}^q|R^{(i)}|g_k(\boldsymbol\upmu^{R^{(i)}})}\cdot2^{-|S'|\delta_k}\cdot
\sum_{R'}2^{-|R'|H_{\T_k}(\boldsymbol\upmu^{R'}\!\!,\,\Lambda_{k+1}|\Lambda_{[1,k]})},
\end{multline*}
where in each sum $R'$ ranges as in the oracle condition.
In the last line, we see three expressions separated by the multiplication dots.
The first expression equals $\prod_{i=1}^q2^{|R^{(i)}|g_k(\boldsymbol\upmu^{R^{(i)}})}$, which, after rounding up the multipliers, equals precisely $\O(R^{(1)})\O(R^{(2)})\cdots\O(R^{(q)})$ (so is not larger than this product). The last expression (the sum), by (5), does not exceed $2^{|S'|\delta_k}$ which cancels with the central expression $2^{-|S'|\delta_k}$, and the oracle condition is proved.
\end{proof}

We return to the missing proof of the property (5).

\begin{proof}[Proof of (5)] This is almost literally \cite[Lemma 9.2.11]{D1}, which says that whenever $\Lambda=\Lambda_1\times\Lambda_2$ is a product alphabet, then for every $n\in\N$ and $\varepsilon>0$ there exists an $m_{(n,\varepsilon)}\in\N$ such that for every $q\ge m_{(n,\varepsilon)}$ and every $D\in\Lambda_1^q$ the following holds
$$
\sum_{B\in\Lambda^q,\,B_1=D}2^{-qH_n(B|B_1)}\le 2^{q\epsilon},
$$
where $B_1$ denotes the block appearing in the first row of $B$. In this formulation (which is meant for the $\Z$-action of the classical shift), $H_n(B|B_1)$ stands for
$\frac1nH(\mu_B,\Lambda^n|\Lambda_1^n)$, with $\mu_B$ denoting the \im\ supported by the orbit of the \sq\ obtained as the infinite concatenation\, $\dots BBB\dots$\,. We will use this lemma only in case $n=1$, in which only the values of $\mu_B$ on single symbols play a role. These values are simply the frequencies of the symbols in $B$, so the ``spacial'' form of the block $B$ (i.e., whether it is a linear block over $\{1,2,\dots,q\}$ or a block over a differently looking subset of cardinality $q$ of some other group) has no meaning.

For given $R'\in\R_{k+1}$, applying \eqref{przek}, we get
$$
|R'|H_{\T_k}(\boldsymbol\upmu^{R'},\Lambda_{k+1}|\Lambda_{[1,k]})=|R'|\boldsymbol\upmu^{R'}([C_k])H(\boldsymbol\upmu^{R'}_{[C_k]},\hat\R_k|\R_k).
$$
Clearly, $\boldsymbol\upmu^{R'}([C_k])=\frac{|C_k(S')|}{|R'|}=\frac q{|R'|}$, so the expression on the right hand side simplifies as $qH(\boldsymbol\upmu^{R'}_{[C_k]},\hat\R_k|\R_k)$. Since the measure $\boldsymbol\upmu^{R'}_{[C_k]}$ is applied only to the finite collection of extended $k$-rectangles ($k$-rectangles, as cylinders, are unions of extended $k$-rectangles) on which it is normalized, it can be thought of as a measure $\mu_B$ on single symbols, where $B=\hat R^{(1)}\hat R^{(2)}\dots \hat R^{(q)}$ is the imaginary linearly ordered block over the alphabet $\hat\R_k$ viewed as a subset of the product $\R_k\times\B_k$, where $\B_k$ is the family of one layer blocks appearing in the $(k\!+\!1)$st layer of the extended $k$-rectangles. Taking for $D$ the block $R^{(1)}R^{(2)}\dots R^{(q)}$, the family of blocks $B$ with $B_1=D$ becomes the family of all $(k\!+\!1)$-rectangles $R'$ with $R'_{[1,k]}=R^{(1)}R^{(2)}\dots R^{(q)}$. With such an identification, the term $qH(\boldsymbol\upmu^{R'}_{[C_k]},\hat\R_k|\R_k)$ coincides with $qH_1(B|B_1)$ in the notation of \cite[Lemma 9.2.11]{D1}. The lemma now yields that if $q=|C_k(S')|$ is sufficiently large then
\begin{multline*}
\sum_{R'\in\R_{k+1},\,R'_{[1,k]}=R^{(1)}R^{(2)}\dots R^{(q)}}\!\!\!\!\!\!\!\!\!\!\!\!\!\!\!\!\!\!2^{-|R'|H_{\T_k}(\boldsymbol\upmu^{R'},\,\Lambda_{k+1}|\Lambda_{[1,k]})}=\\
\sum_{R'\in\R_{k+1},\,R'_{[1,k]}=R^{(1)}R^{(2)}\dots R^{(q)}}\!\!\!\!\!\!\!\!\!\!\!\!\!\!\!\!\!\!2^{-qH(\boldsymbol\upmu^{R'}_{[C_k]},\hat\R_k|\R_k)}=
\sum_{B\in\hat\R_k^q,\,B_1=D}2^{-qH_1(B|B_1)}\le 2^{q\delta_k}.
\end{multline*}
Obviously, $2^{q\delta_k}\le 2^{|S'|\delta_k}$, and so (5) is proved.
\end{proof}

\smallskip\noindent
{\bf Stage 2}. Given an oracle $\O$ we will build a quasi-symbolic extension $\bar\pi:\bar Y\to\bar X$, where $\bar Y=Y\vee\mathbf T$ and $Y$ is a subshift.
The mapping $\bar\pi$ will preserve the tiling system (which is a \tl\ factor of both $\bar Y$ and $\bar X$), i.e., if $\bar y\in\bar Y$ and $\bar x = \bar\pi(\bar y)$ then $\bar y$ and $\bar x$ have the same \sq\ of tilings $\CT=(\CT_k)_{k\in\N}$ associated to them. The space $\bar Y$ will be obtained as the intersection of spaces $\bar Y_k\subset Y_k\vee\mathbf T$, each factoring via a map $\bar\pi_k$ onto $\bar X_{[1,k]}$. These factor maps will be consistent, i.e., $\bar\pi_k|_{\bar Y_{k+1}}$ will coincide with $\bar\pi_{k+1}$ composed with the natural projection $\pi_{[1,k]}:\bar X_{[1,k+1]}\to\bar X_{[1,k]}$. Then on the intersection $\bar Y=\bigcap_{k\in\N}\bar Y_k$ we will have $\bar\pi(\bar y)$ defined by specifying all its projections: $(\bar\pi(\bar y))_{[1,k]} = \bar\pi_k(\bar y)$. It is elementary to see that this map will be a \tl\ factor map from $\bar Y$ onto~$\bar X$.

\medskip\noindent
\emph{Step 1}. We begin the construction of $\bar Y$ and of the map $\bar\pi$ by establishing the alphabet $\Lambda$ of the symbolic part $Y_1$ of $\bar Y_1$. This alphabet will remain unchanged in the following steps, as each $\bar Y_k$ will be a subsystem of $\bar Y_1$. Of course, only cardinality of $\Lambda$ matters, and we define it to be the smallest integer $l$ such that, for every $S\in\CS_1$,
$$
l^{|S|}\ge \sum_{R\in\R_S}\O(R).
$$
With such a choice of $\Lambda$, for every $S\in\CS_1$ there exists a map assigning to each $R\in \R_S$ a subfamily $\F_S(R)\subset\Lambda^S$ of cardinality $\O(R)$, in such a way that these families are disjoint for different $1$-rectangles~$R\in\R_S$. Now, for each $\bar x_1\in\bar X_1$ and $\CT_1$ denoting the (first) tiling associated with $\bar x_1$, we will create a closed subset $\bar Y_1(\bar x_1)\subset \Lambda^G\times\mathbf T$ which will constitute the preimage of $\bar x_1$ by the map $\bar\pi_1$ (which we are about to define). Namely, we admit $(y,\CT)$ to belong to $\bar Y_1(\bar x_1)$ if and only if the first tiling in $\CT$ equals $\CT_1$ and, for any tile $Sc$ of $\CT_1$, $y|_{Sc}\in\F_S(\bar x_1|_{Sc})$ (note that $\bar x_1|_{Sc}$ is a $1$-rectangle $R\in\R_S$). It is easy to see that the subsets $\bar Y_1(\bar x_1)$ are disjoint for different elements $\bar x_1\in\bar X_1$ (if $\bar x_1$ and $\bar x'_1$ differ in having different first tilings, say $\CT_1\neq\CT'_1$, then this difference passes to any elements $\bar y\in\bar Y_1(\bar x_1)$ and $\bar y'\in\bar Y_1(\bar x'_1)$; if the first tilings are the same then the first layers of $\bar x_1$ and $\bar x'_1$ must differ on some tile $Sc$ of the common first tiling and then any elements $\bar y\in\bar Y_1(\bar x_1)$ and $\bar y'\in\bar Y_1(\bar x'_1)$ differ on this tile). We let $\bar Y_1=\bigcup_{\bar x_1\in\bar X_1}\bar Y_1(\bar x_1)$ and we skip checking that this is a closed shift-\inv\ set. The functioning of the mapping $\bar\pi_1:\bar Y_1\to\bar X_1$ is now obvious: for $\bar y\in\bar Y_1$ and $\CT$ being the \sq\ of tilings associated with $\bar y$, we let $\bar\pi_1(\bar y)$ be the unique $\bar x_1\in\bar X_1$ whose first tiling  $\CT_1$ is the same as the first tiling of $\CT$, and $\bar y\in \bar Y_1(\bar x_1)$. It is fairly easy to see that this map is a block code with coding horizon $\bigcup\CS_1(\bigcup\CS_1)^{-1}$.

\medskip\noindent
\emph{Step k+1}. Suppose that for some $k\ge 1$ we have defined $\bar Y_k$ and a \tl\ factor map $\bar\pi_k:\bar Y_k\to\bar X_{[1,k]}$ (a block code with coding horizon $\bigcup\CS_k(\bigcup\CS_k)^{-1}$) such that for each $S\in\CS_k$, with each $k$-rectangle $R\in\R_S$ we have associated a family $\F_S(R)\subset\Lambda^S$ of cardinality $\O(R)$ in such a way that
these families are disjoint for different $k$-rectangles~$R\in\R_S$ and the preimage of each $\bar x_{[1,k]}\in\bar X_{[1,k]}$ consists of all such elements $\bar y=(y,\CT)\in\Lambda^G\times\mathbf T$ that the $k$th tilings $\CT_k$ associated to $\bar y$ and to $\bar x_{[1,k]}$ coincide, and, for every tile $Sc$ of $\CT_k$ ($S\in\CS_k$), $y|_{Sc}\in\F_S(\bar x_{[1,k]}|_{Sc})$.

We need to define $\bar Y_{k+1}\subset\bar Y_k$ and the map $\bar\pi_{k+1}:\bar Y_{k+1}\to\bar X_{[1,k+1]}$ which, composed with the natural projection of $\bar \pi_{[1,k]}:\bar X_{[1,k+1]}\to\bar X_{[1,k]}$ coincides with $\bar\pi_k|_{\bar Y_{k+1}}$. Here is how we proceed:
Consider a shape $S'\in\CS_{k+1}$ and a concatenation $D = R^{(1)}R^{(2)}\dots R^{(q)}$ of $k$-rectangles which occurs as the first $k$ layers in some $(k\!+\!1)$-rectangle $R'\in\R_{S'}$ (i.e., $D=R'_{[1,k]}$). For each $\bar x_{[1,k+1]}\in\bar X_{[1,k+1]}$ and $c\in C_{S'}(\CT_{k+1})$ (where $\CT_{k+1}$ appears in the $(k\!+\!1)$st layer of $\bar x_{[1,k+1]}$), such that the projection $\bar x_{[1,k]}$ of $\bar x_{[1,k+1]}$ satisfies $\bar x_{[1,k]}|_{S'c} = D$, and any $\bar y=(y,\CT)\in\bar\pi_k^{-1}(\bar x_{[1,k]})$ we have (in spite of the common tiling $\CT_k$ for $\bar x_{[1,k]}$ and $\CT$) the following: if $S^{(i)}c^{(i)}$ denotes the tile of $\CT_k$ contained in $S'c$ on which $\bar x_{[1,k]}$ equals $R^{(i)}$ then
$y|_{S^{(i)}c^{(i)}}\in\F_{S^{(i)}}(R^{(i)})$ ($i=1,2,\dots,q$). Moreover, all restrictions $y|_{S'c}$ which fulfill the above for each $i=1,2,\dots,q$ are present in the preimage by $\bar\pi_k$ of $\bar x_{[1,k]}$. This means that there is a family $\mathcal E_D$ consisting of exactly $\O(R^{(1)})\O(R^{(2)})\dots\O(R^{(q)})$ blocks belonging to $\Lambda^{S'}$ appearing in the elements of $\bar\pi_k^{-1}(\bar x_{[1,k]})$ ``above'' each occurrence of any $(k\!+\!1)$-rectangle $R'$ such that $R'_{[1,k]}=D$ in any $\bar x_{[1,k+1]}\in\bar X_{[1,k+1]}$. Note that the families $\mathcal E_D$ are disjoint for different concatenations $D$ with a common shape $S'$. By the oracle condition, with each $(k\!+\!1)$-rectangle $R'$ satisfying $R'_{[1,k]}=D$ we can associate a subfamily $\F_D(R')\subset\mathcal E_D$ of cardinality $\O(R')$ so that these families are disjoint for different $(k\!+\!1)$-rectangles $R'$ with $R'_{[1,k]}=D$. By disjointness of the families $\mathcal E_D$ (for different $D$ with a common shape $S'$), there will be no confusion if denote $\F_D(R')$ by $\F_{S'}(R')$. For any $\bar x_{[1,k+1]}\in\bar X_{[1,k+1]}$ we now define the set $\bar Y_{k+1}(\bar x_{[1,k+1]}) \subset\bar Y_k(\bar x_{[1,k]})$ (the preimage of $\bar x_{[1,k+1]}$ by the future map $\bar\pi_{k+1}$) by the already familiar rule: $\bar Y_{k+1}(\bar x_{[1,k+1]})$ consists of all such elements $\bar y=(y,\CT)\in\bar Y_k(\bar x_{[1,k]})$ that the $(k\!+\!1)$st tilings $\CT_{k+1}$ associated to $\bar y$ and to $\bar x_{[1,k+1]}$ coincide, and, for every tile $S'c$ of $\CT_{k+1}$, $y|_{S'c}\in\F_{S'}(\bar x_{[1,k+1]}|_{S'c})$. We skip the description of how the map $\bar\pi_{k+1}$ functions; it is fully analogous to the description for $\bar\pi_1$. The coding horizon is now $\bigcup\CS_{k+1}(\bigcup\CS_{k+1})^{-1}$.

\smallskip\noindent
{\bf Stage 3}. Once the induction is completed, we have defined both the quasi-symbolic extension $\bar Y$ of $\bar X$ and the associated factor map $\bar\pi:\bar Y\to\bar X$. What remains to do is to verify that on $\M_G(\bar X)$, $h^{\bar\pi}=\EA$ (or that $h^{\bar\pi}-h=\EA-h$).

\begin{lemma}\label{seven}
Fix an \im\ $\mu\in\M_G(\bar X)$ and let $\mu_{[1,k]}$ denote the projection of $\mu$ onto $\bar X_{[1,k]}$. Then
$$
h^{\bar\pi}(\mu)-h(\mu)= \lim_k\sup_{\nu\in\bar\pi_k^{-1}(\mu_{[1,k]})} H_{\T_k}(\nu,\Lambda|\bar\Lambda_{[1,k]}).
$$
\end{lemma}
\begin{proof}
First observe that for $\nu\in\bar\pi^{-1}(\mu)$, the expressions $H_{\T_k}(\nu,\Lambda|\bar\Lambda_{[1,l]})$ are nonincreasing in both $k$ and $l$, hence both iterated limits and the diagonal limit coincide. By Theorem \ref{tte}, the limit in $k$ (with $l$ fixed) equals $h(\nu,\Lambda|\bar\Lambda_{[1,l]}) = h(\nu)-h(\mu_{[1,l]})$, which converges in $l$ to $h(\nu)-h(\mu)$.

Because $\bar\pi^{-1}(\mu)\subset \bar\pi_k^{-1}(\mu_{[1,k]})$ and by the ``rule of thumb'' ``$\lim_a\sup_b\ge\sup_b\lim_a$'', we have
\begin{multline*}
\lim_k\sup_{\nu\in\bar\pi_k^{-1}(\mu_{[1,k]})} H_{\T_k}(\nu,\Lambda|\bar\Lambda_{[1,k]})\ge
\sup_{\nu\in\bar\pi^{-1}(\mu)} \lim_k H_{\T_k}(\nu,\Lambda|\bar\Lambda_{[1,k]})=\\
\sup_{\nu\in\bar\pi^{-1}(\mu)} h(\nu)-h(\mu) = h^{\bar\pi}(\mu)-h(\mu).
\end{multline*}

On the other hand, if $l$ is fixed then, since eventually $k\ge l$ and hence $\pi_k^{- 1} (\mu_{[1, k]})\subset \pi_l^{- 1} (\mu_{[1, l]})$, we have
$$
\lim_k\sup_{\nu\in\bar\pi_k^{-1}(\mu_{[1,k]})} H_{\T_k}(\nu,\Lambda|\bar\Lambda_{[1,k]})\le
\lim_k\sup_{\nu\in\bar\pi_l^{-1}(\mu_{[1,l]})} H_{\T_k}(\nu,\Lambda|\bar\Lambda_{[1,l]})=\cdots
$$
The functions $H_{\T_k}(\nu,\Lambda|\bar\Lambda_{[1,l]})$ are continuous and decrease in $k$, and we consider the supremum over a fixed compact set. In this situation, the supremum and limit can be exchanged (see e.g. \cite[Fact A.1.24]{D1}), and we can continue
as follows:
\begin{multline*}
\cdots=\sup_{\nu\in\bar\pi_l^{-1}(\mu_{[1,l]})} \lim_k H_{\T_k}(\nu,\Lambda|\bar\Lambda_{[1,l]})=
\sup_{\nu\in\bar\pi_l^{-1}(\mu_{[1,l]})} h(\nu,\Lambda|\bar\Lambda_{[1,l]})=\\
\sup_{\nu\in\bar\pi_l^{-1}(\mu_{[1,l]})} h(\nu)-h(\mu_{[1,l]}).
\end{multline*}
Since the left hand side does not depend on $l$, we can apply the limit in $l$ to the right hand side and the inequality will hold. The function $\nu\mapsto h(\nu)$ is upper semicontinuous (this is true for symbolic systems and $\bar Y$ differs from the symbolic system $Y$ by being joined with a zero-entropy system, which does not alter the entropy function). As easily verified, the sets $\bar\pi_l^{-1}(\mu_{[1,l]})$ decrease in $l$ to $\bar\pi^{-1}(\mu)$. This implies that $\sup_{\nu\in\bar\pi_l^{-1}(\mu_{[1,l]})} h(\nu)$ tends (nonincreasingly with $l$) to $\sup_{\nu\in\bar\pi^{-1}(\mu)} h(\nu)$, while $h(\mu_{[1,l]})$ clearly tends to $h(\mu)$. Thus the right hand side (after applying the limit in $l$) becomes $h^{\bar\pi}(\mu)-h(\mu)$, completing the proof of the lemma.
\end{proof}

The above lemma reduces the problem to finding measures $\nu$ in the preimage $\bar\pi_k^{-1}(\mu_{[1,k]})$ maximizing the conditional tiled entropy $H_{\T_k}(\nu,\Lambda|\bar\Lambda_{[1,k]})$. Recall that

$$
H_{\T_k}(\nu,\Lambda|\bar\Lambda_{[1,k]})=\sum_{S\in\CS_k}\nu([S])H(\nu_{[S]},\Lambda^S|\bar\Lambda_{[1,k]}^S).
$$
Now, on $[S]$ the partition $\bar\Lambda_{[1,k]}^S$ coincides with the partition $\R_S$ into $k$-rectangles with the shape $S$, so the above sum splits further as

\begin{multline*}
\sum_{S\in\CS_k}\nu([S])\sum_{R\in\R_S}\nu_{[S]}([R])H((\nu_{[S]})_{[R]},\Lambda^S)=\\
\sum_{S\in\CS_k}\sum_{R\in\R_S}\nu([R])H(\nu_{[R]},\Lambda^S)=\sum_{S\in\CS_k}\sum_{R\in\R_S}\mu([R])H(\nu_{[R]},\Lambda^S)
\end{multline*}
(we have used $(\nu_{[S]})_{[R]}=\nu_{[R]}$ and because $R$ depends only on the first $k$ layers, we also have used $\nu([R])=\mu_{[1,k]}([R])=\mu([R])$ whenever $\pi_k (\nu)= \mu_{[1, k]}$).

Our task is thus to maximize $H(\nu_{[R]},\Lambda^S)$ for each $S\in\CS_k$ and $R\in\R_S$. By the definition of $\bar\pi_k$, for every $R\in\R_S$, the conditional measure $\nu_{[R]}$ is supported by the family of blocks $\F_S(R)$ and clearly the largest entropy is achieved when all these blocks have equal masses. In fact, this condition defines, for each $\mu$ on $\bar X$ a measure $\nu$ on $\bar Y_k$ belonging to $\bar\pi_k^{-1}(\mu_{[1,k]})$. We will call this measure $\nu_{\max}^{\mu_{[1,k]}}$. Since $|\F_S(R)|=\O(R)$, we have $H((\nu_{\max}^{\mu_{[1,k]}})_{[R]},\Lambda^S)=\log(\O(R))$. So,
$$
\max_{\nu\in\bar\pi_k^{-1}(\mu_{[1,k]})}\!\!\!\!\!\!H_{\T_k}(\nu,\Lambda|\bar\Lambda_{[1,k]}) =\!\!\!\sum_{R\in\R_k}\mu([R])\log(\O(R))=\!\!\!\sum_{R\in\R_k}\mu([R])|R|(g_k({\boldsymbol\upmu}^R)+\xi_R)=\!\cdots
$$
where the error term $\xi_R$ ranges between $0$ and $\delta_k$. Note that the sum of the coefficients $\mu([R])|R|$ over $R\in\R_k$ equals $1$, so what we see above is a convex combination. By the condition (3b), we can continue
$$
\dots<g_k (\mu)+\delta_k+\xi_k (\mu)= g_k (\mu)+ \xi_k' (\mu),
$$
where $0\le \xi_k (\mu)\le \delta_k$ and hence $\xi_k' (\mu)$ lies between $0$ and $2\delta_k$. Combining Lemma \ref{seven} with the properties defining the \sq\ $(g_k)_{k\in\N}$, we obtain
$$
h^{\bar\pi}(\mu)-h(\mu)=\lim_k g_k(\mu) = \EA-h.
$$
This concludes the proof of Theorem \ref{quasi}.
\end{proof}

We end this section by mentioning some obvious consequences. For instance, we obtain a characterization of asymptotic $h$-expansiveness. For our purposes, we define asymptotic $h$-expansiveness by a condition which for $\Z$-actions is known to be equivalent to the original definition by M. Misiurewicz (see \cite{M}, and see \cite[Corollary 8.4.12]{D1} for the equivalence):

\begin{definition}
An action of a countable amenable group $G$ on a compact metric space $X$ is \emph{asymptotically $h$-expansive} if it has finite \tl\ entropy and there exists an \ens\ $\H=(h_k)_{k\in\N}$ which converges to the entropy function $h$ uniformly on $\MGX$.
\end{definition}

Observe that in such case, the constant structure $(h)_{k\in\N}$ is also an \ens, because it is uniformly equivalent to $\H$. Any other \ens\ must be uniformly equivalent to the constant structure, which in turn implies that it converges to $h$ uniformly. We have proved that in an asymptotically $h$-expansive system every \ens\ converges to $h$ uniformly.
\smallskip

In order to escape distractions from studying our main subject (which are symbolic extensions), we refrain from discussing whether the above definition is equivalent to the original definition adapted to the context of actions of countable amenable groups. We refer to \cite{CZ} for more details of the adaptation of Misiurewicz' definition \cite{M} to actions of sofic (including amenable) groups. Our definition reflects the most vital for us features of asymptotic $h$-expansiveness, and allows us to formulate what follows:

\begin{theorem} An action of a countable amenable group $G$ on a compact metric space $X$ is asymptotically $h$-expansive if and only if it admits a principal quasi-symbolic extension.
\end{theorem}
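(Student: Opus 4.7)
The plan is to translate both sides of the equivalence into the language of superenvelopes via Theorem~\ref{quasi}. The key observation is that, under the definition adopted here, the system is asymptotically $h$-expansive precisely when the entropy function $h$ itself is a superenvelope of some (equivalently, every) entropy structure of $X$.

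For the forward direction, assume the action is asymptotically $h$-expansive, so there exists an entropy structure $\H=(h_k)_{k\ge 0}$ with $h_k\to h$ uniformly on $\MGX$. The function $h$ is affine, bounded (finite topological entropy is needed for an entropy structure to exist at all), and dominates every $h_k$; moreover, the defects $\widetilde{h-h_k}-(h-h_k)$ are trapped between $0$ and $\sup_\mu(h(\mu)-h_k(\mu))$, which tends to zero. Hence $h$ is a finite affine superenvelope of $\H$. By Theorem~\ref{quasi} there is a quasi-symbolic extension $\bar\pi:\bar Y\to X$ with $h^{\bar\pi}=h$. For any $\nu\in\bar\pi^{-1}(\mu)$ we have $h(\nu)\ge h(\mu)$, and the identity $h^{\bar\pi}(\mu)=h(\mu)$ forces $h(\nu)=h(\mu)$, so $\bar\pi$ is principal.

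For the reverse direction, suppose $\bar\pi:\bar Y\to X$ is a principal quasi-symbolic extension. The easy half of Theorem~\ref{quasi} (the comment at the start of its proof) ensures that $h^{\bar\pi}$ is a finite affine superenvelope of the entropy structure $\H$ of $X$; principality gives $h^{\bar\pi}=h$, so $h$ is itself a superenvelope of $\H$. Choosing $\H$ with \usc\ differences (Theorem~\ref{niezalezy}) and invoking Proposition~\ref{fact2}~(3), we see that for every $k$ the function $h-h_k$ is nonnegative and \usc\ on the compact set $\MGX$. Since $h_k\uparrow h$ pointwise, the \sq\ $(h-h_k)$ decreases pointwise to zero.

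A Dini-type argument now closes the proof: for each $\epsilon>0$ the sets $A_k^\epsilon=\{\mu\in\MGX:h(\mu)-h_k(\mu)\ge\epsilon\}$ are closed (by upper semicontinuity), nested in $k$, and have empty intersection; by compactness of $\MGX$, some $A_k^\epsilon$ is empty, so $\sup_\mu(h(\mu)-h_k(\mu))<\epsilon$ for all sufficiently large $k$. Thus $h_k\to h$ uniformly and the action is asymptotically $h$-expansive. The only step that takes a moment of care is this final Dini argument for \usc\ rather than continuous functions; everything else is a direct combination of Theorem~\ref{quasi} with the standard fact that a topological factor map never decreases measure-theoretic entropy.
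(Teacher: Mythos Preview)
Your proof is correct and follows essentially the same route as the paper: both directions go through Theorem~\ref{quasi} with $\EA=h$, and the reverse direction uses Proposition~\ref{fact2}~(3) plus the Dini-type argument (which the paper simply states as ``such convergence is always uniform''). The only cosmetic difference is that in the forward direction you bound the defects of $h-h_k$ directly by $\sup_\mu(h-h_k)$, whereas the paper observes that $h-h_k$ is a uniform limit of the \usc\ functions $h_{k'}-h_k$ and hence \usc; both arguments establish that $h$ is a superenvelope.
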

\begin{proof}By Theorem \ref{niezalezy}, we can fix an \ens\ $\H=(h_k)_{k\ge0}$ of $X$ which has \usc\ differences.
If $\bar Y$ is a principal quasi-symbolic extension of $X$ then the extension entropy function equals the entropy function $h$. Theorem \ref{quasi} (the ``easy'' direction) implies that $h$ is a \se\ of the entropy structure $\H$ of $X$. So, by Proposition \ref{fact2} (3), the functions $h-h_k$ are \usc\ and obviously they converge nonincreasingly to $0$. Such convergence is always uniform, proving that $h_k$ tends to $h$ uniformly, and $X$ is asymptotically $h$-expansive.

For the opposite implication assume that $\H$ converges to $h$ uniformly. As a consequence, for each $k\in\N$, $h-h_k$ is a uniform limit of $h_{k'}-h_k$ as $k'\to\infty$. A uniform limit of \usc\ functions is \usc. So, $h-h_k$
is \usc\ (and clearly nonnegative) which implies that $h$ is a (finite) \se\ of $\H$. Also, $h$ is affine on $\MGX$. Now, Theorem \ref{quasi} implies the existence of a quasi-symbolic extension of $X$ with the extension entropy function equal to $h$, which is equivalent to the extension being principal.
\end{proof}

\section{The comparison property}\label{s6}

This section is devoted to isolating and studying a very important ``comparison'' property that an action of a countable amenable group $G$ may or may not have. A version of this property can also be associated with the group $G$ itself. For us, its significance lies in the fact that it enables us to create genuine symbolic extensions in place of quasi-symbolic ones. Ironically, for $\Z$-actions the analogous passage occupies in the proof of the Symbolic Extension Entropy Theorem just one line, and the comparison property is not explicitly invoked (but is implicitly essential).

\subsection{Definition of the comparison property}

We will understand the comparison property as follows (see also \cite{K}):

\begin{definition}\label{defcom}
Let $G$ be a countable amenable group.
\begin{enumerate}
\item Let $G$ act on a \zd\ compact metric space $X$. For two clopen sets $\mathsf A,\mathsf B\subset X$, we say that $\mathsf A$ is \emph{subequivalent} to $\mathsf B$ (and write $\mathsf A\preccurlyeq \mathsf B$), if there exists a finite partition $\mathsf A=\bigcup_{i=1}^k \mathsf A_i$ of $\mathsf A$ into clopen sets and there are elements $g_1,g_2,\dots,g_k$ of $G$ such that $g_1(\mathsf A_1), g_2(\mathsf A_2),\dots,g_k(\mathsf A_k)$ are disjoint subsets of $\mathsf B$. We say that the action \emph{admits comparison} if for any pair of clopen subsets $\mathsf A,\mathsf B$ of $X$, the condition that for each \im\ $\mu$ on $X$ we have $\mu(\mathsf A)<\mu(\mathsf B)$, implies $\mathsf A\preccurlyeq \mathsf B$.
\item If every action of $G$ on any \zd\ compact metric space admits comparison then we will say that $G$ has the \emph{comparison property}.
\end{enumerate}
\end{definition}
Clearly, $\mathsf A\preccurlyeq \mathsf B$ implies $\mu(\mathsf A)\le\mu(\mathsf B)$ for every invariant measure $\mu$, so comparison is ``nearly'' an equivalence between subequivalence and the inequality for all \im s.

%Note the obvious fact, that if $X$ admits comparison and $Z$ is a \zd\ \tl\ factor of $X$ then $Z$ also admits comparison.

\begin{remark}\label{from0}
Let two clopen sets $\mathsf A,\mathsf B$ satisfy $\mu(\mathsf A)<\mu(\mathsf B)$ for every \im\ $\mu$. Because the sets $\mathsf A,\mathsf B$ are clopen, the function $\mu\mapsto\mu(\mathsf B)-\mu(\mathsf A)$ is continuous, and since it is positive on a compact set, it is separated from zero, i.e.,
$$
\inf_{\mu\in\M_G(X)}(\mu(\mathsf B)-\mu(\mathsf A))>0.
$$
\end{remark}

The following definition and the adjacent lemma are not used further in this paper. We provide them for a more complete treatment of the comparison property. Consider the following seemingly weaker property:

\begin{definition}\label{defwcom}
The action of a countable amenable group $G$ on a \zd\ compact metric space $X$ admits \emph{weak comparison} if there exists a constant $C\ge 1$ such that for any clopen sets $\mathsf A,\mathsf B\subset X$, the condition $\sup_\mu\mu(\mathsf A)<\frac1C\inf_\mu\mu(\mathsf B)$ (where $\mu$ ranges over all \im s) implies $\mathsf A\preccurlyeq \mathsf B$.
\end{definition}

Clearly, comparison implies weak comparison. We will show that these properties are in fact equivalent.

\begin{lemma}\label{compisweakcomp}
Weak comparison implies comparison.
\end{lemma}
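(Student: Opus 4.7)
Let $\mathsf A, \mathsf B$ be clopen subsets of $X$ with $\mu(\mathsf A) < \mu(\mathsf B)$ for every $\mu \in \MGX$. By Remark \ref{from0}, $\delta := \inf_{\mu \in \MGX}(\mu(\mathsf B) - \mu(\mathsf A)) > 0$. My plan has two stages: first, produce a finite clopen partition $\mathsf A = \mathsf A_1 \sqcup \cdots \sqcup \mathsf A_N$ with $\sup_\mu \mu(\mathsf A_i) < \delta/C$ for every $i$; second, successively embed the pieces $\mathsf A_i$ into $\mathsf B$ via weak comparison, yielding $\mathsf A \preccurlyeq \mathsf B$.

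Given the partition, the embedding stage is inductive. Processing $i = 1, \dots, N$, at step $i$ set $\mathsf B^{(i)} := \mathsf B \setminus \bigsqcup_{j<i}\mathsf B_j$, where $\mathsf B_j \subseteq \mathsf B$ is the disjoint union of shifted pieces of $\mathsf A_j$ constructed at step $j$. Each shift is measure-preserving on $\mu$, so $\mu(\mathsf B_j) = \mu(\mathsf A_j)$, whence
\[
\inf_\mu \mu(\mathsf B^{(i)}) = \inf_\mu\Bigl(\mu(\mathsf B) - \mu\Bigl(\bigsqcup_{j<i}\mathsf A_j\Bigr)\Bigr) \geq \inf_\mu(\mu(\mathsf B) - \mu(\mathsf A)) = \delta > C \sup_\mu \mu(\mathsf A_i),
\]
using $\bigsqcup_{j<i}\mathsf A_j \subseteq \mathsf A$. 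Weak comparison with constant $C$ then yields $\mathsf A_i \preccurlyeq \mathsf B^{(i)}$, producing a clopen partition of $\mathsf A_i$ and a tuple of group elements whose images form $\mathsf B_i \subseteq \mathsf B^{(i)}$, disjoint from all prior $\mathsf B_j$. After $N$ steps, the composite finite clopen partition of $\mathsf A$ together with the disjoint images in $\mathsf B$ exhibits $\mathsf A \preccurlyeq \mathsf B$.

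The main obstacle is the partitioning stage. I would perform it via a Rokhlin-type dynamical tower in $X$, exploiting the \zd ity of $X$ and the F\o lner structure of $G$ through the tiling/quasitiling technology of Section~\ref{s4}: a sufficiently F\o lner set $F \subset G$ together with a clopen base $\mathsf D \subset X$ whose $F$-iterates $\{g\mathsf D : g \in F\}$ are pairwise disjoint clopen sets produces a clopen partition of a large portion of $X$ with atoms of sup-measure at most $1/|F|$; choosing $|F| > C/\delta$ and intersecting with $\mathsf A$ gives the desired partition. The delicate point is that short finite orbits $Gx$ of $G$ on $X$ support atomic invariant measures $\mu_{Gx}$ with point masses $1/|Gx|$ possibly exceeding $\delta/C$, locally obstructing small-sup-measure clopen atoms. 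For each such orbit, however, the hypothesis $\mu_{Gx}(\mathsf A) < \mu_{Gx}(\mathsf B)$ reduces to the cardinality inequality $|\mathsf A \cap Gx| < |\mathsf B \cap Gx|$, from which an explicit injection $\mathsf A \cap Gx \hookrightarrow \mathsf B \cap Gx$ by group elements produces the subequivalence on the finite-orbit contribution; this is separated off (using compactness to consider only finitely many orbit sizes $\leq C/\delta$) and merged with the tower-based embedding on the complementary clopen residue.
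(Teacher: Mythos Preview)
Your inductive embedding stage is fine, but the partitioning stage is a genuine gap, not merely a sketch. The obstruction you identify is real and not easily removed: the set of points with $G$-orbit of size at most $C/\delta$ is closed but in general not clopen, so you cannot ``separate it off'' as a clopen piece on which to run a direct matching while running a tower on the complement. Even on a clopen invariant set where the action is free, producing a clopen base $\mathsf D$ whose $F$-translates are disjoint and cover most of the space is precisely the content of the Ornstein--Weiss/\cite{DH}-type machinery (Theorem~\ref{DH}), which requires freeness and substantial work; you would be importing heavy tools to prove what should be an elementary lemma. Finally, even granting the clopen separation of short orbits, there may be infinitely many such orbits (your parenthetical ``finitely many orbit sizes'' does not give finitely many orbits), and turning the pointwise injections $|\mathsf A\cap Gx|<|\mathsf B\cap Gx|$ into a single clopen-partition subequivalence on the whole short-orbit locus is itself a nontrivial step you have not supplied.

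The paper's argument avoids all of this. Enumerate $G=\{g_1,g_2,\dots\}$ and greedily match: let $\mathsf A_k$ be the part of $\mathsf A$ not yet matched whose $g_k$-image lands in the not-yet-matched part of $\mathsf B$, and set $\mathsf B_k=g_k(\mathsf A_k)$. The eventual unmatched remainder $\mathsf A_0=\mathsf A\setminus\bigcup_k\mathsf A_k$ has $\mu(\mathsf A_0)=0$ for every ergodic (hence every) $\mu$: otherwise, by ergodicity some $x\in\mathsf A_0$ would visit $\mathsf B_0=\mathsf B\setminus\bigcup_k\mathsf B_k$ under some $g_k$, contradicting that it was never matched. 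The clopen sets $\mathsf A\setminus\bigcup_{i\le k}\mathsf A_i$ thus decrease to a null set, and since their measures are continuous functions of $\mu$, the convergence is uniform. For large $k$ the unmatched tail of $\mathsf A$ has sup-measure below $\eps/C$ while the unmatched tail of $\mathsf B$ still has inf-measure at least $\eps$; a \emph{single} application of weak comparison handles the tail, and the explicit matching $\bigcup_{i\le k}\mathsf A_i\preccurlyeq\bigcup_{i\le k}\mathsf B_i$ handles the rest. No towers, no freeness, no case split on orbit length.
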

\begin{proof}
Suppose the action of a countable amenable group $G$ on a \zd\ compact metric space $X$ admits weak comparison with a constant $C$. Let two clopen sets $\mathsf A,\mathsf B$ satisfy $\mu(\mathsf A)<\mu(\mathsf B)$ for every \im\ $\mu$. By
Remark \ref{from0}, $\inf_{\mu\in\M_G(X)}(\mu(\mathsf B)-\mu(\mathsf A))>\eps$ for some positive $\eps$.
We order the group (arbitrarily) by natural numbers, as $G=\{g_1,g_2,\dots\}$ (or $G=\{g_1,\dots,g_n\}$ in case $G$ is finite). We let $\mathsf A_1=\mathsf A\cap g_1^{-1}(\mathsf B)$, and $\mathsf B_1=g_1(\mathsf A_1)$. For each $k>1$ (or $1<k\le n$ in the finite case) we set inductively
$$
\mathsf A_k = \mathsf A\setminus\Bigl(\bigcup_{i=1}^{k-1} \mathsf A_i\Bigr)\cap g_k^{-1}\left(\mathsf B\setminus\Bigl(\bigcup_{i=1}^{k-1} \mathsf B_i\Bigr)\right),
$$
and $\mathsf B_k=g_k(\mathsf A_k)$. It is not hard to see that the sets $\mathsf A_k$ and $\mathsf B_k$ are clopen (some of them possibly empty), disjoint subsets of $\mathsf A$ and $\mathsf B$, respectively and $\mu(\mathsf A_k)=\mu(\mathsf B_k)$ for each $k$ and every \im\ $\mu$. Consider the remainder sets
$$
\mathsf A_0=\mathsf A\setminus\Bigl(\bigcup_{k=1}^{\infty} \mathsf A_k\Bigr)\text{\ \ and \ \ }\mathsf B_0=\mathsf B\setminus\Bigl(\bigcup_{k=1}^{\infty} \mathsf B_k\Bigr),
$$
or in the finite case
$$
\mathsf A_0=\mathsf A\setminus\Bigl(\bigcup_{k=1}^{n} \mathsf A_k\Bigr)\text{\ \ and \ \ }\mathsf B_0=\mathsf B\setminus\Bigl(\bigcup_{k=1}^{n} \mathsf B_k\Bigr).
$$
Clearly, for each \im\ $\mu$ we have $\mu(\mathsf B_0)\ge\eps$. We claim that $\mu(\mathsf A_0)=0$. It suffices to consider an ergodic measure. But if $\mu(\mathsf A_0)$ was positive, then, by ergodicity, there would exist an $x\in \mathsf A_0$ and $g=g_k$ (for some $k$) such that $g_k(x)\in \mathsf B_0$. This is a contradiction, as, by construction, no orbit starting in $\mathsf A_0$ visits the set $\mathsf B_0$. Now, by countable additivity of the measures, we obtain, for each \im\ $\mu$,
$$
\lim_{k\to\infty} \mu\!\left(\mathsf A\setminus\Bigl(\bigcup_{i=1}^k \mathsf A_i\Bigr)\right) =0.
$$
Clearly, the limit is decreasing. Since the measured sets are clopen, the above measure values viewed as functions on the set of \im s are continuous, and thus the convergence is uniform. Let $\delta>0$ be strictly smaller than $\frac\eps C$. Then, for $k$ large enough we have, simultaneously for all \im s $\mu$,
$$
\mu\!\left(\mathsf A\setminus\Bigl(\bigcup_{i=1}^k \mathsf A_i\Bigr)\right)\le\delta<\frac\eps C\le
\frac1C\,\mu\!\left(\mathsf B\setminus\Bigl(\bigcup_{i=1}^k \mathsf B_i\Bigr)\right).
$$
By the weak comparison assumption, we get
$$
\mathsf A\setminus\Bigl(\bigcup_{i=1}^k \mathsf A_i\Bigr)\preccurlyeq \mathsf B\setminus\Bigl(\bigcup_{i=1}^k \mathsf B_i\Bigr),
$$
which, together with the obvious fact that $\bigcup_{i=1}^k \mathsf A_i\preccurlyeq \bigcup_{i=1}^k \mathsf B_i$, completes the proof of $\mathsf A\preccurlyeq \mathsf B$.
\end{proof}

\begin{remark}\label{finitecomp}
The above proof shows also that every finite group $G=\{g_1,g_2,\dots,g_n\}$ has the comparison property. For such a group we have $\mathsf A_0=\mathsf A\setminus(\bigcup_{i=1}^n\mathsf A_i)$. The fact that $\mathsf A_0$ has measure $0$ for all \im s implies that it is empty.
\end{remark}

\begin{remark}\label{disjoint}In the definition of comparison, it suffices to consider only disjoint clopen sets $\mathsf A, \mathsf B$. Indeed, $\{\mathsf A\cap\mathsf B, \mathsf A\setminus\mathsf B\}$ is a clopen partition of $\mathsf A$, and $g_0=e$ sends $\mathsf A\cap\mathsf B$ inside $\mathsf B$, so if $(\mathsf A\setminus\mathsf B)\preccurlyeq(\mathsf B\setminus\mathsf A)$ then also $\mathsf A\preccurlyeq\mathsf B$.
Also note that, for any measure~$\mu$, $\mu(\mathsf A)<\mu(\mathsf B)$ if and only if $\mu(\mathsf A\setminus\mathsf B)<\mu(\mathsf B\setminus\mathsf A)$.
\end{remark}

It is known that many important countable amenable groups, for instance $\Z$, $\Z^d$,
have the comparison property. However, the following question remains open:

\begin{ques}\label{3.7}
Does every countable amenable group have the comparison property?
\end{ques}

Later in this section we will provide a positive answer in a large class of groups.

\subsection{Banach density interpretation of the comparison property}\label{two}

Now we provide a characterization of the comparison property of a countable amenable group in terms of Banach density advantage for subsets of the group.
\smallskip

\subsubsection{Passing between clopen subsets of $X$ and subsets of $G$}
This subsection contains fairly standard tools, often exploited in symbolic dynamics. We include them for completeness and as an opportunity to introduce our notation. We continue to assume that $G$ is a countable amenable group.

\smallskip
\emph{(A) From clopen subsets of $X$ to subsets of $G$}.
First suppose that $G$ acts on a \zd\ compact metric space $X$ in which we have two disjoint clopen sets $\mathsf A$ and $\mathsf B$. Define a map $\pi_{\mathsf A\mathsf B}:X\to\{\mathsf 0,\mathsf 1,\mathsf 2\}^G$ by the formula
$$
(\pi_{\mathsf A\mathsf B}(x))_g=\begin{cases}\mathsf 1\\\mathsf 2\\\mathsf 0\end{cases} \iff g(x)\in \begin{cases} \mathsf A\\\mathsf B\\(\mathsf A\cup \mathsf B)^c,\end{cases}
$$
respectively ($g\in G$). As easily verified, $\pi_{\mathsf A\mathsf B}$ is continuous and intertwines the action on $X$ with the shift action, in other words, it is a \tl\ factor map onto its image $Y_{\mathsf A\mathsf B}=\pi_{\mathsf A\mathsf B}(X)$, which is a subshift,  in which we can distinguish two natural clopen sets, the cylinders $[\mathsf 1]$ and $[\mathsf 2]$. Notice that $\pi_{\mathsf A\mathsf B}^{-1}([\mathsf 1])=\mathsf A$ and $\pi_{\mathsf A\mathsf B}^{-1}([\mathsf 2])=\mathsf B$, hence for every \im\ $\mu$ on $X$ we have $\mu(\mathsf A)=\nu([\mathsf 1])$ and $\mu(\mathsf B)=\nu([\mathsf 2])$, where $\nu=\pi_{\mathsf A\mathsf B}(\mu)$. The set of all shift-\im s on $Y_{\mathsf A\mathsf B}$ will be abbreviated as $\mathcal{M}_{\mathsf A\mathsf B}$. For each $x\in X$ we define two subsets of $G$,
\begin{align}
A_x &= \{g:g(x)\in \mathsf A\}=\{g:(\pi_{\mathsf A\mathsf B}(x))_g=\mathsf 1\}=\{g:g(\pi_{\mathsf A\mathsf B}(x))\in[\mathsf 1]\},\label{kiki}\\
B_x &= \{g:g(x)\in \mathsf B\}=\{g:(\pi_{\mathsf A\mathsf B}(x))_g=\mathsf 2\}=\{g:g(\pi_{\mathsf A\mathsf B}(x))\in[\mathsf 2]\}.\label{kiko}
\end{align}

In the above context we can define new notions:
\begin{definition}\label{dbar}
We fix in $G$ a F\o lner \sq\ $(F_n)_{n\in\N}$. The terms
\begin{align*}
\underline D(\mathsf B)&=\limsup_{n\to\infty}\ \inf_{x\in X}\underline D_{F_n}(B_x),\\
\overline D(\mathsf B)&=\liminf_{n\to\infty}\ \sup_{x\in X}\overline D_{F_n}(B_x),\\
\underline D(\mathsf B,\mathsf A)&=\limsup_{n\to\infty}\ \inf_{x\in X}\underline D_{F_n}(B_x,A_x),
\end{align*}
will be called the \emph{uniform lower Banach density of (visits of the orbits in) $\mathsf B$}, \emph{uniform upper Banach density of $\mathsf B$} and \emph{uniform Banach density advantage of $\mathsf B$ over $\mathsf A$}.
\end{definition}

A statement analogous to Proposition \ref{bd1} holds:

\begin{lemma}\label{bbb}
The values of $\underline D(\mathsf B)$, $\overline D(\mathsf B)$ and $\underline D(\mathsf B,\mathsf A)$ do not depend on the choice of the F\o lner sequence, the limits superior and inferior in the definition are in fact limits, and moreover
\begin{align*}
\underline D(\mathsf B)&=\sup_F\ \inf_{x\in X}\underline D_F(B_x),\\
\overline D(\mathsf B)&=\inf_F\ \sup_{x\in X}\overline D_F(B_x),\\
\underline D(\mathsf B,\mathsf A)&=\sup_F\ \inf_{x\in X}\underline D_F(B_x,A_x),
\end{align*}
where $F$ ranges over all finite subsets of $G$.
\end{lemma}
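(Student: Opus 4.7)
\medskip

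\noindent\textbf{Proof proposal.} The plan is to mimic the argument of Proposition~\ref{bd1} essentially verbatim, the only novelty being that the Shulman-type estimate of Lemma~\ref{bdc} must be applied uniformly across $x \in X$. I will carry out the argument in detail only for the third identity (the Banach density advantage); the first two follow as special/dual cases.

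For the ``$\le$'' direction of the identity $\underline D(\mathsf B,\mathsf A)=\sup_F\inf_{x\in X}\underline D_F(B_x,A_x)$, I simply observe that each $F_n$ is itself one of the admissible finite sets $F$, so $\inf_x \underline D_{F_n}(B_x,A_x)\le \sup_F\inf_x\underline D_F(B_x,A_x)$ for every $n$, and the inequality passes to the $\limsup$. For the ``$\ge$'' direction I fix an arbitrary finite set $F\subset G$ and $\varepsilon>0$. Since $(F_n)_{n\in\N}$ is a F\o lner sequence, $F_n$ is $(F,\varepsilon)$-invariant for all sufficiently large $n$. Applying Lemma~\ref{bdc} pointwise, for every $x\in X$,
$$
\underline D_{F_n}(B_x,A_x)\ge \underline D_F(B_x,A_x)-4\varepsilon.
$$
Taking the infimum over $x\in X$ on both sides (which preserves the inequality because the constant $4\varepsilon$ is independent of $x$), and then the $\liminf$ over $n$, gives
$$
\liminf_{n\to\infty}\inf_{x\in X}\underline D_{F_n}(B_x,A_x)\ge \inf_{x\in X}\underline D_F(B_x,A_x)-4\varepsilon.
$$
Since $\varepsilon>0$ is arbitrary and $F$ is arbitrary, the liminf dominates the supremum over all finite $F$. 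Combining the two directions, the $\limsup$ and $\liminf$ coincide and equal $\sup_F\inf_x\underline D_F(B_x,A_x)$, so the defining limit exists, equals the supremum, and a fortiori does not depend on the F\o lner sequence.

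For $\underline D(\mathsf B)$ the identical argument works, formally by setting $A_x=\emptyset$ in the above (or, equivalently, by invoking the obvious specialization of Lemma~\ref{bdc} with $A=\emptyset$). For $\overline D(\mathsf B)$, I use the duality $\overline D_F(B_x)=1-\underline D_F(B_x^c)$: the complementary set $B_x^c$ corresponds under $\pi_{\mathsf A\mathsf B}$ to the clopen set $\mathsf B^c=X\setminus\mathsf B$, and Lemma~\ref{bdc} applied to $B_x^c$ yields, whenever $F_n$ is $(F,\varepsilon)$-invariant,
$$
\overline D_{F_n}(B_x)\le \overline D_F(B_x)+4\varepsilon
$$
for every $x\in X$. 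Taking the supremum over $x$ and then the $\limsup$ over $n$, and letting $\varepsilon\to 0$ and $F$ vary, one obtains the matching upper bound $\limsup_n\sup_x\overline D_{F_n}(B_x)\le \inf_F\sup_x\overline D_F(B_x)$; the reverse inequality with the $\liminf$ is trivial.

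There is no serious obstacle here; the one point that requires a line of thought is that $\inf_x$ (respectively $\sup_x$) interacts cleanly with the $\varepsilon$-perturbation coming from Lemma~\ref{bdc} precisely because the loss $4\varepsilon$ is a universal constant, not a function of $x$. This is the same structural feature that made Proposition~\ref{bd1} work in the static setting, and transplanting it to the uniform setting is routine.
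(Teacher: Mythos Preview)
Your proof is correct and follows exactly the approach of the paper, which simply states that the argument of Proposition~\ref{bd1} goes through verbatim once one notes that Lemma~\ref{bdc} applies to $A_x,B_x$ simultaneously for all $x\in X$. You have spelled out precisely this uniform application, including the key observation that the loss $4\varepsilon$ is independent of $x$; the reductions to $\underline D(\mathsf B)$ via $A_x=\emptyset$ and to $\overline D(\mathsf B)$ via complementation are also the same as in the paper.
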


\begin{proof} The proof is identical as that of Proposition \ref{bd1}, with the only difference that Lemma \ref{bdc} applies to the sets $A_x,B_x$ whenever $F_n$ is $(F,\eps)$-\inv, simultaneously for all $x\in X$.
\end{proof}

In a moment we will connect the above notions with the values assumed by the invariant measures on $X$ on the sets $\mathsf A$ and $\mathsf B$.
\smallskip

\emph{(B) From subsets of $G$ to clopen subsets of $X$}.
We will now describe the opposite passage: from subsets of $G$ to clopen subsets of some \zd\ compact metric space on which we have a $G$-action.  Suppose we have two disjoint subsets $A$ and $B$ of $G$. Then they determine an element $y^{AB}$ of the symbolic space $\{\mathsf 0,\mathsf 1,\mathsf 2\}^G$, given by the rule
$$
y^{AB}_g=\begin{cases}\mathsf 1\\\mathsf 2\\\mathsf 0\end{cases} \iff g\in \begin{cases} A\\B\\(A\cup B)^c,\end{cases}
$$
respectively ($g\in G$).
The shift-orbit closure of $y^{AB}$, i.e., the set
$$
Y^{AB}=\overline{\{g(y^{AB}):g\in G\}}
$$
is a subshift, which we will call the \emph{subshift associated with the sets $A, B$}. The set of its \im s, $\M_G(Y^{AB})$, will be abbreviated as $\M^{AB}$. In this subshift we will distinguish two clopen sets, $\mathsf A=[\mathsf 1]$ and $\mathsf B=[\mathsf 2]$. It is almost immediate to see that if we apply the definitions of the preceding paragraph to the shift action on $Y^{AB}$ and the above sets $\mathsf A,\mathsf B$ then the factor map $\pi_{\mathsf A\mathsf B}$ is the identity, and $A_{y^{AB}}=\{g:y^{AB}_g=\mathsf 1\} = A$ and $B_{y^{AB}}=\{g:y^{AB}_g=\mathsf 2\}=B$.

\begin{prop}\label{prop}
\begin{enumerate}
\item Suppose $G$ acts on a \zd\ compact metric space $X$ in which we are given two disjoint clopen sets, $\mathsf A,\mathsf B$. Then
\begin{align*}
\inf_{\mu\in\M_G(X)}\mu(\mathsf B)&=\underline D(\mathsf B)=\inf_{x\in X}\underline D(B_x), \\
\sup_{\mu\in\M_G(X)}\mu(\mathsf B)&=\overline D(\mathsf B)=\sup_{x\in X}\overline D(B_x),\\
\inf_{\mu\in\M_G(X)}(\mu(\mathsf B)-\mu(\mathsf A))&=\underline D(\mathsf B,\mathsf A)=\inf_{x\in X}\underline D(B_x,A_x).
\end{align*}
\item
Next suppose that $A$ and $B$ are disjoint subsets of $G$. Consider the cylinders $[\mathsf 1]$ and $[\mathsf 2]$ in the subshift $Y^{AB}$ associated with these sets. Then
\begin{gather*}
\inf_{\mu\in\M^{AB}}\mu([2])=\underline D(B),\\
\sup_{\mu\in\M^{AB}}\mu([2])=\overline D(B),\\
\inf_{\mu\in\M^{AB}}(\mu([2])-\mu([1]))=\underline D(B,A).
\end{gather*}
\end{enumerate}
\end{prop}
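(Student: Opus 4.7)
I will prove part (1) first, since part (2) will follow by specialization together with a simple orbit-closure observation.

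The core computational identity is this: for any $\mu\in\MGX$, by invariance of $\mu$, for every $n$,
\begin{equation}\label{plan-key}
\mu(\mathsf B)=\frac1{|F_n|}\sum_{g\in F_n}\mu(g^{-1}(\mathsf B))=\int_X\frac{|B_x\cap F_n|}{|F_n|}\,d\mu(x),
\end{equation}
and likewise $\mu(\mathsf B)-\mu(\mathsf A)=\int\frac{|B_x\cap F_n|-|A_x\cap F_n|}{|F_n|}\,d\mu(x)$. I will also use the obvious identity $|B_x\cap F_n g|=|B_{g(x)}\cap F_n|$ (and the same for $A$), from which it follows that $\inf_x\underline D_{F_n}(B_x)=\inf_x\frac{|B_x\cap F_n|}{|F_n|}$, $\sup_x\overline D_{F_n}(B_x)=\sup_x\frac{|B_x\cap F_n|}{|F_n|}$ and $\inf_x\underline D_{F_n}(B_x,A_x)=\inf_x\frac{|B_x\cap F_n|-|A_x\cap F_n|}{|F_n|}$, since the infimum (or supremum) over $g$ is absorbed by varying $x$ over its orbit.

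For the equality $\inf_\mu\mu(\mathsf B)=\underline D(\mathsf B)$: the inequality ``$\ge$'' is immediate from \eqref{plan-key} (bounding the integrand below by its infimum and then passing to $\limsup_n$, combining with Lemma \ref{bbb}). For ``$\le$'', pick $x_n\in X$ with $\frac{|B_{x_n}\cap F_n|}{|F_n|}$ within $\varepsilon$ of $\inf_x\underline D_{F_n}(B_x)$; the empirical measures $\boldsymbol\upmu^{F_n}_{x_n}$ satisfy $\boldsymbol\upmu^{F_n}_{x_n}(\mathsf B)=\frac{|B_{x_n}\cap F_n|}{|F_n|}$, any weak-star accumulation point is in $\MGX$ by Proposition~\ref{oh}, and since $\mathsf B$ is clopen, $\mathbf 1_{\mathsf B}$ is continuous, so the limit measure $\mu$ has $\mu(\mathsf B)\le\underline D(\mathsf B)+\varepsilon$. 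The sup case is symmetric; the advantage case is proved identically, using the continuity of $\mathbf 1_{\mathsf B}-\mathbf 1_{\mathsf A}$.

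For the equality $\inf_\mu\mu(\mathsf B)=\inf_x\underline D(B_x)$: the inequality ``$\le$'' is obtained by the same empirical-measure argument applied to each fixed $x$, choosing shifts $g_n$ (and then replacing $x$ by $g_n(x)$) that nearly realize $\underline D(B_x)$ along $F_n$. For ``$\ge$'', I invoke ergodic decomposition and the pointwise ergodic theorem (Theorem~\ref{ergodic}, applied along a tempered subsequence of $(F_n)$, which by Lemma~\ref{bbb} does not affect $\underline D(B_x)$): for every ergodic $\nu\in\MGX$ and $\nu$-a.e.\ $x$, $\frac{|B_x\cap F_n|}{|F_n|}\to\nu(\mathsf B)$, so $\underline D(B_x)\le\nu(\mathsf B)$ for some $x$, yielding $\inf_x\underline D(B_x)\le\nu(\mathsf B)$ for every ergodic $\nu$, and then for every $\mu$ by integrating against $\xi_\mu$. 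The sup and the advantage cases proceed identically.

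For part (2), I apply part (1) to $X=Y^{AB}$ with $\mathsf A=[\mathsf 1]$, $\mathsf B=[\mathsf 2]$; it remains only to verify that $\inf_{y\in Y^{AB}}\underline D(B_y)=\underline D(B)$, and analogously for $\overline D$ and the advantage. The inequality ``$\le$'' is trivial because $y^{AB}\in Y^{AB}$ and $B_{y^{AB}}=B$. For ``$\ge$'', fix $y\in Y^{AB}$ and a finite $F\subset G$; every $y$ is a pointwise limit of some net $g_n(y^{AB})$, and from $(g_n(y^{AB}))_h=y^{AB}_{hg_n}$ one reads $B_{g_n(y^{AB})}=B g_n^{-1}$. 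Thus for each $g\in G$, choosing $n$ so large that $y$ agrees with $g_n(y^{AB})$ on the finite set $Fg$, we obtain $\frac{|B_y\cap Fg|}{|F|}=\frac{|B\cap Fgg_n|}{|F|}\ge\underline D_F(B)$, and taking $\inf_g$ and then $\sup_F$ yields $\underline D(B_y)\ge\underline D(B)$. The same trick handles $\overline D$ (using $\sup_g$) and the advantage (using both $A$ and $B$ simultaneously on $Fg$).

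\textbf{Main obstacle.} The only delicate point is the equality $\inf_\mu\mu(\mathsf B)=\inf_x\underline D(B_x)$, where the order of $\sup_F$ and $\inf_x$ is swapped compared to $\underline D(\mathsf B)=\sup_F\inf_x\underline D_F(B_x)$. This is where the pointwise ergodic theorem (with a tempered F\o lner subsequence) is essential, combined with the invariance of the limit density along the F\o lner sequence guaranteed by Lemma~\ref{bbb}.
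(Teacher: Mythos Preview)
Your proof is correct and follows essentially the same route as the paper: empirical measures with weak-star limits (using that $\mathbf 1_{\mathsf B}$ is continuous) for one direction, the pointwise ergodic theorem along a tempered F\o lner sub\sq\ for the other, and for part~(2) the observation that every block occurring in any $y\in Y^{AB}$ also occurs in the transitive point $y^{AB}$. The only organisational difference is that the paper proves a single circular chain of three inequalities $\inf_\mu\le\underline D\le\inf_x\le\inf_\mu$ (so the middle inequality $\underline D(\mathsf B,\mathsf A)\le\inf_x\underline D(B_x,A_x)$ is trivial from the order of $\limsup_n$ and $\inf_x$), whereas you establish the two equalities separately via four inequalities; your integral identity gives a slightly cleaner argument for $\inf_\mu\ge\underline D$ than the paper's contradiction, but this is purely cosmetic.
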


\begin{proof}
In (1) we will only show the last line of equalities. The first line will then follow by plugging in $\mathsf A=\emptyset$ and the the second one by considering the complement of $\mathsf B$. First suppose that we have sharp inequality $\inf_{\mu\in\M_G(X)}(\mu(\mathsf B)-\mu(\mathsf A))>\underline D(\mathsf B,\mathsf A)$. By Lemma \ref{bbb}, there exists an $\eps>0$ such that for every finite set $F$, $\inf_{\mu\in\M_G(X)} (\mu(\mathsf B)-\mu(\mathsf A))-\eps > \inf_{x\in X} \underline D_F(B_x,A_x)$.
In particular for every set $F_n$ in an \emph{a priori} selected F\o lner \sq, there exists some $x_n\in X$ and $g_n\in G$ with
$$
\inf_{\mu\in\M_G(X)}(\mu(\mathsf B)-\mu(\mathsf A))-\eps > \frac1{|F_n|}(|B_{x_n}\cap F_ng_n|-|A_{x_n}\cap F_ng_n|).
$$
Note that $|B_{x_n}\cap F_ng_n|=|\{f\in F_n:fg_n(x_n)\in \mathsf B\}|$ (and analogously for $\mathsf A$),
thus the right hand side takes on the form
$$
\frac1{|F_n|}(|\{f\in F_n:fg_n(x_n)\in \mathsf B\}|-|\{f\in F_n:fg_n(x_n)\in \mathsf A\}|).
$$
The function $\mathsf W\mapsto \frac1{|F_n|}|\{f\in F_n:fg_n(x_n)\in \mathsf W\}|$ defined on Borel subsets of $X$ is equal to the probability measure $\frac1{|F_n|}\sum_{f\in F_n}\delta_{fg_n(x_n)}$. This \sq\ of measures has a sub\sq\ convergent in the weak-star topology to some $\mu_0\in\M_G(X)$. Since the characteristic functions of the clopen sets $\mathsf A, \mathsf B$ are continuous, we have
$$
\inf_{\mu\in\M_G(X)}(\mu(\mathsf B)-\mu(\mathsf A))-\eps\ge \mu_0(\mathsf B)-\mu_0(\mathsf A),
$$
which is a contradiction. We have proved that $\inf_{\mu\in\M_G(X)}(\mu(\mathsf B)-\mu(\mathsf A))\le\underline D(\mathsf B,\mathsf A)$. The inequality $\underline D(\mathsf B,\mathsf A)\le\inf_{x\in X}\underline D(B_x,A_x)$ is trivial; both sides differ by changing the order of $\limsup_n$ and $\inf_x$ and on the left the infimum is applied earlier.

For the last missing inequality, $\inf_{x\in X}\underline D(B_x,A_x)\le \inf_{\mu\in\M_G(X)}(\mu(\mathsf B)-\mu(\mathsf A))$, we shall invoke the ergodic theorem (Theorem \ref{ergodic}). Notice that, given $\eps>0$, there exists an ergodic measure $\mu_0\in\M_G(X)$ with $\mu_0(\mathsf B)-\mu_0(\mathsf A) < \inf_{\mu\in\M_G(X)}(\mu(\mathsf B)-\mu(\mathsf A))+\eps$. The ergodic theorem now implies that there exists a sub\sq\ $(F_{n_k})_{k\in\N}$ of the F\o lner \sq\ (any tempered sub\sq\ will do), and a point $x\in X$ (in fact, $\mu_0$-almost every point is good)
such that
$$
\lim_{k\to\infty}\frac1{|F_{n_k}|}|\{f\in F_{n_k}:f(x)\in\mathsf A\}|= \mu_0(\mathsf A)
$$
and an analogous formula holds for $\mathsf B$. Obviously, $f(x)\in\mathsf A$ or $\mathsf B$, if and only if $f\in A_x$ or $B_x$, respectively. Hence
$$
\lim_{k\to\infty}\frac1{|F_{n_k}|}|A_x\cap F_{n_k}| =
\mu_0(\mathsf A)\ \ \  (\text{and similarly for $B_x$ and $\mathsf B$}).
$$
Thus, for each sufficiently large $k$ we have
$$
\frac1{|F_{n_k}|}\bigl(|B_x\cap F_{n_k}|-|A_x\cap F_{n_k}|\bigr)< \mu_0(\mathsf B)-\mu_0(\mathsf A)+\eps<\inf_{\mu\in\M_G(X)}(\mu(\mathsf B)-\mu(\mathsf A))+2\eps.
$$
Clearly, the left hand side is not smaller than
$$
\inf_{g\in G}\frac1{|F_{n_k}|}(|B_x\cap F_{n_k}g|-|A_x\cap F_{n_k}g|)=\underline D_{F_{n_k}}(B_x,A_x).
$$
Passing to the limit over $k$ and then applying infimum over all $x\in X$ we obtain
$\inf_{x\in X}\underline D(B_x,A_x)\le \inf_{\mu\in\M_G(X)}(\mu(\mathsf B)-\mu(\mathsf A))+2\eps$.
Since this is true for every $\eps>0$, (1) is proved.

\smallskip
We pass to proving (2). As before, the last equality suffices. From (1) applied to the cylinders $\mathsf A=[\mathsf 1]$ and $\mathsf B=[\mathsf 2]$, and applying Lemma \ref{bbb}, we obtain
$$
\inf_{\mu\in\M^{AB}}(\mu([2])-\mu([1]))=\underline D([\mathsf 2],[\mathsf 1])
=\lim_{n\to\infty}\ \,\inf_{y\in Y^{AB}}\ \inf_{g\in G} \frac1{|F_n|} (|B_y\cap F_ng|-|A_y\cap F_ng|).
$$
The above difference $|B_y\cap F_ng|-|A_y\cap F_ng|$ depends on the block $y|_{F_ng}$.
Notice that we are considering a transitive subshift with the transitive point $y^{AB}$ (i.e., whose orbit is dense in the subshift), so every block $y|_{F_ng}$ (for any $y\in Y^{AB}$ and any $g\in G$) occurrs also in $y^{AB}$ as a block $y^{AB}|_{F_ng'}$ for some $g'$ (the converse need not be true, unless $y$ is another transitive point).
Thus, for any $n$, the infimum over $y\in Y^{AB}$ on the right hand side of the formula displayed above is the smallest for $y=y^{AB}$. Recall that $A_{y^{AB}}=A$ and $B_{y^{AB}}=B$. We have proved that
$$
\inf_{\mu\in\M^{AB}}(\mu([2])-\mu([1])) = \lim_{n\to\infty}\ \inf_{g\in G}\frac1{|F_n|} (|B\cap F_ng|-|A\cap F_ng|).
$$
The right hand side is precisely $\underline D(B,A)$.
\end{proof}

Recall the definition of a block code (Definition \ref{bcode}). Based on that, we define the following notion:

\begin{definition}\label{local rule}
Let $X\subset\Lambda^G$ be a subshift. For each $x\in X$ let $A_x\subset G$ and let $\tilde\varphi_x: A_x\to G$ be some function. For $X'\subset X$, we will say that the family $\{\tilde\varphi_x\}_{x\in X'}$ is \emph{determined by a block code} if there exists a block code $\Xi:\Lambda^F\to E$, where $E$ is a finite subset of $G$ (and so is $F$), such that if we denote
$$
\varphi_x(g) = \Xi(g(x)|_F),
$$
($x\in X, g\in G$),
then, for each $x\in X'$, the mapping from $A_x$ to $G$, defined by
$$
a\mapsto \varphi_x(a)a,
$$
($a\in A_x$), coincides with $\tilde\varphi_x$. The elements $\varphi_x(a)$ (belonging to $E$) will be called the \emph{multipliers} of $\tilde\varphi_x$.
\end{definition}

A simple way of checking, that a family $\{\tilde\varphi_x\}_{x\in X'}$ is determined by a block code, is finding a finite set $F$ such that,
for any $x_1,x_2\in X'$ and $a_1\in A_{x_1}, a_2\in A_{x_2}$,
\begin{equation}\label{clr}
a_1(x_1)|_F = a_2(x_2)|_F \ \implies \
\tilde\varphi_{x_1}(a_1)a_1^{-1} = \tilde\varphi_{x_2}(a_2){a_2}^{-1}.
\end{equation}

The following theorem connects the above definition with the relation of subequivalence.

\begin{theorem}\label{tutka}
\begin{enumerate}
\item Let $X\subset\Lambda^G$ be a subshift. Consider the pair of disjoint clopen subsets $\mathsf A, \mathsf B\subset X$. Then $\mathsf A\preccurlyeq\mathsf B$ if and only if there exists a family of functions $\tilde\varphi_x:G\to G$, indexed by $x\in X$, determined by a block code, such that for all $x\in X$, $\tilde\varphi_x$ restricted to $A_x=\{g:g(x)\in\mathsf A\}$ is an injection to $B_x=\{g:g(x)\in\mathsf B\}$.
\item If, moreover, $X$ is transitive with a transitive point $x^*$, then the above condition $\mathsf A\preccurlyeq\mathsf B$ is equivalent to the existence of just one function $\tilde\varphi_{x^*}$ determined by a block code, whose restriction to $A_{x^*}$ is an injection to $B_{x^*}$.
\end{enumerate}
\end{theorem}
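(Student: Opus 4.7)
The plan is to prove part (1) by exhibiting a direct correspondence between the finite data witnessing $\mathsf{A}\preccurlyeq\mathsf{B}$ (a clopen partition $\mathsf{A}=\bigcup_{i=1}^k\mathsf{A}_i$ together with elements $g_1,\dots,g_k\in G$) and the finite data of a block code (a coding horizon $F$, a finite set of multipliers $E\subset G$, and the code $\Xi$). Part (2) will then be obtained from part (1) by transferring the required properties from $x^*$ to every $x\in X$ via density of its orbit.

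For the direct implication in part (1), I would start from $\mathsf{A}\preccurlyeq\mathsf{B}$ with witnesses $\mathsf{A}_i$ and $g_i$ ($1\le i\le k$). Since the $\mathsf{A}_i$ are pairwise disjoint clopen subsets of the subshift $X\subset\Lambda^G$, one can pick a finite window $F\ni e$ and pairwise disjoint sets $\mathcal{A}_i\subset\Lambda^F$ with $x\in\mathsf{A}_i$ iff $x|_F\in\mathcal{A}_i$. Define $\Xi:\Lambda^F\to\{g_1,\dots,g_k,e\}$ by $\Xi(B)=g_i$ when $B\in\mathcal{A}_i$ and $\Xi(B)=e$ otherwise, and set $\varphi_x(g)=\Xi(g(x)|_F)$, so $\tilde\varphi_x(g)=\varphi_x(g)g$. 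For $g\in A_x$ there is a unique $i$ with $g(x)\in\mathsf{A}_i$, hence $\tilde\varphi_x(g)=g_ig\in B_x$ because $g_i(\mathsf{A}_i)\subset\mathsf{B}$. Injectivity then follows from the disjointness of the sets $g_i(\mathsf{A}_i)$: if $g_ig=g_jg'$ for $g,g'\in A_x$ sending $x$ into $\mathsf{A}_i,\mathsf{A}_j$ respectively, the common image lies in $g_i(\mathsf{A}_i)\cap g_j(\mathsf{A}_j)$, forcing $i=j$ and then $g=g'$.

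For the reverse implication, given the block-coded family $\{\tilde\varphi_x\}$ with multipliers in a finite set $E\subset G$, I would set $\mathsf{A}_h=\{x\in\mathsf{A}:\varphi_x(e)=h\}$ for $h\in E$. These are clopen (as $\varphi_x(e)=\Xi(x|_F)$) and partition $\mathsf{A}$, and each $h$ sends $\mathsf{A}_h$ into $\mathsf{B}$ because $\tilde\varphi_x(e)=h\in B_x$ for $x\in\mathsf{A}_h$. The crucial point is the pairwise disjointness of the images $h(\mathsf{A}_h)$: if $y=h(x)=h'(x')$ with $h\ne h'$, $x\in\mathsf{A}_h$, $x'\in\mathsf{A}_{h'}$, then $h^{-1},(h')^{-1}\in A_y$, and the block-code identity $\varphi_y(h^{-1})=\Xi(h^{-1}(y)|_F)=\Xi(x|_F)=\varphi_x(e)=h$ forces $\tilde\varphi_y(h^{-1})=e$; the analogous reasoning for $h'$ gives $\tilde\varphi_y((h')^{-1})=e$, contradicting injectivity on $A_y$ since $h^{-1}\ne(h')^{-1}$.

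For part (2), the block code $\Xi$ underlying $\tilde\varphi_{x^*}$ automatically defines $\tilde\varphi_x$ for every $x\in X$ via the same formula, so only the properties $\tilde\varphi_x(A_x)\subset B_x$ and injectivity on $A_x$ need to be transferred from $x^*$ to every $x$. Fix a window $W$ determining membership in both $\mathsf{A}$ and $\mathsf{B}$, the horizon $F$ of $\Xi$, and the multiplier set $E$. Given $x\in X$ and $g\in A_x$ (respectively, a pair $g_1,g_2\in A_x$ with $\tilde\varphi_x(g_1)=\tilde\varphi_x(g_2)$), I would pick, by density of the orbit of $x^*$, an element $g'\in G$ (resp.\ a single $g_0\in G$) so that $g'(x^*)$ agrees with $g(x)$ on a finite window containing $W\cup F\cup WE$ (resp.\ $g_0(x^*)$ agrees with $x$ on a finite window containing $(W\cup F)g_1\cup(W\cup F)g_2$). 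A direct computation will then yield $g'\in A_{x^*}$ with $\varphi_{x^*}(g')=\varphi_x(g)$ (resp.\ $g_1g_0,g_2g_0\in A_{x^*}$ with $\tilde\varphi_{x^*}(g_ig_0)=\tilde\varphi_x(g_i)g_0$), so the corresponding property of $\tilde\varphi_{x^*}$ descends to $\tilde\varphi_x$. I expect the main obstacle to lie precisely in this bookkeeping: choosing the transferring window large enough to simultaneously control membership in $\mathsf{A}$ and $\mathsf{B}$, the code evaluations on $F$, and the shift by the bounded multipliers on the test window. Once the right window is specified, both transfers should be routine.
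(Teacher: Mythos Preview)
Your proposal is correct and follows essentially the same approach as the paper: both directions of (1) are argued via the same correspondence (clopen pieces $\mathsf A_i\leftrightarrow$ level sets of the code at $e$, with the disjointness-of-images argument reducing to $\tilde\varphi_y(h^{-1})=\tilde\varphi_y((h')^{-1})=e$), and part (2) is handled by the same density-of-orbit transfer, with your explicit window $W\cup F\cup WE$ playing the role of the paper's conditions (a)--(c). The only cosmetic differences are that you build the block code in (1) directly from a window for the $\mathsf A_i$ rather than invoking Curtis--Hedlund--Lyndon, and in (2) you separate the ``into $B_x$'' and injectivity transfers into two approximations while the paper does them in one.
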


\begin{proof}
(1) Firstly, suppose that $\mathsf A\preccurlyeq\mathsf B$. Let $\{\mathsf A_1,\mathsf A_2,\dots,\mathsf A_k\}$ be the clopen partition of $\mathsf A$ and let $g_1,g_2,\dots, g_k$ be the elements of $G$ such that the sets $\mathsf B_i = g_i(\mathsf A_i)$ are disjoint subsets of $\mathsf B$. Let $E = \{g_1,g_2,\dots,g_k\}$. Consider the mapping $\xi:X\to E^G$ given by the following rule
$$
(\xi(x))_g = \begin{cases}
g_i &\text{ if \ }g(x)\in\mathsf A_i, \ i=1,2,\dots,k,\\
g_1 &\text{ otherwise},
\end{cases}
$$
($g\in G$).
Since the sets $\mathsf A_i$ and $X\setminus \mathsf A$ are clopen in $X$, the above map is continuous and, as easily verified, it is shift-equivariant. Thus, it is a \tl\ factor map from $X$ into $E^G$. By Theorem \ref{CHL}, there exists a block code $\Xi:\Lambda^F\to E$ (with some finite coding horizon $F$) satisfying, for all $x\in X$ and $g\in G$, the equality
$$
(\xi(x))_g = \Xi(g(x)|_F).
$$
For each $x\in X$ we define $\varphi_x:G\to E$ by $\varphi_x(g)= (\xi(x))_g$ and $\tilde\varphi_x:G\to G$ by $\tilde\varphi_x(g)=\varphi_x(g)g$, i.e., the family of maps $\{\tilde\varphi_x\}_x$ is determined by the block code $\Xi$. We need to show that, for every $x\in X$, $\tilde\varphi_x$ restricted to $A_x$ is an injection to $B_x$.

Throughout this paragraph we fix some $x\in X$ and skip the subscript $x$ in the writing of $A_x, B_x$, $\varphi_x$ and $\tilde\varphi_x$. For $i=1,2,\dots,k$ let $A_i = A\cap\varphi^{-1}(g_i)$. Clearly, $\{A_1,A_2,\dots,A_k\}$ is a partition of $A$ and for every $a\in A$ we have:
$$
a\in A_i \iff \varphi(a)=g_i \iff (\xi(x))_a = g_i \iff  a(x) \in \mathsf A_i, \ \  (i=1,2,\dots,k).
$$
Further, $a(x)\in\mathsf A_i$ yields $g_ia(x)\in\mathsf B_i\subset \mathsf B$, which implies that $g_ia\in B$. Since $g_ia = \varphi(a)a = \tilde\varphi(a)$, we have shown that $\tilde\varphi$ sends $A$ into $B$. For injectivity of the restriction $\tilde\varphi|_A$, observe that if $a_1\neq a_2$ and both elements belong to the same set $A_i$ then their images by $\tilde\varphi$, equal to $g_ia_1$ and $g_ia_2$, respectively, are different by cancellativity. If $a_1\in A_i$ and $a_2\in A_j$ with $i\neq j$, then
$\tilde\varphi(a_1)(x)=g_ia_1(x)\in\mathsf B_i$ and $\tilde\varphi(a_2)(x)=g_ja_2(x)\in\mathsf B_j$. Since $\mathsf B_i$ and $\mathsf B_j$ are disjoint, the elements $\tilde\varphi(a_1)$ and $\tilde\varphi(a_2)$ must be different.

\smallskip
Now suppose that there exist injections $\tilde\varphi_x:A_x\to B_x$ (for all $x\in X$) determined by a block code $\Xi:\Lambda^F\to E=\{g_1,g_2,\dots,g_k\}\subset G$, where the elements $g_i$ are written without repetitions, i.e., are different for different indices $i=1,2,\dots,k$. That is, denoting, for each $g\in G$,
$$
\varphi_x(g) = \Xi(g(x)|_F),
$$
we obtain maps $\varphi_x$ such that $g\mapsto\varphi_x(g)g$ restricted to $A_x$ coincides with $\tilde\varphi_x$. Now, for each $i=1,2,\dots,k$ we define
$$
\mathsf A_i=\mathsf A\cap [\Xi^{-1}(g_i)]=\{x\in\mathsf A:\Xi(x|_F)=g_i\}.
$$
Clearly, $\{\mathsf A_1,\mathsf A_2,\dots,\mathsf A_k\}$ is a clopen partition of $\mathsf A$. Let $x\in \mathsf A_i$ (for some $i=1,2,\dots, k$). Then $e\in A_x$ and thus $\tilde\varphi_x(e)\in B_x$, i.e., $\tilde\varphi_x(e)(x)\in\mathsf B$. But $\tilde\varphi_x(e)=\varphi_x(e)=\Xi(x|_F)=g_i$. We have shown that $g_i(\mathsf A_i)\subset\mathsf B$.

It remains to show that the sets $g_i(\mathsf A_i)$ are disjoint. Suppose that for some $i\neq j$ there exists $x\in X$ belonging to both $g_i(\mathsf A_i)$ and $g_j(\mathsf A_j)$.
This implies that $g_i^{-1}$ and $g_j^{-1}$ both belong to $A_x$, and $\varphi_x(g_i^{-1}) = g_i$, $\varphi_x(g_j^{-1}) = g_j$. But then
$$
\tilde\varphi_x(g_i^{-1})= \varphi_x(g_i^{-1}) g^{-1}_i= g_ig^{-1}_i=e\text{ \ and \ }\tilde\varphi_x(g_j^{-1})=\varphi_x(g_j^{-1}) g^{-1}_j= g_jg^{-1}_j=e,
$$
which contradicts the injectivity of $\tilde\varphi_x$ on $A_x$.

\smallskip
(2) In view of (1), it suffices to show that if a block code $\Xi:\Lambda^F\to E$ determines an injection $\tilde\varphi_{x^*}:A_{x^*}\to B_{x^*}$ then it also determines (as usually, by the formulas $\varphi_x(g)=\Xi(g(x)|_F)$ and $\tilde\varphi_x(a)=\varphi_x(a)a$\,)
injections $\tilde\varphi_x:A_x\to B_x$ for all $x\in X$.
Fix some $x\in X$ and let $a_1\neq a_2$ belong to $A_x$, i.e., $a_1(x), a_2(x)\in\mathsf A$. Since $x^*$ is a transitive point,
a point $g(x^*)$ (for some $g\in G$) is so close to $x$ that:
\begin{enumerate}
	\item[(a)] $a_1g(x^*),\ a_2g(x^*)\in\mathsf A$,
	\item[(b)] the blocks $g(x^*)|_{Fa_1\cup Fa_2}$ and $x|_{Fa_1\cup Fa_2}$ are equal,
	\item[(c)] $(\forall f\in Ea_1\cup Ea_2) \ \ fg(x^*)\in\mathsf B \iff f(x)\in\mathsf
	B$.
\end{enumerate}
By (a), both $a_1g$ and $a_2g$ belong to $A_{x^*}$. Thus $\tilde\varphi_{x^*}(a_1g)$
and $\tilde\varphi_{x^*}(a_2g)$ are \emph{different} elements of $B_{x^*}$. But
$$
\tilde\varphi_{x^*}(a_1g)=\varphi_{x^*}(a_1g)a_1g\text{ \ \ and \ \ }\tilde\varphi_{x^*}(a_2g)=\varphi_{x^*}(a_2g)a_2g,
$$
which, after canceling $g$, yields
$$
\varphi_{x^*}(a_1g)a_1\neq\varphi_{x^*}(a_2g)a_2.
$$
On the other hand, by (b), $x|_{Fa_1} = g(x^*)|_{Fa_1}$, whence $a_1(x)|_F = a_1g(x^*)|_F$, and
$$
\varphi_{x}(a_1)=\Xi(a_1(x)|_F) = \Xi(a_1g(x^*)|_F) = \varphi_{x^*}(a_1g),
$$
which means that $\tilde\varphi_{x}(a_1)=\varphi_{x}(a_1)a_1=\varphi_{x^*}(a_1g)a_1$.
Analogously, $\tilde\varphi_{x}(a_2)=\varphi_{x^*}(a_2g)a_2$. We have shown that
$\tilde\varphi_x(a_1)\neq \tilde\varphi_x(a_2)$, i.e., $\tilde\varphi_x$ restricted to $A_x$ is injective.

Further, the fact that $\tilde\varphi_{x^*}(a_1g)\in B_{x^*}$ yields
$$
\mathsf B\ni\tilde\varphi_{x^*}(a_1g)(x^*)=\varphi_{x^*}(a_1g)a_1g(x^*)=
\varphi_{x}(a_1)a_1g(x^*).
$$
Since $\varphi_x(a_1)a_1\in Ea_1$, by (c) we get
$$
\mathsf B\ni\varphi_{x}(a_1)a_1(x)=\tilde\varphi_x(a_1)(x),
$$
and hence $\tilde\varphi_x(a_1)\in B_x$. We have shown that $\tilde\varphi_x$ sends $A_x$ injectively to $B_x$.
\end{proof}

\subsubsection{Banach density comparison property of a group}
\begin{definition}
We say that $G$ has the \emph{Banach density comparison property} if whenever
$A\subset G$ and $B\subset G$ are disjoint and satisfy $\underline D(B,A)>0$ then,
in the subshift \,$Y^{AB}$ there exists an injection $\tilde\varphi:A\to B$ determined by a block code (recall that $y^{AB}$ is a transitive point in $Y^{AB}$ and $A=A_{y^{AB}},\ B=B_{y^{AB}}$, so the above condition is the same as that in Theorem \ref{tutka} (2)).
\end{definition}

\begin{remark}
It is immediate to see that any finite group has the Banach density comparison property.
\end{remark}

%We continue to consider two subsets $A,B$ of $G$. We have the following relation of upper and lower Banach densities, and the Banach density advantage, with invariant measures.

We can now completely characterize the comparison property of a countable amenable group in terms of the Banach density comparison property.

\begin{theorem}\label{ujowe}
A countable amenable group $G$ has the comparison property if and only if it has the Banach density comparison property.
\end{theorem}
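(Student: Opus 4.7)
The proof splits into two implications; the forward one is straightforward, while the backward one requires building an auxiliary transitive subshift that captures the language of the given action.

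For \emph{comparison $\Rightarrow$ Banach density comparison}, I would specialize the comparison property to the transitive subshift $Y^{AB}$. Given disjoint $A, B \subset G$ with $\underline D(B,A) > 0$, Proposition \ref{prop}(2) yields $\inf_{\mu \in \M^{AB}}(\mu([\mathsf 2]) - \mu([\mathsf 1])) = \underline D(B,A) > 0$, so every invariant measure on $Y^{AB}$ satisfies $\mu([\mathsf 1]) < \mu([\mathsf 2])$. Comparison then supplies $[\mathsf 1] \preccurlyeq [\mathsf 2]$, and Theorem \ref{tutka}(2), applied at the transitive point $y^{AB}$ (for which $A = A_{y^{AB}}$ and $B = B_{y^{AB}}$), produces the desired block code inducing an injection $A \to B$.

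For the converse, given a $G$-action on a zero-dimensional $X$ with disjoint clopen $\mathsf A, \mathsf B$ and $\mu(\mathsf A) < \mu(\mathsf B)$ for every $\mu \in \M_G(X)$, the first step is to reduce via $\pi_{\mathsf A\mathsf B}$ to the case $X \subset \{\mathsf 0, \mathsf 1, \mathsf 2\}^G$, $\mathsf A = [\mathsf 1]$, $\mathsf B = [\mathsf 2]$: since $\pi_{\mathsf A\mathsf B}$ is surjective on invariant measures, the inequality transfers to $\mathcal{M}_{\mathsf A\mathsf B}$, and any subequivalence in $Y_{\mathsf A\mathsf B}$ pulls back through $\pi_{\mathsf A\mathsf B}^{-1}$ to one in $X$. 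Setting $\delta = \inf_\mu(\mu(\mathsf B) - \mu(\mathsf A)) > 0$, Proposition \ref{prop}(1) and Lemma \ref{bbb} supply a finite $F^* \subset G$ satisfying $|B_x \cap F^* g| - |A_x \cap F^* g| \ge \tfrac{\delta}{2}|F^*|$ uniformly in $x \in X$ and $g \in G$.

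The heart of the plan is to construct a single point $z \in \{\mathsf 0, \mathsf 1, \mathsf 2\}^G$ whose language contains that of $X$ and for which $\underline D(B_z, A_z) > 0$. I would fix a disjoint $(1\!-\!\varepsilon)$-covering quasitiling of $G$ with $(F^*, \varepsilon)$-invariant shapes (available by Theorems \ref{OW} and \ref{DH1}), write on each tile a block read off from some $x \in X$ restricted to that shape, cycling through an enumeration of all blocks of $X$ over each shape so that each one appears, and fill the uncovered portion with $\mathsf 0$. The uniform local excess of $\mathsf 2$'s over $\mathsf 1$'s on every $F^*$-translate inside any $x \in X$, together with the F\o lner property of the shapes, then forces $\underline D(B_z, A_z) \ge \delta/4$ once $\varepsilon$ is small enough in terms of $\delta$ and $|F^*|$. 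Noting that $z = y^{A_z B_z}$ so that $Y^{A_z B_z}$ is the orbit-closure of $z$, applying Banach density comparison to $(A_z, B_z)$ yields a block code $\Xi$ that, by Theorem \ref{tutka}(2), induces injections $\tilde\varphi_y: A_y \to B_y$ for \emph{every} $y \in Y^{A_z B_z}$.

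To finish, I would observe that the same $\Xi$ automatically induces injections $A_x \to B_x$ for every $x \in X$. Indeed, any failure of $\Xi$ at $x$ -- whether a collision $\tilde\varphi_x(a_1) = \tilde\varphi_x(a_2)$ with distinct $a_1, a_2 \in A_x$, or a codomain violation $\tilde\varphi_x(a) \notin B_x$ -- is witnessed by the value of $x$ on a finite window. That window lies in the language of $X$, hence by construction appears in $z$, hence in some shift of $z$, i.e.\ in some $y \in Y^{A_z B_z}$, where $\Xi$ would fail identically, contradicting the previous step. Theorem \ref{tutka}(1) then concludes $\mathsf A \preccurlyeq \mathsf B$. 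The main obstacle I anticipate is the density estimate $\underline D(B_z, A_z) > 0$: careful bookkeeping of tile boundaries and of the $\mathsf 0$-filler region is needed, and the quasitiling parameter $\varepsilon$ must be tuned small enough not to wipe out the surplus $\delta/2$ that the choice of $F^*$ guarantees on individual $F^*$-windows.
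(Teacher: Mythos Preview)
Your forward implication and the reduction to the factor subshift $Y_{\mathsf A\mathsf B}$ match the paper exactly, and your density estimate for $z$ can indeed be pushed through along the lines you sketch. The genuine gap is in the final transfer step.

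You build $z$ from a quasitiling with a \emph{fixed finite} set of shapes, and you only ensure that blocks of $X$ over those shapes occur in $z$. The block code $\Xi$, however, is produced by Banach density comparison \emph{after} $z$ is constructed, and its coding horizon $F$ (together with the multiplier set $E$) is not under your control. A failure of $\tilde\varphi_x$ at $x\in X$ is witnessed on a window of the form $(F\cup FE^{-1}E\cup E^{-1}E)a$; this window can be strictly larger than every tile shape, in which case the corresponding block of $x$ need not occur anywhere in $z$ (any block of $z$ of that size straddles several tiles plus the $\mathsf 0$-filled gaps, so it is not a block of $X$). Your argument ``the window lies in the language of $X$, hence by construction appears in $z$'' is therefore circular: the shapes were fixed before $\Xi$, but the required window size depends on~$\Xi$.

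The paper closes this gap differently: rather than a bounded quasitiling, it places blocks $C_n\in\{\mathsf 0,\mathsf 1,\mathsf 2\}^{F_n}$ over an \emph{increasing, exhausting} F\o lner sequence at widely separated locations $g_n$ (with $F_nF_n^{-1}F_ng_n$ pairwise disjoint), choosing the $C_n$ so that every $y\in Y_{\mathsf A\mathsf B}$ is a limit of some subsequence, and fills the complement with $\mathsf 2$. This guarantees the inclusion $Y_{\mathsf A\mathsf B}\subset Y=\overline{O(y^*)}$, so once Theorem~\ref{tutka}(2) yields $[\mathsf 1]\preccurlyeq[\mathsf 2]$ in the transitive system $Y$, the subequivalence simply restricts to $Y_{\mathsf A\mathsf B}$; no after-the-fact matching of windows is needed. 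Your idea becomes correct if you replace the fixed quasitiling by such a sequence of growing blocks (equivalently, if you arrange $X\subset\overline{O(z)}$ rather than merely that the bounded-scale language of $X$ sits inside that of $z$).
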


\begin{proof}
The theorem holds trivially for finite groups, so we can restrict to infinite groups $G$. Assume that $G$ has the comparison property and let $A,B\subset G$ be disjoint and satisfy $\underline D(B,A)>0$. Then, by Proposition \ref{prop} (2), taking in the subshift $Y^{AB}$ the clopen sets: $\mathsf A=[\mathsf 1]$ and $\mathsf B=[\mathsf 2]$, we have $\inf_{\mu\in\M^{AB}}(\mu(\mathsf B)-\mu(\mathsf A))>0$. By the assumption, $\mathsf A\preccurlyeq\mathsf B$. Now, a direct application of Theorem \ref{tutka} (2) completes the proof of the Banach density comparison property.

Let us pass to the proof of the opposite implication.
Suppose that a countable amenable group $G$ having the Banach density comparison property acts on a \zd\ compact metric space $X$, in which we have selected two clopen sets $\mathsf A$ and $\mathsf B$ satisfying, for each \im\ $\mu$ on $X$, the inequality $\mu(\mathsf A)<\mu(\mathsf B)$.
By Remark \ref{disjoint}, we can assume that $\mathsf A$ and $\mathsf B$ are disjoint; and
by Remark \ref{from0}, we have $\inf_{\mu\in\M_G(X)}(\mu(\mathsf B)-\mu(\mathsf A))>0$. This translates to $\inf_{\nu\in\M_{\mathsf A\mathsf B}}(\nu([\mathsf 2])-\nu([\mathsf 1]))>0$ in the factor subshift $Y_{\mathsf A\mathsf B}$. By Proposition \ref{prop} (1) applied to this subshift, we get $\underline D([\mathsf 2],[\mathsf 1])>0$.

Since we intend to use the Banach density comparison property and Theorem~\ref{tutka}~(2), we need to embed $Y_{\mathsf A\mathsf B}$ in a transitive subshift $Y$ (over the alphabet $\{\mathsf0,\mathsf1,\mathsf2\}$). We also desire a transitive point $y^*$ which satisfies
$\underline D(B_{y^*},A_{y^*})>0$. Below we present the construction of such a transitive subshift.

Choose some positive $\gamma<\underline D([\mathsf 2],[\mathsf 1])$. Fix an increasing (w.r.t. set inclusion) F\o lner \sq\ $(F_n)_{n\in\N}$ such that $\bigcup_{n=1}^\infty F_n=G$. By choosing a sub\sq\ we can assume that $\sum_{i=1}^{n-1}|F_i|<\frac{1-\gamma}2|F_n|$ for every $n$ (in this place we use the assumption that $G$ is infinite). Next, we need to find a \sq\ of blocks $C_n\in\{\mathsf0,\mathsf1,\mathsf2\}^{F_n}$ each appearing as $y_n|_{F_n}$ in some $y_n\in Y_{\mathsf A\mathsf B}$, such that every $y\in Y_{\mathsf A\mathsf B}$ is a coordinatewise limit of a subsequence $C_{n_k}$ of the selected blocks. Finally, we need to find a \sq\ $g_n$ of elements of $G$ such that the sets $F_nF_n^{-1}F_ng_n$ are disjoint. All the above steps are possible and easy. Once they are completed, $y^*$ is defined by the rule: for each $n$ and $f\in F_n$ we put $y^*_{fg_n}=C_n (f)$, and for all $g$ outside the union $\bigcup_{n=1}^\infty F_ng_n$, we put $y^*_g = \mathsf 2$. We let $Y$ be the closure of the orbit of $y^*$.

The following properties hold:
\begin{itemize}
	\item $Y\supset Y_{\mathsf A\mathsf B}$,
	\item $\underline D(B_{y^*},A_{y^*})\ge \gamma>0$.
\end{itemize}
The first property is obvious by construction: each $y\in Y_{\mathsf A\mathsf B}$ is the limit of a sequence of blocks $C_{n_k}$, hence it is also the limit of the \sq\ of elements $g_{n_k}(y^*)$, and thus it belongs to $Y$.

We need to prove the latter property. By the definition of $\underline D([\mathsf 2],[\mathsf 1])$ in the subshift $Y_{\mathsf A\mathsf B}$, there exist arbitrarily large indices $n_k$ such that
\begin{equation}\label{noco}
|\{f\in F_{n_k}:y_{fg}=\mathsf 2\}|-|\{f\in F_{n_k}:y_{fg}=\mathsf 1\}|\ge\gamma|F_{n_k}|,
\end{equation}
for all $y\in Y_{\mathsf A\mathsf B}$ and $g\in G$. It suffices to show an analogous property for $y^*$.

Fix some $g\in G$ and observe the block $y^*|_{F_{n_k}g}$. The set $F_{n_k}g$ either does not intersect any of the sets $F_mg_m$ with $m\ge n_k$ or intersects one of them (say $F_{m_0}g_{m_0}$ with $m_0\ge n_k$).

In the first case, the block $y^*|_{F_{n_k}g}$ consists mostly of symbols $\mathsf 2$; as all symbols different from $\mathsf 2$ appear in $y^*$ only over the intersection of $F_{n_k}g$ with the union of the sets $F_ig_i$ with $i<n_k$, the percentage of such symbols in $y^*|_{F_{n_k}g}$ is at most
$$
\frac1{|F_{n_k}g|}\sum_{i=1}^{n_k-1}|F_ig_i|=\frac1{|F_{n_k}|}\sum_{i=1}^{n_k-1}|F_i|<\frac{1-\gamma}2.
$$
Thus, in this case we have
\begin{equation}\label{noco1}
|\{f\in F_{n_k}:y^*_{fg}=\mathsf 2\}|-|\{f\in F_{n_k}:y^*_{fg}=\mathsf 1\}|\ge\gamma|F_{n_k}|.
\end{equation}
In the latter case, we have $g\in F_{n_k}^{-1}F_{m_0}g_{m_0}$, hence $F_{n_k}g\subset F_{n_k}F_{n_k}^{-1}F_{m_0}g_{m_0}\subset F_{m_0}F_{m_0}^{-1}F_{m_0}g_{m_0}$. By disjointness of the sets $F_nF_n^{-1}F_ng_n$, $F_{n_k}g$ does not intersect any set $F_nF_n^{-1}F_ng_n$ (and hence also $F_ng_n$) with $n\neq m_0$. We will compare the block $y^*|_{F_{n_k}g}$ with the block $y_{m_0}|_{F_{n_k}gg_{m_0}^{-1}}$. We can write
$$
F_{n_k}g = (F_{n_k}g\cap F_{m_0}g_{m_0}) \cup (F_{n_k}g\setminus F_{m_0}g_{m_0}),
$$
and likewise
$$
F_{n_k}gg_{m_0}^{-1} = (F_{n_k}gg_{m_0}^{-1}\cap F_{m_0})\cup (F_{n_k}gg_{m_0}^{-1}\setminus F_{m_0}).
$$
By the definition of $y^*$, the block $y^*|_{F_{n_k}g\cap F_{m_0}g_{m_0}}$ is identical to $y_{m_0}|_{F_{n_k}gg_{m_0}^{-1}\cap F_{m_0}}$, while $y^*|_{F_{n_k}g\setminus F_{m_0}g_{m_0}}$ contains just the symbols $\mathsf 2$. Thus
the difference
$$
|\{f\in F_{n_k}:y^*_{fg}=\mathsf 2\}|-|\{f\in F_{n_k}:y^*_{fg}=\mathsf 1\}|
$$
is not smaller than
$$
|\{f\in F_{n_k}: (y_{m_0})_{fgg_{m_0}^{-1}}=\mathsf 2\}|-|\{f\in F_{n_k}:(y_{m_0})_{fgg_{m_0}^{-1}}=\mathsf 1\}|.
$$
Since $y_{m_0}\in Y_{\mathsf A\mathsf B}$, \eqref{noco} implies that the latter expression is at least $\gamma|F_{n_k}|$. We have proved \eqref{noco1} also in this case.

We have proved that $\underline D(B_{y^*},A_{y^*})\ge\gamma>0$. Now, the Banach density comparison property of $G$ implies that there exists an injection $\tilde\varphi$ from $A_{y^*}$ to $B_{y^*}$ determined by a block code. Thus, by Theorem~\ref{tutka} (2), we get $[\mathsf 1]\preccurlyeq[\mathsf 2]$ in the transitive subshift $Y$, and by restriction to a closed \inv\ set the same holds in $Y_{\mathsf A\mathsf B}$, which, by an application of $\pi_{\mathsf A\mathsf B}^{-1}$, translates to $\mathsf A\preccurlyeq\mathsf B$ in~$X$.
\end{proof}

\subsubsection{Comparison property via finitely generated subgroups}

\begin{definition}
In a group $G$, a set $R$ such that $\bigcup_{n=1}^\infty (R\cup R^{-1})^n=G$ is called a \emph{generator} of $G$. A group having a finite generator is called \emph{finitely generated}.
\end{definition}

As we shall see, the comparison property of a group is determined by its all finitely generated subgroups.

\begin{lemma}\label{supinf}
Let $G$ act on a \zd\ compact metric space $X$. Let $\mathsf A,\mathsf B\subset X$ be two disjoint clopen sets. Then
$$
\sup_H \inf_{\mu\in\M_H(X)}(\mu(\mathsf B)-\mu(\mathsf A))=\sup_{H'} \inf_{\mu\in\M_{H'}(X)}(\mu(\mathsf B)-\mu(\mathsf A))=\inf_{\mu\in\M_G(X)}(\mu(\mathsf B)-\mu(\mathsf A))
$$
where $H$ ranges over all finitely generated subgroups of $G$ and $H'$ ranges over all subgroups of $G$.
\end{lemma}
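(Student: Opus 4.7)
The plan is to reduce everything to a straightforward weak-$*$ compactness argument, built around the fact that a countable group is an increasing union of finitely generated subgroups.

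First I would dispose of the two easy inequalities. For any subgroups $H\le H'\le G$, every $G$-invariant measure is $H'$-invariant, and every $H'$-invariant measure is $H$-invariant, so
\[
\M_G(X)\subseteq\M_{H'}(X)\subseteq\M_H(X).
\]
Since $A, B$ are clopen, the function $\mu\mapsto\mu(\mathsf B)-\mu(\mathsf A)$ is weak-$*$ continuous, so taking infima over progressively smaller sets gives
\[
\inf_{\mu\in\M_H(X)}(\mu(\mathsf B)-\mu(\mathsf A))\le\inf_{\mu\in\M_{H'}(X)}(\mu(\mathsf B)-\mu(\mathsf A))\le\inf_{\mu\in\M_G(X)}(\mu(\mathsf B)-\mu(\mathsf A)).
\]
Taking the supremum on the left over all finitely generated $H$, and then over all subgroups $H'$, yields the trivial direction
\[
\sup_H\inf_{\mu\in\M_H(X)}(\cdots)\le\sup_{H'}\inf_{\mu\in\M_{H'}(X)}(\cdots)\le\inf_{\mu\in\M_G(X)}(\cdots).
\]

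For the nontrivial direction, I would enumerate $G=\{g_1,g_2,\dots\}$ and put $H_n=\langle g_1,\dots,g_n\rangle$, obtaining an increasing chain of finitely generated subgroups with $\bigcup_n H_n=G$. Denote $\beta=\inf_{\mu\in\M_G(X)}(\mu(\mathsf B)-\mu(\mathsf A))$ and $\alpha_n=\inf_{\mu\in\M_{H_n}(X)}(\mu(\mathsf B)-\mu(\mathsf A))$. Each $H_n$ is a subgroup of the amenable group $G$, hence amenable, so $\M_{H_n}(X)$ is nonempty; being weak-$*$ closed in $\M(X)$, it is compact, and the continuous function $\mu\mapsto\mu(\mathsf B)-\mu(\mathsf A)$ attains its infimum there at some $\mu_n\in\M_{H_n}(X)$. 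The sequence $(\alpha_n)_{n\in\N}$ is non-decreasing (since $\M_{H_{n+1}}(X)\subseteq\M_{H_n}(X)$) and bounded above by $\beta$.

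Now I would pass to a weak-$*$ accumulation point $\mu^*$ of $(\mu_n)_{n\in\N}$, which exists by compactness of $\M(X)$. For any fixed $g\in G$, one has $g\in H_n$ for all sufficiently large $n$, so $\mu_n$ is $g$-invariant for all such $n$; the set of $g$-invariant measures being weak-$*$ closed, $\mu^*$ is $g$-invariant. Since this holds for every $g\in G$, we obtain $\mu^*\in\M_G(X)$. By continuity of $\mu\mapsto\mu(\mathsf B)-\mu(\mathsf A)$, along the convergent subsequence we get $\mu^*(\mathsf B)-\mu^*(\mathsf A)=\lim_k\alpha_{n_k}\ge\beta$ (the last inequality because $\mu^*\in\M_G(X)$). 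Combined with $\alpha_n\le\beta$, this forces $\alpha_n\to\beta$, so in particular $\sup_H\alpha_H\ge\beta$, closing the chain of inequalities.

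There is essentially no obstacle beyond what has been written: the argument hinges only on the clopenness of $\mathsf A,\mathsf B$ (giving continuity of the relevant functional), the countability of $G$ (giving the exhausting chain of finitely generated subgroups), and weak-$*$ compactness. Amenability of $G$ is used only implicitly, to ensure $\M_{H_n}(X)\neq\emptyset$ via amenability of subgroups of amenable groups; the argument would in fact work verbatim whenever the relevant invariant-measure sets are nonempty.
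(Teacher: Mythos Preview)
Your proof is correct and takes a genuinely different route from the paper. The paper argues via its Banach density machinery: by Proposition~\ref{prop}~(1), $\inf_{\mu\in\M_G(X)}(\mu(\mathsf B)-\mu(\mathsf A))=\underline D(\mathsf B,\mathsf A)$, so for any $\delta>0$ there is a finite set $F\subset G$ with $\underline D_F(B_x,A_x)>\underline D(\mathsf B,\mathsf A)-\delta$ for all $x$; taking $H=\langle F\rangle$ and applying Proposition~\ref{prop}~(1) again to the $H$-action yields $\inf_{\mu\in\M_H(X)}(\mu(\mathsf B)-\mu(\mathsf A))\ge\underline D(\mathsf B,\mathsf A)-\delta$. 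Your argument sidesteps all of this with a direct weak-$*$ compactness step along an exhausting chain $H_n\uparrow G$, which is more elementary and self-contained. What the paper's approach buys is an explicit witness: for each $\delta$ it produces a concrete finitely generated subgroup $H=\langle F\rangle$, with $F$ coming from the density definition, whereas your argument only shows that \emph{some} $H_n$ far enough along the chain works. That constructive flavor is consistent with how the lemma is used in the paper (to transfer comparison from finitely generated subgroups to $G$), but for the bare statement of the lemma your proof is entirely adequate and arguably cleaner.
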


\begin{proof}
The inequality $\le$ on the left hand side is trivial, while the second inequality $\le$ follows easily from the fact that every measure invariant under the action of $G$ is invariant under the action of $H'$ for any subgroup $H'$ of $G$.

We need to prove the last missing inequality.
By Proposition \ref{prop} (1), we have $\inf_{\mu\in\M_G(X)}(\mu(\mathsf B)-\mu(\mathsf A))=\underline D(\mathsf B,\mathsf A)$. Then, for any positive $\delta$, there exists a finite set $F$ such that
$$
\frac1{|F|}(|B_x\cap Fg|-|A_x\cap Fg|)>\underline D(\mathsf B,\mathsf A)-\delta
$$
for every $x\in X$ and all $g\in G$, in particular for all $g\in H$, where $H$ is the subgroup generated by $F$. Thus, for every $x\in X$, we have
$$
\inf_{g\in H}\frac1{|F|}(|B_x\cap Fg|-|A_x\cap Fg|)\ge\underline D(\mathsf B,\mathsf A)-\delta.
$$
Since $F\subset H$ and $g\in H$, we have $A_x\cap Fg = (A_x\cap H)\cap Fg$. Note that $A_x\cap H$ equals the set $A_x$ defined for the induced action of $H$ on $X$ (and analogously for $B_x$). Thus, the expression on the left hand side above equals $\underline D_F(B_x,A_x)$ evaluated for the action of $H$ on $X$. Now, Lemma \ref{bbb} implies $\underline D(B_x,A_x)\ge\underline D(\mathsf B,\mathsf A)-\delta$ for every $x\in X$ (where $\underline D(B_x,A_x)$ is evaluated for the action of $H$ on $X$, and $\underline D(\mathsf B,\mathsf A)$ is evaluated for the action of $G$ on $X$), and Proposition \ref{prop} (1) yields
$$
\inf_{\mu\in\M_H(X)}(\mu(\mathsf B)-\mu(\mathsf A))\ge\underline D(\mathsf B,\mathsf A)-\delta=\inf_{\mu\in\M_G(X)}(\mu(\mathsf B)-\mu(\mathsf A))-\delta.
$$
After applying the supremum over $H$ on the left we can ignore $\delta$ on the right.
\end{proof}

\begin{prop}\label{44}
A countable amenable group $G$ has the comparison property if every finitely generated subgroup $H$ of $G$ has it.
\end{prop}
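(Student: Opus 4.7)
The plan is to reduce comparison for $G$ directly to comparison for an appropriately chosen finitely generated subgroup, using Lemma~\ref{supinf} as the central tool. Fix a zero-dimensional compact metric space $X$ on which $G$ acts, and fix a pair of clopen sets $\mathsf A,\mathsf B\subset X$ with $\mu(\mathsf A)<\mu(\mathsf B)$ for every $\mu\in\MGX$. By Remark~\ref{disjoint} we may assume $\mathsf A$ and $\mathsf B$ are disjoint, and by Remark~\ref{from0} we have
$$
\inf_{\mu\in\MGX}(\mu(\mathsf B)-\mu(\mathsf A))=:\eta>0.
$$

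Now I invoke Lemma~\ref{supinf}: the supremum of $\inf_{\mu\in\M_H(X)}(\mu(\mathsf B)-\mu(\mathsf A))$ over all finitely generated subgroups $H$ of $G$ equals $\eta$. In particular, I can choose a finitely generated subgroup $H\le G$ such that
$$
\inf_{\mu\in\M_H(X)}(\mu(\mathsf B)-\mu(\mathsf A))>\tfrac{\eta}{2}>0,
$$
where here $H$ acts on $X$ by restriction of the $G$-action. Thus, regarding $X$ as an $H$-space, the clopen sets $\mathsf A,\mathsf B$ satisfy $\mu(\mathsf A)<\mu(\mathsf B)$ for every $H$-invariant Borel probability measure $\mu$.

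Next I observe that $H$ is automatically a countable amenable group: subgroups of amenable groups are amenable, and a finitely generated subgroup of the countable group $G$ is countable (if $H$ happens to be finite then it has the comparison property by Remark~\ref{finitecomp}, so the hypothesis is non-trivial only when $H$ is countably infinite). By the hypothesis of the proposition, $H$ has the comparison property, hence the $H$-action on the zero-dimensional space $X$ admits comparison. Applied to the pair $\mathsf A,\mathsf B$, this yields a clopen partition $\mathsf A=\bigcup_{i=1}^k\mathsf A_i$ and elements $h_1,\dots,h_k\in H$ such that $h_1(\mathsf A_1),\dots,h_k(\mathsf A_k)$ are pairwise disjoint subsets of $\mathsf B$.

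Finally, since each $h_i$ lies in $H\subset G$, precisely the same partition and the same elements $h_1,\dots,h_k$ witness $\mathsf A\preccurlyeq\mathsf B$ in the ambient $G$-action. This proves that the action of $G$ on $X$ admits comparison, and since $X$, $\mathsf A$, $\mathsf B$ were arbitrary, $G$ has the comparison property. There is no real obstacle in this argument — all the technical content is packaged into Lemma~\ref{supinf}, and the only point requiring a brief check is that a certificate of subequivalence for the subgroup action transfers verbatim to the ambient action, which is immediate from the Definition~\ref{defcom} of $\preccurlyeq$.
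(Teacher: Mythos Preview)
Your proof is correct and follows essentially the same approach as the paper's: both use Lemma~\ref{supinf} to find a finitely generated subgroup $H$ for which the measure (equivalently, Banach density) advantage of $\mathsf B$ over $\mathsf A$ remains positive, apply the comparison property of $H$, and then observe that the resulting subequivalence certificate transfers verbatim to the $G$-action. Your version is simply more explicit about the preliminary reductions (Remarks~\ref{disjoint} and~\ref{from0}) and the finite-$H$ case, whereas the paper phrases things in the Banach density language via Proposition~\ref{prop}(1).
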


\begin{proof}
Let $G$ act on a \zd\ compact metric space $X$ and let $\mathsf A,\mathsf B\subset X$ be two disjoint clopen sets satisfying $\underline D(\mathsf B,\mathsf A)>0$. By the preceding lemma (and by Proposition \ref{prop} (1) used twice), there exists a finitely generated subgroup $H$ of $G$ such that the inequality $\underline D(\mathsf B,\mathsf A)>0$ holds also if $\underline D$ is evaluated for the action of $H$. By the comparison property of $H$, we get that $\mathsf A\preccurlyeq\mathsf B$ in this latter action. But this clearly implies the same subequivalence in the action by $G$.
\end{proof}

\begin{remark}
By the proof of Lemma \ref{supinf}, if $(H_n)_{n\in\N}$ is an increasing \sq\ of subgroups of $G$ such that  $G=\bigcup_{n=1}^\infty H_n$ then
$$
\inf_{\mu\in\M_G(X)}(\mu(\mathsf B)-\mu(\mathsf A))=\lim_{n\to\infty}\inf_{\mu\in\M_{H_n}(X)}(\mu(\mathsf B)-\mu(\mathsf A)).
$$
Thus, in Proposition \ref{44}, the assumption can be weakened to the existence of
an increasing \sq\ $(H_n)_{n\in \mathbb{N}}$ of subgroups of $G$ such that $G=\bigcup_{n=1}^\infty H_n$, and every $H_n$ has the comparison property.
\end{remark}

\begin{remark}
The converse implication in Proposition \ref{44} is a bit mysterious. On the one hand, since there are no examples of countable amenable groups without the comparison property, clearly, there is no counterexample for the implication in question. On the other hand, we failed to deduce the comparison property of a subgroup of $G$ from the comparison property of the group $G$.
\end{remark}

\subsubsection{Subexponential groups}

\begin{definition}
A finitely generated group $G$ with a generator $R$ has \emph{subexponential growth} if $|(R\cup R^{-1})^n|$ grows subexponentially, i.e.,
$$
\lim_{n\to\infty}\frac1n\log|(R\cup R^{-1})^n|=0.
$$
\end{definition}

It is very easy to see that subexponential growth of a finitely generated group $G$ implies subexponential growth of $|K^n|$ for any finite set $K\subset G$ and thus does not depend on the choice of a finite generator.

\begin{definition}
A countable group $G$ (not necessarily finitely generated) is called \emph{subexponential} if every its finitely generated subgroup has subexponential growth.
\end{definition}

It is a standard fact that a group $G$ is amenable if and only if so is every finitely generated subgroup of $G$. It is also known that finitely generated groups with subexponential growth are amenable \cite{AS}, hence every subexponential group is amenable. This is why we can omit the amenability assumption when dealing with subexponential groups. Examples of subexponential groups are: Abelian, nilpotent and virtually nilpotent groups. These examples have polynomial growth, but there are also examples of countable groups with intermediate growth rates \cite{Gr}. By a recent result \cite{BGT}, all finitely generated groups, which admit an increasing \sq\ of sets $(A_n)_{n\in\N}$ with $G=\bigcup_{n=1}^\infty A_n$ and $|A_n^2|<C|A_n|$ for some constant $C>0$, are virtually nilpotent and hence subexponential. In particular, this applies to finitely generated groups possessing a symmetric F\o lner \sq\ $(F_n)_{n\in\N}$ satisfying Tempelman's condition $|F_n^{-1}F_n|\le C|F_n|$.

\subsection{Comparison property of subexponential groups}\label{cztery}

This subsection contains our next important result: every subexponential group has the comparison property. The theorem is preceded by a few key definitions and lemmas.
\smallskip

\subsubsection{Correction chains}
We now introduce the key tool in the proof of the main result. The term $(\phi,E)$-chain reflects a remote analogy to $(f,\eps)$-chains in \tl\ dynamics.  Throughout this subsection, we let $A,B$ denote two disjoint subsets of a countable group $G$.

\begin{definition}Given a partially defined bijection $\phi:A'\to B'$, where $A'\subset A$ and $B'\subset B$, such that all multipliers $\phi(a)a^{-1}$ belong to a finite set $E\subset G$, by a \emph{$(\phi,E)$-chain of length $2n$} (or briefly just \emph{a chain}) we will mean a \sq\ $\mathbf C=(a_1,b_1,a_2,b_2,\dots,a_n,b_n)$ of \,$2n$ \emph{different} elements alternately belonging to $A$ and $B$, such that
$$
\text{for each }i=1,2,\dots,n,\ \  b_i\in Ea_i,
$$
and
$$
\text{for each }i=1,2,\dots,n-1, \ \ b_i\in B',\ \ a_{i+1}\in A' \text{ \ and \ } b_i = \phi(a_{i+1})
$$
(in particular, $b_i\in Ea_{i+1}$).
\end{definition}
The $(\phi,E)$-chains starting at a point $a_1\in A\setminus A'$ and ending at a point $b_n \in B\setminus B'$ are of special importance, as they allow one to ``correct'' the mapping and include $a_1$ in the domain and $b_n$ in the range.

\begin{definition}
A $(\phi,E)$-chain $\mathbf C=(a_1,b_1,a_2,b_2,\dots,a_n,b_n)$ will be called a \emph{$\phi$-correction chain} if $a_1\in A\setminus A'$ and $b_n\in B\setminus B'$.
With each $\phi$-correction chain $\mathbf C$ we associate the \emph{correction of $\phi$ along $\mathbf C$}. The corrected map denoted by $\phi^{\mathbf C}$ is defined on $A'\cup\{a_1\}$ onto $B'\cup\{b_{n}\}$, as follows: for each $i=1,2,\dots,n$ we let
$$
\phi^{\mathbf C}(a_i) = b_i,
$$
and for all other points $a\in A'$ we let $\phi^{\mathbf C}(a)=\phi(a)$.
\end{definition}

The correction may be visualized as follows (solid arrows in the top row represent the map $\phi$ and in the bottom row they represent $\phi^\mathbf C$; the dashed arrows represent the ``$E$-proximity relation'' $b\in Ea$):
\begin{gather*}
a_1\dashrightarrow b_1\longleftarrow a_2\dashrightarrow b_2\longleftarrow a_3\ \dots\ b_{n-1}\longleftarrow a_n\dashrightarrow b_n\\
\Downarrow\\
a_1\longrightarrow b_1\dashleftarrow a_2\longrightarrow b_2\dashleftarrow a_3\ \dots\ b_{n-1}\dashleftarrow a_n\longrightarrow b_n
\end{gather*}
(the dashed arrows become solid, the solid arrows are removed from the map). Notice that $\phi^{\mathbf C}$ still has all its multipliers $\phi^{\mathbf C}(a) a^{- 1}$ in the set $E$.
\smallskip

The problem with the correction chains is that the corresponding corrections of $\phi$ usually cannot be applied simultaneously. The correction chains may collide with each other, i.e., pass through common points and then the corresponding corrections rule each other out. To manage this problem we need to learn more about the possible collisions and then carefully select a family of mutually non-colliding correction chains. The details of this selection are given below.

\begin{definition}
Two $\phi$-correction chains \emph{collide} if they have a common point.
\end{definition}

Since the starting points of $\phi$-correction chains belong to $A\setminus A'$, the ending points belong to $B\setminus B'$, other odd points (counting along the chain) belong to $A'$, other even points belong to $B'$, where the above four sets are disjoint, and each even point is tied to the following odd point by the inverse map $\phi^{-1}$, each collision between two $\phi$-correction chains, say
$\mathbf C=(a_1,b_1,a_2,b_2,\dots,a_n,b_n)$ and $\mathbf C'=(a'_1,b'_1,a'_2,b'_2,\dots,a'_m,b'_m)$, is of one of the following three types:
\begin{itemize}
	\item \emph{common start}: $a_1=a'_1$,
	\item \emph{common end}: $b_n=b'_m$,
	\item all other collisions occur in pairs $(b_i,a_{i+1})=(b'_j,a'_{j+1})$ for some $1\le i<n$ and $1\le j<m$.
\end{itemize}
Of course, two chains may have more than one collision. Note that the definition of a $(\phi,E)$-chain eliminates the possibility of ``self-collisions'' in one chain.

\begin{definition}
Given a $(\phi,E)$-chain $\mathbf C=(a_1,b_1,a_2,b_2,a_3,\dots,a_n,b_n)$, the \sq\
$\mathbf n(\mathbf C)=(p_1,q_1,p_2,q_2,\dots,p_{n-1},q_{n-1},p_n)$, where
$p_i=b_ia_i^{-1}$ $(i=1,2,\dots,n)$ and $q_i=b_ia_{i+1}^{-1}$ $(i=1,2,\dots,n-1)$,
will be called the \emph{name} of $\mathbf C$.
\end{definition}
Notice that the name is always a \sq\ of elements of $E$, of length $2n-1$.

\begin{lemma}\label{shorter}
If two different $\phi$-correction chains have the same name (note that their lengths are then equal) and collide with each other then each of them collides also with a strictly shorter $\phi$-correction chain.
\end{lemma}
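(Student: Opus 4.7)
The plan is to exploit a collision between $\mathbf C$ and $\mathbf C'$ to splice them into a strictly shorter $\phi$-correction chain that shares elements with each. First, I would analyze the structure of collisions under the same-name hypothesis. Since the name together with any single element determines the entire chain ($p_i$ recovers $b_i$ from $a_i$, $q_i$ recovers $a_{i+1}$ from $b_i$, and one can propagate both forward and backward), any coincidence $a_k=a'_k$ or $b_k=b'_k$ at the same index forces $\mathbf C=\mathbf C'$. Thus for different chains every collision occurs at different indices. Since $a_1,a'_1\in A\setminus A'$ and $b_n,b'_n\in B\setminus B'$ while all other $a$- and $b$-entries lie in $A'$ and $B'$ respectively, and since $A\cap B=\emptyset$, every collision is interior and, by bijectivity of $\phi$, takes the form $a_{i+1}=a'_{j+1}$ (equivalently $b_i=b'_j$) with $1\le i,j\le n-1$ and $i\ne j$.

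Next, for any such collision $(i,j)$, after swapping $\mathbf C$ and $\mathbf C'$ if necessary so that $i<j$, I would define the spliced sequence
$$
\mathbf C_{i,j}=(a_1,b_1,\ldots,a_i,b_i,a'_{j+1},b'_{j+1},\ldots,a'_n,b'_n)
$$
of length $2(i+n-j)<2n$. It starts at $a_1\in A\setminus A'$ and ends at $b'_n\in B\setminus B'$; the $E$-proximity $b_i\in Ea'_{j+1}$ at the splice is inherited from $\mathbf C'$ via $b_i=b'_j$, and $\phi(a'_{j+1})=b'_j=b_i$ gives the $\phi$-relation at the splice. All other $(\phi,E)$- and $\phi$-constraints come from $\mathbf C$ (first half) and $\mathbf C'$ (second half). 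Thus, provided its listed elements are distinct, $\mathbf C_{i,j}$ is a $\phi$-correction chain strictly shorter than both $\mathbf C$ and $\mathbf C'$ and colliding with each of them (sharing $a_1$ with $\mathbf C$ and $b'_n$ with $\mathbf C'$).

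The main obstacle will be securing distinctness, since a priori the two halves of the splice may overlap. To handle this I would choose, among all collisions with $i<j$, one $(i^*,j^*)$ that minimizes the quantity $i^*+n-j^*$. Any repeated element in $\mathbf C_{i^*,j^*}$ would have to be between the first half (elements of $\mathbf C$ with index $\le i^*$) and the second half (elements of $\mathbf C'$ with index $\ge j^*+1$); using that $a_1\notin A'$, $b'_n\notin B'$, and the distinctness within each of $\mathbf C$ and $\mathbf C'$, such a repetition would produce another interior collision $(i',j')$ with $i'<i^*$ and $j'>j^*$ (the cases $i'=i^*$ or $j'=j^*$ are excluded because they would force two distinct elements within $\mathbf C$ or within $\mathbf C'$ to coincide). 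Then $i'<j'$ and $i'+n-j'<i^*+n-j^*$, contradicting the choice of $(i^*,j^*)$. Hence $\mathbf C_{i^*,j^*}$ has pairwise distinct elements and completes the proof. If instead every collision satisfies $i>j$, the identical argument applies after interchanging the roles of $\mathbf C$ and $\mathbf C'$.
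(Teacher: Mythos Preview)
Your argument is correct and follows essentially the same route as the paper: splice the two chains at an interior collision to produce a strictly shorter $\phi$-correction chain meeting both, and use an extremal choice of collision to rule out self-collisions in the splice. The only difference is cosmetic---the paper picks the collision whose smaller index $i_0$ is minimal over all collisions (so any self-collision would produce a collision index below $i_0$), whereas you minimize the splice length $i+n-j$; both minimality criteria work, and your verification of distinctness is in fact more explicit than the paper's.
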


\begin{proof}
It is obvious that if two $\phi$-correction chains with the same name, say
$$
\mathbf C=(a_1,b_1,a_2,b_2,a_3,\dots,a_n,b_n), \ \ \mathbf C'=(a'_1,b'_1,a'_2,b'_2,a'_3,\dots,a'_n,b'_n),
$$
have the common start $a_1=a_1'$ or the common end $b_n=b_n'$, or a common pair
$(b_i, a_{i+1})=(b'_i,a'_{i+1})$ with the same index $i=1,2,\dots,n-1$, then the chains are equal. The only possible collision between two different $\phi$-correction chains with the same name is that they have a common pair $(b_i,a_{i+1})=(b'_j,a'_{j+1})$ with $i\neq j$. Let $i_0$ be the smallest index appearing in the role of $i$ or $j$ in the collisions
of $\mathbf C$ with $\mathbf C'$ and assume that it plays the role of $i$ (with some corresponding $j$).  Then
$$
(a_1,b_1,a_2,b_2,a_3,\dots,a_{i_0},b_{i_0},a_{i_0+1},b'_{j+1},a'_{j+2},\dots,a'_n,b'_n)
$$
is a $\phi$-correction chain (it has no self-collisions) of length strictly smaller than $2n$, and clearly it collides with both $\mathbf C$ and $\mathbf C'$.
\end{proof}

We enumerate $E$ (arbitrarily) as $\{g_1,g_2,\dots,g_k\}$. We define
$$
\mathbf N=\bigcup_{n=1}^\infty E^{\times2n-1},
$$
which means the disjoint union of the $(2n-1)$-fold Cartesian products of copies of $E$.
This set can be interpreted as the collection of all ``potential'' names of the correction chains of
any partially defined bijection from $A$ to $B$ with the multipliers in $E$. The enumeration of $E$ induces the following linear order on $\mathbf N$:
$$
\mathbf n<\mathbf n'\ \ \iff\ \ |\mathbf n|<|\mathbf n'| \ \vee \ (\,|\mathbf n|=|\mathbf n'| \ \wedge\ \mathbf n<\mathbf n'\,),
$$
where $|\mathbf n|$ denotes the length of $\mathbf n$ and the last inequality is with respect to the lexicographical order on $E^{\times|\mathbf n|}$.

\begin{definition}
A $\phi$-correction chain $\mathbf C$ is \emph{minimal} if it does not collide with any other $\phi$-correction chain whose name precedes $\mathbf n(\mathbf C)$ in the above defined order
on~$\mathbf N$.
\end{definition}

\begin{lemma}\label{minnox}
Minimal $\phi$-correction chains do not collide with each other.
\end{lemma}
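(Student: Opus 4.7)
The plan is a short case analysis playing the minimality condition against Lemma~\ref{shorter} and the linear order on $\mathbf{N}$. Suppose for contradiction that $\mathbf{C}$ and $\mathbf{C}'$ are two \emph{different} minimal $\phi$-correction chains that collide. Compare their names in the order on $\mathbf{N}$, which (by construction) is a strict linear order, so exactly one of three situations occurs: $\mathbf{n}(\mathbf{C})<\mathbf{n}(\mathbf{C}')$, $\mathbf{n}(\mathbf{C})>\mathbf{n}(\mathbf{C}')$, or $\mathbf{n}(\mathbf{C})=\mathbf{n}(\mathbf{C}')$.

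In the first case, $\mathbf{C}'$ collides with a $\phi$-correction chain (namely $\mathbf{C}$) whose name strictly precedes $\mathbf{n}(\mathbf{C}')$, contradicting minimality of $\mathbf{C}'$. The second case is symmetric, contradicting minimality of $\mathbf{C}$. So the only possibility left is $\mathbf{n}(\mathbf{C})=\mathbf{n}(\mathbf{C}')$ with $\mathbf{C}\neq\mathbf{C}'$. Now I would invoke Lemma~\ref{shorter}: since $\mathbf{C}$ and $\mathbf{C}'$ are two different correction chains sharing a name and colliding, there exists a $\phi$-correction chain $\mathbf{C}''$ of length strictly smaller than the common length of $\mathbf{C}$ and $\mathbf{C}'$, which collides with each of them.

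The name $\mathbf{n}(\mathbf{C}'')$ then has length strictly smaller than $|\mathbf{n}(\mathbf{C})|=|\mathbf{n}(\mathbf{C}')|$, and by the very definition of the order on $\mathbf{N}$ (length first, lexicographical order as a tiebreaker) this forces $\mathbf{n}(\mathbf{C}'')<\mathbf{n}(\mathbf{C})$ and $\mathbf{n}(\mathbf{C}'')<\mathbf{n}(\mathbf{C}')$. But then $\mathbf{C}$ collides with a $\phi$-correction chain whose name strictly precedes $\mathbf{n}(\mathbf{C})$, contradicting minimality of $\mathbf{C}$ (and equally, minimality of $\mathbf{C}'$).

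There is no serious obstacle: the definition of the order on $\mathbf{N}$ is tailored exactly so that this bookkeeping works, and Lemma~\ref{shorter} is the only nontrivial ingredient, already proved. The one mild point to be careful about is that the ``other'' chain witnessing non-minimality must indeed be distinct from the chain in question, which is automatic here because a strictly smaller name (or one that strictly precedes in the order) cannot equal $\mathbf{n}(\mathbf{C})$ itself.
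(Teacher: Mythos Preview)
Your proof is correct and follows exactly the same approach as the paper's: distinguish the case of different names (where one chain witnesses non-minimality of the other) from the case of equal names (where Lemma~\ref{shorter} supplies a strictly shorter colliding chain). The paper's version is simply more terse.
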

\begin{proof}
If two $\phi$-correction chains with different names collide, one of them is not minimal. If two $\phi$-correction chains with the same name collide, by Lemma \ref{shorter} none of them is minimal.
\end{proof}

\begin{lemma}\label{minch}
Assume that $E$ is a symmetric set containing the unity $e$ and let $a_1\in A\setminus A'$. If there is a $\phi$-correction chain $\mathbf C$ of length $2n$, starting at $a_1$, then there exists a minimal $\phi$-correction chain of length at most $2n$ contained in the finite set $E^{s(n)}a_1$ (where $s(n)$ depends only
on $|E|$ and $n$).
\end{lemma}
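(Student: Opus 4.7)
The plan is to start from the given chain $\mathbf C$ and, as long as the current chain fails to be minimal, repeatedly replace it by a colliding $\phi$-correction chain with strictly smaller name. The argument then splits into two ingredients: a termination/length control coming from the fact that names of chains of length at most $2n$ live in a finite linearly ordered set, and a diameter control showing that each replacement stays within a bounded expansion of the previous chain.

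For the diameter control, I would first establish the simple observation that, because $E$ is symmetric and contains $e$, any $\phi$-correction chain of length $2m$ starting at a point $a$ is contained in $E^{2m-1}a$; this follows by induction from the defining inclusions $b_i\in Ea_i$ and $a_{i+1}\in E^{-1}b_i=Eb_i$, which give $a_i\in E^{2i-2}a$ and $b_i\in E^{2i-1}a$. Consequently, if two chains of length at most $2n$ collide at a common point $v$ and one of them lies in $E^{s}a_1$, then $v\in E^{s}a_1$ and the starting point of the other chain lies in $E^{2n-1}v$ (again using symmetry of $E$), so the other chain is contained in $E^{2(2n-1)+s}a_1$. This yields an additive growth of $4n-2$ per replacement step.

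For the termination, I would observe that since the order on names puts shorter names before longer ones, each replacement preserves the bound $|\mathbf C^{(i)}|\le 2n$ automatically. Hence every name $\mathbf n(\mathbf C^{(i)})$ encountered lies in the finite set $\bigcup_{j=1}^n E^{\times 2j-1}$, whose cardinality $N:=\sum_{j=1}^n |E|^{2j-1}$ depends only on $|E|$ and $n$. Because the names strictly decrease under $<$, the process terminates after at most $N$ steps in a minimal chain $\mathbf C^{\ast}$ of length at most $2n$. Combining this with the diameter bound, a straightforward induction starting from $s_0=2n-1$ with $s_{i+1}=s_i+(4n-2)$ produces $\mathbf C^{\ast}\subset E^{s(n)}a_1$ for $s(n):=(2n-1)(2N+1)$, which depends only on $|E|$ and $n$.

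The only point requiring real care is the diameter bookkeeping: one must remember that it is the collision point, not the starting point of the new chain, which is known a priori to lie in the previously controlled region, and that the symmetry and unit assumptions on $E$ are exactly what allow one to ``travel back'' from the collision point to the new starting point before expanding along the new chain. Once this is kept straight, the induction and the termination argument are both entirely routine; Lemma~\ref{shorter} is not needed here, since we rely only on the well-ordering of names, not on any structural property of minimal-name chains.
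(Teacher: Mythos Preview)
Your argument is correct and follows essentially the same route as the paper: iteratively replace the current $\phi$-correction chain by one with a strictly smaller name, use the finiteness of $\bigcup_{j=1}^n E^{\times 2j-1}$ to bound the number of iterations, and track the diameter growth per step. Your bookkeeping is in fact slightly tighter than the paper's (you use $2n-1$ and the increment $4n-2$ where the paper uses the cruder $2n$ per step), and your explicit remark about traveling back from the collision point via the symmetry of $E$ makes precise what the paper leaves as ``clearly''.
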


\begin{proof}
If $\mathbf C$ itself is not minimal then it collides with a $\phi$-correction chain $\mathbf C_1$ with $\mathbf n(\mathbf C_1)<\mathbf n(\mathbf C)$ in $\mathbf N$. Clearly, $\mathbf C_1$ is entirely contained in $E^{4n}a_1$. If $\mathbf C_1$ is not minimal, then it collides with some $\mathbf C_2$, whose name precedes that of $\mathbf C_1$ (and hence also that of $\mathbf C$). Now, $\mathbf C_2$ is contained in $E^{6n}a_1$. This recursion may be repeated at most $\sigma_n-1=\sum_{i=1}^n|E|^{2n}-1$ times, because this number estimates the number of names preceding $\mathbf n(\mathbf C)$. So, before $\sigma_n$ steps are performed, a minimal $\phi$-correction chain must occur. Its length is at most $2n$ and it is entirely contained in $E^{2n\sigma_n}a_1$.
\end{proof}

It is the following lemma, where subexponentiality of the group comes into play. We also exploit the notion of tilings.

\begin{lemma}\label{key}
Let $G$ be a subexponential group. Let $\CT$ be a tiling of $G$ and let $\mathcal S$ denote the set of all shapes of $\CT$. Denote $E=\bigcup_{S\in\mathcal S}SS^{-1}$. Let $A,B$ be disjoint subsets of $G$ satisfying, for some $\eps>0$ and every tile $T$ of $\CT$, the inequality
$$
|B\cap T|-|A\cap T|>\eps|T|.
$$
Let $N\ge 1$ be such that for any $n\ge N$,
$$
\frac1n\log|(E^2)^n|<\log(1+\eps)
$$
(by the subexponentiality assumption, since $E^2$ is finite, such an $N$ exists).
Then, for any partially defined bijection $\phi:A'\to B'$ with $A'\subset A,\ B'\subset B$, such that all multipliers $\phi(a)a^{-1}$ are in $E$, for every point $a_1\in A\setminus A'$, there exists a $\phi$-correction chain of length at most $2N$, starting at $a_1$ (and ending in $B\setminus B'$).
\end{lemma}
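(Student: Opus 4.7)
The plan is to argue by contradiction: assume no $\phi$-correction chain of length $\leq 2N$ starting at $a_1$ exists, and derive a violation of the subexponentiality bound. The key is to reformulate chains as walks in a well-chosen directed graph $\Gamma$ on $A \cup B$, where the edges go $a \mapsto b$ whenever $b \in B$ lies in the same tile of $\CT$ as $a$, and $b \mapsto \phi^{-1}(b)$ whenever $b \in B'$. Note that whenever $a \in T = Sc$ with $S \in \mathcal S$, one has $T = Ss^{-1}a \subset SS^{-1}a \subset Ea$, so the first type of edge respects $b \in Ea$; the second type of edge does so because the multipliers of $\phi$ lie in the symmetric set $E$. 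Hence every walk of length $m$ in $\Gamma$ from $a_1$ stays inside $E^m a_1$. Simple paths in $\Gamma$ from $a_1$ to points of $B \setminus B'$ correspond exactly to $\phi$-correction chains, and since removing cycles from a walk yields a simple path of no greater length, the contradictory hypothesis translates to: the set $V_{2N-1}$ of vertices reachable from $a_1$ in at most $2N-1$ edges is entirely contained in $A \cup B'$.

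Next I would track the reachable set layer by layer. Let $V_A^{(k)}$ be the set of $A$-elements reachable in at most $2k$ edges and $V_B^{(k)}$ the set of $B$-elements reachable in at most $2k+1$ edges. Let $\mathcal T^{(k)}$ be the collection of tiles of $\CT$ meeting $V_A^{(k)}$. Because tiles are disjoint and every edge $a \to b$ uses exactly the tile containing $a$, one gets
\[
V_B^{(k)} \;=\; \bigcup_{T \in \mathcal T^{(k)}} (B \cap T),
\qquad
V_A^{(k+1)} \;=\; \{a_1\} \cup \phi^{-1}\!\bigl(V_B^{(k)}\bigr),
\]
where the second identity uses that under our contradictory hypothesis $V_B^{(k)} \subset B'$ for all $k \leq N-1$. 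Since $\phi$ is a bijection and $a_1 \notin A'$, the second identity gives $|V_A^{(k+1)}| = 1 + |V_B^{(k)}|$. For the first identity, using the hypothesis $|B \cap T| - |A \cap T| > \eps|T|$ on every tile and the obvious bound $|V_A^{(k)}| \leq \sum_{T \in \mathcal T^{(k)}} |A \cap T|$,
\[
|V_B^{(k)}| \;>\; \sum_{T\in\mathcal T^{(k)}} |A\cap T| \;+\; \eps \!\!\sum_{T\in\mathcal T^{(k)}} |T| \;\geq\; (1+\eps)\,|V_A^{(k)}|.
\]
Combining, $|V_A^{(k+1)}| > 1 + (1+\eps)\,|V_A^{(k)}| > (1+\eps)\,|V_A^{(k)}|$, and an easy induction starting from $|V_A^{(0)}| = 1$ gives $|V_A^{(N)}| > (1+\eps)^N$.

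Finally, the growth estimate collides with subexponentiality: since $V_A^{(N)} \subset E^{2N}a_1 = (E^2)^N a_1$, we have $|V_A^{(N)}| \leq |(E^2)^N|$, whence $(1+\eps)^N < |(E^2)^N|$, contradicting the choice of $N$. I expect the main obstacle to be the careful conversion from the definition of chains (sequences of distinct elements) to walks in $\Gamma$, together with the bookkeeping needed to separate the "$A$-layer" and "$B$-layer" reachable sets and to ensure that the singleton $\{a_1\}$ is accounted for exactly once (so that the inductive inequality $|V_A^{(k+1)}| \geq (1+\eps)|V_A^{(k)}|$ is not lost). Once the graph-theoretic setup is in place, the density hypothesis on the tiling converts tile-by-tile into a geometric growth rate, and the subexponential hypothesis on $G$ cleanly forbids such growth beyond the threshold $N$.
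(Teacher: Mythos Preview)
Your proof is correct and follows essentially the same strategy as the paper's: both build an increasing sequence of reachable sets that grow geometrically at rate $(1+\eps)$ while remaining inside $(E^2)^n a_1$, forcing a contradiction with subexponentiality at step $N$. Your graph-theoretic packaging (layers $V_A^{(k)}, V_B^{(k)}$) is a cumulative variant of the paper's sets $P_n, Q_n$ (where $Q_n$ is the $\CT$-saturation of $P_n$ and $P_{n+1}=\phi^{-1}(B'\cap Q_n)$), and your explicit cycle-removal argument handles the distinctness requirement in the definition of a chain more carefully than the paper, which leaves that point implicit.
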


\begin{proof}
For each tile $T$ of $\CT$ we have
$$
\frac{|B\cap T|}{|A\cap T|}\ge\frac{\eps|T|}{|A\cap T|}+1\ge 1+\eps
$$
(including the case when the denominator equals $0$).
Clearly, any \emph{$\CT$-saturated} finite set $Q$, i.e, being a union of tiles of $\CT$,
also satisfies
$$
\frac{|B\cap Q|}{|A\cap Q|}\ge1+\eps.
$$
For a set $P\subset G$, we define the \emph{$\CT$-saturation} $P^\CT$ of $P$ as the union of all tiles intersecting $P$:
$$
P^\CT= \bigcup\{T\in\CT:P\cap T\neq\emptyset\}.
$$
Obviously, $P^\CT\subset EP$.

Consider a point $a_1\in A\setminus A'$ (if $A\setminus A'=\emptyset$ then the statement of the theorem holds trivially). Let $T$ be the tile of $\CT$ containing $a_1$, i.e., $T=\{a_1\}^\CT$. Since $T$ contains $a_1$ (and thus $|A\cap T|\ge 1$), we have $|B\cap T|\ge 1+\eps$. There exist $(\phi,E)$-chains of length $2$ from $a_1$ to every $b\in B\cap T$. Now, there are two options:
\begin{itemize}
	\item either at least one of these chains is a $\phi$-correction chain (and then the construction is finished),
	\item or none of these chains is a $\phi$-correction chain, i.e., $B'\cap T=B\cap T$.
\end{itemize}
In the latter option we have $|B'\cap T|=|B\cap T|\ge1+\eps$, i.e., denoting
$$
P_1=\{a_1\} \text{ \ and \ } Q_1=T=P_1^{\CT},
$$
we have
$$
|B'\cap Q_1|\ge1+\eps.
$$

From now on we continue by induction. Suppose that for some $n\ge 1$ we have defined a $\CT$-saturated set $Q_n$ such that
\begin{enumerate}
	\item for every $b\in B\cap Q_n$ there exists a $(\phi,E)$-chain of length at most $2n$ from $a_1$ to $b$,
	\item $B\cap Q_n= B'\cap Q_n$ (i.e., there are no $\phi$-correction chains starting at $a_1$ and ending in
	$Q_n$), and
	\item $|B'\cap Q_n|\ge(1+\eps)^n$.
\end{enumerate}
Then we define $P_{n+1}=\phi^{-1}(Q_n)=\phi^{-1}(B'\cap Q_n)$. Bijectivity of $\phi$ implies that $|P_{n+1}|\ge(1+\eps)^n$. Let $Q_{n+1}$ denote the $\CT$-saturation $P_{n+1}^{\CT}$. Every point $b\in B\cap Q_{n+1}$ is of the form $g\phi^{-1}(b')$ with $g\in E$ and $b'\in B'\cap Q_n$, and, by (1), $b'$ can be reached from $a_1$ by a $(\phi,E)$-chain of length at most $2n$. Thus there exists a $(\phi,E)$-chain of length at most $2(n+1)$ from $a_1$ to every $b\in B\cap Q_{n+1}$. There are two options:
\begin{itemize}
	\item either at least one of these chains is a $\phi$-correction chain (then the construction is finished),
	\item or $B\cap Q_{n+1}=B'\cap Q_{n+1}$.
\end{itemize}
Suppose the latter option occurs. Since $Q_{n+1}$ is $\CT$-saturated, we have
$$
|B'\cap Q_{n+1}|=|B\cap Q_{n+1}|\ge(1+\eps)|A\cap Q_{n+1}|\ge(1+\eps)|P_{n+1}|\ge(1+\eps)^{n+1}.
$$
Now, (1)--(3) are fulfilled for $n+1$, so the induction can be continued.

Notice that for each $n$, $Q_n\subset EP_n$ and, by symmetry of the set $E$, $P_{n+1}\subset EQ_n$. As a consequence, we have $Q_{n+1}\subset E^{2n+1} a_1\subset (E^2)^{n+1} a_1$, and if the latter of the above options occurs, we have
$$
|(E^2)^{n+1}|\ge|Q_{n+1}|\ge|B'\cap Q_{n+1}|\ge(1+\eps)^{n+1},
$$
which implies that $n+1<N$ by the assumption. So, $n=N-2$ is the last integer for which nonexistence of $\phi$-correction chains of length $2(n+1)$ is possible. In the worst case scenario a correcting chain of length $2N$ must already exist.
\end{proof}

\begin{remark}It is absolutely crucial in the proof that we are using a tiling, not a \qt\ leaving some part of $G$ uncovered by the tiles. In such case, $a_1$ may be uncovered by the tiles, moreover, we would have no control as to how many elements of $P_{n+1}=\phi^{-1}(Q_n)$ are ``lost'' in the untiled part of $G$.
\end{remark}

\subsubsection{Proof of the comparison property of subexponential groups}
\begin{theorem}\label{main}
Every subexponential group $G$ has the comparison property.
\end{theorem}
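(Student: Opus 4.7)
The plan is to chain together the reductions established earlier in this section with the technology of minimal correction chains. First I would invoke Proposition \ref{44}: since every subgroup of a subexponential group is again subexponential (any finitely generated subgroup of a subgroup is a finitely generated subgroup of the ambient group), it suffices to prove the theorem under the additional assumption that $G$ is finitely generated subexponential. Next, by Theorem \ref{ujowe}, the comparison property is equivalent to the Banach density comparison property, so the task reduces to the following concrete combinatorial assertion: for any disjoint $A, B \subseteq G$ with $\underline D(B, A) > 0$, there is an injection $\tilde\varphi : A \to B$ determined by a block code in the subshift $Y^{AB}$.

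To set up the combinatorial stage, I fix $\varepsilon$ with $0 < 6\varepsilon < \underline D(B, A)$ and a finite $F \subseteq G$ with $\underline D_F(B, A) > 6\varepsilon$. Theorem \ref{ext} then yields a tiling $\CT$ of $G$ whose (finitely many) shapes are all $(F, \eta)$-invariant for an $\eta$ so small that, by Lemma \ref{bdc}, every shape $S$ of $\CT$ satisfies $\underline D_S(B, A) > \varepsilon$; in particular, for every tile $T$ of $\CT$, $|B \cap T| - |A \cap T| > \varepsilon |T|$. Setting $E = \bigcup_{S \in \mathcal S(\CT)} S S^{-1}$ (a symmetric finite set containing $e$) and applying Lemma \ref{key}, I obtain a constant $N$ with the property that, for every partial bijection $\phi$ from a subset of $A$ to a subset of $B$ with multipliers in $E$, every point $a_1 \in A \setminus \mathrm{dom}(\phi)$ is the start of a $\phi$-correction chain of length at most $2N$.

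I would then construct $\tilde\varphi$ by iterating the minimal-correction procedure. Put $\phi_0 = \emptyset$ and, having defined $\phi_k$, let $\phi_{k+1}$ be obtained by simultaneously applying every minimal $\phi_k$-correction chain. Lemma \ref{minnox} ensures these chains are pairwise disjoint so the corrections compose consistently, while Lemma \ref{minch} ensures that each minimal chain of length $\le 2n$ lies inside $E^{s(n)}$ of any of its points; hence each stage is governed by a local rule whose horizon is controlled by $E$ and $N$. Because the starting points of minimal chains lie in $A \setminus \mathrm{dom}(\phi_k)$, the domains $\mathrm{dom}(\phi_k)$ grow monotonically, and Lemma \ref{key} furnishes a correction chain starting at every element of $A \setminus \mathrm{dom}(\phi_k)$ at every stage. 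A pigeonhole/saturation argument within each window $E^{s(N)}a$ then forces stabilization: after a uniformly bounded number $K$ of stages (depending only on $|E|$ and $N$), $\mathrm{dom}(\phi_K) = A$, and the final map $\tilde\varphi = \phi_K$ is determined by a block code with horizon contained in $E^{K s(N)}$.

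The hard part, which is exactly where subexponentiality is essential, is bounding the number of iterations by a quantity depending only on $|E|$ and $N$, so that the resulting local rule genuinely has a finite coding horizon. Without subexponentiality, Lemma \ref{key} would not even provide a bound $N$ on correction-chain length, and even granted $N$, the iteration has to be shown to terminate uniformly—that is, the decision made at any single point must stop depending on ever-larger windows. This is where the interplay between the tile-wise Banach-density advantage (which guarantees that each iteration absorbs a definite fraction of the locally uncovered portion of $A$) and the subexponential control on the sets $E^n$ (which prevents the horizon from blowing up faster than the absorption rate reduces the number of uncovered points) has to be carried out carefully. Everything else in the argument is a reasonably direct assembly of Theorems \ref{ujowe}, \ref{ext} and Lemmas \ref{bdc}, \ref{key}, \ref{minch}, \ref{minnox}.
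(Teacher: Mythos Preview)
Your proposal is essentially correct and follows the same architecture as the paper's proof: reduce via Proposition~\ref{44} and Theorem~\ref{ujowe}, set up a tiling with tile-wise advantage via Theorem~\ref{ext} and Lemma~\ref{bdc}, invoke Lemma~\ref{key} for a uniform bound $N$ on correction-chain length, and iterate simultaneous corrections along minimal chains using Lemmas~\ref{minch} and~\ref{minnox}. The one structural difference---you start from the empty partial bijection $\phi_0=\emptyset$, whereas the paper first builds a greedy partial map $\phi_1$ with multipliers in $E$---is harmless: Lemma~\ref{key} applies to any partial bijection with multipliers in $E$, including the empty one, and the termination bound is unaffected.

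Your last paragraph, however, mislocates the role of subexponentiality. It is used \emph{only} inside Lemma~\ref{key}, to produce the bound $N$. Once $N$ is fixed, the termination argument needs no further growth control and no ``absorption rate versus horizon growth'' competition: for each $a\in A\setminus\mathrm{dom}(\phi_k)$, Lemmas~\ref{key} and~\ref{minch} guarantee a minimal $\phi_k$-correction chain inside $E^{s(N)}a$, whose starting point lies in $(A\setminus\mathrm{dom}(\phi_k))\cap E^{s(N)}a$ and gets absorbed at step $k+1$. Since these absorbed points are distinct across steps and all lie in the fixed finite window $E^{s(N)}a$, the process terminates in at most $|E^{s(N)}|$ iterations. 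This is pure pigeonhole, exactly as in the paper, and the resulting coding horizon is bounded by a fixed power of $E$ (the paper gets $E^{k+4Nm}$ with $m\le|E^{s(N)}|$). So your identification of the ``hard part'' is off: the genuinely hard step is Lemma~\ref{key}, which you are invoking rather than proving, and the rest is a clean finite assembly.
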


\begin{proof}
By Proposition \ref{44}, it suffices to prove the theorem for finitely generated groups $G$ with subexponential growth, and Theorem \ref{ujowe} allows us to focus on the Banach density comparison property. So, let $G$ be a finitely generated group with subexponential growth. Let $A,B\subset G$ be disjoint and satisfy $\underline D(B,A)>0$. All we need is, in the subshift $Y^{AB}$, to construct an injection $\tilde\varphi:A\to B$ determined by a block code.

By Definition \ref{bad}, there exists a finite set $F\subset G$ such that $\underline D_F(B,A)>5\eps$ for some positive $\eps$. By Theorem~\ref{ext}, there exists an $(F,\eps)$-\inv\ tiling $\CT$ of $G$. We let $\mathcal S$ denote the set of all shapes of $\CT$. By Lemma \ref{bdc}, for every shape $S$ of $\CT$ we have $\underline D_S(B,A)>\eps$, in particular,
$$
|B\cap T|-|A\cap T|>\eps|T|,
$$
for every tile $T$ of $\CT$. Let $E=\bigcup_{S\in\mathcal S}SS^{-1}$ and say $E= \{g_1, g_2, \dots, g_k\}$.

We will  build the desired injection $\tilde\varphi:A\to B$ in a series of steps. The first approximation of $\tilde\varphi$ is the map $\phi_1$ defined on a subset of $A$ by
a procedure similar to that used in the proof of Lemma \ref{compisweakcomp}: we let $A_1=A\cap g_1^{-1}(B)$, and $B_1=g_1(A_1)\subset B$ and then, for each $j=2,3,\dots,k$ we define inductively
$$
A_j = A\setminus\Bigl(\bigcup_{i=1}^{j-1}A_i\Bigr)\cap g_j^{-1}\left(B\setminus\Bigl(\bigcup_{i=1}^{j-1} B_i\Bigr)\right)\ \text{and}\ B_j= g_j A_j\subset B.
$$
On each set $A_j$ (with $j=1,2,\dots,k$), $\phi_1$ is defined as the multiplication on the left by $g_j$. We let $A'_1=\bigcup_{i=1}^k A_i\subset A$ and $B'_1=\bigcup_{i=1}^k B_i\subset B$ denote the domain and range of $\phi_1$, respectively. The rule behind the construction of $\phi_1$ is as follows: for each $a\in A$ we first check whether $g_1a\in B$ and for those $a$ for which this is true, we assign $\phi_1(a)=g_1 a$. For other points $a$ we check whether $g_2a\in B$ and, unless $g_2a$ has already been assigned as $\phi_1(a')$ (for some $a'\in A$) in the previous step, we assign $\phi_1(a)=g_2 a$. And so on: at step $i$ we assign $\phi_1(a)=g_i a$ if $g_ia\in B$, unless $g_ia$ has already been assigned as $\phi(a')$ (for some $a'\in A$) at steps $1,2,\dots,i-1$. We stop when $i=k$. From this description it is easy to see that $\phi_1$ is an injection from $A_1'$ into $B_1'\subset B$. In fact, it is also seen that if $a_1,a_2\in A$ and
$$
a_1(y^{AB})|_{E^k} = a_2(y^{AB})|_{E^k},
$$
then either $\phi_1(a_1)a_1^{- 1}=\phi_1(a_2) a_2^{- 1}$ or both values of $\phi_1(a_1)$ and $\phi_1(a_2)$ are undefined. Using the criterion \eqref{clr} (for a one-element family $\A$), we conclude that $\phi_1$ restricted to its domain $A_1'$ is determined by a block code (with the coding horizon $E^k$). We remark, that the block code determines some extension of $\phi_1$ to the whole group, but we do not care about the values of the code outside $A_1'$ and we still treat $\phi_1$ as undefined outside $A_1'$. If $A'_1=A$ (which is rather unlikely in infinite groups), then the proof is finished.

Otherwise we continue the construction involving the correction chains and the associated corrections. By Lemma \ref{key}, for an appropriate $N$, every element $a_1\in A\setminus A_1'$ is the start of a $\phi_1$-correction chain of length at most $2N$. Next, by Lemma \ref{minch}, within $E^{s(N)}a_1$ there is a minimal $\phi_1$-correction chain of length at most $2N$. Finally, by Lemma \ref{minnox}, all minimal $\phi_1$-correction chains of lengths at most $2N$ do not collide with each other. Thus we can perform simultaneous corrections along all $\phi_1$-correction chains of lengths at most $2N$. The corrected map will be denoted by $\phi_2$. For each $a\in A\setminus A'_1$ perhaps we have not yet included $a$ in the domain $A'_2$ of $\phi_2$, but we have included in $A'_2$ at least one new point from $E^{s(N)}a\cap (A\setminus A_1')$. Clearly, $\phi_2$ sends $A'_2$ into $B$ and the multipliers of $\phi_2$ are contained in $E$.

We will now argue why $\phi_2$ is determined by a block code. Notice that given $a\in A$, finding all $\phi_1$-correction chains of lengths bounded by $2N$ starting at or passing through $a$ requires examining the values of $\phi_1$ at most in the set $E^{2N}a$. Then, given such a chain, we can decide whether it is minimal or not by examining all $\phi_1$-correction chains of lengths bounded by $2N$ which collide with it. For this, viewing the values of $\phi_1$ on the set $E^{4N}a$ suffices. Now suppose that $a_1,a_2\in A$ and
$$
a_1(y^{AB})|_{E^{k+4N}}=a_2(y^{AB})|_{E^{k+4N}}.
$$
Since $E^k$ is the coding horizon for $\phi_1$, we have
$$
a_1(\bar\phi_1)|_{E^{4N}}=a_2(\bar\phi_1)|_{E^{4N}},
$$
where $\bar\phi_1$ is defined as the symbolic element over the alphabet $E\cup\{\emptyset\}$ by the rule
$$
(\bar\phi_1)_g=\begin{cases}\phi_1(g)g^{-1}& \text{if }g\in A'_1,\\ \emptyset &\text{otherwise,}\end{cases}
$$
($g\in G$). This implies that $(r_1 a_1, s_1 a_1, r_2 a_1, s_2 a_1, \dots, r_n a_1, s_n a_1)$ is a (minimal) $\phi_1$-correction chain if and only if $(r_1 a_2, s_1 a_2, r_2 a_2, s_2 a_2, \dots, r_n a_2, s_n a_2)$ is a (minimal) $\phi_1$-correction chain, whenever $n\le N$ and all $r_i$ and $s_i$ belong to $E^{2N}$. Hence either both $a_1$ and $a_2$ lie on minimal $\phi_1$-correction chains of length at most $2N$, or both do not. In the latter case, since $a_1(y^{AB})|_{E^{k}}=a_2(y^{AB})|_{E^{k}}$, either $\phi_2(a_1)a_1^{-1}=\phi_1(a_1)a_1^{-1}=\phi_1(a_2)a_2^{-1}=\phi_2(a_2)a_2^{-1}$ or both $\phi_2(a_1)$ and $\phi_2(a_2)$ are undefined. In the former case, the lengths and names of the two minimal $\phi_1$-correction chains are the same, moreover $a_1$ and $a_2$ occupy equal positions in the corresponding chains. This implies that the multipliers $\phi_2(a_1)a_1^{-1}$ and $\phi_2(a_2)a_2^{-1}$ (although different than those for $\phi_1$) will both be defined and equal. So, $\phi_2$ is indeed determined by a block code.

The above process can be now repeated: the next map $\phi_3$ is obtained by performing simultaneous corrections along all minimal $\phi_2$-correction chains of lengths not exceeding $2N$. Again, for every $a\in A\setminus A'_2$, at least one point from each
set $E^{s(N)}a$ is included in the domain $A'_3$ of $\phi_3$ (the intersection $(A\setminus A'_2)\cap E^{s(N)}a$ is nonempty as it contains $a$, and often $a$ will be the new point included in $A'_3$). By the same arguments as before, the map $\phi_3$ is an injection from $A'_3$ into $B$ determined by a block code (with the coding horizon $E^{k+ 4N}$), and the multipliers of $\phi_3$ remain in $E$.

We claim that after a finite number $m$ of analogous steps all points of $A$ will be included in the domain of $\phi_m$, i.e., $\phi_m$ will be the desired injection $\tilde\varphi$ from $A$ into $B$. Indeed, a point $a\in A\setminus A_1'$ remains outside the domains of all the maps $\phi_i$ with $i\le m$ only if the number of all other points (except $a$) in $(A\setminus A_1')\cap E^{s(N)}a$ is at least $m- 1$ (because in each step at least one new point from this set is included in the domain). This is clearly impossible for $m> |E^{s(N)}|$, hence the desired finite number $m$ exists. By induction, all the maps $\phi_i$ ($i=1,2,\dots,m$) are determined by block codes (the coding horizon for the code which determines $\tilde\varphi=\phi_m$ is at most the set $E^{k+4Nm}$). This ends the proof.
\end{proof}

\subsubsection{Two questions}
As we have already mentioned, the problem whether all countable amenable groups have the comparison property is rather difficult. On the other hand, based on the experience with subexponential groups, one might hope that other additional assumptions might help as well. We formulate two relaxed, yet still open, versions of Question \ref{3.7}.

\begin{ques}
\begin{enumerate}
	\item Do all countable amenable residually finite groups have the comparison property?
	\item Do all countable amenable left (right) orderable groups have the comparison property?
\end{enumerate}
\end{ques}

\section{Encodable tiling systems}\label{s7}

In this last section of the paper, we shall prove the full version of the Symbolic Extension Entropy Theorem for two important classes of countable amenable groups: those which have the comparison property and those which are residually finite.

\subsection{Encodable systems of \qt s}

\begin{definition}
A \tl\ dynamical system will be called \emph{perfectly encodable} if it has an isomorphic symbolic extension. We will say that it is \emph{encodable} if it has a principal symbolic extension.
\end{definition}
For $\Z$-actions a full characterization of perfectly encodable systems privided in \cite{BuD}. In particular, any aperiodic (i.e., free) system with zero entropy is perfectly encodable. In the general case of actions of countable amenable groups an analogous theorem is unknown. The difficulty lies in encoding the zero entropy tiling system of Theorem~\ref{fs}. However, we are able to perfectly encode a zero entropy F\o lner system of disjoint \qt s, and the rest of this subsection is devoted to proving this:

\begin{theorem}\label{nine}
Let $G$ be a countable amenable group. There exists a perfectly encodable F\o lner system of disjoint \qt s $\hat{\mathbf T}$ of \tl\ entropy zero.
\end{theorem}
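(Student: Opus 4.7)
The plan has two stages: produce $\hat{\mathbf T}$ as a zero-entropy F\o lner system of disjoint quasitilings using the existing machinery, then recode it as a subshift on a finite alphabet, exploiting the gaps of each layer to carry the data of deeper layers.

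First I would invoke Theorem~\ref{DHZ} to fix a free zero-dimensional action $X$ of $G$ with $\htop(X,G)=0$. Successive applications of Theorems~\ref{DH} and~\ref{DH1} with parameters $\varepsilon_k\searrow 0$ and starting indices $n_k\nearrow\infty$ produce, for each $k$, a disjoint dynamical $\varepsilon_k$-quasitiling $\hat\T_k$ which is a topological factor of $X$ and whose shapes are $(1\!-\!\varepsilon_k)$-subsets of certain $F_{n_{k,i}}$ with $n_{k,i}>n_k$. The topological joining $\hat{\mathbf T}=\bigvee_{k\in\N}\hat\T_k$ realized inside $X$ is then a F\o lner system of disjoint quasitilings by construction (the joint collection of shapes is a F\o lner sequence since $n_k\to\infty$, and disjointness passes through factor maps), and it inherits zero topological entropy from~$X$.

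For perfect encodability I would, after passing to a fast-growing subsequence of the layers so that the total upper Banach density of centers of $\hat\T_k$ with $k\ge 2$ is strictly dominated by $\varepsilon_1$, define a subshift $\hat Y\subset\Lambda^G$ over a finite alphabet $\Lambda$ whose coordinate at each $g\in G$ carries two pieces of data: the symbol of $\hat\T_1$ at $g$ (drawn from the finite set $\{``S\,":S\in\CS_1\}\cup\{0\}$), and, when $g$ lies in a gap of $\hat\T_1$, possibly a \emph{marker} from a fixed finite sub-alphabet. The markers within a bounded block around each center of $\hat\T_k$ (for $k\ge 2$) encode, via a predetermined injective rule, the shape label of that center together with the position of the center relative to the block. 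The natural projection $\hat Y\to\hat{\mathbf T}$ that forgets the markers is continuous, shift-equivariant, surjective, and---by the design of the encoding---bijective, hence a topological conjugacy, in particular an isomorphic (symbolic) extension.

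The main obstacle is ensuring simultaneously that (i) $\Lambda$ is finite, (ii) marker placements coming from different layers do not conflict at a common gap position of $\hat\T_1$, and (iii) the whole assignment $\hat{\boldsymbol\CT}\mapsto\hat y$ is realized as a single shift-equivariant block code. Point (i) is handled by the speed-up, which caps the per-position alphabet independently of the number of layers since only finitely many ``types'' of marker need to coexist in any coding horizon. Point (ii) is managed by a priority rule (smaller $k$ dominates in conflicts) combined with a further speed-up ensuring that, in each gap of $\hat\T_1$, at most one deeper-layer center competes for encoding space. Point (iii) follows from the fact that each $\hat\T_k$ is already realized as a factor of $X$ via a block code with a finite coding horizon, so the full encoding is a superposition of finitely many finitary rules and can be written as a single block code in the joint finite alphabet.
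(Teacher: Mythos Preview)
Your first stage is fine and matches the paper: a free zero-entropy zero-dimensional action $X$ (Theorem~\ref{DHZ}) carries, via Theorems~\ref{DH} and~\ref{DH1}, a F\o lner system of disjoint \qt s $\hat{\mathbf T}$ as a factor, and $\hat{\mathbf T}$ inherits topological entropy zero.

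The second stage, however, has a genuine gap. Your encoding scheme places the markers for layers $k\ge 2$ \emph{in the gaps of $\hat\T_1$}, within a bounded block around each center of $\hat\T_k$. But the assumption you arrange by speed-up, namely that the total upper Banach density of the centers of $\hat\T_k$ ($k\ge2$) is smaller than~$\varepsilon_1$, is a \emph{global} density comparison; it gives no \emph{local} guarantee whatsoever. For a $(1\!-\!\varepsilon_1)$-covering \qt, the uncovered set has upper Banach density at most $\varepsilon_1$, but its lower Banach density may well be zero, so for a given center $c$ of $\hat\T_k$ there need not be \emph{any} gap of $\hat\T_1$ inside your bounded window $Fc$. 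Your obstacle~(ii) already concedes that different layers may compete for the same gap; the real problem is that the gap may simply not exist where you need it. Without an additional mechanism producing gap positions at prescribed locations (which would amount to something like a tiling, not a \qt), the map $\hat{\boldsymbol\CT}\mapsto\hat y$ is not well defined.

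There is a second, independent issue in point~(iii). You write that the full encoding is ``a superposition of finitely many finitary rules'' because each $\hat\T_k$ is a factor of $X$ via a block code. But there are \emph{infinitely} many layers, and the coding horizons grow with $k$; you do not obtain a single block code $\hat{\mathbf T}\to\Lambda^G$ with a finite horizon. Consequently your candidate $\hat Y$ is not obviously closed, nor is the ``forgetting'' map $\hat Y\to\hat{\mathbf T}$ obviously a topological factor map on the closure.

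The paper circumvents both problems by reversing the direction of the construction: it builds a \emph{decreasing} sequence of subshifts $Z_1\supset Z_2\supset\cdots$ over three symbols, with consistent block-code factor maps $\pi_k:Z_k\to\T_{[1,k]}$, and takes $Z=\bigcap_kZ_k$. The encoding space is not the gaps but dedicated neighborhoods $U_kc$ of the centers of $\T_k$, whose disjointness is guaranteed by a $K$-separation lemma (Lemma~\ref{sep}) specific to the \qt s from~\cite{DH}; at each step three admissible blocks are allowed at each center, and this residual ``trit'' of freedom is consumed at the next step to encode $\T_{k+1}$. The resulting extension is only \emph{isomorphic} (bijective off a universally null set), not a topological conjugacy as you claim, and that is all ``perfectly encodable'' requires. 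A further subtlety you have not addressed is that encodability of the non-disjoint $\varepsilon$-\qt s does not automatically transfer to the disjoint versions $\hat\T_k$ of Theorem~\ref{DH1}; the paper handles this via a revision of the construction in~\cite{DH} (primary/non-primary duplication of shapes) that makes the passage $\T_k\mapsto\hat\T_k$ a topological conjugacy.
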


\begin{proof} The largest part of the proof is devoted to constructing a perfectly ecodable F\o lner system $\mathbf T$ of \qt s which are not yet disjoint, but they support an additional information allowing to create a conjugate disjoint version $\hat{\mathbf T}$. The construction of $\mathbf T$ starts with a zero entropy free action of $G$ on a \zd\ space $X$ whose existence is guaranteed by Theorem~\ref{DHZ}. All dynamical \qt s $\T_k$ ($k\in\N$) appearing below are \tl\ factors of $X$, delivered by Theorem~\ref{DH}, in particular they have \tl\ entropy zero. For the joining $\mathbf T$ we choose their ``natural joining'', i.e., as they appear joined in $X$. Unfortunately, encodability of a \sq\ of disjoint \qt s provided directly by Theorem \ref{DH1} (i.e., \cite[Corollary 3.5]{DH}) is uncertain and we need to introduce a slight modification in the constructions in \cite{DH} of both the $\varepsilon$-disjoint and disjoint \qt s.

\smallskip\noindent
\emph{Revision of the construction in \cite{DH}}. We need to  recall some portions of
the proofs of \cite[Lemma 3.4 and Corollary 3.5]{DH}. The first one contains a construction of a factor map $x\mapsto\CT_x$ where $x\in X$ ($X$ is a free \zd\ system) and $\CT_x$ is an $\varepsilon$-\qt\ with the set of shapes $\CS=\{F_{n_1},\dots,F_{n_r}\}$ (throughout, $(F_n)_{n\in\N}$ denotes a nested and symmetric F\o lner \sq\ starting with $F_1=\{e\}$). The tiles are distributed over $G$ in the reversed order: at first we distribute (for all $x\in X$) tiles with the largest shape $F_{n_r}$, then those with the shape $F_{n_{r-1}}$ and so on, until the smallest shape $F_{n_1}$. In each step $j=r,\dots,1$ we proceed as follows: we cover the space $X$ by finitely many clopen sets $U_{j,1},\dots,U_{j,m_j}$ such that, for each $i$, the images $g(U_{j,i})$ are pairwise disjoint for different $g\in F_{n_j}$. Next, we proceed by an (inner) induction over $i=1,2,\dots,m_j$ (each step of the resulting double induction is indexed by a pair $(j,i)$). At step $(j,i)$, we accept as tiles of $\CT_x$ these sets of the form $F_{n_j}g$ which satisfy:
\begin{enumerate}
	\item $g(x)\in U_{j,i}$ and
	\item $F_{n_j}g\setminus V_{j,i}$ is a $(1\!-\!\varepsilon)$-subset of
	$F_{n_j}g$, with $V_{j,i}$ abbreviating a complicated formula describing the union of all
	tiles accepted in all preceding steps (i.e., in steps $(j',i')$, where either $j'>j$ or
	$j'=j$ and $i'<i$).
\end{enumerate}
Later, in the proof of \cite[Corollary 2.5]{DH}, the tiles of the disjoint \qt\ $\hat\CT_x$, are exactly the above sets $F_{n_j}g\setminus V_{j,i}$. This is all we need to recall from \cite{DH}. Just observe, that the construction associates to each tile of $\CT_x$ a double index $(j,i)$ ($j=r,r-1,\dots,1,\ i=1,2,\dots,m_j$) which introduces a partial order among the tiles, such that if two different tiles are not disjoint then one strictly precedes another, and later the disjoint tiles are obtained by subtracting from each tile the union of all preceding tiles. The problem which we must solve now is that the partial order among the tiles in $\CT_x$ depends not only on $\CT_x$ but also on $x$. Thus, even if we prove that the $\varepsilon$-disjoint \qt s $\T=\{\CT_x:x\in X\}$ created for a decreasing to zero \sq\ $(\epsilon_k)_{k\in\N}$ constitute an encodable system of \qt s $\mathbf T=\bigvee_{k\in\N}\T_k$, this will \emph{not imply} encodability of the corresponding system of disjoint \qt s~$\hat{\mathbf T}$.

To resolve the problem at a minimized cost of changes in the original construction, we need to do three things:
\begin{enumerate}
	\item Choose the initial free system $X$ to be minimal (this is always possible, because
	each free system has a minimal subsystem which is also free).
	\item For each $j=r,r-1,\dots,1$ construct the cover $U_{j,1},U_{j,2},\dots,U_{j,m_j}$ in
	a more specific way: Choose the first clopen set $U_{j,1}$ arbitrarily (yet so that the
	sets $g(U_{j,1})$ are disjoint for $g\in F_{n_j}$). By minimality, there are finitely many
	elements $g_{j,1}= e,\ g_{j,2},\ \dots,\ g_{j,m_j}$ of $G$ such that
	$g_{j,1}(U_{j,1}),\ g_{j,2}(U_{j,1}),\ \dots,\ g_{j,m_j}(U_{j,1})$ cover $X$ (we may
	assume that all sets in this cover are indispensable). Now, for each $i=1,\dots,m_j$,
	define
	$$
	U_{j,i}=g_{j,i}(U_{j,1})\setminus\Bigl(\bigcup_{i'=1}^{i-1}g_{j,i'}(U_{j,1})\Bigr).
	$$
	It is clear that the sets $U_{j,i}$ ($i=1,\dots,m_j$) have the required properties (each
	of them is clopen, has disjoint images under $g\in F_{n_j}$, and jointly they cover $X$).
	\item Apply the following duplicating of shapes of $\T$: replace each symbol $``S\,"$
	($S\in\CS$) by two symbols $``S_\mathsf p"$ and $``S_\mathsf n"$ (the symbolic
	representation of $\T$ after duplicating will use an alphabet of cardinality
	$2r$). For each $j$ and $S=F_{n_j}$ place the symbols $``S_\mathsf p"$ at
	centers of all tiles with shape $S$ associated with the index $(j,1)$. Otherwise (for
	indices $(j,i)$, $i>1$) place the symbols $``S_\mathsf n"$.
\end{enumerate}
The rest of the construction is unchanged. What we have gained is captured in the lemma below.

\begin{lemma}\label{oo}
The dynamical \qt s $\T$ (we mean the version obtained via the above revision of the construction including the duplicating of shapes) and its disjoint version $\hat\T$ are \tl ly conjugate.
\end{lemma}

\begin{proof}
The revision enables one to recognize, for each $j=1,2,\dots,r$, which tiles with the shape $S=F_{n_j}$ are associated with the double index $(j,1)$. Call them \emph{primary tiles} (the subscripts $_\mathsf p$ and $_\mathsf n$ stand for ``primary'' and ``non-primary''). The association of the indices $(j,i)$ to non-primary tiles is also possible: if $c$ is the center of a non-primary tile of $\CT_x$ with shape $S=F_{n_j}$ then we examine all the elements $g_{j,2}^{-1}c,\ g_{j,3}^{-1}c,\ \dots,\ g^{-1}_{j,m_j}c$. The term $i$ in the double index $(j,i)$ associated with the considered tile $Sc$ can be determined as the smallest index $i$ for which $g^{-1}_{j,i}c$ is a center of a primary tile (we skip the elementary verification that this works). Once the indices $(j,i)$ are determined for all tiles (and thus the partial order among the tiles), the disjoint version $\hat\T$ is also determined: given $x\in X$, $\hat\CT_x$ is obtained by subtracting, from each tile of $\CT_x$, all its predecessors (we may also need to perform an adjustment of centers, as described in subsection \ref{spqt}). This is clearly a block code, so $\hat\T$ is a \tl\ factor of $\T$. In order for $\hat\T$ to be conjugate to $\T$ it suffices to apply to $\hat\T$ duplication of shapes, by which each tile of $\hat\T$ will ``remember'' the shape of the tile of $\T$ from which it was created. We omit more formal details of this easy step.
\end{proof}

\smallskip
Here the revision ends, but we continue establishing properties (independent of the above revision) of the \qt s provided by Theorem \ref{DH}. Let $K\subset G$ be a finite set. Easy examples show that there exists no pair $(F,\varepsilon)$, where $F\subset G$ is finite and $\varepsilon>0$, such that $(F,\varepsilon)$-invariance and $\varepsilon$-disjointness of a general \qt\ jointly guarantee $K$-separation of its set of centers. However, the \qt s provided by Theorem \ref{DH} are specific (due to the partial order among the tiles) and thus we can prove what follows (note that our revision of the construction from \cite{DH} is not employed):

\begin{lemma}\label{sep}
Let $K\subset G$ be a finite set and let $\varepsilon>\frac12$. If the shapes $F_{n_1},\dots,F_{n_r}$ of the $\varepsilon$-\qt s $\CT_x$ ($x\in X$) constructed in the proof of Theorem \ref{DH} (i.e., of \cite[Lemma~3.4]{DH}) are $(K^{-1}K,\varepsilon)$-invariant, then $C(\CT_x)$ is $K$-separated.
\end{lemma}

\begin{proof} Consider two tiles $T\neq T'$ of $\CT_x$ and denote by $(j,i)$ and $(j',i')$ the indices associated to $T$ and $T'$, respectively. If these tiles are disjoint then $|T\cap T'|=0$. If not, then one of them, say $T'=S'c'$, strictly precedes the other, say $T=Sc$, (i.e., $(j',i')$ precedes $(j,i)$). In this case $T\setminus T'$ is a $\frac12$-subset of $T$, i.e., $|T\cap T'|<\frac12|T|$. Moreover, since the F\o lner \sq\ $(F_n)_{n\in\N}$ is nested and $j'\ge j$, we also have $S\subset S'$. We can thus write
$$
|T\cap T'|=|Sc\cap S'c'|=|(S\cap S'c'c^{-1})^{-1}|=
|S^{-1}\cap c(c')^{-1}(S')^{-1}|\ge |S^{-1}\cap c(c')^{-1}S^{-1}|.
$$
Suppose that $Kc$ and $Kc'$ are not disjoint. Then $c(c')^{-1}\in K^{-1}K$, and since, by symmetry of the F\o lner \sq, $S^{-1}$ is $(K^{-1}K,\frac12)$-\inv, it is $(c(c')^{-1},1)$-invariant (see observation (1) above Definition \ref{tyi}), i.e., $|S^{-1}\cap c(c')^{-1}S^{-1}|>\frac12|S^{-1}|=\frac12|T|$. We have arrived at a contradiction.
\end{proof}

We are in a position to start the actual construction of a perfectly encodable F\o lner system of disjoint \qt s $\hat{\mathbf T}$. We fix a decreasing to zero \sq\ $(\epsilon_k)_{k\in\N}$ with $\epsilon_1<\frac12$. We will inductively construct a F\o lner system of (non-disjoint) \qt s $\mathbf T=\bigvee_{k\in\N}\T_k$ so that for each $k\in\N$, $\T_k$ is a dynamical $\epsilon_k$-\qt\ obtained via the above revised construction, with the collection of shapes $\CS_k\subset\{F_{n_{1,k}}, F_{n_{2,k}},\dots, F_{n_{r(\epsilon_k),k}}\}$, where $n_{1,k}<n_{2,k}<\dots<n_{r(\epsilon_k),k}<n_{1,{k+1}}$ and the dependence $\varepsilon\mapsto r(\varepsilon)$ is the same as in Theorem \ref{OW}. Due to duplication, each shape $S\in\CS_k$ will correspond to two symbols, $``S_\mathsf p"$ and $``S_\mathsf n"$. At the same time we will construct a decreasing \sq\ of subshifts $Z_k$ on three symbols\footnote{The number of symbols can be reduced to two, though not without some effort, see the Appendix.} together with a consistent \sq\ of \tl\ factor maps $\pi_k:Z_k\to\T_{[1,k]}=\bigvee_{i=1}^k\T_i$. The meaning of ``consistency'' is the same as in the proof of Theorem \ref{quasi}: $\pi_{k+1}$ composed with the natural projection $\pi_{[1,k]}:\T_{[1,k+1]}\to\T_{[1,k]}$ coincides with the restriction of $\pi_k$ to $Z_{k+1}$. The intersection $Z=\bigcap_{k\in\N} Z_k$ will be a symbolic extension of the entire system of \qt s $\mathbf T=\bigvee_{k\in\N}\T_k$. Later we will show that this extension is in fact isomorphic. This will prove perfect encodability of $\mathbf T$. By Lemma \ref{oo}, the disjoint version $\hat{\mathbf T}=\bigvee_{k\in\N}\hat\T_k$, being conjugate to $\mathbf T$, will also be perfectly encodable. The construction of $\mathbf T$ follows now.

\smallskip\noindent
{\bf Step 1}. We let $\T_1$ be the dynamical \qt\ whose only element is the tiling by singletons. This is an $\epsilon_1$-\qt\ (regardless of $\epsilon_1$) whose only shape is $F_1=\{e\}$ (i.e., $\CS_1=\bigl\{\{e\}\bigr\}$). We let $Z_1=\{-1,0,1\}^G$ (the full shift on three symbols). Clearly, $Z_1$ is a \tl\ extension of $\T_1$.

\smallskip\noindent
{\bf Step 2}. Define $m=\lceil\log_2(3\cdot2r(\epsilon_2))\rceil+1$. Fix a set $U_2\subset G$ of cardinality $m$ and containing the unity. Theorem \ref{DH} provides a zero entropy dynamical $\epsilon_2$-\qt\ $\T_2$ with the collection of shapes $\CS_2\subset\{F_{n_{1,2}},F_{n_{2,2}},\dots,F_{n_{r(\epsilon_2),2}}\}$, where $n_{1,2}<\cdots<n_{r(\epsilon_2),2}$. By Lemma \ref{sep}, choosing $n_{1,2}$ large enough, we can ensure that the set of centers $C(\CT_2)$ of every $\CT_2\in\T_2$ is $U_2$-separated. We use the revised version of Theorem \ref{DH}, and thus, for each tiling $\CT_2\in\T_2$ and each shape $S\in\CS(\T_2)$, we can determine the primariness of the tiles of $\CT_2$ with the shape $S$ (by observing the symbols $``S_\mathsf p"$ versus $``S_\mathsf n"$). Also note that since $X$ is minimal, so is $\T_2$.

The collection $\{-1,1\}^{U_2\setminus\{e\}}$ has cardinality $2^{m-1}\ge 3\cdot 2r(\epsilon_2)$. Thus, to every symbol $``S_\mathsf s"$, where $S\in\CS_2$ and $\mathsf s\in\{\mathsf p,\mathsf n\}$, one can disjointly associate a family of three different blocks $\{B^{(2)}_{S,\mathsf s,-1}, B^{(2)}_{S,\mathsf s,0}, B^{(2)}_{S,\mathsf s,1}\}$ from the above collection (the superscript $^{(2)}$ refers to the step of the construction).

We will now describe a rule of assigning to every pair $\CT_{[1,2]}=(\CT_1,\CT_2)\in\T_{[1,2]}$ (in fact, to every $\CT_2\in\T_2$, because $\CT_1$ is unique) an uncountable family denoted by $\pi_2^{-1}(\CT_{[1,2]})$ of symbolic elements $z\in Z_1$ which will constitute the preimage of $\CT_{[1,2]}$ in a symbolic extension of $\T_{[1,2]}$. Namely, given $\CT_2$ we allow $z\in Z_1$ to be a member of $\pi_2^{-1}(\CT_{[1,2]})$ if the following holds:
\begin{enumerate}
	\item If $\CT_{2,c}=``S_\mathsf s"$ (i.e., $c\in C(\CT_2)$ is the center of a primary or
	non-primary tile $Sc$ of $\CT_2$) then we require that $z_c=0$ and
	$z|_{U_2c\setminus\{c\}}=B^{(2)}_{S,\mathsf s,i}$, where $i\in\{-1,0,1\}$ (it is essential
	that the sets $U_2c$ are disjoint for different $c\in C(\CT_2)$).
	\item All independent choices of the above indices $i$ for different centers $c\in
	C(\CT_2)$ are represented in the elements $z\in \pi_2^{-1}(\CT_{[1,2]})$.
	\item We define the \emph{background of $\CT_2$} as the complement of $U_2C(\CT_2)$,
	and we require that $z_g=1$ for every $z\in \pi_2^{-1}(\CT_{[1,2]})$ and all $g$ in this
	background.
\end{enumerate}
We define $Z_2=\bigcup_{\CT_{[1,2]}\in\T_1\times\T_2}\pi_2^{-1}(\CT_{[1,2]})$. It should be obvious that $Z_2$ is closed and shift-\inv. The factor map $\pi_2$ functions as follows: given $z\in Z_2$ we look for the positions of the symbols $0$ in $z$. These are exactly the centers of the tiles of such \qt\ $\CT_2$ that $(\CT_1,\CT_2)=\pi_2(z)$. For every center $c$ the block $z|_{U_2c\setminus\{c\}}$ has the form $B^{(2)}_{S,\mathsf s,i}$, where $S\in\CS_2$, $\mathsf s\in\{\mathsf p,\mathsf n\}$ and $i\in\{-1,0,1\}$.\footnote{The ``trit'' (analog of ``bit'' but with three values) of information carried by the index $i\in\{-1,0,1\}$ is, at this step, superfluous, but will be essentially used in the following steps.} We can now determine that $S$ is the shape of the tile of $\CT_2$, centered at $c$, while $\mathsf s$ tells us whether the tile is primary or not. We have deduced that in the symbolic representation of $\CT_2$, $\CT_{2,c}=``S_\mathsf s"$. In this manner $z$ allows to fully reconstruct $\CT_2$ (with the duplicated alphabet) using a block code with coding horizon $U_2$. It is clear that the set denoted by $\pi_2^{-1}(\CT_{[1,2]})$ is indeed the preimage of $\CT_{[1,2]}$ by the above mapping $\pi_2$.

\smallskip\noindent
{\bf Step $k+1$}. Given $k\ge 2$ suppose that for each $2\le l\le k$ we have selected a minimal dynamical $\epsilon_l$-\qt\ $\T_l$ with the collection of shapes $\CS_l\subset \{F_{n_{1,l}},F_{n_{2,l}},\dots,F_{n_{r(\epsilon_l),l}}\}$, where $n_{r(\epsilon_{l-1}),\,l-1}<n_{1,\,l}<n_{2,\,l}<\cdots<n_{r(\epsilon_l),\,l}$, represented as a subshift over the duplicated alphabet (of cardinality at most $2r(\epsilon_l)$) allowing to differentiate between primary and non-primary tiles. We also assume that we have constructed a subshift $Z_k$ on three symbols, and a \tl\ factor map $\pi_k:Z_k\to \T_{[1,k]}$. We assume that there exist finite sets $U_k\subset V_k$ with $\frac{|U_k|}{|V_k|}\le\frac1{k-1}$ such that for each $\CT_k\in\T_k$ the set of centers $C(\CT_k)$ is $V_k$-separated and the factor map $\pi_k$ is given by a block code with coding horizon $U_k$ (at step 2 we have taken $V_2=U_2$). Moreover, we require certain structure of the fibers (preimages of points) of $\pi_k$, captured in the conditions (1)-(3) below. Given a $k$-tuple  $\CT_{[1,k]}=(\CT_1,\CT_2,\cdots,\CT_k)\in\T_{[1,k]}$ and  $c\in C(\CT_k)$ consider the restriction $\CT_{[1,k]}|_{U_kc}$. Since each $\CT_l$ is symbolic ($l\le k$), $\CT_{[1,k]}$ is also symbolic and this restriction is in fact a (shifted) block on finitely many symbols over the domain $U_k$. Let $\mathcal D_k$ denote the (finite) family of all such blocks
$$
\mathcal D_k=\{\CT_{[1,k]}|_{U_kc}: \ \CT_{[1,k]}\in\T_{[1,k]}, \ c\in C(\CT_k)\}.
$$

\begin{enumerate}
	\item For every $D\in\mathcal D_k$ there are exactly three different blocks
	$B^{(k)}_{D,-1},B^{(k)}_{D,0}$ and $B^{(k)}_{D,1}$ belonging to $\{-1,0,1\}^{U_k}$ such
	that whenever $D=\CT_{[1,k]}|_{U_kc}$ for some
	$\CT_{[1,k]}\in\T_{[1,k]}$ and $c\in C(\CT_k)$, and
	$z\in\pi_k^{-1}(\CT_{[1,k]})$ then $z|_{U_kc}=B^{(k)}_{D,i}$ for some $i\in\{-1,0,1\}$.
	\item For any fixed $\CT_{[1,k]}$, all independent choices of the above indices $i$
	for different centers $c\in C(\CT_k)$ are represented in the elements
	$z\in\pi_k^{-1}(\CT_{[1,k]})$ (it is essential that the sets $U_kc$ are pairwise disjoint).
	\item The restrictions of all elements $z\in\pi_k^{-1}(\CT_{[1,k]})$ to the complement of
	the set $U_kC(\CT_k)$ (called the background of $\CT_k$) are equal.
\end{enumerate}

We now need to construct $\T_{k+1}$, $Z_{k+1}$ and define $\pi_{k+1}$. Since $\T_k$ is minimal, it is transitive, say $\T_k=\bar O(\CT^\bullet_k)$. By Propositon \ref{syn}, there exists a finite set $U$ containing $e$, such that the set $C(\CT^\bullet_k)$ of all centers of $\CT^\bullet_k$ is $U^{-1}$-syndetic. Since $U^{-1}$-syndeticity is clearly an invariant and closed property, the same holds for each $\CT_k\in\T_k$, that is to say, in every shifted set $U\!g$ there exists at least one center $c$ of some tile $T$ of $\CT_k$. We define $m=\lceil\log_2 (3\cdot|U|2r(\epsilon_{k+1}))\rceil\!+\!1$. Further, there exists a (much larger) finite set $\hat U\supset U$ such that for each $\CT_k\in\T_k$, in every shifted copy $\hat Ug$ there are at least $m$ centers of $\CT_k$ (it suffices that $\hat U$ contains $m$ disjoint shifted copies of $U$). We define $U_{k+1}$ as $U_k\hat UU$. We also choose a finite set $V_{k+1}\supset U_{k+1}$ with $\frac{|U_{k+1}|}{|V_{k+1}|}\le\frac1k$.

The revised version of Theorem \ref{DH} combined with Lemma \ref{sep} provides a zero entropy minimal dynamical $\epsilon_{k+1}$-\qt\ $\T_{k+1}$ with at most $r(\epsilon_{k+1})$ shapes belonging to the F\o lner \sq: $\CS_{k+1}\subset \{F_{n_{1,k+1}},F_{n_{2,k+1}},\dots,F_{n_{r(\epsilon_{k+1}),k+1}}\}$, where $n_{r(\epsilon_k),\,k}<n_{1,\,k+1}<n_{2,\,k+1}<\cdots<n_{r(\epsilon_{k+1}),\,k+1}$, and such that for every $\CT_{k+1}\in\T_{k+1}$ the set of centers $C(\CT_{k+1})$ is $V_{k+1}$-separated. The \qt\ is represented as a subshift over the duplicated alphabet $\bigl\{``S_\mathsf s":S\in\CS_{k+1},\,\mathsf s\in\{\mathsf p,\mathsf n\}\bigr\}$, allowing to determine the primariness of the tiles.

There are at most $|U|2r(\epsilon_{k+1})$ triples $(u,S,\mathsf s)$, where $u\in U$, $S\in\CS_{k+1}$ and $\mathsf s\in\{\mathsf p,\mathsf n\}$, while there are at least $2^{m-1}\ge3\cdot|U|2r(\epsilon_{k+1})$ words of length $m$, over the alphabet $\{-1,0,1\}$ (i.e., functions from $\{1,2,\dots,m\}\to\{-1,0,1\}$), in which $0$ occurs exactly once, at the first position. Thus, to every triple $(u,S,\mathsf s)$ one can disjointly associate a family $\{W_{u,S,\mathsf s,-1}, W_{u,S,\mathsf s,0}, W_{u,S,\mathsf s,1}\}$ of three different such words.

For each $(k\!+\!1)$-tuple $\CT_{[1,k+1]}=(\CT_1,\CT_2,\dots,\CT_k,\CT_{k+1})\in \T_{[1,k+1]}$ we will now select a subset of $\pi_k^{-1}(\CT_{[1,k]})$ where $\CT_{[1,k]}=(\CT_1,\CT_2,\dots,\CT_k)$, which will constitute the preimage $\pi_{k+1}^{-1}(\CT_{[1,k+1]})$. Recall that all elements $z\in\pi_k^{-1}(\CT_{[1,k]})$ are equal on the background of $\CT_k$, while on every set $U_kc$ ($c\in C(\CT_k)$) there occur three possible blocks $B^{(k)}_{D,i}$ ($i\in\{-1,0,1\}$), where $D=\CT_{[1,k]}|_{U_kc}$. We will soon restrict these possibilities in a way that depends on $\CT_{k+1}$.

We enumerate the set $\hat U$ as $\{g_1,g_2,\dots,g_{|\hat U|}\}$ starting with the elements of $U$, i.e., so that $U=\{g_1,g_2,\dots,g_{|U|}\}$. Let $c_0\in C(\CT_{k+1})$, i.e., for some $S\in\CS_{k+1}$, $Sc_0$ is a tile of $\CT_{k+1}$. In $U\!c_0$ there is at least one center of $\CT_k$. We let $c_1$ be the first one in the enumeration of $U\!c_0$ as $\{g_1c_0,g_2c_0,\dots,g_{|U|}c_0\}$. We denote by $u$ the element $c_1c_0^{-1}\in U$. Next, in $\hat Uc_1$ there are at least $m$ centers of $\CT_k$. After excluding $c_1$, we have at least $m-1$ such centers. We let $c_2,c_3,\dots,c_m$ be the first $m-1$ of them in the ordering of $\hat Uc_1$ as $\{g_1c_1,g_2c_1,\dots,g_{|\hat U|}c_1\}$.

Within $z|_{U_{k+1}c_0}$ we will encode the information about $u$ (which represents the ``distance'' between $c_0$ and $c_1$), the shape $S$, the subscript $\mathsf s$ according to which the tile $Sc_0$ is primary or not, plus one extra trit of information for future use. This will be achieved by encoding (within $z|_{U_{k+1}c_0}$) one of the three words $\{W_{u,S,\mathsf s,-1}, W_{u,S,\mathsf s,0}, W_{u,S,\mathsf s,1}\}$. To this end, we simply require that the indices $i$ in the blocks $B^{(k)}_{D_j,i}$, where $D_j=\CT_{[1,k]}|_{U_kc_j}$  ($j=1,\dots,m$) follow one of the words $W_{u,S,\mathsf s,-1}$ or $W_{u,S,\mathsf s,0}$, or $W_{u,S,\mathsf s,1}$. Formally, we require that:
$$
\exists_{i'\in\{-1,0,1\}}\ \forall_{j=1,\dots,m}\ \ z|_{U_kc_j}=B^{(k)}_{D_j,W_{u,S,\mathsf s,i'}(j)}.
$$
Roughly speaking, on the set $\bigcup_{j=1}^m U_kc_j$ we have reduced the number of possibilities from $3^m$ (represented by all possible configurations of the indices $i$) to just $3$ (represented by the new index $i'$). Since each $c_j$ belongs to $\hat Uc_1\subset \hat UUc_0$, the above restrictions affect $z$ only on the set $U_k\hat UUc_0=U_{k+1}c_0$. As the set $C(\CT_{k+1})$ is $U_{k+1}$-separated, there is no collision between the above restrictions introduced for different centers $c_0\in C(\CT_{k+1})$. For fixed $\CT_{[1,k+1]}$ we allow all independent choices of the indices $i'$ for different centers $c_0\in C(\CT_{k+1})$ to be represented in the elements $z\in\pi_{k+1}^{-1}(\CT_{[1,k+1]})$.

Additionally, we introduce two ``background rules'':
\begin{enumerate}
	\item \emph{The ``small background''}: if $c$ is a center of $\CT_k$ within $U_{k+1}c_0$
	other than any $c_j$ ($j=1,\dots,m$), then we require that for all
	$z\in\pi^{-1}_{k+1}(\CT_{[1,k+1]})$, $x|_{U_kc}=B^{(k)}_{D,1}$ (where
	$D=\CT_{[1,k]}|_{U_kc}$).
	With this rule, the block $z|_{U_{k+1}c_0}$ may assume one of only three possible forms
	(corresponding to the new index $i'$). The collection of these three blocks depends only
	on the restriction $D'=\CT_{[1,k+1]}|_{U_{k+1}c_0}$, hence we can denote these three blocks
	as $B^{(k+1)}_{D',-1}$, $B^{(k+1)}_{D',0}$ and $B^{(k+1)}_{D',1}$.
	\item \emph{The ``large background''}: If $c$ is a center of $\CT_k$ outside
	$U_{k+1}C(\CT_{k+1})$, we also require that for all $z\in\pi^{-1}_{k+1}(\CT_{[1,k+1]})$,
	$z|_{U_kc}=B^{(k)}_{D,1}$, where $D=\CT_{[1,k]}|_{U_kc}$.
\end{enumerate}
%We remark, that since the set $C(\CT_k)$ is $U_k$-separated, the the background rules lead to no collisions, neither.
This concludes the definition of $\pi^{-1}_{k+1}(\CT_{[1,k+1]})$. We let
$$
Z_{k+1}=\bigcup\{\pi^{-1}_{k+1}(\CT_{[1,k+1]}):\  \CT_{[1,k+1]}\in\T_{[1,k+1]}\}.
$$
Clearly, by construction, $Z_{k+1}\subset Z_k$. We skip the elementary verification that $Z_{k+1}$ is closed and shift-\inv.
\smallskip

We will now describe the functioning of the code $\pi_{k+1}$. Let $z\in Z_{k+1}$. Clearly, $z\in Z_k$ and by the inductive assumption, we can determine the image $\CT_{[1,k]}=\pi_k(z)$ by a block code with the coding horizon $U_k$. The $k$-tuple $\CT_{[1,k]}$ will play the role of the projection of the desired image $\CT_{[1,k+1]}$ onto the first $k$ coordinates, and it only remains to determine $\CT_{k+1}$ \emph{given} $\CT_{[1,k]}$. This will automatically guarantee consistency of $\pi_{k+1}$ with the preceding maps $\pi_l$ ($l\le k$). In particular, we can locate all centers $c\in C(\CT_k)$, and, for every such center we can determine the block $D=\CT_{[1,k]}|_{U_kc}$. Next, for every such pair $c$ and $D$ we check whether $z|_{U_kc}=B^{(k)}_{D,0}$. If yes, then we denote $c$ by $c_1$ and we know that the center $c_0$ of a tile of $\CT_{k+1}$ lies within $U^{-1}c_1$, say $c_0=u^{-1}c_1$. We need to determine three pieces of data: $u$, the shape $S\in\CS_{k+1}$ of the tile of $\CT_{k+1}$ centered at $c_0$, and its primariness. In $\hat Uc_1$ we can easily locate the first $m-1$ (other than $c_1$) centers of $\CT_k$ in the ordering of $\hat Uc_1$ as $\{g_1c_1,g_2c_1,\dots,g_{|\hat U|}c_1\}$, and call them $c_2,c_3,\dots,c_m$. By the rules of creating $Z_{k+1}$, the blocks $z|_{U_kc_j}$ will have only two forms, either $B^{(k)}_{D_j,-1}$ or $B^{(k)}_{D_j,1}$, where $D_j = \CT_{[1,k]}|_{U_kc_j}$. The indices $-1,1$, together with the initial 0, will form a word $W\in\{-1,0,1\}^{\{1,2,\dots,m\}}$ equal to one of the words $W_{u,S,\mathsf s,i'}$ for a unique combination of parameters $u\in U$, $S\in\CS(\CT_{k+1})$, $\mathsf s\in\{\mathsf p,\mathsf n\}$, $i'\in\{-1,0,1\}$. Now we can determine $c_0$ as $u^{-1}c_1$ and we know that $\CT_{k+1}$ has a tile (primary or not, according to the value of $\mathsf s$) centered at $c_0$ with the shape $S$, i.e., that $\CT_{k+1,c_0}=``S_\mathsf s"$. In this manner, we have recognized the tile and its primariness by viewing the set $U_{k+1}=U_k\hat UU$ shifted to the center of this tile. The trit of information carried by the index $i'$ is, at this step, superfluous, but clearly crucial in further steps.

\medskip
Once the induction is completed, we define $Z$ as the decreasing intersection of the subshifts $Z_k$ ($k\in\N$). It is clear that $Z$ is a symbolic extension of the countable joining $\mathbf T=\bigvee_{k\in\N}\T_k$. The factor map $\pi:Z\to\mathbf T$
is defined as the limit of the blocks codes $\pi_k$: by consistency this limit exists (each code $\pi_k$ allows to determine the first $k$ layers of the image by $\pi$).

\medskip
We shall now argue that $Z$ is an isomorphic extension of $\mathbf T$ by showing that the factor map $\pi$ is injective except on a set of universal measure zero (i.e., of measure zero for all \im s on $Z$). It suffices to show that $\pi^{-1}(\boldsymbol\CT)$ ($\boldsymbol\CT=(\CT_1,\CT_2,\dots)\in\mathbf T$) is a singleton except when $\boldsymbol\CT$ belongs to some set of universal measure zero on $\mathbf T$. A way to prove it is by showing that the set of all elements $\boldsymbol\CT\in\mathbf T$ which have multiple preimages by $\pi$, i.e., the set
$$
\mathsf A=\{\boldsymbol\CT:|\pi^{-1}(\boldsymbol\CT)|>1\}
$$
(which is clearly Borel-measurable in $\mathbf T$) has universal \im\ zero. An element $\boldsymbol\CT\in\mathbf T$ is in $\mathsf A$ if there exists $g\in G$ and two elements $z,z'\in\pi^{-1}(\boldsymbol\CT)$ with $z_g\neq z'_g$. Thus $\mathsf A=\bigcup_{g\in G}\mathsf A_g$, where $\mathsf A_g=\{\boldsymbol\CT:\exists_{z,z'\in\pi^{-1}(\boldsymbol\CT)}\ z_g\neq z'_g\}$. It suffices to prove that for every $g\in G$, $\mathsf A_g$ has universal measure zero. Because $\mathsf A_g=g(\mathsf A_e)$, we can consider only $g=e$. Let $\boldsymbol\CT\in\mathsf A_e$ and let $z,z'$ be as in the definition of $\mathsf A_e$.
For each $k$ we then have $z,z'\in\pi_k^{-1}(\CT_{[1,k]})$, where $\CT_{[1,k]}$ is the projection of $\boldsymbol\CT$ on the first $k$ coordinates. Thus, $z_e\neq z'_e$ is possible only when $e$ does not belong to the background of $\CT_k$, i.e., when $e\in U_kC(\CT_k)$. Thus, for each $k$,
$$
\mathsf A_e\subset\mathsf B=\{\boldsymbol\CT=(\CT_1,\CT_2,\dots):e\in U_kC(\CT_k)\}.
$$
Given $\boldsymbol\CT\in\mathbf T$, we have, in the notation of Proposition \ref{prop}
(with $x$ replaced by $\boldsymbol\CT$), the following equality:
$$
B_{\boldsymbol\CT}=\{g: g\boldsymbol\CT\in\mathsf B\}= U_kC(\CT_k).
$$
By left invariance and subadditivity of upper Banach density in amenable groups (see subsections \ref{bd}, \ref{bdr}), for each $k\ge 2$ we have
$$
\overline D(B_{\boldsymbol\CT})= \overline D(U_kC(\CT_k))\le|U_k|\overline D(C(\CT_k))\le|U_k| \frac 1{|V_k|}\le\frac1{k-1}.
$$
We have proved that for any $\boldsymbol\CT\in\mathbf T$, $\overline D(B_{\boldsymbol\CT})\le\frac1{k-1}$ for every $k\ge 2$, i.e., that $\overline D(B_{\boldsymbol\CT})=0$. By Proposition \ref{prop}, we obtain
$$
\sup_{\mu\in\M_G(\mathbf T)}\mu(\mathsf A_e)\le \sup_{\mu\in\M_G(\mathbf T)}\mu(\mathsf B)=0.
$$
This ends the proof that the extension $\pi:Z\to\mathbf T$ is isomorphic, so $\mathbf T$ is perfectly encodable. Let us remind once more, that by Lemma \ref{oo}, the associated system of \emph{disjoint} \qt s $\hat{\mathbf T}$ is conjugate to $\mathbf T$ and thus it is also perfectly encodable. The proof of Theorem \ref{nine} is now complete.
\end{proof}

\subsection{Encodability of tiling systems versus the comparison property}\label{piec}

Encodable F\o lner systems of disjoint \qt s is just a step in our pursuit towards facing the true challenge which is the creation of encodable tiling systems. Only such systems will allow us to built genuine symbolic extensions. As the theorem below shows, the comparison property is crucial in this aspect.

\begin{comment}
The fact that the dynamical \qt\ $x\mapsto\CT_x$ is a \tl\ factor of the action of $G$ on $X$ is equivalent to the conjuction of the following two statements:
\begin{enumerate}
	\item for any finite set $F$ of $G$, if $x$ and $x'$ are sufficiently close to each other in $X$, then the set $F$ is tiled by $\CT_x$ and by $\CT_{x'}$ in the same way,
	\item for each $g\in G$ we have $\CT_{g(x)}=\{Tg^{-1}:T\in\CT_x\}$.
\end{enumerate}
\end{comment}

\begin{theorem}\label{ddd}
Let a countable amenable group $G$ act on a zero-dimensional compact metric space $X$. Suppose the action has a F\o lner system of disjoint \qt s $\hat{\mathbf T}$ as a \tl\ factor and admits comparison. Then $X$ has a tiling system as a \tl\ factor.
\end{theorem}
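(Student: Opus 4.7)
The plan is to inductively construct a congruent F\o lner system of tilings $(\T_k^*)_{k\in\N}$ as a \tl\ factor of $X$, using the comparison property to ``fill in'' the gaps of the disjoint \qt s $\hat\T_k$; determinism will then follow for free by applying Remark \ref{remark} to duplicate shapes. Throughout the construction we may freely pass to sub\sq s of $\hat{\mathbf T}$ (which remain F\o lner systems of disjoint \qt s), so the disjointness/covering parameters $\epsilon_k$ decay and the $\hat\T_k$-shapes become invariant as fast as the induction requires.

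Begin with $\T_0^*$ equal to the trivial tiling by singletons. Inductively, assume $\T_{k-1}^*$ has been built as a block-coded factor of $X$, with tile sizes bounded by some $M_{k-1}$ and congruent with the earlier tilings. For each $x\in X$ and each tile $T\in\hat\CT_{k,x}$, form the ``enlarged tile''
$$
\tilde T \;=\; \bigcup\{T'\in\CT^*_{k-1,x}:\ c_{T'}\in T\},
$$
where $c_{T'}$ denotes the center of $T'$. The $\tilde T$'s are pairwise disjoint and jointly consist of the \emph{covered} $\T_{k-1}^*$-tiles (those whose centers lie in $\bigcup\hat\CT_{k,x}$); the remaining \emph{leftover} $\T_{k-1}^*$-tiles (centers in the gap $G\setminus\bigcup\hat\CT_{k,x}$) must each be attached to some $\tilde T$ to yield a genuine tiling.

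To perform the attachment locally, consider the clopen subsets of $X$
$$
\mathsf L_k=\{x:e\text{ is a leftover }\T_{k-1}^*\text{-center in }x\},\quad \mathsf B_k=\{x:e\text{ is a covered }\T_{k-1}^*\text{-center in }x\}.
$$
By Proposition \ref{prop}, $\mu(\mathsf L_k)\le\epsilon_k$ for every $\mu\in\M_G(X)$ (since $\mathsf L_k$ lies in the gap set of $\hat\T_k$), while $\mu(\mathsf L_k\cup\mathsf B_k)$ equals the $\T_{k-1}^*$-center density and is therefore at least $1/M_{k-1}$. Picking $\hat\T_k$ far enough along the sequence to ensure $\epsilon_k<\tfrac1{2M_{k-1}}$ forces $\mu(\mathsf L_k)<\mu(\mathsf B_k)$ uniformly in $\mu$, so comparison yields $\mathsf L_k\preccurlyeq\mathsf B_k$. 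Theorem \ref{tutka}(1) then upgrades this subequivalence into a family of injections $\tilde\varphi_x:L_x\to B_x$ ($x\in X$) realised by a block code, with multipliers in a finite set $E_k\subset G$. Each leftover tile $T'$ is then attached to the unique $\hat\T_k$-tile $T$ containing $\tilde\varphi_x(c_{T'})$, and the tiles of $\T_k^*$ are declared to be $\tilde T\cup\{T':T'\text{ is attached to }T\}$.

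A routine verification will show that $\T_k^*$ is a tiling (every $\T_{k-1}^*$-tile is attached to exactly one new tile, and injectivity of $\tilde\varphi_x$ forbids collisions), is congruent with $\T_{k-1}^*$, and has only finitely many shapes (each $\tilde T$ absorbs at most $|E_k|$ leftover tiles, each within $E_k$-displacement of $\tilde T$). F\o lner invariance of $\T_k^*$-shapes is preserved by further requiring $\hat\T_k$-shapes to be $(K,\eps)$-invariant for $K$ dominating $E_k$ and all $\T_{k-1}^*$-shapes, so that the added boundary is of negligible relative size. Since the whole step is a block code, $\T_k^*$ is a \tl\ factor of $X$, and the induction is complete. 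The joining $\bigvee_{k\in\N}\T_k^*$ is then a congruent F\o lner system of tilings as a factor of $X$; applying Remark \ref{remark} to duplicate shapes converts it into a deterministic tiling system, as required. The heart of the argument is the passage from the abstract subequivalence $\mathsf L_k\preccurlyeq\mathsf B_k$ to a block-coded injection with uniformly bounded multipliers---this is the content of Theorem \ref{tutka}, and it is precisely what comparison buys us over the purely measure-theoretic density inequality. The remaining combinatorial juggling of invariance parameters is routinely handled by passing to sparse sub\sq s of $\hat{\mathbf T}$.
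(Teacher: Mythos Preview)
Your inductive framework---building congruency into the construction from the start rather than fixing it at the end as the paper does---is a perfectly reasonable alternative, and the use of comparison via Theorem~\ref{tutka} to attach leftover pieces by a block code is the correct mechanism. However, the argument for F\o lner invariance of the new shapes contains a genuine gap. First, the parenthetical ``each $\tilde T$ absorbs at most $|E_k|$ leftover tiles'' is false: injectivity of $\tilde\varphi_x$ only says that distinct leftover centers land on distinct covered centers, but a single $\tilde T$ may contain many covered centers, and several leftover centers can share the same multiplier while mapping to different targets in $\tilde T$. Second, and more seriously, you propose to recover invariance by ``requiring $\hat\T_k$-shapes to be $(K,\eps)$-invariant for $K$ dominating $E_k$''. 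This is circular: the multiplier set $E_k$ is produced by the comparison argument applied to the clopen sets $\mathsf L_k,\mathsf B_k$, which are themselves defined in terms of $\hat\T_k$; you cannot choose $\hat\T_k$ based on $E_k$.

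The paper breaks the circularity by a different device: rather than targeting \emph{all} covered points, it designates in each shape $S$ a small subset $B_S$ with $|B_S|<\frac{\eps}{2|K|}|S|$ and takes $\mathsf B=\{x:e\in B_Sc\text{ for some tile }Sc\}$. Then the number of elements attached to any tile $Sc$ is at most $|B_S|$, and the estimate $|K\Phi_x(Sc)|\le|KS|+|K|\cdot|B_S|<(1+\eps)|S|$ holds regardless of what $E$ turns out to be. You could import this idea into your inductive scheme (designate only a sparse set of acceptor centers per $\hat\T_k$-tile), but as written the per-tile bound is missing. A minor additional point: Theorem~\ref{tutka} is stated for subshifts, so before invoking it you should pass to an auxiliary symbolic factor of $X$ carrying the relevant clopen data, as the paper does explicitly.
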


Before the proof, let us draw some corollaries. The first one is not very useful for us, but perhaps has an interest of its own. The second one is absolutely crucial for the rest of this paper. Recall that in large parts of Section \ref{s4} we have been dealing with \zd\ systems which had a system of tilings as a \tl\ factor. They were artificially created by joining an arbitrary \zd\ system with a tiling system. We can now characterize, in terms of comparison, these \emph{free} \zd\ systems which have a system of tilings as a \tl\ factor, without needing to join them with anything.

\begin{cor}\label{dt}
Let a countable amenable group $G$ act freely on a compact metric \zd\ space $X$. Then the action has a tiling system as a \tl\ factor if and only if it admits comparison. The forward implication holds without assuming that the action is free.
\end{cor}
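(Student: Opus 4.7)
I handle the forward implication first; it does not use freeness. Suppose $X$ has a tiling system $\mathbf T = \overset\leftarrow{\lim_k}\T_k$ as a \tl\ factor, and let $\mathsf A, \mathsf B \subset X$ be clopen (disjoint by Remark~\ref{disjoint}) with $\mu(\mathsf A) < \mu(\mathsf B)$ for every $\mu \in \MGX$. By Remark~\ref{from0} and Proposition~\ref{prop}(1), $\underline D(\mathsf B, \mathsf A) > \alpha$ for some $\alpha > 0$, so Lemma~\ref{bbb} produces a finite $F \subset G$ with $\underline D_F(B_x, A_x) > \alpha$ uniformly in $x$. Pick $k$ so large that every shape $S \in \CS_k$ is $(F, \alpha/8)$-invariant; Lemma~\ref{bdc} then yields $|B_x \cap Sg| - |A_x \cap Sg| \ge (\alpha/2)|S|$ for every $x \in X$, $S \in \CS_k$, $g \in G$. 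In particular $|A_x \cap T| < |B_x \cap T|$ on every tile $T$ of the static tiling from $\T_k$ associated with $x$.

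Next I pass to the joined system $X \vee \mathbf T$ (a \tl\ extension of $X$, so any subequivalence obtained there pushes down) and build a tile-by-tile matching. I fix once and for all, for each shape $S \in \CS_k$ and each block $b \in \{\mathsf 0, \mathsf 1, \mathsf 2\}^S$ with $|b^{-1}(\mathsf 1)| \le |b^{-1}(\mathsf 2)|$, an injection $\psi_{S,b} : b^{-1}(\mathsf 1) \to b^{-1}(\mathsf 2)$. For $g \in A_x$ let $Sc$ be the unique tile of the static tiling from $\T_k$ associated with $x$ that contains $g$, and let $b = \pi_{\mathsf A \mathsf B}(c^{-1}(x))|_S$; set $\tilde\varphi_x(g) = \psi_{S,b}(gc^{-1})\,c$. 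This yields an injection $\tilde\varphi_x : A_x \to B_x$ whose multipliers $\tilde\varphi_x(g) g^{-1}$ lie in the finite set $E = \bigcup_{S \in \CS_k} SS^{-1}$ and which is determined by a block code on the array representation of $X \vee \mathbf T$ with finite coding horizon (the tuple $(c, S, b)$ can be read off a bounded neighborhood of $g$). Following the recipe of the last paragraph of the proof of Theorem~\ref{tutka}(1), the sets $\mathsf A_g = \{x \in \mathsf A : \tilde\varphi_x(e) = g\}$ for $g \in E$ form a clopen partition of $\mathsf A$, and shift-equivariance of the block code together with injectivity of $\tilde\varphi_y$ at $g_1^{-1}$ and $g_2^{-1}$ forces $g_1(\mathsf A_{g_1}) \cap g_2(\mathsf A_{g_2}) = \emptyset$ for $g_1 \neq g_2$. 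Hence $\mathsf A \preccurlyeq \mathsf B$.

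For the backward implication, assume the free \zd\ action admits comparison. Fix $\varepsilon_k \to 0$ and apply Theorem~\ref{DH} to obtain, for each $k$, a dynamical $\varepsilon_k$-\qt\ $\T_k$ as a \tl\ factor of $X$ whose shapes are sufficiently far members of a pre-fixed F\o lner \sq; then Theorem~\ref{DH1} upgrades each $\T_k$ to a disjoint version $\hat\T_k$ still factoring through $X$. The simultaneous factor map $x \mapsto (\hat\CT_{k,x})_{k \in \N}$ realizes $\hat{\mathbf T} = \bigvee_k \hat\T_k$ as a F\o lner system of disjoint \qt s which is a \tl\ factor of $X$, and Theorem~\ref{ddd} then directly yields a tiling system as a \tl\ factor of $X$.

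The main obstacle is verifying in the forward direction that the tile-by-tile injection genuinely comes from a block code on $X \vee \mathbf T$; equivalently, that the family $\{\tilde\varphi_x\}_{x \in X}$ is shift-equivariant. This is forced by fixing $\psi_{S,b}$ in advance as a function of the shape--block pair $(S,b)$ alone (both of which are shift-invariant attributes of a tile), so no point-dependent choice enters the construction, and the clopen-partition argument of Theorem~\ref{tutka}(1) then applies without change.
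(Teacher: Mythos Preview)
Your proof is correct and follows the same strategy as the paper in both directions: for the forward implication, pick a sufficiently invariant level $\T_k$ so that each tile contains more of $B_x$ than of $A_x$, fix once and for all a tile-by-tile injection depending only on the shape and the $\{\mathsf 0,\mathsf 1,\mathsf 2\}$-pattern over the tile, and verify via the recipe of Theorem~\ref{tutka}(1) that this yields $\mathsf A\preccurlyeq\mathsf B$; for the backward implication, invoke Theorem~\ref{DH1} and then Theorem~\ref{ddd}. The only cosmetic difference is that the paper passes to a symbolic \emph{factor} $\hat X\subset(\{\mathsf 0,\mathsf 1,\mathsf 2\}\times{\rm V})^G$, so that Theorem~\ref{tutka}(1) applies verbatim and subequivalence then lifts back to $X$; your detour through the joining $X\vee\mathbf T$ is redundant (since $\mathbf T$ is already a factor of $X$ the natural joining is conjugate to $X$), and your remark that subequivalence ``pushes down'' from an extension would be false in general but is harmless here precisely because your partition $\{\mathsf A_g\}$ is already definable in $X$.
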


\begin{proof} Because the action is free, by Theorem \ref{DH1}, it has a F\o lner system of disjoint \qt s $\hat{\mathbf T}$ as a \tl\ factor. If the action admits comparison, Theorem \ref{ddd} applies which ends the proof of backward implication. We save the somewhat lengthy proof of the forward implication for later.
\end{proof}

\begin{cor}\label{ddt}
Suppose $G$ has the comparison property. Then there exists an encodable zero entropy tiling system $\tilde{\mathbf T}$ of $G$.
\end{cor}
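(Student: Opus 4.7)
The plan is to combine Theorem \ref{nine} with Theorem \ref{ddd} and use the comparison hypothesis to bridge between the two. First, by Theorem \ref{nine}, there exists a perfectly encodable F\o lner system of disjoint \qt s $\hat{\mathbf T}$ of zero \tl\ entropy; let $\pi:Z\to\hat{\mathbf T}$ be an isomorphic symbolic extension. In particular, $Z$ is a subshift (hence \zd\ compact metric) and, being isomorphic to $\hat{\mathbf T}$, it has \tl\ entropy zero.

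Next, since $G$ enjoys the comparison property, by Definition \ref{defcom}(2) the action of $G$ on $Z$ admits comparison. As $\hat{\mathbf T}$ is a \tl\ factor of $Z$ (via $\pi$), the system $Z$ has a F\o lner system of disjoint \qt s as a \tl\ factor. Thus the hypotheses of Theorem \ref{ddd} are met by the action on $Z$, so there exists a tiling system $\tilde{\mathbf T}$ which is a \tl\ factor of $Z$; denote the corresponding factor map by $\rho:Z\to\tilde{\mathbf T}$.

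It remains to verify that the symbolic extension $\rho:Z\to\tilde{\mathbf T}$ is principal, which will establish that $\tilde{\mathbf T}$ is encodable. But this is automatic: since $Z$ has \tl\ entropy zero, every invariant measure $\nu\in\M_G(Z)$ satisfies $h(\nu,Z)=0$, so trivially $h(\nu,Z\,|\,\tilde{\mathbf T})=0$. Hence $\rho$ is a principal extension, and $\tilde{\mathbf T}$ (which inherits zero \tl\ entropy as a factor of $Z$) is the desired encodable zero entropy tiling system.

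There is essentially no serious obstacle in this corollary; it is a direct synthesis of the heavy machinery already developed---perfect encodability of \qt\ systems (Theorem \ref{nine}) and the upgrade from \qt\ systems to tiling systems under comparison (Theorem \ref{ddd})---together with the trivial observation that principality is automatic once the extending system has zero \tl\ entropy.
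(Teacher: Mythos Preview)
Your proof is correct and follows essentially the same approach as the paper. The only cosmetic difference is that the paper applies the comparison property and Theorem \ref{ddd} directly to $\hat{\mathbf T}$ (which trivially has itself as a F\o lner system of disjoint \qt s factor), obtaining $\tilde{\mathbf T}$ as a factor of $\hat{\mathbf T}$, and then composes with $\pi:Z\to\hat{\mathbf T}$; you instead apply comparison and Theorem \ref{ddd} to $Z$, using that $\hat{\mathbf T}$ is a factor of $Z$. Either route yields a zero-entropy symbolic system $Z$ factoring onto a tiling system, and principality follows for free from zero entropy, exactly as you argue.
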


\begin{proof}
By Theorem \ref{nine}, there exists a (perfectly) encodable F\o lner system of disjoint \qt s $\hat{\mathbf T}$ of \tl\ entropy zero (for that, the comparison property is not used yet). By the comparison property, the action on $\hat{\mathbf T}$ admits comparison. Now, Theorem \ref{ddd} implies that there exists a tiling system $\tilde{\mathbf T}$ which is a \tl\ factor of $\hat{\mathbf T}$. Clearly, the isomorphic symbolic extension of $\hat{\mathbf T}$ (which exists by perfect encodability) is also a principal symbolic extension of $\tilde{\mathbf T}$ and thus $\tilde{\mathbf T}$ is encodable.
\end{proof}

\begin{proof}[Proof of Theorem \ref{ddd}] Firstly, assuming comparison, we will show that $X$ has, as a \tl\ factor, a F\o lner system of tilings $\check{\mathbf T}$ (which is not necessarily congruent let alone deterministic; we will take care of ensuring these properties later). In fact, we will show that if $\hat{\mathbf T}=\bigvee_{k\in\N} \hat\T_k$ then, for arbitrarily large indices $k\in\N$, the \qt s $\hat\T_k$ extend to some dynamical tilings $\check\T_k$ (still factors of $X$), which have only slightly worse invariance properties. This will imply that $X$ has the joining $\check{\mathbf T}=\bigvee_{k\in\N}\check\T_k$ (where $k$ ranges over the respective sub\sq) as a \tl\ factor and that $\check{\mathbf T}$ is a F\o lner system of tilings.

Fix a finite set $K\subset G$ and $\varepsilon>0$. Let $\delta> 0$ be so small that
$$
\frac{2\delta}{1-\delta}<\frac\eps{2|K|}.
$$
For some $k\in\N$ the dynamical \qt\ $\hat\T_k$ is $(K,\frac\eps2)$-invariant, disjoint and $(1\!-\!\delta)$-covering.
We denote by $\hat\CS_k$ the collection of all shapes used by this \qt. By choosing $k$ large enough, we can also assume that each shape $S\in\hat\CS_k$ has cardinality so large that the interval $(\frac{2\delta}{1-\delta}|S|,\frac\eps{2|K|}|S|)$ contains an integer $i_{\!S}$. In each shape $S\in\hat\CS_k$ we select (arbitrarily) a subset $B_S$ of cardinality $i_{\!S}$. Since $k$ is fixed from now on, we will skip it in the denotation of $\hat\T_k$, $\hat \CS_k$ and, above all, $\hat\CT_k$, giving room for the subscript $x$. Given $x\in X$ we let $\hat\CT_x\in\hat\T$ be the \qt\ corresponding to $x$ via the factor map from $X$ to $\hat{\mathbf T}$.  We now observe two subsets of $G$:
$$
A_x= G\setminus \bigcup\hat\CT_x\text{ \ \ and  \ \ }B_x = \bigcup_{Sc\in\hat\CT_x}B_Sc.
$$
Clearly, $\overline D(A_x)=1-\underline D(\bigcup\hat\CT_x)\le\delta$. Using Lemma~\ref{1.5} we easily get $\underline D(B_x)>(1-\delta)\cdot \frac{2\delta}{1-\delta} =2\delta$. By Corollary \ref{coro}, $\underline D(B_x,A_x)>\delta$. Define two subsets of $X$:
$$
\mathsf A=\{x\in X: e\in A_x\}\text{ \ \ and \ \ }\mathsf B=\{x\in X: e\in B_x\}.
$$
Since one can determine whether $e\in A_x$ (and likewise, whether $e\in B_x$) from the symbolic representation of $\hat\CT_x$ by viewing the symbols in the bounded horizon $\bigcup_{S\in\hat\CS}S^{- 1}$ around $e$, both sets $\mathsf A$ and $\mathsf B$ are clopen (and obviously disjoint) in $X$. The notation $A_x,\,B_x$ is now consistent with \eqref{kiki} and \eqref{kiko} for the sets $\mathsf A,\,\mathsf B$, respectively, hence, by Proposition \ref{prop} (1) (the last equality) we obtain $\underline D(\mathsf B,\mathsf A)\ge\delta>0$. The fact that the action on $X$ admits comparison implies that $\mathsf A\preccurlyeq\mathsf B$.

%\begin{comment}
Since we prefer to work with a symbolic system in place of the \zd\ system $X$, we will now build a symbolic factor $\hat X$ of $X$ carrying the minimum information needed to restore both the dependence $x\mapsto\hat\CT_x$ and the subequivalence $\mathsf A\preccurlyeq\mathsf B$. Let $\{\mathsf A_1,\mathsf A_2,\dots,\mathsf A_l\}$ and $g_1,g_2,\dots, g_l$ be the clopen partition of $\mathsf A$ and the associated elements of $G$ as in the definition of subequivalence, respectively. We define a factor map $\hat\pi:X\to\hat X\subset{\hat{\rm V}}^G$, where
$\hat{\rm V}={\rm V}\times\{0,1,\dots,l,l+1\}$, and ${\rm V}=\{``S\,":S\in\hat\CS\}\cup\{0\}$ is the alphabet of the symbolic representation of $\hat\T$, as follows:
$$
(\hat\pi(x))_g =
\begin{cases}
((\hat\CT_x)_g,i)&\text{ \ \ if \ }g(x)\in \mathsf A_i, \ \ i=1,2,\dots,l\\
((\hat\CT_x)_g,l+1)&\text{ \ \ if \ }g(x)\in\mathsf B\\
((\hat\CT_x)_g,0)&\text{ \ \ if \ }g(x)\notin\mathsf A\cup\mathsf B.\\
\end{cases}
$$
For $i=1,2,\dots,l$ denote $\hat{\mathsf A}_i=[\cdot,i]$, $\hat{\mathsf A}=\bigcup_{i=1}^l[\cdot,i]$ and $\hat{\mathsf B}=[\cdot,l+1]$. Clearly, $\hat\pi^{-1}(\hat{\mathsf A})=\mathsf A$, $\hat\pi^{-1}(\hat{\mathsf A}_i)=\mathsf A_i$ ($i=1,2,\dots,l$) and $\hat\pi^{-1}(\hat{\mathsf B})=\mathsf B$, which easily implies that $\hat{\mathsf A}\preccurlyeq\hat{\mathsf B}$ in the subshift $\hat X$, and the subequivalence involves the same elements $g_1,g_2,\dots, g_l$. Also for any $\hat x\in \hat X$ all \qt s $\hat\CT_x$ with $x\in\hat\pi^{-1}(\hat x)$ coincide, so we can denote them by $\hat\CT_{\hat x}$. In this manner the \qt\ $\hat\T$ turns out to be a \tl\ factor of the subshift $\hat X$. Moreover, whenever $x\in\hat\pi^{-1}(\hat x)$, we have $\hat A_{\hat x}=A_x,\ \hat B_{\hat x}=B_x$. From now on, we will skip the hats in the denotation of $\hat X$ (and of $\hat x\in\hat X$) remembering that we have replaced the zero-dimensional system $X$ by a subshift.
%\end{comment}

By Theorem \ref{tutka} (1), there exists a family of injections $\tilde\varphi_x:A_x\to B_x$ indexed by $x\in X$ determined by a block code. We are in a position to create, basing on the \qt s $\hat\CT_x$, the desired tilings $\check\CT_x$. Given $x\in X$, we define a transformation of the tiles $Sc\in\hat\CT_x$ as follows:
$$
\Phi_x(Sc)=Sc\cup\tilde\varphi_x^{-1}(B_Sc)\subset Sc\cup A_x
$$
(recall that $B_Sc$ is a part of the set $B_x$, so its preimage by $\tilde\varphi_x$ is a part of $A_x$). We will call the set $\tilde\varphi_x^{-1}(B_Sc)$ the \emph{added set}.
We define the center of the new tile $\Phi_x(Sc)$ as $c$. The shape of the new tile equals
$$
\Phi_x(Sc)c^{-1}=S\cup\tilde\varphi_x^{-1}(B_Sc)c^{-1}.
$$
Note that
$$
\tilde\varphi_x^{-1}(B_Sc)c^{-1}\subset E^{-1} (B_Sc)c^{-1}\subset E^{-1}S,
$$
which is a finite set (here $E$ is the finite set of multipliers used by $\tilde\varphi_x$, common for all $x\in X$). Since $\hat\CS$ is finite, the set $\check\CS$ of all new shapes is also finite. As the \qt\ $\hat\CT_x$ is disjoint, $\tilde\varphi_x$ restricted to $A_x$ is injective, and the image of $A_x$ is contained in $B_x=\bigcup_{Sc\in\hat\CT_x}B_Sc$, it is clear that the new \qt\
$$
\check\CT_x=\{\Phi_x(Sc):Sc\in\hat\CT_x\}
$$
is a tiling (disjoint and covering $G$ completely).

Further, for any tile $Sc$ of $\hat\CT_x$ the added set $\tilde\varphi_x^{-1}(B_Sc)$
has cardinality at most $|B_S|=i_S<\frac{\eps}{2|K|}|S|$. Thus
$$
|K\Phi_x(Sc)|\le |KSc| + |K|\cdot \frac{\eps}{2|K|}|S| = |KS|+\frac\eps2|S|.
$$
We can assume (at the beginning of the proof) that $e\in K$, and then $(K,\frac\eps2)$-invariance of $S$ is equivalent to the inequality $|KS|<(1+\frac\eps2)|S|$. Thus
$$
|K\Phi_x(Sc)|< (1+\eps)|S|\le (1+\eps)|\Phi_x(Sc)|,
$$
and so $\Phi_x(Sc)$ is $(K,\eps)$-invariant. Summarizing, we have constructed a mapping $(x,\hat\CT_x)\mapsto\check\CT_x$ from $X$ into tilings with a finite set $\check\CS$ of $(K,\eps)$-invariant shapes.

\smallskip
We need to show that the above is a \tl\ factor map.
To do so, we can use the criterion \eqref{clr}, i.e., we need to indicate a finite set $J\subset G$, such that for any $x,x'\in X$ and $g\in G$,
\begin{equation}\label{equ}
\hat\CT_x|_{Jg}=\hat\CT_{x'}|_{Jg} \implies (\check\CT_x)_g=(\check\CT_{x'})_g.
\end{equation}

We claim that the set $J=\{e\}\cup FE^{-1}R$ is good, where $F$ is the finite coding horizon of $\tilde\varphi_x$ (common for all $x\in X$), $E$ is the common set of multiplies, and $R=\bigcup\hat\CS$. In order to verify this claim, assume that with so defined $J$ the left hand side of \eqref{equ} holds for some $x,x'\in X$ and $g\in G$.
Since $g\in Jg$, we have $(\hat\CT_x)_g=(\hat\CT_{x'})_g$. If this common entry is $0$ then $g$ is not a center of any tile in neither $\hat\CT_x$ nor $\hat\CT_{x'}$, and then $g$ is not a center of any tile in neither $\check\CT_x$ nor $\check\CT_{x'}$, i.e., $(\check\CT_x)_g=(\check\CT_{x'})_g=0$. If the common entry is some $``S\,"$ with $S\in\hat\CS$ then we know that $g=c$ is a center of some tile in both $\check\CT_x$ and $\check\CT_{x'}$, their shapes have the same common part $S$ and may differ only in having different added sets. The added sets equal $\tilde\varphi_x^{-1}(B_Sc)c^{-1}$ and $\tilde\varphi_{x'}^{-1}(B_Sc)c^{-1}$, respectively. We need to show that
$$
\tilde\varphi_x^{-1}(B_Sc)c^{-1}=\tilde\varphi_{x'}^{-1}(B_Sc)c^{-1}.
$$
Since $FE^{-1}Rc=FE^{-1}Rg\subset Jg$, the left hand side of \eqref{equ} implies $\hat\CT_x|_{FE^{-1}Rc}=\hat\CT_{x'}|_{FE^{-1}Rc}$. Recall that the family $\{\tilde\varphi_x\}_{x\in X}$ is determined by a block code with coding horizon $F$. We deduce that $\tilde\varphi_x$ agrees with $\tilde\varphi_{x'}$ on the set $E^{-1}Rc$, which contains $E^{-1}Sc$, which contains $E^{-1}B_Sc$. But $E^{-1}B_Sc$ contains the union
$\tilde\varphi_x^{-1}(B_Sc)\cup \tilde\varphi_{x'}^{-1}(B_Sc)$. Since, as we have shown,
$\tilde\varphi_x$ and $\tilde\varphi_{x'}$ agree on this union, we conclude that $\tilde\varphi_x^{-1}(B_Sc)=\tilde\varphi_{x'}^{-1}(B_Sc)$. Thus we have shown that the tiling $\check\T$ is a \tl\ factor of $X$.

Restoring the indices $k$, and applying the above to all \qt s $\T_k$ ($k\in\N$), we create the desired F\o lner system of tilings $\check{\mathbf T}$ as a \tl\ factor of $X$. The next step in the proof is turning this system into a congruent and deterministic one, i.e., into a tiling system $\tilde{\mathbf T}$. Only congruency is essential, because determinism can be easily achieved later using Remark~\ref{remark} (by duplicating the shapes). Now, passing from the F\o lner system of tilings $\check{\mathbf T}$ to a congruent one is described in the proof of \cite[Lemma~5.1]{DHZ}, and here we only briefly sketch the method. Recall that $\check{\mathbf T}=\bigvee_{k\in\N}\check\T_k$. We let $\tilde\T_1=\phi_1(\check\T_1)=\check\T_1$ and then, in an inductive procedure, once the modification map $\phi_{[1,k]}:\check\T_{[1,k]}\to\tilde\T_{[1,k]}$ is constructed (where the image is already congruent), we extend this map to $\phi_{[1,k+1]}$ as follows: given $\check\CT_{[1,k+1]}=(\check\CT_1,\check\CT_2,\dots,\check\CT_k,\check\CT_{k+1})\in \check\T_{[1,k+1]}$, for each tile $\check T$ of $\check\CT_{k+1}$ we define its modification $\tilde T$ as the union all tiles of $\tilde\CT_k$ whose centers lie in $\check T$, where $\tilde\CT_k$ is the $k$th term in
$$
\phi_{[1,k]}(\check\CT_{[1,k]})=\phi_{[1,k]}(\check\CT_1,\check\CT_2,\dots,\check\CT_k) =(\tilde\CT_1,\tilde\CT_2,\dots,\tilde\CT_k).
$$
The tiling consisting of the new tiles $\tilde T$ is denoted by $\tilde\CT_{k+1}$ and is added as the last term in the definition of $\phi_{[1,k+1]}(\check\CT_{[1,k+1]})=(\tilde\CT_1,\tilde\CT_2,\dots,\tilde\CT_k,\tilde\CT_{k+1})$. We may need to apply adjustment of centers of $\tilde\CT_{k+1}$ in case some of them falls outside the new tiles, but this can be done using a \tl\ conjugacy (see subsection \ref{spqt}, the comment after Theorem \ref{DH1}).
Eventually we create a map $\phi$ sending each $\check{\boldsymbol\CT}=(\check\CT_k)_{k\in\N}\in
\check{\mathbf T}$ to a congruent system of tilings $\tilde{\boldsymbol\CT}=(\tilde\CT_k)_{k\in\N}$. We define $\tilde{\mathbf T}$ as the image $\phi(\check{\mathbf T})$. A careful verification that $\tilde{\mathbf T}$ is a F\o lner system of tilings and that $\phi$ is a \tl\ factor map is given in \cite{DHZ} and it is pointless to copy it here.
\end{proof}

\begin{proof}[Proof of the missing implication in Corollary \ref{dt}]
Suppose $G$ acts on a \zd\ compact metric space $X$ (we do not assume freeness of the action) and that it admits a tiling system $\mathbf T=\bigvee_{k\in\N}\T_k$ as a \tl\ factor. Let $\mathsf A,\mathsf B$ be disjoint clopen subsets of $X$ such that $\mu(\mathsf B)>\mu(\mathsf A)$ for all \im s $\mu$ on $X$. We need to show that $\mathsf A\preccurlyeq\mathsf B$.

As we have observed in Remark \ref{from0}, the infimum $\inf_{\mu\in\M_G(X)}(\mu(\mathsf B)-\mu(\mathsf A))$ is positive. Proposition \ref{prop} (1) implies that
$$
\underline D(\mathsf B,\mathsf A)\ge6\eps,
$$
for some $\eps>0$. By Lemma \ref{bbb}, there exists a finite set $F\subset G$ satisfying, for every $x\in X$, $\underline D_F(B_x,A_x)\ge5\eps$. For some $k$, the set of shapes $\CS=\CS_k$ of $\T=\T_k$ consists of $(F,\eps)$-\inv\ sets. Recall that $\T$ is a \tl\ factor of $X$ via a map $x\mapsto\CT_x$. Lemma~\ref{bdc} implies that for every $S\in\mathcal S$ and $x\in X$, we have
$$
\underline D_S(B_x,A_x)\ge \underline D_F(B_x,A_x)-4\eps>0,
$$
which yields $|A_xg^{-1}\cap S|<|B_xg^{-1}\cap S|$ for every $g\in G$.

We will now build an auxiliary symbolic factor $\hat X$ of $X$ carrying the minimum information about both the sets $\mathsf A,\mathsf B$ and the dynamical tiling. Namely, we define a factor map $\pi:X\to \hat X\subset {\hat{\rm V}}^G$, where $\hat{\rm V} = \{\mathsf 0,\mathsf 1,\mathsf 2\}\times{\rm V}$ (as usually, ${\rm V}=\{``S\,":S\in\CS\}\cup\{0\}$ is the alphabet of the symbolic representation of the dynamical tiling $\T$), as follows
$$
(\pi(x))_g=
\begin{cases}
(\mathsf1,``S\,")& \ \text{ if \ \ }g\in A_x, Sg\in\CT_x\\
(\mathsf2,``S\,")& \ \text{ if \ \ }g\in B_x, Sg\in\CT_x\\
(\mathsf0,``S\,")& \ \text{ if \ \ }g\notin A_x\cup B_x, Sg\in\CT_x\\
(\mathsf1,0)& \ \text{ if \ \ }g\in A_x, Sg\notin\CT_x\\
(\mathsf2,0)& \ \text{ if \ \ }g\in B_x, Sg\notin\CT_x\\
(\mathsf0,0)& \ \text{ if \ \ }g\notin A_x\cup B_x, Sg\notin\CT_x.
\end{cases}
$$
Clearly, the subshift $\hat X$ factors onto $\T$ and $\CT_{\hat x}=\CT_x$ whenever $x\in\pi^{-1}(\hat x)$.
Denote $\hat{\mathsf A} = [\mathsf 1,\cdot]$ and $\hat{\mathsf B} = [\mathsf 2,\cdot]$.
We have $\mathsf A=\pi^{-1}(\hat{\mathsf A})$ and $\mathsf B=\pi^{-1}(\hat{\mathsf B})$.

Thus it suffices to show that $\hat{\mathsf A}\preccurlyeq\hat{\mathsf B}$ in $\hat X$.
By Theorem \ref{tutka} (1), the proof will be ended once we will have constructed a family of injections $\tilde\varphi_{\hat x}:\hat A_{\hat x}\to \hat B_{\hat x}$ indexed by $\hat x\in\hat X$ and determined by a block code.

By the definition of $\pi$ we have, that if $\hat x=\pi(x)$ then $A_x=\hat A_{\hat x}$ and $B_x=\hat B_{\hat x}$, and the inequality $|A_xg^{-1}\cap S|<|B_xg^{-1}\cap S|$ translates to $|\hat A_{\hat x}g^{-1}\cap S|<|\hat B_{\hat x}g^{-1}\cap S|$ (for each $\hat x\in \hat X$, $S\in\mathcal S$ and $g\in G$). In other words, in every block $g(\hat x)|_S$ there are more symbols $\mathsf 2$ than $\mathsf 1$ (we just consider the first entries in the pairs which constitute the symbols). Since $\mathcal S$ is finite and for each $S\in\mathcal S$ there are only finitely many blocks $C\in{\hat{\rm V}}^S$, we have globally a finite number of possible blocks $C$ appearing in the role $g(\hat x)|_S$ (with $\hat x\in \hat X$, $g\in G$ and $S\in\mathcal S$). For every block $C$ in this finite collection we select arbitrarily an injection $\varphi_{C}:\{s\in S:C(s)=(\mathsf 1,\cdot)\}\to\{s\in S:C(s)=(\mathsf 2,\cdot)\}$, where $S$ is the domain of $C$.

Fix some $\hat x\in \hat X$ and $a\in\hat A_{\hat x}$. Let $Sc$ be the tile of $\CT_{\hat x}$ containing $a$ and let $C = c(\hat x)|_S$. We define
$$
\tilde\varphi_{\hat x}(a) = \varphi_C(ac^{-1})c.
$$
Since $C(ac^{-1})=\hat x_a =(\mathsf 1,\cdot)$, $\varphi_C(ac^{-1})$ is defined and satisfies $C(\varphi_C(ac^{-1}))=(\mathsf 2,\cdot)$, and thus
$\hat x_{\varphi_C(ac^{-1})c}=(\mathsf 2,\cdot)$, i.e., $\tilde\varphi_{\hat x}(a)\in\hat B_{\hat x}$. Notice that $\tilde\varphi_{\hat x}(a)$ belongs to the same tile of $\CT_{\hat x}$ as $a$. Injectivity of so defined $\tilde\varphi_{\hat x}$ is very easy. Consider $a_1\neq a_2\in \hat A_{\hat x}$. If both elements belong to the same tile of $\CT_{\hat x}$, then their images are different by injectivity of $\varphi_C$, where $C= c(\hat x)|_S$. If they belong to different tiles, their images also belong to different tiles, hence are different. The last thing to check is the condition \eqref{clr}, which will establish that the family $\{\tilde\varphi_{\hat x}\}_{\hat x\in\hat X}$ is determined by a block code. We claim that the horizon
$E=\bigcup_{S\in\mathcal S}SS^{-1}$ is good. Indeed, suppose, for some $\hat x_1,\hat x_2\in \hat X$ and $a_1\in\hat A_{\hat x_1},a_2\in\hat A_{\hat x_2}$, that
\begin{equation} \label{add}
a_1(\hat x_1)|_E = a_2(\hat x_2)|_E.
\end{equation}
Let $Sc$ be the central (i.e., containing the unity) tile of $\CT_{a_1(\hat x_1)}$. Then the second entry of the pair constituting the symbol $(a_1 (\hat x_1))_c$ equals $``S\,"$.
Since $c\in\bigcup_{S\in\mathcal S}S^{-1}\subset E$, by \eqref{add} we obtain that the second entry of the symbol $(a_2 (\hat x_2))_c$ also equals $``S\,"$, so that $Sc$ is the central tile of $\CT_{a_2(\hat x_2)}$. Further, since $Sc\subset E$, by \eqref{add} we have $a_1(\hat x_1)|_{Sc}= a_2(\hat x_2)|_{Sc}$ and hence $ca_1(\hat x_1)|_S= ca_2(\hat x_2)|_S$. That is, these two restrictions define the same block $C\in {\hat{\rm V}}^S$. This implies that both $\tilde\varphi_{\hat x_1}(a_1)$ and $\tilde\varphi_{\hat x_2}(a_2)$ are defined with the help of the same injection
$\varphi_C$, and
$$
\tilde\varphi_{\hat x_1}(a_1) = \varphi_C(a_1c_1^{-1})c_1, \ \ \ \ \tilde\varphi_{\hat x_2}(a_2) = \varphi_C(a_2c_2^{-1})c_2,
$$
where $c_1$ is the center of the tile of $\CT_{\hat x_1}$ containing $a_1$, and $c_2$
is the center of the tile of $\CT_{\hat x_2}$ containing $a_2$. By shift equivariance of the dynamical tiling, we easily see that $c_1=ca_1$ and $c_2=ca_2$, which yields
$$
\tilde\varphi_{\hat x_1}(a_1)a_1^{-1} = \varphi_C(c^{-1})c=\tilde\varphi_{\hat x_2}(a_2)a_2^{-1}.
$$
This is exactly the condition $\eqref{clr}$ and the proof is finished.
\end{proof}

Combining Theorem \ref{main} with Corollary \ref{dt} we obtain:

\begin{cor}
If $G$ is a subexponential group then every free action of $G$ on a \zd\ compact metric space has a tiling system as a \tl\ factor.
\end{cor}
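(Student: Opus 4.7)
The statement is essentially a direct corollary of the two main results just established, so the plan is to combine them cleanly rather than to do any further work. First I would invoke Theorem \ref{main}, which asserts that every subexponential group $G$ has the comparison property. By Definition \ref{defcom}(2), this means that \emph{every} action of $G$ on \emph{any} zero-dimensional compact metric space admits comparison. In particular, the given free action on $X$ admits comparison.

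Next I would invoke the backward direction of Corollary \ref{dt}: for a free action of $G$ on a zero-dimensional compact metric space $X$, admitting comparison is sufficient for the existence of a tiling system as a topological factor. This direction of Corollary \ref{dt} is itself proved by first applying Theorem \ref{DH1} (which requires freeness) to obtain a F\o lner system of disjoint quasitilings as a factor of $X$, and then applying Theorem \ref{ddd} (which requires comparison) to convert this into a tiling system as a factor. Since both hypotheses are met in our setting, the conclusion follows.

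In short, the ``plan'' reduces to the observation that the subexponentiality assumption feeds into Theorem \ref{main} to give comparison, freeness feeds into Theorem \ref{DH1} to give a F\o lner system of disjoint quasitilings as a factor, and Theorem \ref{ddd} glues these together into a tiling system as a factor. There is no separate obstacle to overcome here; the work has already been done in the preceding sections, and the only thing to verify is that the hypotheses of Corollary \ref{dt} are satisfied, which is immediate.
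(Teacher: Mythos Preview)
Your proposal is correct and follows exactly the paper's approach: the paper states this corollary immediately after Theorem~\ref{main} and Corollary~\ref{dt} with the one-line justification ``Combining Theorem~\ref{main} with Corollary~\ref{dt} we obtain,'' which is precisely what you do. Your additional unpacking of how the backward direction of Corollary~\ref{dt} works (freeness feeds Theorem~\ref{DH1}, comparison feeds Theorem~\ref{ddd}) is accurate and matches the paper's earlier proof of that corollary.
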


We conclude this section with a question. Let us say that a countable amenable group $G$ has the \emph{tiling property} if any free action of $G$ on a \zd\ compact metric space has a tiling system as a \tl\ factor. In such case, by Corollary~\ref{dt}, any free action on a \zd\ compact metric space admits comparison. It is easy to see that the property of having a tiling system as a factor cannot be extended (without modifying the definition) to non-free actions. However, there are \emph{a priori} no obvious reasons why admitting comparison for non-free action could not be implied by the tiling property. Thus the following question is very natural:

\begin{ques}
Is it true that if $G$ has the tiling property (which depends on free actions only) then it also has the comparison property (which depends on all actions; of course in both cases we restrict our attention to actions on \zd\ compact metric spaces).
\end{ques}

\subsection{Symbolic extensions for actions of selected groups}\label{7.3}

We are ready to discuss the full version of the Symbolic Extension Entropy Theorem for selected countable amenable groups.

\subsubsection{What goes wrong in general countable amenable groups}
It is clear that in order to create a purely symbolic extension of $X$ (or of $\bar X$) with $h^{\pi}=\EA$, in place of a quasi-symbolic one, at least using the techniques known to us, one would need to use, in the construction of Section \ref{s5}, an encodable tiling system $\mathbf T$ of \tl\ entropy zero. Having an encodable tiling system $\mathbf T$ with entropy zero and its principal symbolic extension $Z$ at our disposal, we can use $\mathbf T$ in the construction of Section \ref{s5} and then turn the resulting quasi-symbolic extension $\bar Y=Y\vee\mathbf T$ into purely symbolic by extending it, in a natural way, to $Y\vee Z$.

Let us make it clear that attempting to build a purely symbolic extension as in Section \ref{s5}, using a F\o lner system of disjoint \qt s instead of a tiling system, is simply not going to work, even if it is perfectly encodable with \tl\ entropy zero. There are two ways to explain that. The first one reveals the technical obstacles, the second one actually kills the idea.

\smallskip\noindent(1) A disjoint \qt\ $\CT_k$ leaves a small portion of the group, say a set $B$ of small upper Banach density, uncovered. When building a symbolic preimage $y$ of $\bar x_{[1,k]}$ (or even a quasi-symbolic preimage $\bar y$ which would include a copy of the \qt\ associated to $\bar x_{[1,k]}$), we would have to decide where the information about $\bar x_{[1,k]}|_B$ should be encoded in $y$ (or $\bar y$). Any attempt to do so, eventually turns out to be equivalent to trying to distribute the set $B$ amongst the tiles of the \qt\ $\CT_k$ so that to each tile $T$ we associate a portion of $B$ of cardinality relatively small compared to $|T|$. And we need to do it using a block code. So, in fact, we are trying to factor the dynamical \qt\ onto a tiling. In view of Theorem \ref{nine}, if we were able to factor each \qt\ onto a tiling, we could also build an encodable tiling system with \tl\ entropy zero.

\smallskip\noindent(2) Just observe that if we were able to build (using no matter what technique) a purely symbolic extension with $h^{\pi}=\EA$ then a tiling system with \tl\ entropy zero (which exists, by Theorem \ref{fs}) should admit a principal symbolic extension. So, we are back dealing with the problem of existence of encodable tiling systems. By the way, we have just proved that if the group $G$ admits one encodable zero entropy tiling system then all zero entropy tiling systems are encodable.
\smallskip

The problem with existence of an encodable tiling system (let alone with entropy zero) is rather serious. In spite of many efforts, we failed to prove it for general countable amenable groups. In fact, we failed to construct any tiling system that would admit \emph{any} symbolic extension. Let us give the reader a glimpse into the obstacles. In \cite[Theorem 5.2]{DHZ} we do create a tiling system from a F\o lner system of \qt s. The main step is \cite[Theorem 4.3]{DHZ} in which we transform a disjoint $\varepsilon$-\qt\ $\CT$ onto a tiling. The framework of the proof is identical as in the proof of Theorem \ref{ddd} above:  the uncovered part $B$ of the group is distributed amongst the tiles of $\CT$ so that to each tile $T$ we ``attach'' a portion of $B$ of cardinality smaller than $\varepsilon|T|$. Moreover, the attached portion is contained in $FT$ for some finite set $F$. However, without the comparison property, the ``algorithm of attaching'' is not governed by a block code, so the tiling is not a \tl\ factor of the \qt. We use a version of Hall's Marriage Lemma \cite{HALL} and deciding to which tile a given element $b\in B$ should be attached requires examining the \qt\ of the entire group. Although this method allows to build a tiling system from a F\o lner system of disjoint \qt s, it provides no tools to prove its encodability. This really looks paradoxical, because, as we proved later in \cite{DHZ}, the resulting tiling still has \tl\ entropy zero, so the number of possible configurations of the tiles and the uncovered areas in some large F\o lner set $F_n$ is relatively small. Thus we should be able to encode the ``algorithm of attaching'' using a small number of symbols and a small percentage of the space available in $F_n$. This would lead to a tiling obtainable from $\CT$ by a block code with coding horizon $F_n$. However, the F\o lner set $F_n$ \emph{does not tile} the group $G$. In any covering of $G$ by shifted copies of $F_n$ these copies overlap and the encoded information may simply conflict with each other in the overlapping areas. For a non-conflictive encoding of the ``algorithm of attaching'' we need a disjoint F\o lner tiling of some higher order. Unfortunately, we only have at our disposal \qt s of higher order. Any such \qt, in spite of covering a subset of $G$ of lower Banach density extremely close to $1$ may leave uncovered areas containing huge (although bounded and rare) portions of the set $B$ and we would have no indication as to where (i.e., to which tiles) the elements of these portions should be attached. The situation is better in groups having a symmetric F\o lner \sq\ $(F_n)_{n\in\N}$ satisfying so-called Tempelman's condition, which guarantees a bounded proportion between $|F_n^{-1}F_n|$ and $|F_n|$. One can show that then there exists a symmetric F\o lner \sq\ $(F_n)_{n\in\N}$, also satisfying the Tempelman's condition, and moreover, such that $(F_n^2)_{n\in\N}$ is also a F\o lner \sq\ (see \cite{Ph}). The group can be covered by shifted copies of $F_n^2$ so that the corresponding shifted copies of $F_n$ are disjoint (this is an easy fact for any finite set $F$). In such case, one can encode the ``algorithm of attaching'' occurring within each shifted copy of $F_n^2$ using the space available in the disjoint shifted copies of $F_n$. This idea works indeed, and it has been proved in \cite{Ph} that such groups have the comparison property (hence, by Corollary \ref{ddt}, they admit an encodable tiling system). However, the significance of this result is faded by another relatively recent result of \cite{BGT}, which says that groups satisfying a similar (seemingly slightly stronger) requirement are subexponential (hence they have the comparison property by Theorem \ref{main}). The requirement is that every finitely generated subgroup admits a symmetric, exhausting (i.e., whose union is the whole subgroup) F\o lner \sq\ with the Tempelman's condition. We refrain from detailed investigating whether the result of \cite{Ph} can be deduced from that of \cite{BGT} or not. Certainly, it is nearly covered.

Finally, one could hope to encode the ``algorithm of attaching'' occurring within each ``new'' tile (of the tiling) using the space available in the ``old'' tile (of the \qt). That is,
we could try to encode the new shape using some finite number of symbols, in form of a block over a small portion of the old tile. Unfortunately, this simple idea also fails, because we have no control over the number of shapes of the tiling that are created from one shape of the \qt. It is true that every tile of the tiling build from a tile $T$ of the \qt\ (by attaching to it small portions of $B$) is contained in $FT$ for some finite set $F$ (independent of $T$), but we have no control over the size of $F$. Even by attaching just one element of $B\cap FT$ to $T$ at a time, we can produce a number of new tiles much larger than $l^{|T|}$ (where $l$ is some a priori assumed cardinality of the alphabet used for the coding) so that encoding the new shape within $T$ becomes impossible. This does not stand in a contradiction with \tl\ entropy zero, because the number of configurations of tiles in a huge F\o lner set $F_n$, much larger than all the shapes of the tiling, still can be small relative to the size of $F_n$ (and we have already discussed why this is useless).
\smallskip

\subsubsection{Groups with the comparison property}

A class of countable amenable groups, in which we can claim the full version of the Symbolic Extension Entropy Theorem is that of groups with the comparison property.

\begin{theorem}\label{coc}
Suppose $G$ is a countable amenable group with the comparison property. Then, for every action of $G$ on a compact metric space $X$, we have the equivalence: a function $\EA$ on $\MGX$ is a finite and affine \se\ of the \ens\ $\H$ of $X$ if and only if there exists a symbolic extension $\pi:Y\to X$ such that $h^{\pi}=\EA$ on $\MGX$.
\end{theorem}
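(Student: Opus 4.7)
The backward implication is exactly Theorem \ref{easy} applied to the symbolic extension $\pi$, so the work to be done lies entirely in the forward direction: given a finite affine \se\ $\EA$ of $\H$, I need to produce a genuine symbolic (not merely quasi-symbolic) extension whose extension entropy function equals $\EA$. The plan is to run the quasi-symbolic construction of Theorem \ref{quasi} with a carefully chosen tiling system, and then enlarge the resulting space by a principal symbolic cover of that tiling system. Availability of such a cover is precisely what the comparison property will supply.

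First I would invoke Corollary \ref{ddt}: the comparison property of $G$ guarantees the existence of an encodable zero-entropy tiling system $\tilde{\mathbf T}$ of $G$, together with a principal symbolic extension $\tau\colon Z\to\tilde{\mathbf T}$, where $Z$ is a subshift over some finite alphabet $\Delta$. Next I would revisit the proof of Theorem \ref{quasi} and feed $\tilde{\mathbf T}$ in as the ambient tiling system at the outset; the only modifications of $\tilde{\mathbf T}$ used later in that proof are ``speedings up'', which amount to passing to a cofinal sub\sq\ of the layers of the inverse limit and do not alter the underlying \tl\ \ds\ nor the fact that $\tau$ remains a principal symbolic cover. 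Applied in this setting, Theorem \ref{quasi} yields a quasi-symbolic extension $\bar\pi\colon\bar Y\to X$ of the form $\bar Y=Y\vee\tilde{\mathbf T}\subset\Lambda^G\times\tilde{\mathbf T}$, with $Y\subset\Lambda^G$ a subshift, satisfying $h^{\bar\pi}=\EA$ on $\MGX$.

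The second step is to replace $\tilde{\mathbf T}$ by $Z$ inside the joining. I would form
\[
Y'=\{(y,z)\in Y\times Z:(y,\tau(z))\in\bar Y\},
\]
which, being a closed shift-\inv\ subset of the subshift $Y\times Z\subset(\Lambda\times\Delta)^G$, is itself a subshift, and therefore a symbolic system in the strict sense. The coordinate map $p\colon(y,z)\mapsto(y,\tau(z))$ is a continuous equivariant surjection from $Y'$ onto $\bar Y$, exhibiting $Y'$ as the fiber product of $Z$ and $\bar Y$ over $\tilde{\mathbf T}$. Applying Lemma \ref{simple} with $X=\tilde{\mathbf T}$, $X'=Z$, $X''=\bar Y$ and $X'''=Y'$ gives $h(\nu',Y'|\bar Y)\le h(\cdot,Z|\tilde{\mathbf T})\equiv 0$ for every $\nu'\in\M_G(Y')$, so that $p$ is a principal extension. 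Setting $\pi=\bar\pi\circ p\colon Y'\to X$ then produces a symbolic extension of $X$, and principality of $p$ delivers, for every $\mu\in\MGX$,
\[
h^\pi(\mu)=\sup_{\nu'\in\pi^{-1}(\mu)}h(\nu',Y')=\sup_{\nu\in\bar\pi^{-1}(\mu)}h(\nu,\bar Y)=h^{\bar\pi}(\mu)=\EA(\mu),
\]
which completes the construction.

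The hard part of this plan has already been isolated and paid for in the preceding sections: without the comparison hypothesis I would have no encodable zero-entropy tiling system $\tilde{\mathbf T}$, and the passage from the quasi-symbolic $\bar Y$ to a genuine symbolic $Y'$ would collapse. Once $\tilde{\mathbf T}$ and its principal symbolic cover $Z$ are in hand, the remainder is bookkeeping—a direct application of Theorem \ref{quasi}, Lemma \ref{simple}, and the standard behavior of extension entropy functions under principal extensions.
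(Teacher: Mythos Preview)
Your proposal is correct and follows essentially the same route as the paper: invoke Corollary \ref{ddt} to obtain an encodable zero-entropy tiling system $\tilde{\mathbf T}$ with principal symbolic cover $Z$, run Theorem \ref{quasi} with $\tilde{\mathbf T}$ to get $\bar Y=Y\vee\tilde{\mathbf T}$ with $h^{\bar\pi}=\EA$, then lift to $Y\vee Z$. Your write-up is in fact more explicit than the paper's---you spell out the fiber-product construction of $Y'$, justify principality via Lemma \ref{simple}, and note that the ``speeding up'' in Theorem \ref{quasi} only passes to a cofinal subsystem of the inverse limit and hence does not disturb the underlying space or the cover $\tau$---whereas the paper compresses the last step into a single sentence.
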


\begin{proof} Only one implication needs a proof, and the proof is now straightforward. Let a finite (hence bounded) affine \se\ $\EA$ be given. By Corollary \ref{ddt} there exists an encodable tiling system $\tilde{\mathbf T}$ of $G$ with entropy zero, having a principal (hence also of entropy zero) symbolic extension $Z$. We use $\tilde{\mathbf T}$ to create a quasi-symbolic extension $\bar Y=Y\vee\tilde{\mathbf T}$ of $X$ with the extension entropy function matching $\EA$, as in Theorem~\ref{quasi}. Finally, we extend $\bar Y = Y\vee\tilde{\mathbf T}$ to a symbolic system $Y\vee Z$, without changing the extension entropy function.
\end{proof}

\subsubsection{Residually finite groups} Note that full comparison property of the group $G$ is not necessary for the above proof to work. What we need is just one encodable tiling system of entropy zero. The knowledge about tiling options in general countable amenable groups is very limited. It is unknown whether all such groups are \emph{monotileable}, i.e., for some F\o lner \sq\ $(F_n)_{n\in\N}$ and every $n\in\N$, admit a tiling with just one shape $F_n$.

An example of monotileable groups are \emph{residually finite} (see e.g. \cite{Ma} for definition) countable amenable groups. Monotileability of such groups is shown in \cite[Theorem 1]{W0}. Moreover, in such groups there exists a tiling system consisting of one-shape tilings. Such tiling system obviously has entropy zero and is encodable (the proof of Theorem \ref{nine} applies with $r(\epsilon_k) = 1$ for every $k\in\N$). We remark, that it is an open problem whether all residually finite amenable groups have the comparison property. This is why the extension of the full version of our Symbolic Extension Entropy Theorem to this class is, in the present state of knowledge, independent of Theorem \ref{coc}.

\begin{theorem}\label{fc}
Suppose $G$ is a residually finite countable amenable group. Then, for every action of $G$ on a compact metric space $X$, we have the equivalence: a function $\EA$ on $\MGX$ is a finite and affine \se\ of the \ens\ $\H$ of $X$ if and only if there exists a symbolic extension $\pi:Y\to X$ such that $h^{\pi}=\EA$ on $\MGX$.
\end{theorem}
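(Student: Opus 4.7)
The ``only if'' direction is covered by Theorem \ref{easy} (every symbolic extension has extension entropy function an affine superenvelope of the entropy structure), so the plan concentrates on the ``if'' direction: given a finite affine superenvelope $\EA$ of $\H$, produce a genuine symbolic extension $\pi\colon Y\to X$ with $h^\pi=\EA$. The strategy parallels the proof of Theorem \ref{coc}; the only issue is to replace the comparison property, which we do not have at our disposal, by the special tiling structure available in residually finite amenable groups.

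First I would invoke the monotileability result \cite[Theorem 1]{W0} of B.~Weiss, which gives, for any residually finite countable amenable group $G$, a decreasing-to-zero sequence $(\epsilon_k)_{k\in\N}$ and a congruent sequence of dynamical tilings $\tilde\T_k$, each having a single shape $S_k$ (so that $|\CS(\tilde\T_k)|=1$) and each being $\epsilon_k$-invariant with respect to an exhausting family of finite subsets of $G$. Because the shapes nest and are unique at each level, the inverse limit $\tilde{\mathbf T}=\overset\leftarrow{\lim_k}\tilde\T_k$ is automatically a tiling system (in the sense defined after Remark \ref{remark}); determinism is trivial, since at each level there is just one shape, hence just one way to subdivide $S_{k+1}$ into translates of $S_k$. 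The tiling system $\tilde{\mathbf T}$ has topological entropy zero, again because the single-shape property severely restricts the number of admissible configurations on any F\o lner set.

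Next I would check that $\tilde{\mathbf T}$ is perfectly encodable, by running the construction from the proof of Theorem \ref{nine} in the degenerate case $r(\epsilon_k)=1$ for every $k$. Indeed, every ingredient of that proof---the inductive selection of coding horizons $U_k\subset V_k$, the labels $B^{(k)}_{D,i}$ encoding the three-valued ``trit'' at each center, the step-$k+1$ construction using $U_{k+1}=U_k\hat U U$---goes through verbatim and in fact simplifies, as only one shape is available at each level. The resulting three-symbol subshift $Z$ is an isomorphic (hence principal) symbolic extension of $\tilde{\mathbf T}$, and in particular $\htop(Z,G)=0$.

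Finally, armed with an encodable zero-entropy tiling system $\tilde{\mathbf T}$, the rest of the argument is exactly as in Theorem \ref{coc}: I apply Theorem \ref{quasi} to the given finite affine superenvelope $\EA$ of the entropy structure $\H$ of $X$, using the tiling system $\tilde{\mathbf T}$ throughout Stage 1 (where one must speed it up to satisfy conditions (1)--(5) on the $g_k$ and $\delta_k$) and Stage 2 (building the oracle-driven symbolic-plus-tiling layers). This produces a quasi-symbolic extension $\bar\pi\colon\bar Y\to X$ of the form $\bar Y=Y\vee\tilde{\mathbf T}$, with $h^{\bar\pi}=\EA$ on $\MGX$. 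Replacing the second coordinate $\tilde{\mathbf T}$ by its principal symbolic extension $Z$ via the natural map, I obtain the joining $Y\vee Z\subset\Lambda^G\times\{-1,0,1\}^G$, which is a genuine subshift over the product alphabet and a topological extension of $\bar Y$. Since $Z$ is a principal extension of $\tilde{\mathbf T}$, the induced factor map $Y\vee Z\to\bar Y=Y\vee\tilde{\mathbf T}$ is principal, so fiber entropies do not change, and consequently the composed extension $Y\vee Z\to X$ is symbolic with extension entropy function still equal to $\EA$.

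The only step that might look like an obstacle is the verification that Weiss's single-shape tilings do assemble into a tiling system in our precise sense and that the encodability construction of Theorem \ref{nine} specializes correctly; but as noted, both reduce to routine simplifications of arguments already carried out in full generality earlier in the paper. No appeal to the comparison property of $G$ is needed anywhere.
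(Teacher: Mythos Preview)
Your proposal is correct and follows essentially the same route as the paper. The paper does not write out a separate proof for Theorem \ref{fc}; instead, the paragraph immediately preceding the statement explains that residually finite amenable groups admit a tiling system consisting of one-shape tilings (from the coset structure behind \cite[Theorem 1]{W0}), that such a system has entropy zero and is encodable because ``the proof of Theorem \ref{nine} applies with $r(\epsilon_k)=1$ for every $k\in\N$,'' and then one proceeds exactly as in Theorem \ref{coc}. Your write-up spells out these steps (and the principal-extension argument for passing from $Y\vee\tilde{\mathbf T}$ to $Y\vee Z$) in somewhat more detail than the paper does, but the strategy is identical.
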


\section*{Appendix A}

\setcounter{theorem}{0}

\renewcommand{\thetheorem}{A.\arabic{theorem}}

It is known since a long time that any $\Z$-action with \tl\ entropy zero can be extended to a zero entropy subshift with two symbols (combine \cite[Theorem 7.4]{BFF} with e.g. \cite[Theorem 7.2.3]{D1}). Since one symbol produces only the trivial subshift, two is clearly the necessary minimum. Two symbols have this nice feature that symbolic elements in $\{0,1\}^G$ can be identified with subsets of the group. In any construction of a symbolic extension (of some $\Z$-action) over two symbols, at some place, more or less explicitly, it is used that for any finite set $F\subset\N$, $F=F+n$ if and only if $n=0$. It is not necessarily so in more abstract groups and this section is added just to cope with this slight difficulty. The goal is to prove, in anticipation of possible questions, that in the symbolic extension in Theorem \ref{nine}, we can, although not without some extra effort, reduce the number of symbols from three to two. We will use this opportunity to develop a small (nevertheless somewhat excessive for our goal) ``theory of recognizability''.
\smallskip

\subsubsection*{A.1. Recognizability}
Throughout this section we assume that $G$ is an infinite group with unity~$e$.

\begin{definition}A finite set $A\subset G$ %containing the unit
has \emph{recognizable origin} if the only $g\in G$ such that $Ag=A$ is the unity $e$.
\end{definition}
This property has the following interpretation: for every \emph{shifted copy} of $A$, (i.e., for any set of the form $Ag$, $g\in G$) we know exactly where its \emph{origin} (i.e., the element $g$) is.

\begin{lemma} Any one-element set has recognizable origin. Let $A\subset G$ be a finite set
%containing the unit
of cardinality at least 2. Let $g\notin AA^{-1}A$. Then $B=A\cup\{g\}$ has recognizable origin. (Note that $g\notin A$, so $|B|=|A|+1$.)
\end{lemma}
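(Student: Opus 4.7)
The plan is to settle the one-element case by a single cancellation and to reduce the main statement to a short permutation-theoretic argument culminating in one algebraic identity. For $A=\{a\}$, the equation $Ag=A$ reads $ag=a$, and right cancellation immediately gives $g=e$.

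For $B=A\cup\{g\}$ with $|A|\ge 2$ and $g\notin AA^{-1}A$, I would begin by observing that $A\subseteq AA^{-1}A$ (each $a\in A$ equals $a\cdot a^{-1}\cdot a$), so the hypothesis forces $g\notin A$ and hence $|B|=|A|+1$. Fix $h\in G$ with $Bh=B$, and, aiming at a contradiction, assume $h\neq e$; then right multiplication by $h$ is a permutation of $B$ with no fixed points (a fixed point anywhere in $G$ would force $h=e$).

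The main step is a simple count. The image $Ah$ sits inside $B=A\cup\{g\}$ and has $|A|$ elements, so it misses exactly one element of $B$; consequently the set $A':=\{a\in A:ah\in A\}$ has cardinality at least $|A|-1\ge 1$. Pick any $a\in A'$, so that $ah\in A$. Since $gh\in B$ and $gh\neq g$, we also have $gh\in A$. Equating $h=a^{-1}(ah)$ with $h=g^{-1}(gh)$ and rearranging yields
\[
g=(gh)\,(ah)^{-1}\,a,
\]
which exhibits $g$ as an element of $A\cdot A^{-1}\cdot A$. This contradicts the hypothesis, so $h=e$ and $B$ has recognizable origin.

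The hypothesis $|A|\ge 2$ enters only to guarantee that $A'$ is nonempty; the rest reduces to the one-line identity displayed above. I do not anticipate any genuine obstacle, as all of the counting is elementary and the algebraic manipulation is immediate.
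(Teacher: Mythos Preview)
Your proof is correct and follows essentially the same approach as the paper's: both argue by contradiction from $Bh=B$ with $h\neq e$, use $gh\neq g$ to place $gh$ (or equivalently $g$) in $A$ via the shift, use $|A|\ge 2$ to find some $a\in A$ with $ah\in A$ (equivalently $h\in A^{-1}A$), and combine these to exhibit $g\in AA^{-1}A$. The only difference is cosmetic---you package the conclusion as the single identity $g=(gh)(ah)^{-1}a$, while the paper writes $g\in Ah$ and $h\in A^{-1}A$ separately before combining.
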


\begin{proof} The first statement is trivial.
Suppose that $B$ does not have recognizable origin, i.e., there exists $h\in G$, $h\neq e$, such that $Bh=B$. We have $Ah\cup\{gh\} = A\cup\{g\}$ and since $gh\neq g$, we get $g\in Ah$ (and also $g\in Ah^{-1}$). On the other hand, since $|A|\ge 2$, we get $A\cap Ah\neq\emptyset$ implying $h\in A^{-1}A$ (and $h^{-1}\in A^{-1}A$) and thus $g\in AA^{-1}A$, a contradiction.
\end{proof}

\begin{remark}
It follows from the proof that $g$ can be selected from any a priori given infinite subset of $G$.
\end{remark}

We remark that if $|A|=1$, there may be no $g\notin A$ such that $A\cup\{g\}$ has recognizable origin. This happens if all elements of the group are of order 2 (i.e., $g^2=e$ for all $g\in G$). This is why the lemma produces sets with recognizable origin of all possible finite cardinalities except 2.

\begin{definition}Let $\{A_1,A_2,\dots,A_k\}$ be a collection of finite sets of equal cardinalities. We say that the collection is \emph{recognizable with recognizable origins} if for any $1\!\le\!i,j\!\le\!k$ and $g\in G$, the only possibility that $A_ig=A_j$ is when $i=j$ and $g=e$.
\end{definition}

This property has the following interpretation: given any set of the form $A_ig$, one can recognize which of the sets from the collection has been shifted and how (in particular, the sets $A_i$ must all be different).

\begin{lemma}\label{rec} Let $\{A_i,\ 1\!\le\!i\!\le\!k\}$ be a collection of finite sets of equal cardinalities larger than or equal to 2. Then there exist elements $g_i\notin A_i$ ($i=1,2,\dots,k$) such that the collection $\{B_i,\ 1\!\le\!i\!\le\!k\}$ where $B_i = A_i\cup\{g_i\}$, is recognizable with recognizable origins.
\end{lemma}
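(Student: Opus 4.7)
The plan is to construct the elements $g_i$ inductively, using at each step the fact that only finitely many candidates need to be forbidden, while $G$ itself is infinite. At stage $i$ we will have already fixed $g_1,\dots,g_{i-1}$ (and hence $B_1,\dots,B_{i-1}$), and we must choose $g_i\notin A_i$ so that (A) $B_i=A_i\cup\{g_i\}$ has recognizable origin, and (B) for every $j<i$ and every $h\in G$ we have $B_i h\neq B_j$. Observe that (B) is symmetric in $i,j$: $B_i h=B_j$ iff $B_j h^{-1}=B_i$, so treating $j<i$ in one direction suffices.

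For (A), Lemma~A.2 tells us that it is enough to pick $g_i\notin A_iA_i^{-1}A_i$, a finite set (which also guarantees $g_i\notin A_i$). For (B), fix $j<i$ and analyse the equation
$B_ih= A_ih\cup\{g_ih\}=A_j\cup\{g_j\}=B_j$
(the union on the left is disjoint since $g_i\notin A_i$). Exactly one of two cases must hold. \emph{Case 1:} $g_ih=g_j$, which forces $A_ih=A_j$; here $h=g_i^{-1}g_j$ is determined by $g_i$, and the set $\{h\in G:A_ih=A_j\}$ is finite (a right translate of the right stabilizer of $A_i$, of size at most $|A_i|$), so only finitely many $g_i$ make this case possible. \emph{Case 2:} $g_ih\in A_j$, which forces $g_j\in A_ih$, i.e.\ $h=a^{-1}g_j$ for some $a\in A_i$ and $g_ih=b$ for some $b\in A_j$; this pins $g_i$ down to one of the at most $|A_i|\cdot|A_j|$ elements $bg_j^{-1}a$. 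Together, condition (B) for a single $j<i$ forbids only a finite set of $g_i$'s.

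Combining all constraints, the forbidden set for $g_i$ is the union of $A_iA_i^{-1}A_i$ with finitely many exclusions coming from each $j<i$, hence is finite. As $G$ is infinite, a legitimate $g_i$ exists; pick one and set $B_i=A_i\cup\{g_i\}$. After $k$ steps, every $B_i$ has recognizable origin by (A), and for $i\neq j$ there is no $h\in G$ with $B_ih=B_j$ by (B), so the collection $\{B_1,\dots,B_k\}$ is recognizable with recognizable origins, completing the proof. No step is a serious obstacle; the only point requiring care is the finite enumeration of the ways $B_ih$ can coincide with $B_j$, which the two-case analysis above settles cleanly.
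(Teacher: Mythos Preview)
Your proof is correct and follows essentially the same approach as the paper: an inductive construction where at each step you avoid the finite set $A_iA_i^{-1}A_i$ (to guarantee recognizable origin via Lemma~A.2) together with finitely many further exclusions coming from a two-case analysis of the equation $B_ih=B_j$ for $j<i$. The paper's version is marginally more direct in Case~1 (it simply notes that $A_ih=A_j$ forces $h\in A_i^{-1}A_j$, hence $g_i\in g_jA_j^{-1}A_i$, without invoking the stabilizer), but your stabilizer argument is equally valid and the overall structure is the same.
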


\begin{proof}
We choose $g_1\notin A_1A_1^{-1}A_1$, so that $B_1$ has recognizable origin.
From here on we proceed by induction. Suppose that for some $1\!\le\!i\!\le\!k-1$ we have selected $g_1\notin A_1,\dots,g_i\notin A_i$ so that the collection $\{B_1,\dots,B_i\}$ is recognizable with recognizable origins. We select $g_{i+1}\notin A_{i+1}A_{i+1}^{-1}A_{i+1}$ (so that $B_{i+1}$ has recognizable origin) and moreover, we choose $g_{i+1}$ so it does not belong to any of the sets $g_jA_j^{-1}A_{i+1}\cup A_jg_j^{-1}A_{i+1}$ ($1\!\le\!j\!\le\!i$).
Suppose that the collection $\{B_1,\dots,B_{i+1}\}$ is not recognizable. The only possibility is that $B_{i+1}=B_jg$ for some $j=1,2,\dots,i$ and $g\in G$. That is,
$$
A_{i+1}\cup\{g_{i+1}\}=A_jg\cup \{g_jg\}.
$$
One option is that $g_{i+1}=g_jg$ and $A_{i+1}=A_jg$. This leads to $g=g_j^{-1}g_{i+1}$ and $g\in A_j^{-1}A_{i+1}$, hence $g_{i+1}\in g_jA_j^{-1}A_{i+1}$, which is impossible.
Otherwise, we have $g_{i+1}\in A_jg$ and $g_jg\in A_{i+1}$ leading to $g\in A_j^{-1}g_{i+1}\cap g_j^{-1}A_{i+1}$, and hence $g_{i+1}\in A_jg_j^{-1}A_{i+1}$, which is also impossible.
\end{proof}

\begin{remark}
It follows from the proof that the elements $g_1,g_2,\dots,g_k$ can be selected from any a priori given infinite subset of $G$.
\end{remark}

\begin{definition}Let $\{A_i,\ 1\!\le\!i\!\le\!k\}$ be a collection of finite sets of equal cardinalities larger than or equal 2, each containing the unity. A family of their shifted copies
$$
\{A_ig_{i,j}: 1\!\le\!i\!\le\!k,\ j\in\N\}
$$
is said to be \emph{fully recognizable} if, for any $i_0\in\{1,2,\dots,k\}$, the inclusion
$$
A_{i_0}g\subset\bigcup\{A_ig_{i,j}: 1\!\le\!i\!\le\!k,\ j\in\N\}
$$
is possible only if $i_0=i$ and $g=g_{i,j}$ for some $j\in\N$.
\end{definition}
This property has the following interpretation: in the above union we can recognize all component sets together with their origins (in particular, the collection $\{A_i,\ 1\!\le\!i\!\le\!k\}$ must be recognizable with recognizable origins).

\begin{lemma}\label{mar}
If the collection $\{A_1,A_2,\dots,A_k\}$ is recognizable with recognizable origins and the following two sets
$$
\bigcup\{A^{-1}_{i'}A_iA^{-1}_iA_{i''}:1\!\le\!i',i,i''\!\le\!k\}
$$
(later called \emph{the margin of full recognizability}) and
$$
\{g_{i',j'}(g_{i'',j''})^{-1}:1\!\le\!i',i''\!\le\!k,\ j',j''\in\N,\ (i',j')\neq(i'',j'')\}
$$
are disjoint, then the family $\{A_ig_{i,j}: 1\!\le\!i\!\le\!k,\ j\in\N\}$ is fully recognizable.
\end{lemma}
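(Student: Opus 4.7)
The plan is to argue by contradiction, extracting from a hypothetical failure of full recognizability an element lying simultaneously in the margin and in the set $D = \{g_{i',j'}(g_{i'',j''})^{-1}:(i',j')\neq (i'',j'')\}$, contradicting the disjointness hypothesis.

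First I would unfold the definition: assuming $A_{i_0}g\subseteq\bigcup_{i,j}A_ig_{i,j}$, the goal is to show $A_{i_0}g=A_ig_{i,j}$ for some single pair $(i,j)$, since once this is established the hypothesis that $\{A_1,\dots,A_k\}$ is recognizable with recognizable origins will immediately yield $i_0=i$ and $g=g_{i,j}$. Since all the sets $A_i$ share a common cardinality, any inclusion $A_{i_0}g\subseteq A_ig_{i,j}$ is automatically an equality, so it is enough to show that $A_{i_0}g$ is contained in a single $A_ig_{i,j}$.

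The heart of the argument is therefore the following. Suppose, toward a contradiction, that $A_{i_0}g$ meets two distinct shifted copies $A_{i_1}g_{i_1,j_1}$ and $A_{i_2}g_{i_2,j_2}$ with $(i_1,j_1)\neq(i_2,j_2)$. Then there exist $a_1,a_2\in A_{i_0}$ and $b_1\in A_{i_1},\,b_2\in A_{i_2}$ with
$$
a_1g=b_1g_{i_1,j_1}\quad\text{and}\quad a_2g=b_2g_{i_2,j_2}.
$$
Eliminating $g$ yields the identity
$$
b_2^{-1}a_2a_1^{-1}b_1\;=\;g_{i_2,j_2}\,g_{i_1,j_1}^{-1}.
$$
The left-hand side lies in $A_{i_2}^{-1}A_{i_0}A_{i_0}^{-1}A_{i_1}$, which is a piece of the margin of full recognizability (take $i'=i_2$, $i=i_0$, $i''=i_1$ in the defining union). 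The right-hand side lies in $D$ by the standing assumption that $(i_1,j_1)\neq(i_2,j_2)$. This places a common element in the two sets that are assumed disjoint, a contradiction.

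Consequently $A_{i_0}g$ is contained in, hence equal to, some single $A_ig_{i,j}$, and recognizability with recognizable origins of the collection $\{A_1,\dots,A_k\}$ finishes the proof. The argument is purely formal once the correct combinatorial identity is written down, so the only ``obstacle'' is bookkeeping: one must verify carefully that the product $b_2^{-1}a_2a_1^{-1}b_1$ really fits the shape $A_{i'}^{-1}A_iA_i^{-1}A_{i''}$ prescribed by the definition of the margin, which it does with the middle index chosen to be $i_0$.
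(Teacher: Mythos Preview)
Your proof is correct and follows essentially the same route as the paper's: both arguments show that if $A_{i_0}g$ met two distinct shifted copies $A_{i_1}g_{i_1,j_1}$ and $A_{i_2}g_{i_2,j_2}$, then eliminating $g$ places $g_{i_2,j_2}g_{i_1,j_1}^{-1}$ in the margin $A_{i_2}^{-1}A_{i_0}A_{i_0}^{-1}A_{i_1}$, contradicting the disjointness hypothesis; the equal-cardinality observation then upgrades containment to equality, and recognizability with recognizable origins finishes.
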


\begin{proof}
Suppose that $A_{i_0}g$ is contained in the union $\bigcup\{A_ig_{i,j}: 1\!\le\!i\!\le\!k,\ j\in\N\}$. Either $A_{i_0}g$ matches one component sets $A_ig_{i,j}$ or not. If it does, then by recognizability with recognizable origins of the collection $\{A_1,A_2,\dots,A_k\}$,
we have $i_0=i$ and $g=g_{i,j}$, as required. If it does not, then $A_{i_0}g$ intersects two different sets $A_{i'}g_{i',j'}$ and $A_{i''}g_{i'',j''}$ (with $(i',j')\neq(i'',j'')$), hence the sets $A_{i_0}^{-1}A_{i'}g_{i',j'}$ and $A_{i_0}^{-1}A_{i''}g_{i'',j''}$ are not disjoint (both contain $g$), which, after elementary rearrangements leads to $g_{i',j'}(g_{i'',j''})^{-1}\in A^{-1}_{i'}A_{i_0}A^{-1}_{i_0}A_{i''}$.
By assumption, this cannot happen.
\end{proof}

\subsubsection*{A.2. Reduction of the number of symbols}
\begin{theorem} Let $G$ be a countable amenable group. There exists a perfectly encodable F\o lner system $\hat{\mathbf T}$ of disjoint \qt s of \tl\ entropy zero which has an isomorphic symbolic extension on two symbols.
\end{theorem}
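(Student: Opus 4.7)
The plan is to reproduce the inductive construction of Theorem \ref{nine}, replacing the three-symbol alphabet $\{-1,0,1\}$ by the binary alphabet $\{0,1\}$ and using the recognizability machinery of Section A.1 to play the role previously reserved for the distinguished symbol $0$.

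First I would set $Z_1=\{0,1\}^G$ and let $\T_1$ be the singleton tiling, as before. At step 2, I would enumerate the finite index set $\mathcal I_2$ of triples $(S,\mathsf s,i)$ with $S\in\CS_2$, $\mathsf s\in\{\mathsf p,\mathsf n\}$, $i\in\{-1,0,1\}$, and apply Lemma \ref{rec} to a convenient base collection of $|\mathcal I_2|$ equal-cardinality subsets of $G$ of size $2$, each containing $e$, to produce a collection of distinct ``markers'' $\{A^{(2)}_{S,\mathsf s,i}\subset G\}$ of equal cardinality, all containing $e$, contained in a common finite set $U_2$, and recognizable with recognizable origins. A two-symbol point $z\in Z_1$ will belong to $\pi_2^{-1}(\CT_{[1,2]})$ exactly when, at each $c\in C(\CT_2)$ with tile of shape $S$ and primariness $\mathsf s$, there is a chosen trit $i(c)\in\{-1,0,1\}$ such that $z|_{U_2c}$ is the characteristic function of the shift $A^{(2)}_{S,\mathsf s,i(c)}c$, while the background (outside $U_2C(\CT_2)$) carries $0$s. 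To ensure that this encoding is invertible, I would enlarge $U_2$ to a finite set $V_2$ containing the (automatically finite) margin of full recognizability, and by Lemma \ref{sep} choose the shapes of $\T_2$ large enough that $C(\CT_2)$ is always $V_2$-separated with $|U_2|/|V_2|<1/2$; then Lemma \ref{mar} delivers full recognizability, so $z$ uniquely determines $\CT_{[1,2]}$ and the trits $(i(c))_c$.

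The inductive step $k\to k+1$ is then the same as in Theorem \ref{nine}: inside each $(k\!+\!1)$-tile centered at $c_0$, I would locate $m$ specific $k$-centers $c_1,\ldots,c_m$ (using the $U^{-1}$-syndeticity of $C(\CT_k)$ guaranteed by minimality via Proposition~\ref{syn}), constrain the sequence of trits $(i(c_j))_j$ at these centers to spell out one of the three codewords $W_{u,S,\mathsf s,i'}$ for $i'\in\{-1,0,1\}$, and freeze $i(c)=1$ at all other $k$-centers inside the $(k\!+\!1)$-tile. The three admissible marker patterns over $U_{k+1}c_0$ form a fully recognizable level-$(k\!+\!1)$ family provided $V_{k+1}$ is large enough and $C(\CT_{k+1})$ is $V_{k+1}$-separated with $|U_{k+1}|/|V_{k+1}|\le1/k$, both of which are arranged via Lemmas \ref{mar} and \ref{sep}.

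The hard part will be verifying that the resulting extension $\pi:Z:=\bigcap_kZ_k\to\mathbf T$ is \emph{isomorphic}, not merely a symbolic factor. For this I would repeat the final argument of Theorem~\ref{nine}: the set $\mathsf A_e$ of those $\boldsymbol\CT\in\mathbf T$ admitting more than one $\pi$-preimage is contained, for every $k\ge 2$, in the clopen cylinder $\{\boldsymbol\CT:e\in U_kC(\CT_k)\}$, whose uniform upper Banach density is bounded by $|U_k|/|V_k|\to0$. Proposition~\ref{prop} then forces $\mu(\mathsf A_e)=0$ for every $\mu\in\M_G(\mathbf T)$, and shift-invariance extends this to the whole non-injectivity set $\mathsf A=\bigcup_{g\in G}g(\mathsf A_e)$. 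Composing $Z\to\mathbf T$ with the \tl\ conjugacy $\mathbf T\to\hat{\mathbf T}$ of Lemma~\ref{oo} will then yield the desired isomorphic binary symbolic extension of the perfectly encodable F\o lner system $\hat{\mathbf T}$ of disjoint \qt s with zero entropy.
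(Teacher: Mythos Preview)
Your proposal is essentially correct and follows the same approach as the paper: replace the three-symbol alphabet by $\{0,1\}$, use the recognizability machinery of Section~A.1 at step~2 to encode the triples $(S,\mathsf s,i)$ as characteristic functions of a recognizable collection of finite marker sets, and then run the rest of the construction of Theorem~\ref{nine} unchanged.

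One point where you overcomplicate things: the paper modifies \emph{only} steps~1 and~2. Once step~2 produces, at each center $c\in C(\CT_2)$, exactly three admissible binary blocks over $U_2c$ (indexed by $i\in\{-1,0,1\}$), the inductive step $k\to k+1$ operates purely on the level of these trits, not on the underlying symbols, and therefore goes through verbatim from Theorem~\ref{nine}. Your invocation of Lemma~\ref{mar} at every inductive level is unnecessary---full recognizability is used only once, at step~2, to ensure the binary pattern determines the level-2 centers and their labels; the decoding of $\CT_{k+1}$ from $\CT_{[1,k]}$ and the trits uses the same mechanism as in the three-symbol proof and needs only the $U_{k+1}$-separation of $C(\CT_{k+1})$ (via Lemma~\ref{sep}), not a new application of Lemma~\ref{mar}. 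Also a small relabeling: in the paper, $U_2$ itself is taken to be the margin of full recognizability (which automatically contains $\bigcup\mathcal U$), rather than a smaller set later enlarged to a $V_2$ containing the margin.
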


\begin{proof} In the proof of Theorem \ref{nine} we need to change only the steps 1 and 2 of the construction of the (intermediate) system of (non-disjoint) \qt s $\mathbf T$.
In step 1 we let $X_1$ be the full shift over two symbols $X_1=\{0,1\}^G$. It is step 2, which requires the essential modification.

\smallskip\noindent
{\bf Step 2}. We define $M=3\cdot2r(\epsilon_2)$. Now, we choose a family $\mathcal U$ consisting of $M$ sets, which is recognizable with recognizable origins (Lemma \ref{rec} guarantees that such a family exists). We let $U_2$ be the margin of full recognizability for the family $\mathcal U$ (see Lemma \ref{mar}; note that $U_2\supset\bigcup\mathcal U$). Then, as in the proof of Theorem \ref{nine}, we let $\T_2$ be a zero entropy dynamical $\epsilon_2$-\qt\ with the collection of shapes $\CS_2\subset\{F_{n_{1,2}}, F_{n_{2,2}},\dots,F_{n_{{r(\epsilon_2)},2}}\}$\  ($n_{1,2}<n_{2,2}<\cdots<n_{n_{r(\epsilon_2)},2}$) and such that for every $\CT_2\in\T_2$ the set of centers $C(\CT_2)$ is $U_2$-separated. We use the revised version of Theorem \ref{DH}, and thus, for each tiling $\CT_2\in\T_2$ and each shape $S\in\CS(\T_2)$, we can determine the primariness of the tiles of $\CT_2$ with the shape $S$ (by observing the symbols $``S_\mathsf p"$ versus $``S_\mathsf n"$). To every symbol $``S_\mathsf s"$, where $S\in\CS_2$ and $\mathsf s\in\{\mathsf p,\mathsf n\}$, one can disjointly associate a family of three different sets $\{B^{(2)}_{S,\mathsf s,-1}, B^{(2)}_{S,\mathsf s,0}, B^{(2)}_{S,\mathsf s,1}\}\subset\mathcal U$. We allow $z\in Z_1$ to be a member of $\pi_2^{-1}(\CT_{[1,2]})$ (where $\CT_{[1,2]}\in\T_{[1,2]}$) if the following holds:
\begin{enumerate}
	\item Whenever $\CT_{2,c}=``S_\mathsf s"$ (i.e., $c\in C(\CT_2)$ is the center of a primary
	or non-primary tile $Sc$ of $\CT_2$) then we require that $z|_{U_2c}=\mathbbm
	1_{B^{(2)}_{S,\mathsf s,i}c}|_{U_2c}$ for some $i\in\{-1,0,1\}$ (it is essential that the
	sets $U_2c$ contain the sets ${B^{(2)}_{S,\mathsf s,i}c}$ and are disjoint for	different
	$c\in C(\CT_2)$).
	\item All independent choices of the above indices $i$ for different centers $c\in
	C(\CT_2)$ are represented in the elements $z\in \pi_2^{-1}(\CT_{[1,2]})$.
	\item We define the \emph{background of $\CT_2$} as the complement of $U_2C(\CT_2)$,
	and we require that $z_g=0$ for every $z\in \pi_2^{-1}(\CT_{[1,2]})$ and all $g$ in this
	background.
\end{enumerate}
The map $\pi_2$ now functions by a slightly different rule: By Lemma \ref{mar} and since, for every $\CT_2\in\T_2$, the set of centers $C(\CT_2)$ is $U_2$-separated, any $z\in Z_2$ equals the characteristic function of a fully recognizable family of shifted copies of members of the collection $\mathcal U$. This allows to locate all center sets $c\in C(\CT_2)$ and recognize the sets $B^{(2)}_{S,\mathsf s,i}c$ attached to them (with determining the parameters $S,\ \mathsf s$ and $i$), using a block code with coding horizon~$U_2$.

This ends the description of the modification of step 2. From now on we have, as before, three blocks admitted to encode every shape and all further steps of the construction of $\mathbf T$ remain unchanged. The passage from $\mathbf T$ to the system of disjoint \qt s $\hat{\mathbf T}$ stays the same.
\end{proof}

\section*{Acknowledgements}

The research of the first author is supported by NCN (National Science Center, Poland) grant 2013/08/A/ST1/00275. The paper was written during a series of visits of the first author in Fudan Univeristy, Shanghai, China.

The second author was supported by NSFC (National Natural Science Foundation of China) Grants 11671094, 11722103 and 11731003.

\backmatter

\bibliographystyle{amsplain} % global bibliography

%\bibliography{DownZh}

\printindex

\end{document}